\def\paragraph{\@startsection{paragraph}{4}%
	\z@\z@{-\fontdimen2\font}%
	{\normalfont\bfseries}}
\def\Z{\mathbb{Z}}
\DeclareMathOperator{\At}{At}
\DeclareMathOperator{\Hom}{Hom}
\DeclareMathOperator{\id}{id}
\DeclareMathOperator{\Ext}{Ext}
\DeclareMathOperator{\Ker}{Ker}
\DeclareMathOperator{\Mod}{Mod}
\DeclareMathOperator{\Spec}{Spec}
\DeclareMathOperator{\at}{at}
\newcommand{\Sheafhom}{\mathcal{H}om}
\DeclareMathOperator{\coneop}{Cone}
\newcommand{\eqtopos}[1]{{#1}_{\parallel}}
\theoremstyle{plain}
\newtheorem{theorem}{Theorem}[section]
\newtheorem{proposition}[theorem]{Proposition}
\newtheorem{lemma}[theorem]{Lemma}
\newtheorem{assumption}[theorem]{Assumption}
\newtheorem{corollary}[theorem]{Corollary}
\theoremstyle{definition}
\newtheorem{definition}[theorem]{Definition}
\newtheorem{claim}[theorem]{Claim}
\newtheorem{remark}[theorem]{Remark}
\newtheorem{construction}[theorem]{Construction}
\newtheorem{example}[theorem]{Example}
\newtheorem{situation}[theorem]{Situation}
\newtheorem{variant}[theorem]{Variant}
\begin{document}	
	\title{The Atiyah class on algebraic stacks} 
	\author{Nikolas Kuhn}
	\thanks{The author is supported by Research Council of
		Norway grant number 302277 - ”Orthogonal gauge duality and non-commutative
		geometry”.}
	\begin{abstract}
		We generalize Illusie's definition of the Atiyah class to complexes with quasi-coherent cohomology on arbitrary algebraic stacks. We show that this gives a global obstruction theory for moduli stacks of complexes in algebraic geometry without derived methods. We give a similar generalization of the reduced Atiyah class, and show various useful properties for working with Atiyah classes, such as compatibilities between the reduced and ordinary Atiyah class, and compatibility with tensor products and determinants.
	\end{abstract}
	\maketitle
	
	It is classical fact that the deformation-obstruction theory of a coherent sheaf $E$ on a projective variety $X$ is governed by the groups $\Ext^i_X(E,E)$ for $i=1,2$ \cite[Section 7]{HartDef}. The Atiyah class, as defined in \cite{Ill} in the algebraic setting, globalizes and generalizes this correspondence -- for example it measures how much a family of sheaves varies over a base. 
When working over a moduli space these self-$\Ext$ groups often give an important extra structure in the form of an \emph{obstruction theory} (often with additional properties), which is the foundational ingredient in enumerative sheaf theories. Examples are the famous Donaldson--Thomas theory \cite{Tho}, PT-theory \cite{PT}, \cite{HuTho}, moduli spaces of stable sheaves on surfaces \cite{Moch}, and more recently CY4 theory \cite{OhTh}. In each of these cases, the technical tool used to obtain the obstruction theory is the Atiyah class. 
 
In many cases, particularly for wall-crossing arguments as in \cite{Moch}, \cite{Joyce_En}, \cite{KuTa}, it is necessary to consider moduli stacks that include properly semi-stable objects which may have positive-dimensional stabilizers. To work with these, one would like an obstruction theory \emph{on the stack}. This is constructed in \cite{Moch} for moduli stacks of objects with a two-term resolution, and in \cite{Joyce_En} by using derived moduli stacks of perfect complexes. 

The main result of this paper is to generalize Illusie's construction of the Atiyah class to algebraic stacks, using Olsson's definition of the cotangent complex \cite{Olso1}. We also treat some variants, such as Gillam's reduced Atiyah class \cite{Gill}, and a version of the Atiyah class for exact sequences. We then show various compatibility properties for the Atiyah class and its variants. 

As a second main result, we show that the Atiyah class indeed gives an obstruction theory for moduli stacks of perfect complexes (Theorem \ref{prop:atisob}). The main new part here is that it captures the infinitesimal automorphisms. For simplicity, we only treat the absolute case of a proper scheme over a field, although the relative case follows along the same lines.  

\paragraph{Relation to existing work.}
Throughout, we build on the constructions and results of \cite{HuTho}, \cite{Ill} and \cite{Gill}, which we generalize to algebraic stacks. Our construction also recovers the $G$-equivariant Atiyah class constructed by Ricolfi \cite{Rico}, although note that the cotangent complex of a scheme with $G$-action with its $G$-equivariant structure, does not agree with the cotangent complex of the quotient stack, so our construction is strictly stronger even in that case. Throughout, we use Olsson's definition of the quasi-coherent derived category and the cotangent complex for algebraic stacks developed in \cite{Olso1} and \cite{LasOl}.
\paragraph{Notations and Conventions.}
For a Grothendieck topos $T$ and a ring $R$ in $T$, we let $\Mod(R)$ denote the category of $R$-modules and $D(R)$ its derived category. We identify $\Mod(R)$ with the subcategory of $D(R)$ generated by complexes concentrated in degree zero.

We will use the following convention regarding shift functors: For any complex $E$ of $R$-modules, we have a natural isomorphism 
\[E[1]=\Z[1]\otimes_{\Z} E.\]
If $E,F$ are complexes, then by $E\otimes F[1]$ we mean $(E\otimes F)[1]$ rather than $E\otimes (F[1])$. 
All tensor products, pullbacks and duals will be in the derived sense unless remarked otherwise. 

\section{Statements of results}\label{}
In this section, we introduce the Atiyah class on an algebraic stack and several variants, and we state their basic properties and mutual relations. The proofs will be given in the following sections.

\subsection{The Atiyah class}\label{subsec:theatiyahclass}
Let $f:\mathcal{X}\to \mathcal{Y}$ be a morphism of algebraic stacks and let $E\in D^-_{qcoh}(\mathcal{X})$. The Atiyah class of $E$ over $\mathcal{Y}$, defined in \ref{sec:constr-at}, is a natural map
\[\at_E:E\to L_{\mathcal{X}/\mathcal{Y}}\otimes E[1].\]
We also use the notation $\at_{E,\mathcal{X}/\mathcal{Y}}$ to emphasize the dependence on $f$. 
If $E$ is dualizable in the derived category (equivalently, a perfect complex, see \cite[Lemma 4.3]{HaRy}), then the data of $\at_E$ is equivalent to that of a map
\[\at'_E:E\otimes E^{\vee}[-1]\to L_{\mathcal{X}/\mathcal{Y}},\]
which we also call the Atiyah class.

The Atiyah class has the following fundamental properties, which we address in \S \ref{sec:properties}: Let $F,E$ be objects of $D^-_{qcoh}(\mathcal{X})$.

\paragraph{Functoriality.} Given a map $F\to E$ in $D^-_{qcoh}(\mathcal{X})$, the induced diagram 
\begin{equation*}
	\begin{tikzcd}
		F\ar[r,"\at_F"]\ar[d]& L_{\mathcal{X}/\mathcal{Y}}\otimes F[1]\ar[d] \\
		E\ar[r,"\at_E"] & L_{\mathcal{X}/\mathcal{Y}}\otimes E [1]
	\end{tikzcd}
\end{equation*}
commutes.

\paragraph{Pullback.} Given another morphism $f':\mathcal{X}'\to \mathcal{Y}'$ together with maps $A:\mathcal{X}'\to \mathcal{X}$ and $B:\mathcal{Y}'\to \mathcal{Y}$, and a $2$-isomorphism $B\circ f'\Rightarrow f\circ A$, the induced diagram 

\begin{equation*}
	\begin{tikzcd}
		LA^*E\ar[r,"LA^*\at_E"]\ar[d]& LA^*L_{\mathcal{X}/\mathcal{Y}}\otimes LA^*E[1]\ar[d] \\
		LA^*E\ar[r,"\at_{LA^*E}"] & L_{\mathcal{X}'/\mathcal{Y}'}\otimes LA^*E[1].
	\end{tikzcd}
\end{equation*}
commutes. If $E$ is perfect, then equivalently, the diagram
\begin{equation*}
	\begin{tikzcd}
		LA^*E\otimes LA^*E^{\vee}[-1]\ar[r,"LA^*\at'_E"]\ar[dr,"\at'_{LA^*E}"']& LA^*L_{\mathcal{X}/\mathcal{Y}}\ar[d] \\
		&L_{\mathcal{X}'/\mathcal{Y}'}
	\end{tikzcd}
\end{equation*} 
commutes.
\paragraph{Tensor products.} Identify $E\otimes L_{\mathcal{X}/\mathcal{Y}}[1]\otimes F\simeq L_{\mathcal{X}/\mathcal{Y}}[1]\otimes E\otimes F$ using the standard symmetry isomorphism of the derived tensor product. Then, up to this identification, we have an equality
\[\at_{E\otimes F} = \at_E\otimes F +E\otimes {\at_F}.\]
As a special case of this, if $E$ and $F$ are perfect and $\at_F$ is trivial (e.g. if $F$ is pulled back from $\mathcal{Y}$), then the following diagram commutes:
\begin{equation*}
	\begin{tikzcd}
		E\otimes E^{\vee}[-1]\ar[r,"\at'_E"]\ar[d]& L_{\mathcal{X}/\mathcal{Y}}\ar[d,equals] \\
		E\otimes F\otimes E^{\vee}\otimes F^{\vee}[-1]\ar[r,"\at'_{E\otimes F}"] & L_{\mathcal{X}/\mathcal{Y}}.
	\end{tikzcd}
\end{equation*} 
Here the left vertical map is induced by the diagonal map $\mathcal{O}_X\to F\otimes F^{\vee}$and the symmetry isomorphisms of the tensor product.
\paragraph{Determinants.}
Suppose that $E$ is perfect and consider the natural trace map $\operatorname{tr}:\Hom(E,L_{\mathcal{X}/\mathcal{Y}}[1]\otimes E)\to \Hom(\mathcal{O}_{\mathcal{X}}, L_{\mathcal{X}/\mathcal{Y}}[1])$. Then we have $\at_{\det(E)} = \operatorname{tr}(\at_E)\otimes \det(E)$ as morphisms $\det E\to L_{\mathcal{X}/\mathcal{Y}}\otimes \det E[1]$, at least when $E$ has a global finite length resolution by locally free sheaves. In particular, if the latter condition holds, the following diagram commutes
\begin{equation*}
	\begin{tikzcd}
		\mathcal{O}_X[-1]\ar[dr,"\at'_{\det E}"]\ar[d]& \\
		E\otimes E^{\vee}[-1]\ar[r,"\at'_E"] & L_{\mathcal{X}/\mathcal{Y}},
	\end{tikzcd}
\end{equation*}
where the left horizontal map is induced by the natural diagonal map $\mathcal{O}_X\to E\otimes E^{\vee}$. 
\paragraph{Pushforward.} 
Consider a cartesian diagram
\begin{equation*}
	\begin{tikzcd}
		\mathcal{X}'\ar[r]\ar[d,"p"]& \mathcal{Y}'\ar[d] \\
		\mathcal{X}\ar[r] &\mathcal{Y}
	\end{tikzcd}
\end{equation*}
and suppose that $\mathcal{X}'\to \mathcal{X}$ is concentrated \cite[Definition 2.4]{HaRy}. Let $E\in D^-_{qcoh}(\mathcal{X}')$. Then we have a commutative diagram
\begin{equation}\label{eq:pushforwarddiag}
	\begin{tikzcd}[column sep=huge]
		Rp_*E\ar[r,"\at_{Rp_*E}"]\ar[d]& L_{\mathcal{X}/\mathcal{Y}}[1]\otimes Rp_*E\ar[d] \\
		Rp_*E\ar[r,"Rp_*(\at_E)"] & Rp_*(L_{\mathcal{X}' /\mathcal{Y}'}[1]\otimes E).
	\end{tikzcd}
\end{equation} 
Suppose that, moreover, the diagram is Cartesian and that the morphisms $\mathcal{X}\to \mathcal{Y}$ and $\mathcal{Y}'\to \mathcal{X}$ are Tor-independent, and that $E$ is a perfect complex. Then the right vertical morphism in \eqref{eq:pushforwarddiag} is an isomorphism and this gives a natural identification $Rp_*(\at_E)=\at_{Rp_*E}$ as morphisms $Rp_*E\to L_{\mathcal{X}/\mathcal{Y}}[1]\otimes Rp_*E$. If moreover $Rp_*E$ is perfect, this can be restated as commutativity of the following diagram 
\begin{equation*}
	\begin{tikzcd}
		Rp_*E\otimes (Rp_*E)^{\vee}[-1]\ar[d]\ar[dr]&  \\
		\left(Rp_*(E\otimes E^{\vee})\right)^{\vee}[-1]\ar[r] &L_{\mathcal{X}/\mathcal{Y}}.
	\end{tikzcd}
\end{equation*}
\subsection{The reduced Atiyah class}
Let $f:\mathcal{X}\to \mathcal{Y}$ be a map of algebraic stacks and let $E\in D^-_{qcoh}(\mathcal{Y})$.  Let  
\[ F\to f^*E \to G\xrightarrow{+1}\]
be an exact triangle in $D^-_{qcoh}(\mathcal{X})$ such that $R\Hom^{-1}(F,G)=0$. For example, this applies if $E$ is a sheaf and $G$ is a quotient of the ordinary pullback of $E$ as a quasi-coherent sheaf.

Then the \emph{reduced Atiyah class} associated to this data is a natural map
\[\overline{\at}_{E,\mathcal{X}/\mathcal{Y},G}:F\to L_{\mathcal{X}/\mathcal{Y}}\otimes G.\]

If $G$ is dualizable, this corresponds to a map
\[\overline{\at}_{E,\mathcal{X}/\mathcal{Y},G}':F\otimes G^{\vee}\to L_{\mathcal{X}/\mathcal{Y}}.\]
We also write $\overline{\at}_{E}$ and $\overline{\at}'_{E}$ if the rest of the data is understood. 
\begin{proposition}\label{prop:redatcomphard}
	Assume that $E, F$ and $G$ are dualizable. 
	We have the following compatibility between the reduced Atiyah class and the ordinary Atiyah class of $f^*E$: The diagram
	\begin{equation*}
		\begin{tikzcd}
			F\otimes G^{\vee}\ar[r]\ar[d]& f^*E\otimes f^*E^{\vee}\ar[d] \\
			L_{X/Y}\ar[r] & f^*L_{Y}[1]
		\end{tikzcd}
	\end{equation*}
	anti-commutes.  
\end{proposition}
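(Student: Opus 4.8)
The plan is to derive this compatibility directly from the construction of both Atiyah classes in terms of the relevant exact triangles, rather than from abstract formal properties. Recall that the ordinary Atiyah class of $f^*E$ arises from the functoriality of the cotangent complex applied to the diagonal/first-order neighbourhood, while the reduced Atiyah class $\overline{\at}_{E,\mathcal{X}/\mathcal{Y},G}$ is built from the same universal extension but ``corrected'' along the triangle $F\to f^*E\to G\xrightarrow{+1}$. The key input is the transitivity triangle $f^*L_{\mathcal{Y}}\to L_{\mathcal{X}}\to L_{\mathcal{X}/\mathcal{Y}}\xrightarrow{+1}$ and the fact that the absolute Atiyah class $\at_{f^*E,\mathcal{X}}$ maps to $L_{\mathcal{X}}\otimes f^*E[1]$, while its image in $f^*L_{\mathcal{Y}}\otimes f^*E[1]$ (via the connecting map, up to sign) is the pullback $f^*\at_{E,\mathcal{Y}}$, and its image in $L_{\mathcal{X}/\mathcal{Y}}\otimes f^*E[1]$ is $\at_{f^*E,\mathcal{X}/\mathcal{Y}}$. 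The sign in the transitivity triangle is exactly what will produce the anti-commutativity.

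First I would set up the big diagram: tensor the transitivity triangle with $f^*E[1]$ and map $F$ into it, using that $\at_{f^*E,\mathcal{X}}\colon f^*E\to L_{\mathcal{X}}\otimes f^*E[1]$ restricts along $F\to f^*E$ (by functoriality, the first stated property) to a compatible class on $F$. Then I would use the hypothesis $R\Hom^{-1}(F,G)=0$ to see that the composite $F\to f^*E\to L_{\mathcal{X}}\otimes f^*E[1]\to L_{\mathcal{X}}\otimes G[1]$ and more precisely its $L_{\mathcal{X}/\mathcal{Y}}$-component is the factorization defining $\overline{\at}_{E,\mathcal{X}/\mathcal{Y},G}$, while the $f^*L_{\mathcal{Y}}$-component is $f^*\at_{E,\mathcal{Y}}$ precomposed with $F\to f^*E$. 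Chasing the octahedron relating these triangles, the claimed square is obtained by identifying both routes $F\otimes G^\vee \to f^* L_{\mathcal Y}[1]$ with (two versions of) the boundary map, which differ precisely by the sign coming from the rotation of the transitivity triangle.

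Concretely, the steps in order: (1) recall/recast the construction of $\overline{\at}$ from Section~\ref{sec:constr-at} as a lift through the triangle defining $G$, noting the vanishing hypothesis makes the relevant lift unique; (2) express $\at'_{f^*E}\colon f^*E\otimes f^*E^\vee[-1]\to L_{\mathcal{X}}$ and push it along $L_{\mathcal{X}}\to L_{\mathcal{X}/\mathcal{Y}}$ and (using transitivity) relate the ``difference'' to $f^*L_{\mathcal{Y}}[1]$; (3) precompose with $F\otimes G^\vee\to f^*E\otimes f^*E^\vee$ and use the compatibility of $\at'$ with the map $F\to f^*E$; (4) identify the two composites around the square with the two maps $F\otimes G^\vee\to f^*L_{\mathcal{Y}}[1]$ coming from the distinguished triangle, and read off the sign. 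I would phrase (4) using the standard fact that for a distinguished triangle $A\to B\to C\xrightarrow{\delta}A[1]$, the two natural ways of producing a map into $A[1]$ from data on $B$ and $C$ differ by $-1$; this is where the anti-commutativity (as opposed to commutativity) genuinely comes from.

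The main obstacle I expect is bookkeeping the signs coherently across the several triangles involved (the transitivity triangle for $L_{\mathcal{X}/\mathcal{Y}}$, the triangle $F\to f^*E\to G$, and the shifts hidden in the definition of the Atiyah class as a degree-one map), so that the final discrepancy is unambiguously $-1$ rather than an artifact of a sign convention. A secondary technical point is justifying that all the maps in sight are the ``same'' when restricted appropriately — i.e.\ that $\overline{\at}_{E,\mathcal{X}/\mathcal{Y},G}$ really is the $L_{\mathcal{X}/\mathcal{Y}}$-component of the restricted absolute Atiyah class of $f^*E$ — which should follow from the defining property of $\overline{\at}$ together with the uniqueness granted by $R\Hom^{-1}(F,G)=0$, but needs to be stated carefully. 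Once the sign in the transitivity triangle is pinned down, the anti-commutativity of the square follows formally.
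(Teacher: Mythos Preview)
Your proposed approach has a genuine gap at the central identification. You claim that $\overline{\at}_{E,\mathcal{X}/\mathcal{Y},G}$ is ``the $L_{\mathcal{X}/\mathcal{Y}}$-component of the restricted absolute Atiyah class of $f^*E$,'' but this cannot be right: since $f^*E$ is pulled back from $\mathcal{Y}$, the relative Atiyah class $\at_{f^*E,\mathcal{X}/\mathcal{Y}}$ vanishes, so the composite $F\to f^*E\to L_{\mathcal{X}/\mathcal{Y}}\otimes f^*E[1]\to L_{\mathcal{X}/\mathcal{Y}}\otimes G[1]$ is zero. More fundamentally, $\overline{\at}_E$ is a map $F\to L_{\mathcal{X}/\mathcal{Y}}\otimes G$ (no shift), whereas all the maps you produce from the ordinary Atiyah class land in $L\otimes(\text{something})[1]$. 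The reduced Atiyah class is not a piece of $\at_{f^*E}$ extracted by triangle chasing; it is built from the explicit splitting of the principal-parts sequence of $E_R$ that exists precisely because $E$ comes from $\mathcal{Y}$, and this splitting is extra data invisible at the level of $\at_{f^*E}$ in the derived category. The hypothesis $R\Hom^{-1}(F,G)=0$ only guarantees well-definedness of $\overline{\at}_E$ once one has the right map to lift; it does not manufacture the map for you.

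The paper's proof proceeds quite differently. It first reinterprets $\overline{\at}_E$ via the \emph{graded} cotangent complex: the connecting map for the composition of graded ringed topoi $X[G]\to X[E_X]\to Y[E]$ recovers $\overline{\at}_E$ (Proposition~\ref{prop:redatsecdef}), while the connecting map for $Y[E]\to Y\to Z$ recovers $\at_{E,\mathcal{Y}/\mathcal{Z}}$. The anti-commutativity then reduces to a general lemma (Lemma~\ref{lem:commdiagconn}) about anti-commutativity of iterated connecting maps of cotangent complexes in a $3\times 3$ grid of K\"ahler differentials, proved by explicit calculation with simplicial resolutions. This is then transported to algebraic stacks via the parallel-arrow topos machinery. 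The sign genuinely comes from the homological-algebra fact that two connecting maps in a $3\times 3$ diagram anti-commute, not from rotating the transitivity triangle of $L_{\mathcal{X}/\mathcal{Y}}$ as you suggest.
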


\subsection{Atiyah class of an exact sequence}\label{subsec:atexseq}
Let $\mathcal{X}\to \mathcal{Y}$ be a map of algebraic stacks and let 
\[\underline{E}:= \left[0\to F\to E\to G\to 0\right]\]
be an exact sequence of bounded above complexes of $\mathcal{O}_{\mathcal{X}}$ modules with quasi-coherent cohomology sheaves. Assume that the images of $F,E,G$ in the derived category of $\mathcal{X}$ are perfect complexes with bounded amplitude. 
Then there is a canonical way to complete the natural map $F\otimes G^{\vee}\to E\otimes E^{\vee}$ in $D(\mathcal{X})$ to a triangle 
\begin{equation}\label{eq:quotienttriang}
F\otimes G^{\vee}\to E\otimes E^{\vee}\to E\otimes E^{\vee}/F\otimes G^{\vee}\xrightarrow{+1}.
\end{equation}
Moreover, there exists a natural mophism
\[\at_{\underline{E}}:\frac{E\otimes E^{\vee}}{F\otimes G^{\vee}}[-1]\to L_{\mathcal{X}/\mathcal{Y}},\]
which we call the \emph{Atiyah class of the exact sequence $\underline{E}$}.

\begin{proposition}\label{prop:atexseqcommdiag}
We have a natural commutative diagram 
\begin{equation*}
	\begin{tikzcd}
		F\otimes F^{\vee}[-1]\ar[r]\ar[dr,"\at_F"']& \frac{E\otimes E^{\vee}}{F\otimes G^{\vee}}[-1]\ar[d,"\at_{\underline{E}}"] \\
		 & L_{\mathcal{X}/\mathcal{Y}}
	\end{tikzcd}
\end{equation*}
where the horizontal map is the morphism 
\[F\otimes F^{\vee}\simeq \frac{F\otimes E^{\vee}}{F\otimes G^{\vee}}\hookrightarrow \frac{E\otimes E^{\vee}}{F\otimes G^{\vee}}.\]
\end{proposition}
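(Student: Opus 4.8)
The plan is to deduce this from the ordinary functoriality of the Atiyah class (the first property of \S\ref{subsec:theatiyahclass}) applied to the inclusion $i\colon F\hookrightarrow E$ of complexes, together with the way $\at_{\underline E}$ is built in \S\ref{subsec:atexseq}. I will use the compatibility, which I expect to be immediate from that construction, that $\at_{\underline E}$ is a factorisation of the ordinary Atiyah class $\at'_E$ through the quotient of \eqref{eq:quotienttriang}: that is, $\at'_E=\at_{\underline E}\circ q$, where $q\colon E\otimes E^\vee[-1]\to\frac{E\otimes E^\vee}{F\otimes G^\vee}[-1]$ is the natural map. Such a factorisation exists because $\at'_E$ composed with $F\otimes G^\vee[-1]\to E\otimes E^\vee[-1]$ is null-homotopic (by functoriality of $\at$ applied to the zero map $F\to G$), the point of the construction being to single out a canonical one, using that $\underline E$ is an honest exact sequence rather than just a triangle.

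First I would assemble the relevant maps of triangles. Tensoring the dualised triangle $G^\vee\to E^\vee\xrightarrow{i^\vee}F^\vee\xrightarrow{+1}$ with $F$ gives a triangle $F\otimes G^\vee\to F\otimes E^\vee\xrightarrow{\id_F\otimes i^\vee}F\otimes F^\vee\xrightarrow{+1}$, which is the source of the identification $F\otimes F^\vee\simeq\frac{F\otimes E^\vee}{F\otimes G^\vee}$ in the statement. Tensoring $i$ with $E^\vee$ maps this triangle to \eqref{eq:quotienttriang}, and the induced map of third terms (``cones'') is precisely the horizontal morphism $h$ of the proposition. The middle square of this map of triangles commutes, giving $h\circ(\id_F\otimes i^\vee)[-1]=q\circ(i\otimes\id_{E^\vee})[-1]$ as maps $F\otimes E^\vee[-1]\to\frac{E\otimes E^\vee}{F\otimes G^\vee}[-1]$.

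Combining these, $\at_{\underline E}\circ h\circ(\id_F\otimes i^\vee)[-1]=\at_{\underline E}\circ q\circ(i\otimes\id_{E^\vee})[-1]=\at'_E\circ(i\otimes\id_{E^\vee})[-1]$. Rewriting the ordinary functoriality square for $i$ in its ``primed'' form — tensor the identity $(\id_L\otimes i)\circ\at_F=\at_E\circ i$ with $E^\vee$ and contract along $\operatorname{ev}_E$, using naturality of evaluation — yields $\at'_E\circ(i\otimes\id_{E^\vee})=\at'_F\circ(\id_F\otimes i^\vee)$. Hence $\at_{\underline E}\circ h$ and $\at'_F$ become equal after precomposition with $(\id_F\otimes i^\vee)[-1]\colon F\otimes E^\vee[-1]\to F\otimes F^\vee[-1]$, which is the bulk of the statement.

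The main obstacle is then to promote this to the honest equality $\at_{\underline E}\circ h=\at'_F$: precomposition with $(\id_F\otimes i^\vee)[-1]$ need not be injective on $\Hom_{D(\mathcal X)}(F\otimes F^\vee[-1],L_{\mathcal X/\mathcal Y})$, the ambiguity living in the image of $\Hom_{D(\mathcal X)}(F\otimes G^\vee,L_{\mathcal X/\mathcal Y})\to\Hom_{D(\mathcal X)}(F\otimes F^\vee[-1],L_{\mathcal X/\mathcal Y})$ coming from the triangle above (governed by the connecting map $G\to F[1]$, nonzero in general). To remove it — and to settle any sign — I would return to the construction: since $\underline E$ is an honest short exact sequence, the formation of \eqref{eq:quotienttriang} and of $\at_{\underline E}$ is functorial in $\underline E$, and I would compare $\underline E$ with the degenerate exact sequence $\underline F:=[\,0\to F\xrightarrow{\id}F\to0\to0\,]$ via the morphism of exact sequences $(\id_F,i,0)\colon\underline F\to\underline E$. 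For $\underline F$ the triangle \eqref{eq:quotienttriang} is $0\to F\otimes F^\vee\xrightarrow{\id}F\otimes F^\vee\to0$, so $\at_{\underline F}=\at'_F$ is the $G=0$ instance of the compatibility recalled in the first paragraph; and $(\id_F,i,0)$ induces precisely $h$ on the quotient complexes of \eqref{eq:quotienttriang}. Functoriality of $\underline E\mapsto\at_{\underline E}$ then gives $\at_{\underline E}\circ h=\at_{\underline F}=\at'_F$. The genuinely technical points are thus (i) verifying that $(\id_F,i,0)$ really induces $h$, i.e.\ unwinding $F\otimes F^\vee\simeq\frac{F\otimes E^\vee}{F\otimes G^\vee}$ against naturality of cones while tracking the tensor and duality symmetry isomorphisms, and (ii) recording the needed functoriality of the construction of \eqref{eq:quotienttriang}–$\at_{\underline\bullet}$ along morphisms of exact sequences; granted these, the spurious obstruction above is automatically absent. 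Alternatively, one can carry the canonical null-homotopies supplied by the construction through the chase of the third paragraph and check directly that they restrict to the ones computing $\at'_F$.
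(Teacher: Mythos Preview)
Your second approach --- comparing $\underline E$ to the degenerate sequence $\underline F=[0\to F\xrightarrow{\id}F\to 0\to 0]$ via $(\id_F,i,0)$ and using functoriality of the construction --- is correct and is exactly the paper's argument. The paper does not write out a separate proof of this proposition; the argument is implicit in the construction of \S\ref{subsec:atexseqtopoi} and is spelled out, in only marginally different form, inside the proof of Proposition~\ref{prop:attrace}, where it appears as the observation that $\underline{P}^1(F_R)\otimes\overline{F}$, being the termwise cokernel of $\underline{P}^1(F_R)\otimes\overline{G}\hookrightarrow\underline{P}^1(F_R)\otimes\overline{E}$, includes into the quotient sequence $\underline S$ defining $\at_{\underline E}$. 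Taking connecting maps of this inclusion of short exact sequences gives the commutative triangle; this is precisely your technical point (i)+(ii) carried out at the level of modules of principal parts.

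Two remarks. First, your opening detour through ordinary functoriality of $\at$ is unnecessary: as you correctly observe, precomposition with $(\id_F\otimes i^\vee)[-1]$ need not be injective, so that route cannot conclude on its own, and nothing from it is reused later. Second, be slightly careful with your technical point (ii): a general functoriality of $\underline E\mapsto\at_{\underline E}$ in morphisms of exact sequences is delicate, since the construction involves both $E$ and $\overline E\simeq E^\vee$, which vary in opposite directions under such a morphism. What makes $(\id_F,i,0)$ work is the concrete realization $\overline F\otimes\underline P^1(F)\simeq(\overline E\otimes\underline P^1(F))/(\overline G\otimes\underline P^1(F))$ together with $\overline E\otimes\underline P^1(F)\hookrightarrow\overline E\otimes\underline P^1(E)$; the paper checks this particular instance directly rather than invoking an abstract functoriality statement.
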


\begin{proposition}\label{prop:atexseqcomp}
	Let $\mathcal{X}\xrightarrow{f}\mathcal{Y} \to \mathcal{Z}$ be maps of algebraic stacks with $f$ flat and let $E$ be a bounded above complex of $\mathcal{O}_{\mathcal{Y}}$-modules with quasi-coherent cohomology. Let $E_{\mathcal{X}}:=f^*E$ and suppose we are given an exact sequence $\underline{E}_{\mathcal{X}}$ of the form 
	\[0\to F\to E_X\to G\to 0.\]
	Then we have a natural morphism of distinguished triangles
	\begin{equation*}
		\begin{tikzcd}
			E_X\otimes E_X^{\vee}[-1] \ar[r]\ar[d]&\frac{E_X\otimes E_X^{\vee}}{F\otimes G^{\vee}}[-1]\ar[r]\ar[d]& F\otimes G^{\vee}\ar[d]\ar[r, "{+1}"]&\, \\
			f^*L_{Y/Z}\ar[r]&L_{X/Z}\ar[r] & L_{X/Y}\ar[r,"{+1}"]&\,.
		\end{tikzcd}
	\end{equation*}
Here, the morphisms in the upper row are the shifts of the ones arising in the triangle \eqref{eq:quotienttriang}, except for the last one which is \emph{minus} the one arising there. 
\end{proposition}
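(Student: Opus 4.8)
The plan is to exhibit the morphism of triangles with left vertical $f^{*}\at'_{E,\mathcal{Y}/\mathcal{Z}}$, middle vertical $\at_{\underline{E}_{X},\mathcal{X}/\mathcal{Z}}$ (the Atiyah class of the exact sequence $\underline{E}_{X}$, taken relative to $\mathcal{Z}$), and right vertical the reduced Atiyah class $\overline{\at}'_{E,\mathcal{X}/\mathcal{Y},G}$, and then to check that the three resulting squares commute. Write $q\colon\mathcal{Y}\to\mathcal{Z}$ and let
\[
f^{*}L_{Y/Z}\xrightarrow{\ \alpha\ }L_{X/Z}\xrightarrow{\ \beta\ }L_{X/Y}\xrightarrow{\ \gamma\ }f^{*}L_{Y/Z}[1]
\]
be the transitivity triangle of $\mathcal{X}\xrightarrow{f}\mathcal{Y}\xrightarrow{q}\mathcal{Z}$; this is the bottom row. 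With $\iota\colon F\to E_{X}$ and $\rho\colon E_{X}\to G$ the maps of $\underline{E}_{X}$, the top row is obtained from \eqref{eq:quotienttriang} (for $\underline{E}_{X}$) by rotating once and applying $[-1]$, and the sign in the rotation axiom is exactly what turns the last top map into $-(\iota\otimes\rho^{\vee})$. Since $f$ is flat I freely identify $E_{X}\otimes E_{X}^{\vee}\simeq f^{*}(E\otimes E^{\vee})$ and work with underived pullbacks throughout.

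The key input is that, by its construction (\S\ref{subsec:atexseq}), $\at_{\underline{E}_{X},\mathcal{X}/\mathcal{Z}}$ is the factorization of $\at'_{E_{X},\mathcal{X}/\mathcal{Z}}$ through the quotient $E_{X}\otimes E_{X}^{\vee}[-1]\to\frac{E_{X}\otimes E_{X}^{\vee}}{F\otimes G^{\vee}}[-1]$, the composite $F\otimes G^{\vee}[-1]\to E_{X}\otimes E_{X}^{\vee}[-1]\xrightarrow{\at'_{E_{X},\mathcal{X}/\mathcal{Z}}}L_{X/Z}$ being canonically nullhomotopic because, by the functoriality property, it is computed from $\at_{E_{X},\mathcal{X}/\mathcal{Z}}$ through $\iota$ and $\rho$ and $\rho\circ\iota=0$. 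Granting this, commutativity of the left square is the identity $\at'_{E_{X},\mathcal{X}/\mathcal{Z}}=\alpha\circ f^{*}\at'_{E,\mathcal{Y}/\mathcal{Z}}$, which is the pullback property of the Atiyah class applied to base change along $f$ relative to $\mathcal{Z}$. For the middle square, post-compose $\at_{\underline{E}_{X},\mathcal{X}/\mathcal{Z}}$ with $\beta$: passing to factorizations through the quotient and using the pullback property again gives $\beta\circ\at_{\underline{E}_{X},\mathcal{X}/\mathcal{Z}}=\at_{\underline{E}_{X},\mathcal{X}/\mathcal{Y}}$, and since $E_{X}=f^{*}E$ is pulled back along $f$ the relative Atiyah class $\at'_{E_{X},\mathcal{X}/\mathcal{Y}}$ vanishes, so $\at_{\underline{E}_{X},\mathcal{X}/\mathcal{Y}}$ factors further through the cofiber $F\otimes G^{\vee}$; by construction the resulting map is $\overline{\at}'_{E,\mathcal{X}/\mathcal{Y},G}$, which is precisely the asserted middle square. (Here one also checks that the canonical nullhomotopies entering the $\mathcal{Z}$- and $\mathcal{Y}$-relative constructions agree under $\beta$, which is clear since both arise from $\rho\circ\iota=0$ at the cochain level.)

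Commutativity of the right square is the statement $\gamma\circ\overline{\at}'_{E,\mathcal{X}/\mathcal{Y},G}=-\,(f^{*}\at'_{E,\mathcal{Y}/\mathcal{Z}})[1]\circ(\iota\otimes\rho^{\vee})$, which is exactly Proposition \ref{prop:redatcomphard} read with $\mathcal{Z}$ in place of the absolute base; its proof applies verbatim in the relative setting. This right square is the main obstacle: the left and middle squares are formal, resting only on functoriality, the pullback property and the vanishing $\at'_{f^{*}E,\mathcal{X}/\mathcal{Y}}=0$, whereas the right square pins down the precise sign and twist by which the reduced Atiyah class differs from the restriction of the ordinary Atiyah class of $f^{*}E$, and it is there that Proposition \ref{prop:redatcomphard} must be invoked (or reproved). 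The rest is bookkeeping: tracking signs through the rotation of \eqref{eq:quotienttriang} and checking the compatibility of the canonical nullhomotopies used in defining $\at_{\underline{E}_{X}}$ and $\overline{\at}'_{E}$ relative to $\mathcal{Y}$ and to $\mathcal{Z}$.
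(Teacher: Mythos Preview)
Your overall strategy matches the paper's proof: the three vertical maps are $f^{*}\at'_{E,\mathcal{Y}/\mathcal{Z}}$, $\at_{\underline{E}_{X},\mathcal{X}/\mathcal{Z}}$, and $\overline{\at}'_{E,\mathcal{X}/\mathcal{Y},G}$, and the three squares are handled by (i) pullback compatibility together with the compatibility of $\at_{\underline{E}}$ with $\at'_{E}$ (Corollary~\ref{cor:atexseqatcomp}), (ii) functoriality of $\at_{\underline{E}}$ in the base (Proposition~\ref{prop:atexseqfuncystack}) together with the compatibility of $\at_{\underline{E}}$ with $\overline{\at}'_{E}$ (Corollary~\ref{cor:redatexseqatstack}), and (iii) Proposition~\ref{prop:atclassredat} (equivalently Proposition~\ref{prop:redatcomphard}).

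There is, however, a genuine looseness in your argument that you should tighten. You describe $\at_{\underline{E}_{X},\mathcal{X}/\mathcal{Z}}$ as ``the factorization of $\at'_{E_{X},\mathcal{X}/\mathcal{Z}}$ through the quotient'' coming from a canonical nullhomotopy of $\rho\circ\iota$, and likewise in the middle square you say that $\at_{\underline{E}_{X},\mathcal{X}/\mathcal{Y}}$ ``factors further through the cofiber $F\otimes G^{\vee}$; by construction the resulting map is $\overline{\at}'_{E}$.'' But in a triangulated category a factorization through a cofiber is \emph{not} unique, so neither of these phrases defines a specific morphism. In the paper, $\at_{\underline{E}}$ is defined by an explicit chain-level construction with principal parts (\S\ref{subsec:atexseqtopoi}), and the two compatibilities you invoke---that it sits under $\at'_{E}$ and over $\overline{\at}'_{E}$---are proved separately as Corollary~\ref{cor:atexseqatcomp} and Proposition~\ref{prop:redatexseqat}/Corollary~\ref{cor:redatexseqatstack}. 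Your parenthetical remark about checking that the nullhomotopies agree under $\beta$ is exactly the content that makes this work, but it is not ``clear since both arise from $\rho\circ\iota=0$ at the cochain level'': one has to work with the specific cochain-level models, which is what the paper does. Once you replace your informal factorization description by citations to those results, your argument becomes the paper's.
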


\subsection{Deformation theoretic properties}
We present two important examples of how the Atiyah class can be used to construct obstruction theories. For simplicity, we work over a base field $k$ and let $X$ be a smooth and proper scheme of dimension $d$ over $k$.
\paragraph{Obstruction theory on moduli spaces of sheaves.}
Let $\mathcal{M}$ be a stack over $\Spec k$ and let $E\in D^-_{qcoh}(X\times \mathcal{M})$ be perfect. Consider the Atiyah class map of $E$ relative to $X$: 
\[\at_{E}:E\to L_{X\times \mathcal{M}/X}\otimes E[1]\simeq \pi_{\mathcal{M}}^*L_{\mathcal{M}}\otimes E[1].\]
Since $E$ is dualizable, and by the projection formula, this data is equivalent to a map
\[\mathcal{O}_{\mathcal{M}}\to R\pi_*(\pi_{\mathcal{M}}^*L_{\mathcal{M}}\otimes (E\otimes E^{\vee}))[1] \simeq L_{\mathcal{M}}\otimes R\pi_*(E\otimes E^{\vee})[1].\]

Using dualizability again, we obtain a morphism
\begin{equation}\label{eq:atclobth}
	\At_E:R\pi_{\mathcal{M}*}(E\otimes E^{\vee})^{\vee}[-1]\to L_{\mathcal{M}}.
\end{equation}

\begin{theorem}\label{prop:atisob}
	Suppose that $\mathcal{M}$ is an open substack of the moduli stack of coherent sheaves on $X$. Then $\At_E$ is an obstruction theory. More generally, this holds when $\mathcal{M}$ is an open substack of a moduli space of universally gluable perfect complexes on $X$. 
\end{theorem}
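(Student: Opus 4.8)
The plan is to reduce the assertion to a comparison of deformation theories over arbitrary test schemes, and then to identify the relevant components of $\At_E$ with the classical Kodaira--Spencer and obstruction classes; the only genuinely new point is the behaviour in the automorphism direction. Write $\mathbb{E}_{\mathcal{M}}:=R\pi_{\mathcal{M}*}(E\otimes E^{\vee})^{\vee}[-1]$, so that $\At_E\colon\mathbb{E}_{\mathcal{M}}\to L_{\mathcal{M}}$; since $X$ is smooth and proper, $R\pi_{\mathcal{M}*}(E\otimes E^{\vee})$ is perfect and $\mathbb{E}_{\mathcal{M}},L_{\mathcal{M}}\in D^{\leq 1}_{qcoh}(\mathcal{M})$. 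Then $\At_E$ is an obstruction theory exactly when $h^{0}(\At_E)$ and $h^{1}(\At_E)$ are isomorphisms and $h^{-1}(\At_E)$ is surjective (nothing is required in degrees $<-1$, so the theory is not perfect once $d\geq 3$); equivalently, for every $k$-scheme $T$, every $g\colon T\to\mathcal{M}$, and every square-zero extension $T\hookrightarrow\overline T$ with quasi-coherent ideal $J$, the map $Lg^{*}\At_E$ sends the intrinsic obstruction to extending $g$ onto the corresponding class in $\Ext^{1}_{T}(Lg^{*}\mathbb{E}_{\mathcal{M}},J)$ and induces bijections on first-order extensions of $g$ and on automorphisms of a fixed extension. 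As both obstruction theories and the Atiyah class restrict well to open substacks, I may take $\mathcal{M}$ to be the full moduli stack. By the moduli interpretation, extending $g$ to $\overline T$ is the same as extending the perfect, universally gluable family $\mathcal{E}:=(\id_{X}\times g)^{*}E$ on $X\times T$ to $X\times\overline T$, and its obstruction, first-order extensions and automorphisms are classically governed by $\Ext^{2}$, $\Ext^{1}$ and $\Ext^{0}=\Hom$ of $R\Sheafhom(\mathcal{E},\mathcal{E})$ with coefficients in the pullback of $J$ --- standard deformation theory of coherent sheaves (\cite{Ill}) and of universally gluable complexes (\cite{HuTho}). By the projection formula, and since $R\pi_{T*}(\mathcal{E}\otimes\mathcal{E}^{\vee})\simeq Lg^{*}R\pi_{\mathcal{M}*}(E\otimes E^{\vee})$ is perfect (proper base change, $X$ smooth and proper), these three groups are $\Ext^{1}_{T}$, $\Ext^{0}_{T}$ and $\Ext^{-1}_{T}$ of $Lg^{*}\mathbb{E}_{\mathcal{M}}$ with coefficients in $J$.

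It remains to show that $Lg^{*}\At_E$ realizes the canonical comparison maps. The pullback property of the Atiyah class, applied to $A=\id_{X}\times g$, identifies $Lg^{*}\at_{E,X\times\mathcal{M}/X}$ with $\at_{\mathcal{E},X\times T/X}$, reducing everything to a statement about the Atiyah class of the family $\mathcal{E}$ on $X\times T$ relative to $X$. For the obstruction part and the first-order part this is the classical fact that the Atiyah (equivalently Kodaira--Spencer) class of a family computes the obstruction to deforming it; the functoriality and pullback compatibilities, together with the translation between $\at$ and $\at'$ proved in \S\ref{sec:properties}, are exactly what one needs to run the arguments of \cite{Ill} and \cite{HuTho} over an arbitrary base, for the relative Atiyah class, and with Olsson's cotangent complex in place.

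The new ingredient is that $h^{1}(\At_E)$ is an isomorphism. On the target side, $h^{-1}(T_{\mathcal{M}})$ is, for any algebraic stack, the Lie algebra along the identity section of the inertia $I_{\mathcal{M}}\to\mathcal{M}$; for a moduli stack of sheaves or of universally gluable complexes this inertia is the open subgroup scheme $\underline{\operatorname{Aut}}(E)\subseteq\underline{\Hom}_{X\times\mathcal{M}/\mathcal{M}}(E,E)$ of endomorphisms of the universal object, with Lie algebra $\pi_{\mathcal{M}*}\Sheafhom(E,E)=h^{0}(R\pi_{\mathcal{M}*}(E\otimes E^{\vee}))$. On the source side $h^{1}(\mathbb{E}_{\mathcal{M}})=h^{0}(R\pi_{\mathcal{M}*}(E\otimes E^{\vee}))^{\vee}$, using that $R\pi_{\mathcal{M}*}(E\otimes E^{\vee})$ has non-negative cohomology, so the two sheaves agree abstractly; the content is that $h^{1}(\At_E)$ is this identification. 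Via the $\Ext^{-1}(-,J)$ reformulation above, this is the statement that for each family $\mathcal{E}$ the class $\at_{\mathcal{E},X\times T/X}$ induces the standard identification of the infinitesimal automorphisms $\Hom_{X\times T}(\mathcal{E},\mathcal{E}\otimes\pi_{T}^{*}J)$ of an extension with those computed from $L_{\mathcal{M}}$. I expect to prove this by extending the local cocycle description of the Atiyah class --- from a finite locally free presentation of $E$, as in \cite{Ill} and \cite{HuTho} --- one cohomological degree below the range used for Kodaira--Spencer, so that the same cocycle simultaneously yields the automorphism comparison; the determinant compatibility of $\At_E$ provides an independent check on the trace part.

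The main obstacle is precisely this last step. Olsson's cotangent complex, and hence the Atiyah class, are constructed from the diagonal of $\mathcal{M}$ and a lisse-\'etale presentation, and are a priori remote from the $\Ext$-theoretic picture of the deformation theory of sheaves and complexes; the delicate work is to certify, uniformly over all test schemes $T$, that no spurious sign, line-bundle twist, or multiplicity intervenes between the abstract $\At_E$ and the concrete Kodaira--Spencer and automorphism description. The compatibility properties of the Atiyah class collected in the earlier sections are the bookkeeping tools that should make this tractable.
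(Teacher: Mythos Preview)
Your overall strategy matches the paper's: reduce to test schemes $g\colon T\to\mathcal{M}$ with a square-zero extension $T\hookrightarrow\overline{T}$ by an ideal $J$, and verify that $(Lg^*\At_E)^*$ carries the intrinsic obstruction/deformation/automorphism data of morphisms to $\mathcal{M}$ onto the corresponding sheaf-theoretic data of the family $\mathcal{E}$. For the obstruction and deformation pieces, your appeal to \cite{Ill} and \cite{HuTho} together with the pullback compatibility of the Atiyah class is essentially what the paper does in \S\S\ref{subsec:obstrcomp}--\ref{subsec:defcomp}, though the paper spells out the identifications more carefully (Remarks \ref{rem:obstrsmor}, \ref{rem:obstrshvs} and Lemma \ref{lem:explicitdefs}).

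The genuine gap is the automorphism comparison. You correctly flag it as ``the main obstacle'' but do not prove it: the proposal to ``extend the local cocycle description one cohomological degree below'' is not developed, and it is not clear what cocycle computation you have in mind for the degree-$1$ part of $L_{\mathcal{M}}$, which is invisible on any single atlas. The determinant compatibility is not relevant here. Your side remark that $h^1(\mathbb{E}_{\mathcal{M}})=h^0(R\pi_{\mathcal{M}*}(E\otimes E^\vee))^\vee$ as sheaves is also not quite right without a local-freeness hypothesis on $h^0$; this is another reason the paper works exclusively through the $\Ext^i(-,J)$ formulation rather than trying to identify cohomology sheaves directly.

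The paper's approach to this piece is rather different and exploits the specific construction of the Atiyah class on stacks given in \S\ref{sec:constr-at}. After passing to an \'etale cover of $T$ so that $g$ lifts through a smooth atlas $Y\to\mathcal{M}$, one describes infinitesimal automorphisms of $\overline{g}$ as certain maps $\overline{T}\to Z:=Y\times_{\mathcal{M}}Y$ (Lemma \ref{lem:autcorrspecial}), obtaining a concrete identification $\operatorname{Aut}(\overline{g})\simeq\Ext^{-1}(g^*L_{\mathcal{M}},J)$ via the presentation $z^*L_{\mathcal{M}}\simeq\coneop(s^*L_Y\oplus t^*L_Y\to L_Z)[-1]$. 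The key computation (Lemma \ref{lem:atclsnake}) then identifies the composite of $\at_{\mathcal{E}}$ with the truncation $L_{\mathcal{M}}[1]\to h^1(L_{\mathcal{M}})$ as a Snake-Lemma connecting map built from the principal-parts sequences on $Y$ and $Z$; an explicit local-section calculation finishes the match with the sheaf-theoretic automorphism of Remark \ref{rem:autshvs}. So the missing idea is to use the groupoid presentation of $\mathcal{M}$ directly, rather than to look for a cocycle argument internal to the family.
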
	
This is proven in \S \S \ref{subsec:obstrcomp}--\ref{subsec:autcomp} for moduli of sheaves. The statement for complexes is addressed in Remark \ref{rem:complexes}.

\paragraph{Obstruction theory on Quot-schemes.}
Let $\mathcal{Y}$ be an algebraic stack and let $E$ be a $\mathcal{Y}$-flat coherent sheaf on $X\times \mathcal{Y}$. Let $f:\mathcal{Q}\to \mathcal{Y}$ be an open substack of the relative Quot-scheme of $E$ over $\mathcal{Y}$ and let 
\[0\to F\to E_{\mathcal{Q}}\to G\to 0\]
be the universal exact sequence on $X\times \mathcal{Q}$. We consider the associated reduced Atiyah class $\overline{\at}_{E}:=\overline{\at}_{E,V\times \mathcal{X}/V\times \mathcal{Y},G}$ as a map $\overline{\at}_E:F\to \pi_\mathcal{X}^* L_{\mathcal{X}/\mathcal{Y}}\otimes G$ in the derived category. As before, this data is equivalent to a morphism 
\[\overline{\At}_E: Rf_{*}(G\otimes F^{\vee})^{\vee}\to L_{\mathcal{Q}/\mathcal{Y}}\]

\begin{proposition}\label{prop:redatisob}
	The map $\overline{\At}_E$ is a relative obstruction theory for $f:\mathcal{Q}\to \mathcal{Y}$.
\end{proposition}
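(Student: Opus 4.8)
The plan is to verify the standard deformation-theoretic characterization of relative obstruction theories in terms of square-zero extensions, and then to match the maps induced by $\overline{\At}_E$ with Grothendieck's classical description of the deformation theory of the relative Quot scheme. Write $\pi\colon X\times\mathcal{Q}\to\mathcal{Q}$ for the projection; since $X$ is smooth and proper of dimension $d$ over $k$, the morphism $\pi$ is smooth and proper, the $\mathcal{Q}$-flat coherent sheaves $E_{\mathcal{Q}},F,G$ are perfect on $X\times\mathcal{Q}$, and $\overline{\At}_E$ is obtained from $\overline{\at}'_E\colon F\otimes G^\vee\to\pi^*L_{\mathcal{Q}/\mathcal{Y}}$ by pushing forward along $\pi$ and applying Grothendieck--Serre duality. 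By the usual criterion, $\overline{\At}_E$ is a relative obstruction theory for $f$ iff for every affine scheme $T$ with a map to $\mathcal{Y}$, every square-zero extension $T\hookrightarrow\bar T$ over $\mathcal{Y}$ with ideal $J$, and every $\mathcal{Y}$-morphism $g\colon T\to\mathcal{Q}$, the map obtained from $\overline{\At}_E$ by pulling back along $g$ and dualizing against $J$ is bijective on $\Ext^0$, injective on $\Ext^1$, and carries the intrinsic obstruction $\mathrm{ob}(g,\bar T)\in\Ext^1(Lg^*L_{\mathcal{Q}/\mathcal{Y}},J)$ — whose vanishing controls whether $g$ extends to $\bar T$ over $\mathcal{Y}$ — to the classical Quot obstruction. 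As only affine $T$ over $\mathcal{Y}$ enter, the possible stackiness of $\mathcal{Y}$ plays no further role.

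The next step is to identify the two sides of this comparison. Set $g_X=\id_X\times g\colon X\times T\to X\times\mathcal{Q}$ and let $0\to F_T\to E_T\to G_T\to 0$ be the pullback of the universal sequence; by $\mathcal{Q}$-flatness of $G$ this is again short exact, with all terms $T$-flat, and it is the sequence classified by the $T$-point $g$ of the relative Quot scheme. Proper flat base change along $X\times T\to X\times\mathcal{Q}$, the projection formula, Grothendieck--Serre duality and perfectness of $F,G$ give canonical identifications
\begin{equation*}
\Ext^i\!\bigl(Lg^*R\pi_*(G\otimes F^\vee)^\vee,\,J\bigr)\;\cong\;\Ext^i_{X\times T}\bigl(F_T,\,G_T\otimes\pi_T^*J\bigr)\;\cong\;\Ext^i_{X\times T}(F_T,G_T)\otimes_{\mathcal{O}_T}J ,
\end{equation*}
the derived tensor products being underived by flatness. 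On the other side, by the basic property of the relative cotangent complex, $\Ext^i(Lg^*L_{\mathcal{Q}/\mathcal{Y}},J)$ is exactly the group governing $\mathcal{Y}$-extensions of $g$ to $\bar T$: for $i=0$ the set of such extensions (when nonempty) is a torsor under it, and for $i=1$ it receives $\mathrm{ob}(g,\bar T)$. By Grothendieck's deformation theory of Quot, extending $g$ over $\mathcal{Y}$ amounts to extending the quotient $E_T\twoheadrightarrow G_T$ to a $\bar T$-flat quotient of $E_{\bar T}$, so the extensions form a torsor under $\Hom_{X\times T}(F_T,G_T\otimes\pi_T^*J)$ with obstruction in $\Ext^1_{X\times T}(F_T,G_T\otimes\pi_T^*J)$. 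Hence both families of $\Ext$-groups compute the same classical invariants, and it remains only to check that $\overline{\At}_E$ realizes the tautological identification between them; bijectivity on $\Ext^0$, injectivity on $\Ext^1$, and matching of obstruction classes then follow.

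The heart of the argument — and the step I expect to be the main obstacle — is precisely this last compatibility: that pairing $\overline{\At}_E$ against a square-zero extension reproduces the classical gluing cocycle of a deformed quotient and its obstruction, with the correct signs. For this I would first use that the reduced Atiyah class is compatible with base change — which follows from its construction, just as the Pullback property of \S\ref{subsec:theatiyahclass} does for the ordinary Atiyah class — to identify $Lg_X^*\overline{\at}'_E$ with $\overline{\at}'_{\underline{E}_T}$, the reduced Atiyah class of the pulled-back exact sequence $\underline{E}_T$ relative to $X$ over $\Spec k$; since $L_{X\times T/X}\simeq\pi_T^*L_T$, this reduces the claim to the Quot scheme over the affine base $T$. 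There it is the assertion that the reduced Atiyah class computes the Grothendieck--Illusie deformation-obstruction theory of Quot, which is the content of \cite{Ill} and \cite{Gill}; their proofs, which trace the construction of $\overline{\at}$ through the first infinitesimal neighbourhood of the diagonal and compare the resulting class with the classical obstruction cocycle, apply verbatim here because $X\times T\to T$ is smooth and proper. Granting this, $\overline{\At}_E$ is a relative obstruction theory for $f$.
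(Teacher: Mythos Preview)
Your proposal is correct and follows essentially the approach the paper indicates: the paper does not give a self-contained proof of this proposition but states that it is analogous to the proof of Theorem~\ref{prop:atisob} and refers to \cite[\S 4]{Gill} for details, which is precisely the reduction-to-affine-base-plus-classical-Quot-deformation-theory argument you outline. One minor quibble: the isomorphism $\Ext^i_{X\times T}(F_T,G_T\otimes\pi_T^*J)\cong\Ext^i_{X\times T}(F_T,G_T)\otimes_{\mathcal{O}_T}J$ you write is not needed and not generally true without flatness of $J$; the argument only requires the first identification.
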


\section{Preliminaries}
We will use the following notation: If $R$ is a ring in a topos $T$, then $C(R)$ denotes the category of complexes of $R$-modules, while $C_{-1,0}(R)$ denotes the full subcategory of complexes concentrated in degrees $-1,0$ (which is canonically identified with the category of maps of $R$-modules). 
\subsection{Derived category and cotangent complex of an algebraic stack}
Let $\mathcal{X}$ be an algebraic stack and let $\mathcal{O}_{\mathcal{X}}$ denote its structure sheaf in the lisse-\'etale topos on $\mathcal{X}$. Given a smooth cover $X\to \mathcal{X}$, where $X$ is an algebraic space, one can form the strictly simplicial algebraic space $X_{\bullet} =  X_{\bullet, \operatorname{et}}$, which we consider as a strictly simplicial topos with respect to the \'etale topology on every component. We write $X_{\bullet, \operatorname{lis-et}}$ for the strictly simplicial topos obtained by taking the corresponding lisse-\'etale topos in place of each $X_n$. It is shown in \cite[4.6]{Olso1}, we have flat morphisms of topoi
\[X_{\bullet,\operatorname{et}}\xleftarrow{\epsilon} X_{\bullet, \operatorname{lis-et}}\xrightarrow{\pi} \mathcal{X}_{\operatorname{lis-et}}.\]
Moreover, the functor $\epsilon_*$ is exact and preserves flatness. We define $\eta_X^*$ to be the composition $\epsilon_*\circ \pi^*:\operatorname{Mod}(\mathcal{O}_{\mathcal{X}})\to \operatorname{Mod}(\mathcal{O}_{X_{\bullet}})$. It defines a functor on the categories of chain complexes and due to exactness also on the derived categories, both of which we also denote by $\eta_X^*$. Let $\eta_{X_*}:=R\pi_*\circ\epsilon^*:D(\mathcal{O}_{X_{\bullet}})\to D(\mathcal{O}_{\mathcal{X}})$.
By \cite[Example 2.2.5]{LasOl}, the functors $\eta_X^*$ and $\eta_{X_*}$ restrict to mutually inverse equivalences on the derived categories with quasi-coherent cohomology sheaves
\begin{align}
	\eta_X^*:D_{qcoh}(\mathcal{O}_{\mathcal{X}})  & \to D_{qcoh}(\mathcal{O}_{X_{\bullet}}),\\
	\eta_{X*}:D_{qcoh}(\mathcal{O}_{X_{\bullet}})  & \to D_{qcoh}(\mathcal{O}_{\mathcal{X}}). 
\end{align}

Given a smooth surjective map of algebraic spaces $g:W\to X$ over $\mathcal{X}$, let $W_{\bullet}$ be the induced hypercover for the map $W\to \mathcal{X}$ and let $g_{\bullet}:W_{\bullet}\to X_{\bullet}$ be the map induced by $g$. Then there is a canonical natural isomorphism between the functors $g^*\eta_X^*$ and $\eta_W^*$ on the levels of sheaves, which induces isomorphisms between the induced functors on complexes and derived categories respectively.
We recall the notion of the cotangent complex of algebraic stacks as given in \cite[\S 8]{Olso1}:
Given a morphism of algebraic stacks $f:\mathcal{X}\to \mathcal{Y}$, choose a $2$-commutative diagram 
\begin{equation}\label{diag:2comm}
	\begin{tikzcd}
		X\ar[r]\ar[d]& \mathcal{X}\ar[d] \\
		Y\ar[r] & \mathcal{Y},
	\end{tikzcd}
\end{equation}
where $X,Y$ are algebraic spaces and where the maps $Y\to \mathcal{Y}$ and $X\to\mathcal{X}_Y:= Y\times_{\mathcal{Y}} \mathcal{X}$ are smooth and surjective. Let $X_{\bullet}$ and $Y_{\bullet}$ be the strictly simplicial algebraic spaces associated to $X\to \mathcal{X}$ and $Y\to \mathcal{Y}$ respectively. One defines a complex $L_{\mathcal{X}/\mathcal{Y},X/Y}$ on $X_{\bullet}$ whose restriction to $X_n$ is given by the complex
\[L_{X_n/Y_n}\to \Omega_{X/\mathcal{X}_{Y_n}},\]
where $\Omega_{X/\mathcal{X}_{Y_n}}$ is placed in degree one, and the map is induced from the natural map of differentials $h^0(L_{X_n/Y_n})\simeq \Omega_{X_n/Y_n}\to \Omega_{X_n/\mathcal{X}_{Y_n}}$.
It is shown in \cite{Olso1}, that this defines an element of $D_{qcoh}^{\leq 1}(X_{\bullet})$, and that the element $\eta_* L_{\mathcal{X}/\mathcal{Y},X/Y}\in D^{\leq 1}_{qcoh}(\mathcal{X})$ is independent of the choice of diagram \eqref{diag:2comm} up to canonical isomorphisms. This is used to define $L_{\mathcal{X}/\mathcal{Y}}$, so that one has a canonical isomorphism  $\eta^*L_{\mathcal{X}/\mathcal{Y}} \simeq L_{\mathcal{X}/\mathcal{Y},X/Y}$ for any choice of diagram \eqref{diag:2comm}. 
\begin{remark}\label{rem:cotcomp}
	Define $\Omega_{X_{\bullet}/\mathcal{X}_{Y_{\bullet}}}$ to be the $\mathcal{O}_{X_{\bullet}}$-module which on $X_n$ is given by $\Omega_{X_n/\mathcal{X}_{Y_n}}$ with the obvious pullback maps. Then we may restate
	\[L_{\mathcal{X}/\mathcal{Y},X/Y}= \operatorname{Cone}\left(L_{X_{\bullet}/Y_{\bullet}}\xrightarrow{-}  \Omega_{X_{\bullet}/\mathcal{X}_{Y_{\bullet}}}\right)[-1].\]
	Here $L_{X_{\bullet}/Y_{\bullet}}$ is the usual cotangent complex for the map of topoi $X_{\bullet}\to Y_{\bullet}$, and the map indicated by ``$-$'' is minus the natural map whose restriction to the $n$-th simplicial degree is given by the composition $L_{X_n/Y_n}\xrightarrow{\tau_{\geq 0}} \Omega_{X_n/Y_n}\to \Omega_{X_n/\mathcal{X}_{Y_n}} $.
\end{remark}

\subsection{Simplicial methods}
We recall some`notation and basic facts about simplicial rings and simplicial sheaves of modules in general topoi. For a general reference, see Illusie's book \cite{Ill}. Throughout this subsection, let $T$ denote a topos. 
\paragraph{Simplicial modules.}
For a simplicial ring $A$ in $T$, we denote by $A\operatorname{-Mod}$ the category of $A$-modules. If $A$ is an ordinary ring, we let $A\operatorname{-Mods}$ denote the category of \emph{simplicial} $A$-modules, i.e. the category of modules over $A$ regarded as a constant simplicial ring. In either case, we denote by $D^{\Delta}(A)$ the derived category obtained by localizing $A\operatorname{-Mod}$ (resp. $A\operatorname{-Mods}$) at the class of quasi-isomorphisms.

\paragraph{Dold--Kan correspondence.}
Let $A$ be an ordinary ring in $T$. The \emph{normalized chain functor} induces an equivalence of abelian categories $N:A\operatorname{-Mods}\to C^{\leq 0}(A)$, see \cite[I 1.3]{Ill}. It sends homotopic maps to homotopic maps and there are natural identifications $\pi_i(M)\simeq h^{-i}(NM)$ for a simplicial $A$-module $M$. In particular, $N$ preserves quasi-isomorphisms and induces an equivalence $N:D^{\Delta}(A)\to D^{\leq 0}(A)$.

\paragraph{Cones and distinguished triangles.}
As in \cite[I 3.2.1]{Ill}, let $\sigma$ denote the simplicial $\Z$-module satisfying $N\sigma=\Z[1]$, and let $\gamma$ be the simplicial $\Z$-module such that $N\gamma$ is the complex $\Z\to \Z$ concentrated in degrees $[-1,0]$. Let $A$ be a simplicial ring in the topos $T$, and let $E$ be an $A$-module. We write $\sigma E:=\sigma\otimes_\Z E$ and $\gamma E:=\gamma\otimes_\Z E$.  One has canonical isomorphisms $\pi_i(E) = \pi_{i+1}(\sigma E)$ for any $i\geq 0$. Note also that we have a natural exact sequence of $A$-modules
\[0\to E\xrightarrow{\iota} \gamma E\xrightarrow{q} \sigma E\to 0. \] 

 For a map $\alpha:E\to F$ of $A$-modules, we define 
\[\operatorname{Cone}^{\Delta}(\alpha):= \operatorname{Coker}(E\xrightarrow{(\iota,\alpha)} \gamma E\oplus F).\] 
We have the sequence of natural maps 
\begin{equation}\label{eq:disttri}
	E\xrightarrow{\alpha} F\to \operatorname{Cone}^{\Delta}(\alpha)\to \sigma E,
\end{equation}
and the induced maps on homotopy groups fit into a long exact sequence. 
One declares a sequence $E\to F\to G\to \sigma E$ in $D^{\Delta}(A)$ to be a \emph{distinguished triangle} if it is isomorphic in $D^{\Delta}(A)$ to a sequence of the form \eqref{eq:disttri}, see \cite[I 3.2.2]{Ill}. If $A$ is an ordinary ring in $T$, and $E$ a simplicial $A$-module, then in $D^{\leq 0}(A)$, we have natural isomorphisms $N\sigma E\simeq (NE)[1]$, and the Dold--Kan correspondence preserves the notions of distinguished triangle.

\paragraph{Derived tensor product.}
Let $A$ be a simplicial ring in $T$. The derived tensor product defines a functor $D^{\Delta}(A)\times D^{\Delta}(A)\to D^{\Delta}(A), (E,F)\mapsto E\otimes^{\ell} F$, which can be computed as follows: For any quasi-isomorphism $L\to E$ where $L$ is a flat (i.e. degreewise flat) $A$-module, there is a natural quasi-isomorphism $L\otimes_A F\to E\otimes^{\ell} F$. The analogous statement holds with a flat replacement of $F$. For fixed $E$, the functor $E\otimes ^{\ell}-:D^{\Delta}(A)\to D^{\Delta}(A)$ is naturally triangulated, and similarly for $-\otimes^{\ell} E$.  If $A$ is an ordinary ring and $E$, $F$ are simplicial $A$-modules, then we have canonical natural isomorphisms $N(E\otimes^{\ell} F) \simeq NE\otimes^{L} NF$ in $D^{\leq 0}(A)$, which are compatible with the symmetry isomorphism of the tensor product.	

Now let $P\to B$ be a morphism of $A$-algebras. Then derived tensor  product $B\otimes_{P}^{\ell} -$ induces a triangulated functor $D^{\Delta}(P)\to D^{\Delta}(B)$, which is left-adjoint to the functor $D^{\Delta}(B)\to D^{\Delta}(P)$ obtained by restriction of scalars. We have
\begin{lemma}\label{lem:dertens}
	Suppose that $P\to B$ is a quasi-isomorphism. Then the derived tensor product and restriction of scalars are mutually inverse equivalences of categories. More precisely, the natural adjunction maps $M\to B\otimes^{\ell} M$ for $M$ in $D^{\Delta}(P)$ and $B\otimes^{\ell}N_P\to N_P$ for $N$ in $D^{\Delta}(B)$ are isomorphisms. 
\end{lemma}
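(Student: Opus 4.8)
The plan is to deduce everything from the single claim that the \emph{unit} of the adjunction is always an isomorphism. Write $G:=B\otimes^{\ell}_{P}(-)\colon D^{\Delta}(P)\to D^{\Delta}(B)$ for extension of scalars and $\rho\colon D^{\Delta}(B)\to D^{\Delta}(P)$ for restriction of scalars, so that $G$ is left adjoint to $\rho$ with unit $\eta\colon\operatorname{id}\Rightarrow\rho G$ and counit $\varepsilon\colon G\rho\Rightarrow\operatorname{id}$; the two maps in the statement are $\eta_M$ and $\rho(\varepsilon_N)$. Since $\rho$ leaves the underlying simplicial abelian sheaf unchanged, a morphism in $D^{\Delta}(B)$ is an isomorphism precisely when its image under $\rho$ is. Hence, granting that $\eta_M$ is an isomorphism for every $M$, the triangle identity $\rho(\varepsilon_N)\circ\eta_{\rho N}=\operatorname{id}_{\rho N}$ shows that $\rho(\varepsilon_N)$, and therefore $\varepsilon_N$, is an isomorphism for every $N$; so $G$ and $\rho$ are mutually inverse equivalences and both adjunction maps are isomorphisms.

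To prove that $\eta_M$ is an isomorphism for all $M$, I would argue by d\'evissage. The identity and $\rho G$ are triangulated endofunctors of $D^{\Delta}(P)$ that commute with arbitrary direct sums ($\rho G$ because $G$ is a left adjoint and $\rho$ preserves sums), and $\eta$ is a natural transformation between them, so $\mathcal{C}:=\{M:\eta_M\text{ is an isomorphism}\}$ is a localizing subcategory of $D^{\Delta}(P)$. On the free module, the unit $\eta_P$ is the canonical map $P\to B$, which is a quasi-isomorphism by hypothesis, so $P\in\mathcal{C}$. The formation of $G$ and of $\eta$ is compatible with the localization functors $j_!,j^{*}$ attached to an object $U$ of a generating family of $T$: both $j^{*}$ and $j_!$ are exact and commute with $B\otimes^{\ell}_{P}(-)$ and with $\rho$ — for $j_!$ via the projection formula $B\otimes^{\ell}_{P}j_!(-)\simeq j_!\bigl(B|_U\otimes^{\ell}_{P|_U}(-)\bigr)$ — so $\eta_{j_!(P|_U)}\simeq j_!\bigl((\eta_P)|_U\bigr)$ is an isomorphism and the free simplicial $P$-modules $j_!(P|_U)$ lie in $\mathcal{C}$. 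Finally, every object of $D^{\Delta}(P)$ receives a quasi-isomorphism from a simplicial $P$-module built degreewise out of such free modules — for instance the diagonal of the canonical (bar) resolution attached to the free/forgetful adjunction — so these objects generate $D^{\Delta}(P)$ as a localizing subcategory, whence $\mathcal{C}=D^{\Delta}(P)$.

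I expect the last step — the precise statement that the free simplicial $P$-modules generate $D^{\Delta}(P)$ under shifts, cones and direct sums, together with the exactness of $j_!$ and the projection formula for $\otimes^{\ell}$ — to be the only genuinely technical point. It is a routine unwinding of the constructions in \cite[I]{Ill}; indeed this lemma is in substance what is used there to show that the cotangent complex is independent of the chosen resolution, so one could also appeal directly to that treatment. Equivalently, in the language of flat replacements used above, the same argument shows that $L\to B\otimes_{P}L$ is a quasi-isomorphism for every degreewise $P$-flat simplicial module $L$, which is exactly what the two adjunction maps reduce to after resolving $M$ (respectively $\rho N$) by flat $P$-modules.
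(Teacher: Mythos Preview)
Your argument is correct in outline, and in fact you have written considerably more than the paper does: the paper's proof is a bare citation to \cite[I Corollaire 3.3.4.6]{Ill}. Your reduction to the unit via the triangle identity and the conservativity of restriction of scalars is clean and standard. The d\'evissage via a localizing subcategory is also fine; the only substantive step, as you correctly flag, is the generation statement. That step is precisely what Illusie's machinery of free simplicial resolutions \cite[I \S 4]{Ill} provides, and once you invoke it you are essentially reproducing the content of his Corollaire 3.3.4.6 rather than giving an independent proof. Your final paragraph already says as much: the equivalent formulation ``$L\to B\otimes_P L$ is a quasi-isomorphism for degreewise $P$-flat $L$'' is exactly Illusie's assertion. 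So your proposal is correct, but not genuinely different from the cited result --- it is a sketch of how that result is proven, and you could compress the whole thing to the flat-replacement observation plus the citation without loss.
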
  
\begin{proof}
This is {\cite[I Corollaire 3.3.4.6]{Ill}}.
\end{proof}

\paragraph{Simplicial resolutions.}
Let $A\to B$ be a map of ordinary rings in a topos $T$. We denote by 
\[P_A(B)\]
the \emph{standard simplicial resolution of $B$ over $A$} \cite[I 1.5]{Ill}. It is a simplicial $A$-algebra and flat over $A$ in each degree. There is a natural quasi-isomorphism $P_A(B)\to B$, where we regard $B$ as a constant simplicial $A$-algebra in $T$. 

We will use the following result. 
\begin{lemma} \label{lem:flat}
	Let 
	\begin{equation*}
		\begin{tikzcd}
			W_2\ar[r,"a"]\ar[d,"h"]& W_1\ar[d,"g"] \\
			Y_2\ar[r,"b"] & Y_1
		\end{tikzcd}
	\end{equation*}
	be a commutative diagram of locally ringed topoi with enough points. Assume that $a$ and $b$ are flat. Then the natural map $a^{-1}P_{g^{-1}\mathcal{O}_{Y_1}}(\mathcal{O}_{W_1})\to P_{h^{-1}\mathcal{O}_{Y_2}}(\mathcal{O}_{W_2})$ of simplicial sheaves of rings on $W_2$ is flat in each degree.
\end{lemma}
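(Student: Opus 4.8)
The plan is to make the natural map explicit, reduce the assertion to stalks using that $W_2$ has enough points, and then verify a purely ring-theoretic statement by induction on the simplicial degree. First, the sheaf-theoretic inverse image $a^{-1}$ is exact and cocontinuous, hence commutes with the formation of polynomial algebras on sheaves of sets, with the forgetful functor to sheaves of sets, and therefore with the comonad that defines the standard resolution $P_\bullet$. Consequently there is a natural isomorphism
\[
a^{-1}P_{g^{-1}\mathcal{O}_{Y_1}}(\mathcal{O}_{W_1})\;\simeq\;P_{a^{-1}g^{-1}\mathcal{O}_{Y_1}}(a^{-1}\mathcal{O}_{W_1}),
\]
compatibly with the structure maps. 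By commutativity of the given square of topoi we have $a^{-1}g^{-1}\mathcal{O}_{Y_1}=h^{-1}b^{-1}\mathcal{O}_{Y_1}$ and the structure maps match, so the map of the lemma is the one obtained, by functoriality of $P_\bullet$ in the pair consisting of a ring and an algebra over it, from the commutative square of sheaves of rings on $W_2$
\[
\begin{tikzcd}
h^{-1}b^{-1}\mathcal{O}_{Y_1}\ar[r]\ar[d]& h^{-1}\mathcal{O}_{Y_2}\ar[d]\\
a^{-1}\mathcal{O}_{W_1}\ar[r]&\mathcal{O}_{W_2},
\end{tikzcd}
\]
whose horizontal maps are $h^{-1}(b^{\#})$ and $a^{\#}$.

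Since $W_2$ has enough points, flatness of a map of sheaves of rings can be checked after taking stalks at each point $p$ of $W_2$. The stalk functor $p^{*}$ is again exact and cocontinuous, hence commutes with $P_\bullet$, and carries the square above to the square of local rings
\[
\begin{tikzcd}
\mathcal{O}_{Y_1,\,b(h(p))}\ar[r]\ar[d]& \mathcal{O}_{Y_2,\,h(p)}\ar[d]\\
\mathcal{O}_{W_1,\,a(p)}\ar[r]&\mathcal{O}_{W_2,\,p},
\end{tikzcd}
\]
in which the horizontal arrows are the stalks of $b^{\#}$ and $a^{\#}$. As $a$ and $b$ are flat, these horizontal arrows are flat, and being local homomorphisms of local rings they are faithfully flat, hence injective. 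We are thus reduced to the following statement about ordinary rings: for a commutative square of rings in which $C\to C'$ is flat and injective and $D\to D'$ is injective, the map $P_C(D)_n\to P_{C'}(D')_n$ is flat — in fact flat and injective — for every $n$.

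This is proved by induction on $n$, using that $P_C(D)_{n+1}$ is the polynomial $C$-algebra on the underlying set of $P_C(D)_n$ — write $P_C(D)_0=C[D]$ for the polynomial $C$-algebra on the underlying set of $D$, and similarly in higher degrees — and that $P_C(D)_{n+1}\to P_{C'}(D')_{n+1}$ is induced by $C\to C'$ on coefficients and by $P_C(D)_n\to P_{C'}(D')_n$ on the polynomial variables. For $n=0$, and in the inductive step with $D,D'$ replaced by $P_C(D)_n,P_{C'}(D')_n$ (the map between them being injective by the inductive hypothesis), one factors
\[
C[D]\longrightarrow C'\otimes_C C[D]=C'[D]\longrightarrow C'[D'].
\]
The first arrow is the base change of the flat, injective map $C\to C'$ along the free — hence flat — $C$-algebra $C[D]$, so it is flat, and it is injective because a flat base change preserves injections. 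For the second arrow, injectivity of $D\to D'$ yields an inclusion of underlying sets, so that $C'[D']$ is a polynomial algebra over $C'[D]$ on the remaining variables, hence free over $C'[D]$ and in particular flat and injective. The composite is therefore flat and injective; its injectivity is precisely what lets the induction proceed.

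I expect the main obstacle to lie in the first two paragraphs: checking that $P_\bullet$ commutes with exact cocontinuous functors such as $a^{-1}$ and the stalk functors and is functorial in the pair $(A\to B)$, and noticing that flatness of $a$ and $b$ upgrades, via faithful flatness of local homomorphisms, to injectivity of the relevant stalk maps. Once the problem is reduced to the ring-theoretic statement the induction is routine, the one point to watch being that the second arrow in the factorization is flat because $D\to D'$ is injective, not merely because it is flat.
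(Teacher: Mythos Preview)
Your proposal is correct and follows essentially the same approach as the paper: reduce to stalks using that the standard simplicial resolution commutes with exact cocontinuous functors, upgrade flatness to injectivity via faithful flatness of flat local homomorphisms, and then run the same induction on simplicial degree with the factorization $C[D]\to C'[D]\to C'[D']$. The paper packages the inductive step as a separate lemma but the content is identical.
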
 
\begin{proof}
	This can be checked on stalks of $W_1$. Since taking the standard simplicial resolution commutes with pullback of topoi and with filtered direct limits, we are reduced to the following setting: We have a diagram in the category of local rings
	\begin{equation*}
		\begin{tikzcd}
			B_2\ar[r,leftarrow]\ar[d,leftarrow]& B_1\ar[d,leftarrow] \\
			A_2\ar[r,leftarrow] & A_1
		\end{tikzcd}
	\end{equation*}
	with $B_2$ flat over $B_1$ and $A_2$ flat over $A_1$, and we need to show that the natural map $P_{A_1}(B_1)\to P_{A_2}(B_2)$ is degreewise flat. Denote this map by $F:P\to R$ with $n$-th part $F_n:P_n\to R_n$ for $n\geq 0$. We also define $F_{-1}: B_1\to B_2$. We show by induction on $n\geq -1$ that $F_n$ is flat and injective. The base case follows from the fact, that a flat morphism of local rings is faithfully flat and therefore injective. We observe that $F_{n+1}$ is the natural map $P_{n+1}=A_1[P_n]\to A_2[R_n]=R_{n+1}$. The induction step then follows from Lemma \ref{lem:inductionlem} (note that $A_1\to A_2$ is injective since it is a flat map of local rings).
\end{proof}

\begin{lemma}\label{lem:inductionlem}
	Consider a commutative diagram of rings 
	\begin{equation*}
		\begin{tikzcd}
			B_1\ar[r]\ar[d]& B_2\ar[d] \\
			A_1\ar[r] & A_2
		\end{tikzcd}
	\end{equation*}
	where the map $A_1\to A_2$ is flat and injective and $B_1\to B_2$ is injective. Then $A_1[B_1]\to A_2[B_2]$ is flat and injective.
\end{lemma}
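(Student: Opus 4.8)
The plan is to break the map $A_1[B_1]\to A_2[B_2]$ into two elementary pieces and to check flatness and injectivity separately, using a different factorization for each and the fact that both properties are stable under composition. Throughout I read $A[B]$ as the polynomial ring on the underlying set of $B$ (equivalently, the free $A$-algebra on that set), so that $A[B]$ is a free $A$-module on monomials and depends functorially on $A$ and on the set $B$. Since $B_1\to B_2$ is injective, I would fix a set $S$ with $B_2=B_1\sqcup S$ (identifying $B_1$ with its image), so that $A_i[B_2]=A_i[B_1][S]$ is a polynomial extension of $A_i[B_1]$ for $i=1,2$, and $A_2[B_j]=A_2\otimes_{A_1}A_1[B_j]$ for $j=1,2$.

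For flatness I would factor the map as $A_1[B_1]\to A_1[B_2]\to A_2[B_2]$: the first arrow is a polynomial (hence free, hence flat) extension, and the second is the base change of the flat map $A_1\to A_2$ along $A_1\to A_1[B_2]$, hence flat; a composite of flat maps is flat. For injectivity I would instead factor as $A_1[B_1]\to A_2[B_1]\to A_2[B_2]$: the first arrow is $A_1\hookrightarrow A_2$ applied coefficientwise in a fixed set of variables — equivalently, the injection $A_1\hookrightarrow A_2$ tensored over $A_1$ with the free module $A_1[B_1]$ — hence injective, and the second is the inclusion of the polynomial subring on the variables indexed by $B_1\subseteq B_2$, hence injective; a composite of injections is injective.

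I do not expect a real obstacle here: the statement is essentially formal. The only point that needs care is the bookkeeping — making sure $A[B]$ is genuinely the polynomial ring on the underlying set, so that an injection $B_1\hookrightarrow B_2$ really does induce an inclusion of polynomial rings $A_2[B_1]\hookrightarrow A_2[B_2]$, and choosing the factorization adapted to each property (the ``free, then base change'' order for flatness versus the ``base change, then free'' order for injectivity). Note that only the injectivity of $A_1\to A_2$, not its flatness, is used for the injectivity conclusion, and only its flatness for the flatness conclusion, matching exactly the hypotheses recorded in the application Lemma \ref{lem:flat}.
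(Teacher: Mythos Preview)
Your proof is correct and follows essentially the same approach as the paper: factor the map through an intermediate polynomial ring and check each step. The paper uses the factorization $A_1[B_1]\to A_2[B_1]\to A_2[B_2]$ for flatness (base change then free extension) rather than your $A_1[B_1]\to A_1[B_2]\to A_2[B_2]$ (free extension then base change), and simply declares injectivity ``clear'' where you spell it out, but these are cosmetic differences.
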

\begin{proof}
	The injectivity is clear. The map $A_1[B_1]\to A_2[B_1]=A_2\otimes_{A_1} A_1[B_1]$ is a base change of a flat map, hence flat, and $A_2[B_1]\to A_2[B_2]$ is a free algebra by injectivity, hence also flat. Since a composition of flat morphisms is flat, the result follows. 
\end{proof}

\paragraph{Module of principal parts.}
Let $A\to B$ be a map of rings in $T$. The \emph{(first) module of principal parts} for the ring map $A\to B$ is given by $P_{B/A}^1:= B\otimes B/I_{\Delta}^2$, where $I_{\Delta}$ is the kernel of the multiplication map $B\otimes_AB\to B$. It is naturally a $(B,B)$-bimodule. Recall that $\Omega^1_{B/A}=I_{\Delta}/I_{\Delta}^2$, so that we have an exact sequence of $(B,B)$-bimodules called the \emph{exact sequence of principal parts}
\[0\to \Omega^1_{B/A}\to P^1_{B/A}\to B \to 0.\]
Here, for each of the outer terms, the two $B$-module structures agree. We will denote this sequence by $\underline{P}^1_{B/A}$. The map $b\mapsto b\otimes 1$ gives a splitting of this sequence for the left $B$-module structures and $b\to 1\otimes b$ gives a splitting for the right $B$-module structures.
Now let $E$ be a $B$-module.  Then we set $P^1_{B/A}(E):=P^1_{B/A}\otimes_B E$, where we take the tensor product with respect to the right $B$-module structure on $P^1_{B/A}$. Equivalently, $P_{B/A}^1(E)=B\otimes_A E/(I_{\Delta}^2 B\otimes_A E)$. Then the \emph{sequence of principal parts for $E$} is $\underline{P}^1_{B/A}(E):=\underline{P}^1_{B/A}\otimes_B E$, or explicitly:
\begin{equation*}
	0\to \Omega_{B/A}\otimes_B E\to P_{B/A}^1(E)\to E\to 0.
\end{equation*}
We usually regard this as a sequence of $B$-modules with respect to the \emph{left} $B$-module structure. Note that it is in general not split. 

\paragraph{Illusie's Atiyah class.}	 
With these ingredients, we recall Illusie's definition of Atiyah class: Let $f:X\to Y$ be a morphism of ringed topoi and let $E\in D^{\leq 0}(X)$. Let $P:= P_{f^{-1}\mathcal{O}_Y}(\mathcal{O}_X)$ be the standard simplicial resolution. By the Dold--Kan correspondence, we may regard $E$ as an object of $D^{\Delta}(\mathcal{O}_X)$ and thus of $D^{\Delta}(P)$ by restriction of scalars. We have the exact sequence of principal parts associated to $E_P$, which is an exact sequence of $P$-modules:
\[\underline{P}^1_{P/f^{-1}\mathcal{O}_Y}(E):\quad 0\to \Omega^1_{P/f^{-1}\mathcal{O}_Y}\otimes_P E_P\to P^1_{P/f^{-1}\mathcal{O}_Y}(E_P)\to E_P\to 0.\]
Note that the leftmost term here computes the derived tensor product since $\Omega_{P/f^{-1}\mathcal{O}_Y}$ is flat over $P$. Moreover, it is canonically quasi-isomorphic to the restriction of scalars of $L_{X/Y}\otimes_{\mathcal{O}_X} E$. From the sequence $\underline{P}_{P/f^{-1}\mathcal{O}_Y}(E_P)$, we obtain a morphism $E_P\to \sigma(L_{X/Y}\otimes_{\mathcal{O}_X} E)_P$. Extending scalars to $\mathcal{O}_X$, this defines a canonical morphism $E\to \sigma  L_{X/Y}\otimes_{\mathcal{O}_X} E$ in $D^{\Delta}(\mathcal{O}_X)$. The Atiyah class is the corresponding morphism
\[E\to L_{X/Y}[1]\otimes E\] 
in $D^{\leq 0}(\mathcal{O}_X)$ obtained via the Dold--Kan correspondence.

\subsection{The parallel arrow category.}
Let 
\[ W  \overset{s}{\underset{t}{\rightrightarrows}} X  \]
be a diagram of topoi. We obtain an induced topos $\eqtopos{W}$ whose objects are tuples $(A_X,A_W,s^{\sharp},t^{\sharp})$, where $A_X$ and $A_W$ are objects of $X$ and $W$ respectively, and $s^\sharp:s^{-1}A_X\to A_W$ and $t^{\sharp}:t^{-1}A_X\to A_W$ are morphisms in $W$. Giving a ring $R=(R_X,R_W,s^{\sharp},t^{\sharp})$ in $\eqtopos{W}$ is equivalent to giving rings on $X$ and $W$, and giving $s,t$ the structure of morphism of ringed topoi. Given such an $R$, we use the following notation: For an $R_X$-module $M_X$, we write $s_R^*:=s^{-1}M_X\otimes_{s^{-1}R_X} R_W$ and  $t_R^*:=t^{-1}M_X\otimes_{t^{-1}R_X} R_W$. Then an $R$-module is given by a tuple $M=(M_X,M_W, s^*, t^*)$, where $M_X$ and $M_W$ are $R_X$ and $R_W$-modules respectively, and where $s^*:s_R^*M_X\to M_W$ and $t^*:t_R^*M_X\to M_W$ are morphisms of $R_W$-modules. 

We define a functor $\coneop_R:C(R) \to C(R_W)$ by
\[\coneop_R:(M_X,M_W,s^*,t^*)\mapsto \operatorname{Cone}(s_R^*M_X\oplus t_R^*M_X\xrightarrow{-s^*\oplus t^*} M_W).\]

If $R$ is a \emph{simplicial} ring in $\eqtopos{W}$, the analogous discussion holds for $R$-modules, and we get a functor
\begin{align*}
	\coneop_R^{\Delta}: R-\operatorname{Mod}&\to R_W-\operatorname{Mod} \\
	(M_X,M_W,s^*,t^*)&\mapsto \operatorname{Cone}^{\Delta}(s_R^*M_X\oplus t_R^*M_X\xrightarrow{-s^*\oplus t^*} M_W).
\end{align*}

We have induced functors on the derived categories:

\begin{lemma}
	\begin{enumerate}[label=\roman*)]
		\item 	Let $R$ be a ring on $\eqtopos{W}$ with $s^{\sharp}:s^{-1}R_X\to R_W$ and $t^{\sharp}:t^{-1}R_X\to R_W$ flat. Then $\coneop_R$ descends to a triangulated functor of derived categories $D(R)\to D(R_W)$ (also denoted $\coneop_R$). 
		\item Let $R$ be a simplicial ring on $\eqtopos{W}$  with $s^{\sharp}:s^{-1}R_X\to R_W$ and $t^{\sharp}:t^{-1}R_X\to R_W$ flat. Then $\coneop_R^{\Delta}$ descends to a triangulated functor $D^{\Delta}(R)\to D^{\Delta}(R_W)$ (also denoted $\coneop_R^{\Delta}$).
		\item Let $R$ be an ordinary ring $\eqtopos{W}$, viewed as a constant simplicial ring. Then the two constructions in i) and ii) are compatible with the Dold--Kan correspondence, in the sense that the two functors obtained by traversing the outer edges of the diagram
		\begin{equation*}
			\begin{tikzcd}
				D^{\Delta}(R)\ar[r,"\coneop_R^{\Delta}"]\ar[d]&D^{\Delta}(R_W) \ar[d] \\
				D^{\leq 0}(R)\ar[r,"\coneop_R"] & D^{\leq 0}(R_W)
			\end{tikzcd}
		\end{equation*}
		are related by a canonical natural isomorphism.	
	\end{enumerate}
\end{lemma}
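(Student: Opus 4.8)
The plan is to treat all three parts uniformly, exploiting that $\coneop_R$ and $\coneop_R^{\Delta}$ are assembled from three elementary operations: the inverse-image functors $s^{-1}$, $t^{-1}$, which are always exact; the base-change functors $-\otimes_{s^{-1}R_X}R_W$ and $-\otimes_{t^{-1}R_X}R_W$, which are exact \emph{precisely because} $s^{\sharp}$ and $t^{\sharp}$ are assumed flat; and the formal mapping-cone construction (resp.\ $\operatorname{Cone}^{\Delta}$). Since each of these operations is compatible with shifts and carries quasi-isomorphisms (resp.\ weak equivalences) to quasi-isomorphisms, the substance of the lemma is to record these facts and to keep track of the triangulated structure.

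For i) I would factor $\coneop_R$ as the composite $M=(M_X,M_W,s^*,t^*)\mapsto \bigl(s_R^*M_X\oplus t_R^*M_X\xrightarrow{-s^*\oplus t^*}M_W\bigr)\mapsto \operatorname{Cone}(\text{that morphism})$, i.e.\ the mapping-cone functor on morphisms of complexes of $R_W$-modules precomposed with the functor $\Phi$ producing this morphism. By the discussion above $\Phi$ is an exact functor of abelian categories, hence carries quasi-isomorphisms to morphisms that are pointwise quasi-isomorphisms, and the mapping-cone functor sends those to quasi-isomorphisms; so $\coneop_R$ preserves quasi-isomorphisms and descends to $D(R)\to D(R_W)$. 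It manifestly commutes with $[1]$. To see it is triangulated it suffices, since every distinguished triangle in $D(R)$ is isomorphic to one of the form $M\xrightarrow{f}N\to\operatorname{Cone}_R(f)\to M[1]$, to produce a natural isomorphism $\coneop_R(\operatorname{Cone}_R(f))\simeq\operatorname{Cone}_{R_W}(\coneop_R(f))$ matching the rotation maps; using exactness and additivity of $\Phi$ to commute $\Phi$ past $\operatorname{Cone}_R(f)$, this reduces to the standard compatibility of iterated mapping cones, checked with the explicit cone differentials. Part ii) is formally identical, with $C(R)$ replaced by $R\operatorname{-Mod}$, quasi-isomorphisms by weak equivalences, $[1]$ by $\sigma(-)$, and $\operatorname{Cone}$ by $\operatorname{Cone}^{\Delta}$. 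Two points need checking: that when $s^{\sharp},t^{\sharp}$ are degreewise flat the functors $s_R^*,t_R^*$ on simplicial modules preserve weak equivalences — equivalently, they compute the derived base change, exactly as recalled above for the derived tensor product; and that $\operatorname{Cone}^{\Delta}$ preserves weak equivalences, which follows from the long exact sequence of homotopy groups attached to \eqref{eq:disttri} and the five lemma. The triangulation is verified as in i), now using the exact sequences $0\to E\xrightarrow{\iota}\gamma E\xrightarrow{q}\sigma E\to 0$ and the presentation $\operatorname{Cone}^{\Delta}(\alpha)=\operatorname{Coker}(E\xrightarrow{(\iota,\alpha)}\gamma E\oplus F)$.

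For iii) I would apply the normalized chain functor $N$. The crucial point is that when $R$ is an ordinary ring — so $R_W$ is an ordinary ring, flat over $s^{-1}R_X$ and over $t^{-1}R_X$ — the functor $N$ commutes \emph{strictly} with $s_R^*$ and $t_R^*$: it visibly commutes with $s^{-1},t^{-1}$, and for a simplicial module $P$ and a \emph{constant} flat base ring one has $N(P\otimes_{s^{-1}R_X}R_W)=NP\otimes_{s^{-1}R_X}R_W$ on the nose, because $N$ is built from the simplicial face maps, which act only on the first tensor factor, and flatness lets the relevant kernels and finite intersections commute with $-\otimes_{s^{-1}R_X}R_W$. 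Thus $N$ sends the arrow $\Phi^{\Delta}(M)$ to $\Phi$ of $M$ with $N$ applied componentwise. Combining this with the natural isomorphism $N(\operatorname{Cone}^{\Delta}(\alpha))\simeq\operatorname{Cone}(N\alpha)$ — obtained by applying the exact functor $N$ to $0\to F\to\operatorname{Cone}^{\Delta}(\alpha)\to\sigma E\to 0$ and using $N\sigma E\simeq(NE)[1]$ — gives the desired natural isomorphism $N\circ\coneop_R^{\Delta}\simeq\coneop_R\circ N$, and tracing the identifications shows it is compatible with shifts and distinguished triangles.

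I expect the main obstacle to be bookkeeping rather than conceptual: the one genuinely delicate step is showing that $\coneop_R$ and $\coneop_R^{\Delta}$ are triangulated functors — constructing the isomorphism identifying $\coneop(\operatorname{Cone}(f))$ with $\operatorname{Cone}(\coneop(f))$ and checking compatibility with the connecting maps — since $\coneop$ is itself a mapping-cone construction, so this amounts to a routine but somewhat fussy comparison of iterated cones in terms of their explicit differentials.
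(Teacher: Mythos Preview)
Your proposal is correct and follows essentially the same approach as the paper: both factor $\coneop_R$ through the functor sending $M$ to the two-term object $s_R^*M_X\oplus t_R^*M_X\to M_W$ and then take the mapping cone, reduce the triangulated structure to the compatibility of iterated cones (which the paper spells out by writing down the explicit sign on $C(\alpha[1])\to C(\alpha)[1]$, and in the simplicial case via the symmetry isomorphism $\gamma\sigma\simeq\sigma\gamma$), and handle part~(iii) by Dold--Kan compatibilities. Your treatment of (iii) via the strict commutation of $N$ with base change along a flat \emph{constant} ring is slightly more concrete than the paper's appeal to compatibility of Dold--Kan with symmetry isomorphisms of tensor products, but the content is the same.
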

\begin{proof}
	The functor $\coneop_R:C(R)\to C(R_W)$ factors as $C(R)\to C(C_{-1,0}(R_W))\to C(R_W)$, where the first map is induced from the map $R\operatorname{-Mod}\to C_{-1,0}(R_W)$ sending $(M_X,M_W,s^*,t^*)$ to $s_R^*M_X\oplus t_R^*M_X\xrightarrow{-s^*\oplus t^*}M_W$ and the second map is given by taking the mapping cone. It is clear that the first map in the composition induces a triangulated functor $D(R)\to D(C_{-1,0}(R_W))$. 
	Therefore, it is enough to show that for any ring $S$, the map $C(C_{0,-1}(S))\to C(S)$ sending $\alpha:A_{-1}\to A_0$ to $C(\alpha)$ is canonically triangulated and preserves quasi-isomorphisms.
	A morphism $u:A\to B$ in $C(C_{0,-1}(S))$ amounts to a commutative diagram
	\begin{equation*}
		\begin{tikzcd}
			A_{-1}\ar[r,"u_{-1}"]\ar[d,"\alpha"]& B_{-1}\ar[d,"\beta"] \\
			A_{0}\ar[r,"u_0"] & B_0.
		\end{tikzcd}
	\end{equation*} 
	The triangle $A\to B\to C(u)\to A[1]$ corresponds to the following map of triangles in $D(S)$:
	\begin{equation*}
		\begin{tikzcd}
			A_{-1}\ar[r,"u_{-1}"]\ar[d,"\alpha"]& B_{-1}\ar[d,"\beta"]\ar[r]& C(u_{-1})\ar[r]\ar[d]& A_{-1}[1]\ar[d,"{\alpha[1]}"] \\
			A_{0}\ar[r,"u_0"]\ar[d] & B_0 \ar[r]\ar[d] & C(u_0)\ar[r]& A_{0}[1]\\
			C(\alpha)\ar[r,"\coneop(u)"]& C(\beta)
		\end{tikzcd}
	\end{equation*} 
	There are two obvious ways to complete the lower row  and make this into a $3\times 4 $ commutative diagram: Either by taking cones vertically, or by forming the exact triangle associated to the morphism $\coneop(u)$. Now i) follows from the statement that there is a canonical isomorphism between the resulting diagrams. One checks this by a direct calculation. The triangulated structure is given by the isomorphism $C(\alpha[1])\to C(\alpha)[1]$ that is minus the identity on the components coming from $A_{-1}$ and the identity on the components coming from $A_0$. 
	
	Part (ii) follows from an analogous argument with simplicial modules. For $S$ any simplicial ring, and $[A_{-1}\xrightarrow{\alpha} A_0]\in C_{-1,0}(S)$, we describe the triangulated structure on the mapping cone functor: Note that 
	\begin{align*}
		C^{\Delta}(\sigma \alpha) &= \operatorname{Coker}(\sigma A_{-1}\to \gamma \sigma A_{-1} \oplus \sigma A_0),\\
		\sigma C^{\Delta}(\alpha) &= \operatorname{Coker}(\sigma A_{-1}\to \sigma \gamma A_{-1}\oplus \sigma A_0).
	\end{align*}
	The isomorphism is then induced by the symmetry isomorphism of the tensor product $\gamma\sigma A_{-1}\simeq \sigma \gamma A_{-1}$.
	
	The compatibility (iii) follows from the constructions by using the basic compatibilities of the Dold--Kan correspondence, in particular that it is compatible with the symmetry isomorphisms of tensor products on the level of derived categories. 
\end{proof}

\begin{lemma}\label{lem:coneforqiso}
	Let $R$ be a ring in $\eqtopos{W}$. For any complex of $R$-modules  $E$,  we have a natural morphism $\gamma:\coneop_R(M)\to E_{W}[1]$. If the pullback maps $s^*:s_R^*E_X\to E_W$ and $t^*:t_R^*E_X\to E_W$ are quasi-isomorphisms, then so is $\gamma$. The same picture holds in the category of simplicial $\mathcal{O}_{\mathcal{X}}$ modules with the obvious modifications, and the two situations are compatible via the Dold--Kan correspondence.
\end{lemma}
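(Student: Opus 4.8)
Here the final statement is Lemma~\ref{lem:coneforqiso} (with the evident typo $\coneop_R(M)$ corrected to $\coneop_R(E)$).

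\emph{Construction of $\gamma$.} The plan is to read $\gamma$ off the canonical triangle of a mapping cone. I would set $A:=s_R^*E_X\oplus t_R^*E_X$ and $g:=-s^*\oplus t^*\colon A\to E_W$, so that $\coneop_R(E)=\operatorname{Cone}(g)$ is an honest complex of $R_W$-modules carrying the canonical projection $\partial\colon\coneop_R(E)\to A[1]$ and the distinguished triangle $A\xrightarrow{g}E_W\to\coneop_R(E)\xrightarrow{\partial}A[1]$. Then define $\gamma$ as the composite
\[\coneop_R(E)\xrightarrow{\ \partial\ }A[1]\xrightarrow{\ \operatorname{pr}_2[1]\ }t_R^*E_X[1]\xrightarrow{\ t^*[1]\ }E_W[1],\]
which on total complexes is the chain map $(x,y,z)\mapsto t^*(y)$, writing $\coneop_R(E)^n=(s_R^*E_X)^{n+1}\oplus(t_R^*E_X)^{n+1}\oplus E_W^n$ and $(E_W[1])^n=E_W^{n+1}$; that this is a chain map is a one-line sign check, and it is plainly natural in $E$. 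I would also record that the variant built from $\operatorname{pr}_1$ and $s^*$ represents the same morphism in $D(R_W)$: the difference of the two chain maps is $(x,y,z)\mapsto g(x,y)$, which is null-homotopic via the projection onto $E_W$. In particular $\gamma$ is insensitive to interchanging the roles of $s$ and $t$, which is the sense in which it is canonical.

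\emph{The quasi-isomorphism claim.} Suppose $s^*$ and $t^*$ are quasi-isomorphisms. The long exact cohomology sequence of the triangle $A\xrightarrow{g}E_W\to\coneop_R(E)\xrightarrow{\partial}A[1]$ finishes the argument: in each degree $H^n(g)\colon H^n(s_R^*E_X)\oplus H^n(t_R^*E_X)\to H^n(E_W)$, $(\xi,\eta)\mapsto-H^n(s^*)\xi+H^n(t^*)\eta$, is surjective — already $H^n(s^*)$ is — and its kernel is carried isomorphically onto $H^n(t_R^*E_X)$ by $\operatorname{pr}_2$. Hence $H^n(E_W)\to H^n(\coneop_R(E))$ vanishes, $H^n(\partial)$ is injective with image $\ker H^{n+1}(g)$, and composing with $\operatorname{pr}_2$ and $H^{n+1}(t^*)$ produces an isomorphism $H^n(\coneop_R(E))\xrightarrow{\sim}H^{n+1}(E_W)=H^n(E_W[1])$; by the explicit formula this is exactly $H^n(\gamma)$, so $\gamma$ is a quasi-isomorphism. (One could instead argue that the morphism of triangles $\big(s_R^*E_X\xrightarrow{\iota_1}A\to t_R^*E_X\big)\to\big(s_R^*E_X\xrightarrow{-s^*}E_W\to\operatorname{Cone}(-s^*)\big)$ which is the identity on the first terms identifies $\coneop_R(E)$ with $\operatorname{Cone}\!\big(t_R^*E_X\to\operatorname{Cone}(-s^*)\big)\simeq\operatorname{Cone}(t_R^*E_X\to0)=t_R^*E_X[1]\simeq E_W[1]$, using that $-s^*$ is a quasi-isomorphism.)

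\emph{Simplicial version and Dold--Kan.} For a simplicial ring $R$ on $\eqtopos{W}$ one repeats the construction with $\operatorname{Cone}^{\Delta}$ in place of $\operatorname{Cone}$: from the natural sequence $A\xrightarrow{g}E_W\to\coneop_R^{\Delta}(E)\to\sigma A$ one defines $\gamma^{\Delta}$ as $\coneop_R^{\Delta}(E)\to\sigma A\xrightarrow{\sigma(\operatorname{pr}_2)}\sigma t_R^*E_X\xrightarrow{\sigma(t^*)}\sigma E_W$. Independence of $s$ versus $t$ is seen as before, now using $0\to E_W\xrightarrow{\iota}\gamma E_W\xrightarrow{q}\sigma E_W\to0$, and the quasi-isomorphism statement follows from the long exact sequence of homotopy groups of the distinguished triangle $A\xrightarrow{g}E_W\to\coneop_R^{\Delta}(E)\to\sigma A$ by the identical computation, using $\pi_i(E)=\pi_{i+1}(\sigma E)$. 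Compatibility with Dold--Kan is then formal: by part~(iii) of the preceding lemma $N$ carries $\coneop_R^{\Delta}$ to $\coneop_{R_W}\circ N$ up to canonical isomorphism, and $N\sigma(-)\simeq N(-)[1]$ compatibly with all the structure maps entering the construction, so $N\gamma^{\Delta}$ is identified with $\gamma$.

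\emph{Main obstacle.} There is no deep point; the work is bookkeeping. One must keep the sign conventions for cones, shifts and $\sigma$ (recalled earlier) consistent, verify that the two descriptions of $\gamma$ agree in the derived category through the stated homotopy, and check that the cohomology isomorphism produced above is genuinely induced by $\gamma$ rather than by $\gamma$ twisted by a sign or an automorphism. That is the only place where errors are likely.
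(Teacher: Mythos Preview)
Your proof is correct and follows essentially the same route as the paper: define $\gamma$ as the cone projection $\coneop_R(E)\to A[1]$ followed by projection to one summand and the corresponding pullback map, note that the other choice is chain-homotopic, and verify the quasi-isomorphism claim directly (the paper uses $s^*\oplus 0$ where you use $\operatorname{pr}_2$ then $t^*$, which is immaterial). You supply more detail on the long exact sequence argument and the simplicial/Dold--Kan compatibility, both of which the paper leaves to the reader.
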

\begin{proof}
	The map is defined as the composition
	\[\coneop(E)\to s_R^*E_{X}[1]\oplus t_R^*E_{X}[1]\xrightarrow{s^*\oplus 0} E_{W}[1]\]
	(The alternative choice of second map $(0\oplus t^*)$ gives the same map up to chain-homotopy). One checks directly, that this is a quasi-isomorphism when $s^*$ and $t^*$ are quasi-isomorphisms. The proofs of the remaining statements are left to the reader. 
\end{proof}	 	

We have the following result regarding tensor products:
\begin{lemma}\label{lem:tensorcone}
	\begin{enumerate}[label = \roman*)]
		\item Let $R$ be a simplicial ring on $\eqtopos{W}$ and let $L,E$ be $R$-modules. Then there is a natural map $\coneop_R^{\Delta}(L\otimes_R E)\to \coneop_R^{\Delta}(L)\otimes E_W$.
		If either of $L$ and $E$ are flat and if $s^*:s_R^*E_X\to E_W$ and $t^*:t_R^*E_X\to E_W$ are quasi-isomorphisms, then  $\coneop_R^{\Delta}(L\otimes_R E)\to \coneop_R^{\Delta}(L)\otimes E_W$ is a quasi-isomorphism. In particular, for any $E\in D^{\Delta}(R)$ we have a canonical $2$-morphism
		\begin{equation*}
			\begin{tikzcd}[row sep = large]
				D^{\Delta}(R)\ar[r,"\coneop^{\Delta}_R"]\ar[d,"-\otimes E"]&D^{\Delta}(R_W) \ar[d,"- \otimes E_W"] \\
				D^{\Delta}(R)\ar[r,"\coneop^{\Delta}_R"]\ar[ur,Rightarrow] & D^{\Delta}(R_W),
			\end{tikzcd}
		\end{equation*}
		which is an isomorphism if the pullback maps $s^*$, $t^*$ of $E$ are isomorphisms in $D^{\Delta}(R_W)$. 
		
		\label{lem:tensorcone1}
		\item The analogous statement holds if $R$ is an ordinary ring and $L,E$ are bounded above complexes of $R$-modules. \label{lem:tensorcone2}
		\item The natural isomorphisms in the derived category in i) and ii) are compatible via the Dold--Kan correspondence. \label{lem:tensorcone3}
	\end{enumerate}	 
\end{lemma}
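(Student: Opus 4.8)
The idea is to build the natural transformation at the level of complexes, as the mapping cone of an explicit map of two-term complexes, and then to see that it becomes a quasi-isomorphism under the stated hypotheses by comparing the two defining triangles of $\coneop_R$ (resp.\ $\coneop_R^{\Delta}$) and applying the five lemma in a triangulated category. I write $\coneop_R$ for either variant; the argument is uniform.

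First I set up the algebra. For any two $R$-modules $M,N$ there is a canonical identification $s_R^*(M_X\otimes_{R_X}N_X)\simeq s_R^*M_X\otimes_{R_W}s_R^*N_X$ — inverse image is monoidal and base change commutes with tensor products — and likewise for $t$. Under these, the structure maps of $L\otimes_R E$ are $s^*_L\otimes s^*_E$ and $t^*_L\otimes t^*_E$. Recall that $\coneop_R$ factors as $C(R)\to C(C_{-1,0}(R_W))\to C(R_W)$ with the second arrow the mapping cone, and that the right-exact additive functor $-\otimes_{R_W}E_W$ commutes with the mapping cone (in the ordinary case this is a degreewise direct-sum statement; in the simplicial case one uses $\gamma(-)\otimes_{R_W}E_W\simeq\gamma(-\otimes_{R_W}E_W)$ together with right-exactness of $\operatorname{Coker}$ in $\operatorname{Cone}^\Delta$, the relevant short exact sequence being degreewise split). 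Hence the required map $\coneop_R(L\otimes_R E)\to\coneop_R(L)\otimes_{R_W}E_W$ is obtained by applying $\operatorname{Cone}$ (resp.\ $\operatorname{Cone}^\Delta$) to the morphism of two-term complexes that is $\id_{L_W\otimes E_W}$ in degree $0$ and is $\id_{s_R^*L_X}\otimes s^*_E$ on the $s$-summand and $\id_{t_R^*L_X}\otimes t^*_E$ on the $t$-summand in degree $-1$. The only square one must check commutes reads $(-s^*_L\otimes\id)\circ(\id\otimes s^*_E)=-s^*_L\otimes s^*_E$, and its $t$-analogue, which is immediate; naturality in $L$ and $E$ is clear.

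For the quasi-isomorphism assertion I would compare the distinguished triangle $s_R^*(L\otimes_R E)_X\oplus t_R^*(L\otimes_R E)_X\to (L\otimes_R E)_W\to\coneop_R(L\otimes_R E)\to\sigma(-)$ with the triangle obtained by applying the triangulated functor $-\otimes_{R_W}E_W$ to the defining triangle of $\coneop_R(L)$. The transformation above is a morphism of triangles which is the identity on the middle terms, so it is an isomorphism in the derived category as soon as $\id\otimes s^*_E$ and $\id\otimes t^*_E$ are quasi-isomorphisms, e.g.\ $s_R^*L_X\otimes_{R_W}s_R^*E_X\to s_R^*L_X\otimes_{R_W}E_W$. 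If $L$ is flat over $R$, then $s_R^*L_X$ is flat over $R_W$ (flatness is preserved under the base change $s_R^*$), so tensoring with it preserves the quasi-isomorphism $s^*_E$. If instead $E$ is flat over $R$, then $s_R^*E_X$ and $E_W$ are both flat $R_W$-modules and $s^*_E$ is a quasi-isomorphism between (bounded above, resp.\ simplicial) flat modules, so it stays a quasi-isomorphism after tensoring with $s_R^*L_X$. Either way the first-term map is a quasi-isomorphism, proving i) at the chain level. The $2$-morphism of derived functors is then obtained by computing $L\otimes_R E$ via a flat replacement of $E$ and invoking that $\coneop_R^{\Delta}$ descends to derived categories (preceding lemma); it is an isomorphism exactly when the pullback maps of a flat replacement — equivalently, of $E$ in $D^{\Delta}(R_W)$ — are quasi-isomorphisms.

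Part ii) runs verbatim with $C(R)$, ordinary mapping cones and shifts $[1]$ replacing simplicial modules, $\operatorname{Cone}^\Delta$ and $\sigma$; boundedness above of the input complexes is what makes flatness enough to compute derived tensor products. For iii), I note that the chain-level transformation of i) is assembled out of the identity map, the structure maps $s^*_E,t^*_E$, the base-change isomorphisms $s_R^*(M_X\otimes N_X)\simeq s_R^*M_X\otimes s_R^*N_X$, and the formation of $\operatorname{Cone}^\Delta$; under the normalized-chains functor $N$ each of these goes to its counterpart from ii), using the already cited facts that $N$ intertwines $\coneop_R^{\Delta}$ with $\coneop_R$ and that $N(-\otimes^{\ell}-)\simeq N(-)\otimes^{L}N(-)$ compatibly with the symmetry isomorphisms — the latter being what controls the behaviour of the base-change isomorphism under $N$ — so chasing these through the comparison of the two triangles yields iii). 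I expect the only real friction to be the flatness bookkeeping in the quasi-isomorphism step (making explicit that a flat $R$-module is flat in each component, so that $s_R^*$ preserves it, and handling the ``$E$ flat'' case via stability of quasi-isomorphisms between bounded-above flats under arbitrary tensor), together with keeping track of signs and symmetry isomorphisms in iii); the rest is formal manipulation of cones of natural transformations.
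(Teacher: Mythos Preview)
Your argument is correct and tracks the paper's proof closely. The only packaging difference is that the paper writes $\coneop_R^{\Delta}(F)$ as an \emph{iterated} cone $\operatorname{Cone}^{\Delta}\bigl(s_R^*F_X\to\operatorname{Cone}^{\Delta}(t_R^*F_X\to F_W)\bigr)$ and then builds the comparison map and checks the quasi-isomorphism in two steps (first for the inner $t$-cone via a map $\beta$, then for the outer $s$-cone via a map $\gamma$), whereas you work directly with the single two-term complex $s_R^*(-)\oplus t_R^*(-)\to(-)_W$ and compare the resulting triangles in one shot; the flatness and five-lemma reasoning is identical in substance.
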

\begin{proof}
	We only address \ref{lem:tensorcone1}. Part \ref{lem:tensorcone2} is analogous, and \ref{lem:tensorcone3} can be seen by tracing through the argument and using the compatibilites of the Dold--Kan correspondence.
	Note that for any $R$-module $F$, we have 
	\[\coneop^{\Delta}_R(F)=\operatorname{Cone}^{\Delta}(s^*_RF_X\to \operatorname{Cone}^{\Delta}(t_R^*F_X\xrightarrow{t^*} F_Z)).\]
	where the outer cone is induced by the composition of $-s^*$ with the inclusion of $F_Z$ into the cone. 
	For any commutative triangle
	\begin{equation*}
		\begin{tikzcd}
			K\ar[d,"u"]\ar[dr,"vu"]&  \\
			M\ar[r,"v"] & N
		\end{tikzcd}
	\end{equation*} 
	in $R_W-\operatorname{Mod}$, we obtain induced maps 
	\[\operatorname{Cone}^{\Delta}(u)\to \operatorname{Cone}^{\Delta}(vu)\xrightarrow{\alpha} \operatorname{Cone}^{\Delta}(v),\]
	which form the first three terms of an exact triangle. In particular, if $u$ is a quasi-isomorphism, then $\operatorname{Cone}^{\Delta}(u)$ is acyclic, so $\alpha$ is also a quasi-isomorphism.
	Applying this to the triangle
	\begin{equation*}
		\begin{tikzcd}
			t^*_RL_X \otimes_{R_W} t^*_RE_X\ar[d, "1\otimes t^*"]\ar[dr,"t^*\otimes t^*"]&  \\
			t^*_RL_X \otimes_{R_W} E_W\ar[r," t^*\otimes 1"] &  L_W \otimes_{R_W} E_W,
		\end{tikzcd}
	\end{equation*} 
	we get a natural map, 
	\[\beta:\operatorname{Cone}^{\Delta}(t^*\otimes t^*) \to \operatorname{Cone}^{\Delta}(t^*\otimes 1) =\operatorname{Cone}^{\Delta}(t^*_RL_X\to L_W)\otimes E_W.\]
	
	Similarly, from the triangle
	
	\begin{equation*}
		\begin{tikzcd}
			s^*_RL_X \otimes_{R_W} s^*_RE_X\ar[d, "- 1\otimes s^*"]\ar[dr]&  \\
			s^*_RL_X \otimes_{R_W} E_W\ar[r] &  \operatorname{Cone}^{\Delta}(t^*_RL_X\to L_W) \otimes_{R_W} E_W,
		\end{tikzcd}
	\end{equation*} 
	where the horizontal map is induced from $-s^*_R$, we get a morphism 
	\begin{gather*}
		\gamma: \operatorname{Cone}^{\Delta}\left(s^*_RL_X \otimes_{R_W} s^*_RE_X \to \operatorname{Cone}^{\Delta}(t^*_RL_X\to L_W) \otimes_{R_W} E_W\right) \\\to \coneop_R^{\Delta}(L)\otimes E_W.
	\end{gather*}
	Putting together $\gamma$ and $\beta$, we get a natural morphism
	\[\coneop_R^{\Delta}(L\otimes_R E)\to \coneop_R^{\Delta}(L)\otimes_{R_W} E_W.\]
	Under the assumption that $L$ (resp. $E$) is flat and that the pullback maps of $E$ are quasi-isomorphisms, we find that the vertical maps of the triangles defining $\beta$ and $\gamma $ are quasi-isomorphims, hence so are $\beta$ and $\gamma$. It follows that the desired map is also a quasi-isomorphism. 
\end{proof}

\begin{variant}
	Let $W_{\wedge}$ denote the topos associated to the diagram 
	\[X\xrightarrow{s} W \xleftarrow{t} X.\]
	Then everything goes through with $W_{\wedge}$ in place of $\eqtopos{W}$. Note also that we have an exact pullback functor $\operatorname{Sh}(\eqtopos{W})\to \operatorname{Sh}(W_{\wedge})$. If $\eqtopos{R}$ is a ring on $\eqtopos{W}$ and $R_{\wedge}$ its restriction to $W_{\wedge}$, then $\coneop_{\eqtopos{R}}$ factors as 
	\[D(\eqtopos{R})\to D(R_{\wedge})\xrightarrow{\coneop_{R_{\wedge}}}D(R_W).\]
	The analogous picture holds for the simplicial versions. 
\end{variant}

\paragraph{Application to algebraic stacks.}
We apply the preceding discussion to the derived categories of algebraic stacks and the cotangent complex. This lays the groundwork for our definition of the Atiyah class on a stack using the modules of principal parts. 

\begin{situation}\label{sit:wxtopoi}
	Consider the diagram \eqref{diag:2comm} with the associated strictly simplicial algebraic spaces $X_{\bullet}$ and $Y_{\bullet}$. Let $W:=X\times_{\mathcal{X_Y}} X$, with associated projections $s,t:W\to X$ on the first and second factor respectively, and let $h:W\to Y$ be the induced map. Let $W_{\bullet}$ be the strictly simplicial algebraic space associated to the covering $W\xrightarrow{s} X\to \mathcal{X}$. One has canonical isomorphisms $W_n\simeq X_n\times_{\mathcal{X}_{Y_n}}X_n$ and maps $h_n:W_n\to Y_n$.
	We denote by $\eqtopos{W}$ the topos associated to the diagram 
	\[W_{\bullet} \overset{s_{\bullet}}{\underset{t_{\bullet}}{\rightrightarrows}} X_{\bullet}.\] 
	It has a natural structure of ringed topos, and we write $\mathcal{O}_{\eqtopos{W}}$ for the structure sheaf. 
	
	We denote by $(\eqtopos{Y},\mathcal{O}_{\eqtopos{Y}})$ the ringed topos associated to the diagram
	\[Y_{\bullet}\rightrightarrows Y_{\bullet}\]
	with both arrows the identity. There is a natural map of topoi $\eqtopos{h}:\eqtopos{W}\to \eqtopos{Y}$.
	
	For $E$ a complex of $\mathcal{O}_{\mathcal{X}}$-modules, we write $E_{X_{\bullet}}:=\eta_X^*E$ and $E_{W_{\bullet}}:=\eta_W^*E$. We have natural morphisms  $s_{\bullet}^*E_{X_{\bullet}}\to E_{W_{\bullet}}$ and 
	$t_{\bullet}^*E_{X_{\bullet}}\to E_{W_{\bullet}}$. Thus we get naturally a complex of $\mathcal{O}_{\eqtopos{W}}$-modules, which we denote $E_{\eqtopos{W}}$.
\end{situation}

In Situation \ref{sit:wxtopoi}, we have the following properties:
\begin{lemma}\label{lem:conepresqcoh}
	For any a complex of $\mathcal{O}_{\mathcal{X}}$-modules  $E$,  we have a natural morphism $ \coneop(E_{\eqtopos{W}})\to E_{W_{\bullet}}[1]$. If $E$ has quasi-coherent (or more generally, Cartesian) cohomology sheaves, this map is a quasi-isomorphism. The same picture holds in the category of simplicial $\mathcal{O}_{\mathcal{X}}$ modules with the obvious modifications, and the two situations are compatible via the Dold--Kan correspondence.
\end{lemma}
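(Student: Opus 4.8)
The plan is to obtain everything as an instance of Lemma \ref{lem:coneforqiso}, applied to the ring $\mathcal{O}_{\eqtopos{W}}$ on the parallel-arrow topos $\eqtopos{W}$ of Situation \ref{sit:wxtopoi} and to the $\mathcal{O}_{\eqtopos{W}}$-module complex $E_{\eqtopos{W}}$ (whose $X_{\bullet}$-component is $E_{X_{\bullet}}$, whose $W_{\bullet}$-component is $E_{W_{\bullet}}$, and whose structure maps $s^{\sharp},t^{\sharp}$ are the natural ones constructed there). First, the morphisms $s_{\bullet},t_{\bullet}\colon W_{\bullet}\to X_{\bullet}$ are smooth and surjective: in simplicial degree $n$ they are the two projections $X_{n}\times_{\mathcal{X}_{Y_{n}}}X_{n}\to X_{n}$, hence base changes of the smooth surjective morphism $X_{n}\to\mathcal{X}_{Y_{n}}$, and in particular they are flat. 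Thus the structure maps $s^{\sharp},t^{\sharp}$ of $\mathcal{O}_{\eqtopos{W}}$ are flat, so $\coneop=\coneop_{\mathcal{O}_{\eqtopos{W}}}$ descends to derived categories, and feeding $E_{\eqtopos{W}}$ into Lemma \ref{lem:coneforqiso} directly yields the asserted natural morphism $\coneop(E_{\eqtopos{W}})\to E_{W_{\bullet}}[1]$. By the same lemma, in order to see that it is a quasi-isomorphism it suffices to check that the two structure maps $s_{\bullet}^{*}E_{X_{\bullet}}\to E_{W_{\bullet}}$ and $t_{\bullet}^{*}E_{X_{\bullet}}\to E_{W_{\bullet}}$ of $E_{\eqtopos{W}}$ are quasi-isomorphisms, where by flatness the underived pullbacks occurring here compute the derived ones.

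For the latter I would invoke the canonical natural isomorphism $g^{*}\eta_{X}^{*}\simeq\eta_{W}^{*}$ recalled in the preliminaries on the cotangent complex, applied to the two projections $g=s,t\colon W\to X$. Each is smooth and surjective, being a base change of $X\to\mathcal{X}_{Y}$ along itself, and each is a morphism over $\mathcal{X}$: for $s$ tautologically, since the augmentation $W_{\bullet}\to\mathcal{X}$ is by construction $W\xrightarrow{s}X\to\mathcal{X}$; for $t$ because $W=X\times_{\mathcal{X}_{Y}}X$ comes with a $2$-isomorphism $(X\to\mathcal{X}_{Y})\circ s\Rightarrow(X\to\mathcal{X}_{Y})\circ t$ which, composed with $\mathcal{X}_{Y}\to\mathcal{X}$ and the factorisation $X\to\mathcal{X}_{Y}\to\mathcal{X}$ of $X\to\mathcal{X}$, gives a $2$-isomorphism $(X\to\mathcal{X})\circ s\Rightarrow(X\to\mathcal{X})\circ t$ exhibiting $t$ over $\mathcal{X}$. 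One then checks that under these identifications the structure maps of $E_{\eqtopos{W}}$ coincide with the comparison isomorphisms $g^{*}\eta_{X}^{*}E\xrightarrow{\sim}\eta_{W}^{*}E$; hence they are isomorphisms, in particular quasi-isomorphisms. When $E$ has quasi-coherent, or merely Cartesian, cohomology — the case of interest — this may alternatively be seen from the equivalences of \cite[Example 2.2.5]{LasOl}, since then $E_{X_{\bullet}}$ and $E_{W_{\bullet}}$ lie in the quasi-coherent derived categories of $X_{\bullet}$ and $W_{\bullet}$ and the cited results apply verbatim. Finally, the simplicial variant and its compatibility with Dold--Kan follow at once from the corresponding halves of Lemma \ref{lem:coneforqiso}, using that $\eta_{X}^{*}$ and $\eta_{W}^{*}$ act degreewise in the simplicial direction, so that the pullback maps remain quasi-isomorphisms there as well.

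The point requiring genuine care is the identification carried out in the second paragraph: one must verify that the structure maps built into $E_{\eqtopos{W}}$ in Situation \ref{sit:wxtopoi} coincide on the nose with the canonical comparison isomorphisms $g^{*}\eta_{X}^{*}\simeq\eta_{W}^{*}$, and that the $2$-isomorphism used to regard $t$ as a morphism over $\mathcal{X}$ is precisely the one implicit in the formation of $W_{\bullet}$. Granting this bookkeeping, the statement is a formal consequence of Lemma \ref{lem:coneforqiso}.
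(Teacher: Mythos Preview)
Your proposal is correct and follows exactly the paper's approach: the paper's own proof is a single sentence stating that this is a restatement of Lemma~\ref{lem:coneforqiso}, using that the pullback maps $s^{*}$ and $t^{*}$ are quasi-isomorphisms whenever $E$ has quasi-coherent cohomology. You have simply unpacked this, verifying the flatness needed to form $\coneop$ and then checking the hypothesis on the structure maps.

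One small comment: your first argument, that the structure maps $s_{\bullet}^{*}E_{X_{\bullet}}\to E_{W_{\bullet}}$ and $t_{\bullet}^{*}E_{X_{\bullet}}\to E_{W_{\bullet}}$ are literal isomorphisms for \emph{every} complex $E$ via the comparison $g^{*}\eta_{X}^{*}\simeq\eta_{W}^{*}$, is stronger than what the lemma asserts and, as you yourself note, hinges on identifying the map $t_{\bullet}$ of Situation~\ref{sit:wxtopoi} with the map ``induced by $t$'' in the sense of the preliminaries (and matching the $2$-isomorphisms). The paper does not make this stronger claim and phrases the maps in Situation~\ref{sit:wxtopoi} only as ``natural morphisms''. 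Your second argument---that for $E$ with quasi-coherent (or Cartesian) cohomology the structure maps are quasi-isomorphisms---is exactly what is needed and is what the paper relies on; it is the safe route, and it already establishes the lemma.
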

\begin{proof}
	This is a restatement of Lemma \ref{lem:coneforqiso}, using that the pullback maps $s^*$ and $t^*$ are quasi-isomorphisms whenever $E$ has quasi-coherent cohomology sheaves.
\end{proof}

\begin{lemma}\label{lem:coneprescot}
	In $D_{qcoh}(W_{\bullet})$, the object $\eta_{W}^*L_{\mathcal{X}/\mathcal{Y}}$ is naturally isomorphic to 
	\[\coneop(L_{\eqtopos{W}/\eqtopos{Y}})[-1]=\operatorname{Cone} \left(s_{\bullet}^*L_{X_{\bullet}/Y_{\bullet}}\oplus t_{\bullet}^*L_{X_{\bullet}/Y_{\bullet}}\xrightarrow{-s^*_{\bullet}+t_{\bullet}^*} L_{W_{\bullet}/Y_{\bullet}}\right)[-1].\]
	This isomorphism is functorial in the diagram \eqref{diag:2comm}.
\end{lemma}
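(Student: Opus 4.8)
The plan is to unwind both sides explicitly and match them, using Olsson's model for $L_{\mathcal{X}/\mathcal{Y},X/Y}$ recalled in Remark~\ref{rem:cotcomp} together with the naturality of the transitivity triangles associated to the cartesian square $W_\bullet\simeq X_\bullet\times_{\mathcal{X}_{Y_\bullet}}X_\bullet$ of Situation~\ref{sit:wxtopoi}.

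\emph{Unwinding the left-hand side.} The two ``component'' functors $\operatorname{Sh}(\eqtopos{W})\to\operatorname{Sh}(X_\bullet)$ and $\operatorname{Sh}(\eqtopos{W})\to\operatorname{Sh}(W_\bullet)$ are the inverse-image functors of morphisms of ringed topoi $\iota_X\colon X_\bullet\to\eqtopos{W}$ and $\iota_W\colon W_\bullet\to\eqtopos{W}$ (the right adjoint of $\iota_X^{-1}$ sends a sheaf $B$ to the tuple with $B$ in the $X_\bullet$-slot, the terminal sheaf in the $W_\bullet$-slot and the unique structure maps; similarly for $\iota_W$). These functors are exact and carry $\mathcal{O}_{\eqtopos{W}}$ to $\mathcal{O}_{X_\bullet}$, resp.\ $\mathcal{O}_{W_\bullet}$, and $\eqtopos{h}^{-1}\mathcal{O}_{\eqtopos{Y}}$ to $f_\bullet^{-1}\mathcal{O}_{Y_\bullet}$, resp.\ $h_\bullet^{-1}\mathcal{O}_{Y_\bullet}$, where $f_\bullet\colon X_\bullet\to Y_\bullet$ and $h_\bullet\colon W_\bullet\to Y_\bullet$ are the natural maps. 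Since the standard simplicial resolution commutes with pullback of topoi (as in the proof of Lemma~\ref{lem:flat}), it follows that $L_{\eqtopos{W}/\eqtopos{Y}}$ is the $\mathcal{O}_{\eqtopos{W}}$-module with $X_\bullet$-component $L_{X_\bullet/Y_\bullet}$, $W_\bullet$-component $L_{W_\bullet/Y_\bullet}$ and structure maps the canonical functoriality maps $s_\bullet^*L_{X_\bullet/Y_\bullet}\to L_{W_\bullet/Y_\bullet}$, $t_\bullet^*L_{X_\bullet/Y_\bullet}\to L_{W_\bullet/Y_\bullet}$ (which exist because $s_\bullet,t_\bullet$ are morphisms over $Y_\bullet$). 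By the definition of $\coneop$ this identifies $\coneop(L_{\eqtopos{W}/\eqtopos{Y}})$ with the cone in the statement.

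\emph{Collapsing the cone.} Next I would collapse the summand $t_\bullet^*L_{X_\bullet/Y_\bullet}\xrightarrow{t_\bullet^*}L_{W_\bullet/Y_\bullet}$: its cone is the cotangent complex $L_{W_\bullet/X_\bullet}$ of $t_\bullet$ (transitivity for $W_\bullet\xrightarrow{t_\bullet}X_\bullet\to Y_\bullet$), which by flat base change along the cartesian square $W_\bullet\simeq X_\bullet\times_{\mathcal{X}_{Y_\bullet}}X_\bullet$ is $s_\bullet^*\Omega_{X_\bullet/\mathcal{X}_{Y_\bullet}}$. An octahedron then identifies the whole cone with $\operatorname{Cone}\bigl(s_\bullet^*L_{X_\bullet/Y_\bullet}\xrightarrow{\phi}s_\bullet^*\Omega_{X_\bullet/\mathcal{X}_{Y_\bullet}}\bigr)$, where $\phi$ is, up to sign, the composite $s_\bullet^*L_{X_\bullet/Y_\bullet}\xrightarrow{s_\bullet^*}L_{W_\bullet/Y_\bullet}\to L_{W_\bullet/X_\bullet}$. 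The crucial point is that this composite equals $s_\bullet^*$ of the natural map $L_{X_\bullet/Y_\bullet}\to\Omega_{X_\bullet/\mathcal{X}_{Y_\bullet}}$ of Remark~\ref{rem:cotcomp}: by naturality of the transitivity triangle along $s_\bullet$ (a morphism over $\mathcal{X}_{Y_\bullet}$, hence over $Y_\bullet$) it factors as $s_\bullet^*L_{X_\bullet/Y_\bullet}\to s_\bullet^*\Omega_{X_\bullet/\mathcal{X}_{Y_\bullet}}\to\Omega_{W_\bullet/\mathcal{X}_{Y_\bullet}}\to L_{W_\bullet/X_\bullet}$, and the last two maps compose to the identity since $\Omega_{W_\bullet/\mathcal{X}_{Y_\bullet}}\simeq s_\bullet^*\Omega_{X_\bullet/\mathcal{X}_{Y_\bullet}}\oplus t_\bullet^*\Omega_{X_\bullet/\mathcal{X}_{Y_\bullet}}$, with the inclusion of the first summand the $s_\bullet$-functoriality map and the projection $\Omega_{W_\bullet/\mathcal{X}_{Y_\bullet}}\to L_{W_\bullet/X_\bullet}$ killing the second summand (differentials of a fibre product).

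\emph{Conclusion.} Hence, invoking Remark~\ref{rem:cotcomp}, $\coneop(L_{\eqtopos{W}/\eqtopos{Y}})\simeq s_\bullet^*\operatorname{Cone}\bigl(L_{X_\bullet/Y_\bullet}\xrightarrow{-}\Omega_{X_\bullet/\mathcal{X}_{Y_\bullet}}\bigr)=s_\bullet^*\bigl(L_{\mathcal{X}/\mathcal{Y},X/Y}[1]\bigr)$, and $s_\bullet^*L_{\mathcal{X}/\mathcal{Y},X/Y}=s_\bullet^*\eta_X^*L_{\mathcal{X}/\mathcal{Y}}\simeq\eta_W^*L_{\mathcal{X}/\mathcal{Y}}$ via the canonical isomorphism $g^*\eta_X^*\simeq\eta_W^*$ recalled above, applied to the smooth surjection $s\colon W\to X$ over $\mathcal{X}$; shifting by $[-1]$ gives the claim. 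Functoriality in \eqref{diag:2comm} follows because each ingredient (Olsson's identification of $\eta_*L_{\mathcal{X}/\mathcal{Y},X/Y}$ up to canonical isomorphism, the isomorphism $g^*\eta_X^*\simeq\eta_W^*$, flat base change for the square, naturality of the transitivity triangles) is itself functorial in the diagram. The step I expect to be the main obstacle is the sign and coherence bookkeeping in the second paragraph — matching the ``$-$'' in $\coneop$ with the ``$-$'' in Remark~\ref{rem:cotcomp}, performing the cone collapse with the correct signs, and checking compatibility with the identification $L_{W_\bullet/X_\bullet}\simeq s_\bullet^*\Omega_{X_\bullet/\mathcal{X}_{Y_\bullet}}$ — together with the (routine but not automatic) verification in the first paragraph that the module structure maps of $L_{\eqtopos{W}/\eqtopos{Y}}$ really are the transitivity/functoriality maps.
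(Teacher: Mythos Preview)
Your proposal is correct and follows essentially the same route as the paper: pull back Olsson's model $L_{\mathcal{X}/\mathcal{Y},X/Y}=\operatorname{Cone}(L_{X_\bullet/Y_\bullet}\xrightarrow{-}\Omega_{X_\bullet/\mathcal{X}_{Y_\bullet}})[-1]$ along $s_\bullet$, identify $s_\bullet^*\Omega_{X_\bullet/\mathcal{X}_{Y_\bullet}}\simeq\Omega_{W_\bullet/X_\bullet,t_\bullet}$ with the cone of $t_\bullet^*L_{X_\bullet/Y_\bullet}\xrightarrow{t^*}L_{W_\bullet/Y_\bullet}$ via base change and transitivity, then check that the remaining map $s_\bullet^*L_{X_\bullet/Y_\bullet}\to\Omega_{W_\bullet/X_\bullet,t_\bullet}$ is the pullback of the truncation map, so that the iterated cone is the displayed one. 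The only stylistic difference is that the paper carries this out with explicit quasi-isomorphisms of complexes and a direct check of a commutative square (observing that $s_\bullet^*L_{X_\bullet/Y_\bullet}\to\Omega_{W_\bullet/X_\bullet,t_\bullet}$ factors through $L_{W_\bullet/Y_\bullet}$), whereas you phrase the same step as an octahedron and identify the composite via the direct-sum splitting of $\Omega_{W_\bullet/\mathcal{X}_{Y_\bullet}}$; both arguments are valid and yield the same isomorphism.
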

\begin{proof}
	The stated equality is immediate from the definition of $\eqtopos{W}$ and the definition of cotangent complex for a morphism of topoi. By the construction of the cotangent complex and Remark \ref{rem:cotcomp}, we have a canonical isomorphism 
	\begin{equation}\label{eq:cotcompcone}
		\eta_X^*L_{\mathcal{X}/\mathcal{Y}}\simeq L_{\mathcal{X}/\mathcal{Y},X/Y} = \operatorname{Cone}\left(L_{X_{\bullet}/Y_{\bullet}}\xrightarrow{-}  \Omega_{X_{\bullet}/\mathcal{X}_{Y_{\bullet}}}\right)[-1].
	\end{equation}
	
	We pull this isomorphism back via $s_{\bullet}$. Due to the diagram 
	\begin{equation*}
		\begin{tikzcd}
			W_{\bullet}\ar[r,"t_{\bullet}"]\ar[d,"s_{\bullet}"]& X_{\bullet}\ar[d]& \\
			X_{\bullet}\ar[r] & \mathcal{X}_{Y_{\bullet}}\ar[r]&Y_{\bullet}
		\end{tikzcd}
	\end{equation*}
	we have natural quasi-isomorphisms of complexes
	\[s_{\bullet}^*\Omega_{X_{\bullet}/\mathcal{X}_{Y_{\bullet}}} \simeq \Omega_{W_{\bullet}/X_{\bullet},t_{\bullet}}\leftarrow \operatorname{Cone}(t_{\bullet}^*L_{X_{\bullet}/Y_{\bullet}} \xrightarrow{t^*}L_{W_{\bullet}/Y_{\bullet}}).\]
	We claim that the following diagram commutes: 
	\begin{equation*}
		\begin{tikzcd}
			s_{\bullet}^*L_{X_{\bullet}/Y_{\bullet}}\ar[r,"\gamma"]\ar[d]& \operatorname{Cone}(t_{\bullet}^*L_{X_{\bullet}/Y_{\bullet}} \xrightarrow{t^*}L_{W_{\bullet}/Y_{\bullet}})\ar[d] \\
			s_{\bullet}^*\Omega_{X_{\bullet}/\mathcal{X}_{Y_{\bullet}}}\ar[r,"\sim"] & \Omega_{W_{\bullet}/X_{\bullet},t_{\bullet}},
		\end{tikzcd}
	\end{equation*}
	where the upper horizontal map is given by the pullback $s^*:s_{\bullet}^*L_{X_{\bullet}/Y_{\bullet}}\to L_{W_{\bullet}/Y_{\bullet}}$ followed by the inclusion of $L_{W_{\bullet}/Y_{\bullet}}$ into the cone.
	Indeed, this follows from the observation  that the map $s^*_{\bullet}L_{X_{\bullet}/Y_{\bullet}}\to \Omega_{W_{\bullet}/X_{\bullet},t_{\bullet}}$ factors through $L_{W_{\bullet}/Y_{\bullet}}$. It follows from this that the right hand side of \eqref{eq:cotcompcone} is naturally quasi-isomorphic to 
	\[ \operatorname{Cone}\left(s_{\bullet}^*L_{X_{\bullet}/Y_{\bullet}}\xrightarrow{-\gamma}  \operatorname{Cone}\left(t_{\bullet}^*L_{X_{\bullet}/Y_{\bullet}}\xrightarrow{t^*} L_{W_{\bullet}/Y_{\bullet}}\right)\right)[-1].\]
	An easy calculation shows that this iterated cone is identical to
	\[\operatorname{Cone} \left(s_{\bullet}^*L_{X_{\bullet}/Y_{\bullet}}\oplus t_{\bullet}^*L_{X_{\bullet}/Y_{\bullet}}\xrightarrow{-s^*_{\bullet}+t_{\bullet}^*} L_{W_{\bullet}/Y_{\bullet}}\right)[-1]\]
	as desired.
	The compatibility with pullback follows from the compatibility of \eqref{eq:cotcompcone}, and the usual pullback compatibilities of the cotangent complex for algebraic spaces.
\end{proof}

\begin{remark}\label{rem:hugecommdiag}
	In Situation \ref{sit:wxtopoi}, suppose that $R$ is a simplicial ring on $\eqtopos{W}$ with a given map $R\to \mathcal{O}_{\eqtopos{W}}$. Then we have the following diagram of functors, which commutes up to canonical natural isomorphisms 
	\begin{equation}\label{diag:hugecommdiag}
		\begin{tikzcd}
			D^{\Delta}(R)\ar[r,"\mathcal{O}_{\eqtopos{W}}\otimes^{\ell}_R -"]\ar[d,"\coneop_R^{\Delta}"]& D^{\Delta}(\mathcal{O}_{\eqtopos{W}}) \ar[r,"N"]\ar[d,"\coneop^{\Delta}"] & D^{\leq 0}(\mathcal{O}_{\eqtopos{W}})\ar[d,"\coneop "]& \\
			D^{\Delta}(R_W)\ar[r,"\mathcal{O}_{W_{\bullet}}\otimes^{\ell}_{R_W} -"] & D^{\Delta}(\mathcal{O}_{W_{\bullet}})\ar[r,"N"]& D^{\leq 0 }(\mathcal{O}_{W_{\bullet}}) \ar[r, "\eta_{W*}"]& D^{\leq 0}(\mathcal{X}).
		\end{tikzcd}
	\end{equation} 
\end{remark}	

For the following lemma, let $W_{\wedge}$ be the ringed topos associated to the diagram $X_{\bullet}\xleftarrow{s_{\bullet}} W_{\bullet}\xrightarrow{t_{\bullet}} X_{\bullet}$. Recall that we have a natural restriction functor $D(\eqtopos{W})\to D(W_{\wedge})$. The following will be used in the proof of Lemma \ref{lem:functoriality}. 
\begin{lemma}\label{lem:technicalfunctorialitylemma}
	Let $E_{\eqtopos{W}}=(E_{X_{\bullet}},E_{W_{\bullet}}, s^*,t^*)$ be a complex of $\mathcal{O}_{\eqtopos{W}}$-modules such that $E_{X_{\bullet}}$ has quasi-coherent cohomology sheaves and such that the pullback maps $s^*$ and $t^*$ are quasi-isomorphisms. Let $F:=\eta_{W*}E_{W_{\bullet}}\in D_{qcoh}(\mathcal{X})$, and let $F_{\eqtopos{W}}=(\eta_X^*F, \eta_W^*F)$ denote the induced object of $D(\eqtopos{W})$. Let $E_{W_{\wedge}}$ and $F_{W_\wedge}$ denote the images of $E_{\eqtopos{W}}$ and $F_{\eqtopos{W}}$ in $D(W_{\wedge})$ respectively. Then there is a natural isomorphism $F_{W_{\wedge}}\to E_{W_{\wedge}}$ in $D(W_{\wedge})$.   
\end{lemma}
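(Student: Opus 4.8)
The plan is to identify $F$ explicitly, build the isomorphism component by component, and then assemble the pieces into a morphism in $D(W_{\wedge})$. First I would pin down $F$: since $s^{*}\colon s_{\bullet}^{*}E_{X_{\bullet}}\to E_{W_{\bullet}}$ is a quasi-isomorphism and $E_{X_{\bullet}}$ has quasi-coherent cohomology, so does $E_{W_{\bullet}}$; hence $F=\eta_{W*}E_{W_{\bullet}}\in D_{qcoh}(\mathcal{X})$ and, since $\eta_{W}^{*},\eta_{W*}$ are mutually inverse on $D_{qcoh}$, there is a canonical isomorphism $\eta_{W}^{*}F\simeq E_{W_{\bullet}}$. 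Feeding into this the canonical isomorphism $\eta_{W}^{*}\simeq s_{\bullet}^{*}\eta_{X}^{*}$ — valid because each projection $W=X\times_{\mathcal{X}_{Y}}X\to X$ is smooth surjective over $\mathcal{X}$ — together with $s^{*}$, one obtains $F\simeq\eta_{X*}E_{X_{\bullet}}$ in $D_{qcoh}(\mathcal{X})$, hence $\eta_{X}^{*}F\simeq E_{X_{\bullet}}$ in $D_{qcoh}(X_{\bullet})$. So the corresponding components of $F_{W_{\wedge}}$ and $E_{W_{\wedge}}$ are isomorphic.

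Next I would arrange these isomorphisms to respect the structure maps. Here I use that $s_{\bullet}^{*}$ and $t_{\bullet}^{*}$ restrict to equivalences $D_{qcoh}(X_{\bullet})\to D_{qcoh}(W_{\bullet})$ (both being $\simeq\eta_{W}^{*}\circ(\eta_{X}^{*})^{-1}$). Take $\phi_{0}\colon\eta_{W}^{*}F\xrightarrow{\sim}E_{W_{\bullet}}$ as above; then, using full faithfulness of $s_{\bullet}^{*}$ (resp.\ $t_{\bullet}^{*}$) on quasi-coherent objects, let $\phi_{1}$ (resp.\ $\phi_{2}$) be the unique map $\eta_{X}^{*}F\xrightarrow{\sim}E_{X_{\bullet}}$ with $s_{\bullet}^{*}(\phi_{1})=(s^{*})^{-1}\circ\phi_{0}\circ\operatorname{can}$ (resp.\ $t_{\bullet}^{*}(\phi_{2})=(t^{*})^{-1}\circ\phi_{0}\circ\operatorname{can}$), $\operatorname{can}$ being the relevant canonical structure map of $F_{W_{\wedge}}$. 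By construction $(\phi_{1},\phi_{2},\phi_{0})$ is a triple of component isomorphisms intertwining \emph{both} structure maps as morphisms in $D(W_{\bullet})$. It is essential that $\phi_{1}$ and $\phi_{2}$ may differ; this is precisely why the statement concerns $W_{\wedge}$, with its two decoupled copies of $X_{\bullet}$, rather than $\eqtopos{W}$, where the single $X_{\bullet}$-component would have to match both structure maps at once (and indeed $F_{\eqtopos{W}}\simeq E_{\eqtopos{W}}$ in $D(\eqtopos{W})$ would be false in general).

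Finally I would promote $(\phi_{1},\phi_{2},\phi_{0})$ to a morphism in $D(W_{\wedge})$. The input is that every $\mathcal{O}_{W_{\wedge}}$-module $M=(M_{1},M_{2},M_{0},\alpha_{1},\alpha_{2})$ lies in a functorial short exact sequence $0\to l_{0}(s_{\bullet}^{*}M_{1}\oplus t_{\bullet}^{*}M_{2})\to l_{1}M_{1}\oplus l_{0}M_{0}\oplus l_{2}M_{2}\to M\to 0$, where $l_{1},l_{0},l_{2}$ are the exact left adjoints of the three restriction functors to $\Mod(X_{\bullet}),\Mod(W_{\bullet}),\Mod(X_{\bullet})$; applying $R\Hom_{W_{\wedge}}(-,N)$ and these adjunctions yields an exact triangle relating $R\Hom_{W_{\wedge}}(M,N)$, the sum of the three component $R\Hom$'s, and $R\Hom_{W_{\bullet}}(s_{\bullet}^{*}M_{1},N_{0})\oplus R\Hom_{W_{\bullet}}(t_{\bullet}^{*}M_{2},N_{0})$, with connecting map ``pull along the structure maps of $N$ minus push along those of $M$''. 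Its long exact sequence shows that any triple of component maps intertwining both structure maps in the derived sense — in particular $(\phi_{1},\phi_{2},\phi_{0})$ — lifts to a morphism $F_{W_{\wedge}}\to E_{W_{\wedge}}$ in $D(W_{\wedge})$; and for $M=F_{W_{\wedge}}$, $N=E_{W_{\wedge}}$ the relevant coboundary becomes, after identifying the groups via the equivalences $s_{\bullet}^{*},t_{\bullet}^{*}$ and the isomorphism $F\simeq\eta_{X*}E_{X_{\bullet}}$, the surjection $(a,b,c)\mapsto(a-b,c-b)$, so the lift is unique, hence canonical and natural. It is an isomorphism since isomorphisms in $D(W_{\wedge})$ are detected on the three components.

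The step I expect to be the main obstacle is this last one: a family of component isomorphisms commuting with the structure maps only in the derived category need not come from a morphism in $D(W_{\wedge})$ — there is a ``gluing'' obstruction — so one must genuinely produce the lift and pin down its ambiguity, which the functorial resolution above does. An alternative is to construct an explicit zigzag of honest quasi-isomorphisms of complexes of $\mathcal{O}_{W_{\wedge}}$-modules, at the cost of bookkeeping with units, counits, resolutions, and base change along $W_{\bullet,\operatorname{lis-et}}\to X_{\bullet,\operatorname{lis-et}}$.
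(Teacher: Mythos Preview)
Your argument is correct and takes a genuinely different route from the paper's own proof. The paper proceeds by an explicit zigzag: it passes to the lisse-\'etale diagram topos $W_{\wedge,\operatorname{lis-et}}$ via the map $\epsilon_{\wedge}$, replaces $E$ by a $K$-injective complex $G$, sets $F'=\pi_{X*}G_X$, and then produces honest maps of complexes $F_{\wedge,\operatorname{lis-et}}\leftarrow\widetilde{F}_{\wedge,\operatorname{lis-et}}\to G$ using the two adjunction-induced comparisons $\sigma,\tau:F'\to F$ coming from $s^*$ and $t^*$. In other words, the paper does exactly what you describe in your final sentence as ``an alternative''.

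Your approach instead stays in the derived category throughout: you identify the components via the equivalences $\eta_X^*,\eta_W^*$ on $D_{qcoh}$, use full faithfulness of $s_\bullet^*,t_\bullet^*$ on quasi-coherent objects to produce component isomorphisms $(\phi_1,\phi_2,\phi_0)$ compatible with both structure maps, and then solve the gluing problem in $D(W_{\wedge})$ via the bar-type resolution $0\to l_0(s_\bullet^*M_1\oplus t_\bullet^*M_2)\to l_1M_1\oplus l_0M_0\oplus l_2M_2\to M\to 0$ and its associated long exact sequence. The surjectivity of the degree $-1$ map (which after your identifications becomes $(a,b,c)\mapsto(a-b,c-b)$) kills the ambiguity and gives naturality for free. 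This is cleaner conceptually, avoids the detour through $W_{\wedge,\operatorname{lis-et}}$ and $K$-injective resolutions, and makes explicit why one must pass to $W_{\wedge}$ rather than $\eqtopos{W}$. The paper's approach has the advantage of producing an actual zigzag of complexes, which can be useful when one later needs to compose with other explicit maps of complexes rather than derived-category morphisms.
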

\begin{proof}
	Let $W_{\wedge,lis-et}$ be the ringed topos associated to the diagram 
	\[X_{\bullet, lis-et}\xleftarrow{s_{\bullet}}W_{\bullet, lis-et}\xrightarrow{t_{\bullet}} X_{\bullet, lis-et}.\]
	From the diagram of ringed topoi
	
	\begin{equation*}
		\begin{tikzcd}
			X_{\bullet}&X_{\bullet, lis-et}\ar[l,"{\epsilon}"] \ar[drr,"\pi_X"]& &\\
			W_{\bullet}\ar[u,"s_{\bullet}"]\ar[d,"t_{\bullet}"']&W_{\bullet, lis-et}\ar[l,"\epsilon"]\ar[d,"t_{\bullet}"']\ar[u,"s_{\bullet}"] \ar[rr,"\pi_W"]&  &\mathcal{X}\\
			X_{\bullet}&X_{\bullet, lis-et}\ar[l,"{\epsilon}"] \ar[urr,"\pi_X"']& &
		\end{tikzcd}
	\end{equation*}
	
	we see that there is an induced morphism $\epsilon_{\wedge}: {W}_{\wedge,lis-et} \to W_{\wedge}$. Let $F_{\wedge,lis-et}=(\pi_X^*F,\pi_W^*F, \pi_X^*F)$, with pullback maps induced by the identifications $\pi_W^*=s^*\pi_X^*$ and $\pi_W^*=t^*\pi_X^*$ respectively.  Since the functors $\epsilon_*$ are exact, we have $F_{\wedge}={\epsilon}_{\wedge*}F_{\wedge, lis-et}$. In particular, since $\epsilon_*\epsilon^*$ is naturally isomorphic to the identity, it is enough to show that there is a natural quasi-isomorphism $\epsilon_{\wedge}^*E_{W_{\wedge}}\to F_{\wedge,lis-et}$.
	
	We are reduced to the following situation: Let $G=(G_{X},G_W,G_{X},s^*,t^*)$ be a complex of $\mathcal{O}_{W_{\wedge, lis-et}}$-modules such that $G_X$ has quasicoherent cohomology sheaves, and such that $s^*$,$t^*$ are quasi-isomorphisms, and suppose that $F=R(\pi_W)_*G_W$. Then we need to show that $G$ is naturally isomorphic to $F_{\wedge,lis-et}$.  Without loss of generality, we may assume that $G$ is $K$-injective. Then $G_W$ and $G_X$ are themselves $K$-injective. We can therefore assume that $F=\pi_{W*}G_W$.  Let also $F':=\pi_{X*}G_X$. The morphism $s^*:s^*G_X\to G_W$ corresponds to a morphism $G_X\to s_*G_W$, and by applying $\pi_{W*}$ we obtain a map $\sigma:F'\to F$. Similarly, we obtain $\tau:F'\to F$ from $t^*:t^*G_X\to G_W$. The maps $\sigma $ and $\tau$ are quasi-isomorphisms, as one can check after applying $\pi_W^*$, since $F'$ and $F$ have quasi-coherent comohology. 
	
	Then we have the following commutative diagram of complexes of $\mathcal{O}_{W_{\bullet,lis-et}}$-modules:
	\begin{equation*}
		\begin{tikzcd}
			s^*\pi_X^*F\ar[d,"s*"']	&s^*\pi_X^*F'\ar[r]\ar[d,"\pi_W^*\sigma "']\ar[l,"s^*\pi_X^*\sigma"']&s^*G_X\ar[d] \\
			\pi_W^*F	&\pi_W^*F\ar[r]\ar[l] & G_W \\
			t^*\pi_X^* F\ar[u,"t^*"]	&t^*\pi_X^* F'\ar[u,"\pi_W^*\tau"]\ar[r]\ar[l,"t^*\pi_X^*\tau"]&t^*G_X\ar[u]
		\end{tikzcd}
	\end{equation*}	
	The horizontal maps going to the right are the quasi-isomorphisms coming from the push-pull adjunctions associated to $\pi_X$ and $\pi_W$ respectively.
	Setting $\widetilde{F}_{\wedge, lis-et}= (\pi_X^*F', \pi_W^*F, \pi_X^*F', s^*\pi_X^*\sigma, t^*\pi_X^*\tau)$, it follows that we have quasi-isomorphisms 
	\[F_{\wedge, lis-et} \leftarrow\widetilde{F}_{\wedge,lis-et}\to G\]
	as desired.
\end{proof}
\subsection{Tensor triangulated categories and additivity of traces.}
\paragraph{Traces in a closed symmetric monoidal category.}
Let $\mathcal{C}$ be a symmetric closed monoidal category with product $-\otimes -$, unit $\mathcal{O}$, and internal Hom-functor $\Sheafhom(-,-)$. Let $\tau$ denote the symmetry morphism of the tensor product. We let $E,F,G$ denote arbitrary elements of $\mathcal{C}$ and write $E^{\vee}:=\Sheafhom(E,\mathcal{O})$. Recall that we have an adjunction between $-\otimes E$ and $\Sheafhom(E,-)$. We have various natural maps in $\mathcal{C}$:	

\begin{enumerate}[label=(\arabic*)]
	\item {\it Evaluation.} We have a natural map $ev: \Sheafhom(E,F)\otimes E\to F$, corresponding to the identity on $\Sheafhom(E,F)$ under adjunction.
	\item As a special case, we get the evaluation map $ev:E^{\vee}\otimes E\to \mathcal{O}$.
	\item {\it Composition.} We have a natural map $comp: \Sheafhom(F,G)\otimes \Sheafhom(E,F)\to \Sheafhom(E,G)$. This map is adjoint to the map $\Sheafhom(F,G)\otimes \Sheafhom(E,F)\otimes E\to G$ which is obtained from composing two evaluation maps. 
	\item As a special case, we get the composition map $comp:F\otimes E^{\vee}\to\Sheafhom(E,F)$, where we use the canonical isomorphism $F\simeq \Sheafhom(\mathcal{O},F)$. 
	\item {\it Diagonal.} We have a diagonal map $s:\mathcal{O}\to \Sheafhom(E,E)$ which corresponds to $\operatorname{id}_{E}$ under adjunction.    
\end{enumerate}

We recall the notion of dualizable object:
\begin{definition}
	The object $E$ is called \emph{dualizable} if the composition map $comp:E\otimes  E^{\vee}\to \Sheafhom(E,E)$ is an isomorphism. See the discussion preceding Definition 2.2 in \cite{PoSh}.
\end{definition}
For dualizable $E$ and arbitrary $F$, the canonical map $comp: F\otimes E^{\vee}\to \Sheafhom(E,F)$ is an isomorphism and we will often use this isomorphism to identify the source and target.

Let $E$ be a dualizable object of $\mathcal{C}$. Then we have further natural maps

\begin{enumerate}[label=(\arabic*)]
	\item {\it Trace I.} We define the \emph{trace map} $\operatorname{tr}:\Sheafhom({E},{E})\to \mathcal{O}$ as the composition $\Sheafhom(E,E)\simeq{E}\otimes {E}^{\vee}\xrightarrow{\tau} E^{\vee}\otimes {E}\xrightarrow{ev} \mathcal{O}$. 
	\item  {\it Trace II.} We have a map $\operatorname{tr}:\Sheafhom({E}, {F} \otimes {E})\to {F}$ given by the composition $\Sheafhom({E}, {F}\otimes {E})\simeq {F}\otimes {E}\otimes {E}^{\vee}\xrightarrow{{F}\otimes tr}{F}$. By abuse of language, we call this also the \emph{trace map}.
	\item {\it Diagonal II.} We have the composition $\eta:=comp^{-1}\circ s:\mathcal{O}\to {E}\otimes {E}^{\vee}$, which we will also call \emph{diagonal}.  
	
\end{enumerate}

A morphism $f:{E}\to {F}\otimes {E}$ corresponds by adjunction to a map $\mathcal{O}\to \Sheafhom({E},{F}\otimes {E})$.  When $E$ is dualizable, we define $\operatorname{tr}_{f}$ as the composition 
$\mathcal{O}\to \Sheafhom({E},{F}\otimes {E})\xrightarrow{\operatorname{tr}} {F}$

As a consequence of the definition and the naturality of adjunction, we have
\begin{lemma}\label{lem:traltdef}
	For $f:{E}\to {F}\otimes {E}$, the map $tr_{f}$ is equal to the composition
	\[\mathcal{O}\xrightarrow{\eta} {E}\otimes {E}^{\vee}\xrightarrow{f\otimes \operatorname{id}_{{E}^{\vee}}} {F}\otimes {E}\otimes {E}^{\vee} \xrightarrow{\operatorname{id}_{F} \otimes \tau} {F}\otimes {E}^{\vee} \otimes {E}\xrightarrow{\operatorname{id}_{{F} }\otimes ev} {F}, \]
	where $\tau$ denotes the symmetry isomorphism of the tensor product.
\end{lemma}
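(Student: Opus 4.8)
The plan is to unwind both sides into compositions built only from $\eta$, $ev$, $\tau$ and the canonical isomorphism $comp$, and then to reduce the whole identity to naturality of $comp\colon F\otimes E^{\vee}\xrightarrow{\sim}\Sheafhom(E,F)$ in the variable $F$.

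First I would rewrite the left-hand side. By definition $\operatorname{tr}_f = \operatorname{tr}\circ\widetilde f$, where $\widetilde f\colon\mathcal{O}\to\Sheafhom(E,F\otimes E)$ is the morphism adjoint to $f$ and $\operatorname{tr}$ is the Trace~II map. Unwinding Trace~II and then Trace~I, one has $\operatorname{tr} = (\operatorname{id}_F\otimes ev)\circ(\operatorname{id}_F\otimes\tau)\circ comp^{-1}$, where $comp\colon(F\otimes E)\otimes E^{\vee}\xrightarrow{\sim}\Sheafhom(E,F\otimes E)$ is the canonical isomorphism (the special case of the composition map, using $\Sheafhom(\mathcal{O},F\otimes E)\simeq F\otimes E$), $\tau\colon E\otimes E^{\vee}\to E^{\vee}\otimes E$ is the symmetry, and $ev\colon E^{\vee}\otimes E\to\mathcal{O}$ is evaluation; here and below I suppress associativity isomorphisms. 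Thus $\operatorname{tr}_f = (\operatorname{id}_F\otimes ev)\circ(\operatorname{id}_F\otimes\tau)\circ comp^{-1}\circ\widetilde f$, and comparing with the asserted formula it remains to prove the identity $comp^{-1}\circ\widetilde f = (f\otimes\operatorname{id}_{E^{\vee}})\circ\eta$ of morphisms $\mathcal{O}\to F\otimes E\otimes E^{\vee}$.

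Next I would reduce this remaining identity to naturality. On one hand, by naturality of the adjunction $\Hom(\mathcal{O},\Sheafhom(E,-))\simeq\Hom(E,-)$ and the fact that $s\colon\mathcal{O}\to\Sheafhom(E,E)$ is adjoint to $\operatorname{id}_E$, we have $\widetilde f = \Sheafhom(E,f)\circ s$. On the other hand, $\eta = comp^{-1}\circ s$ by definition. Since $comp$ is invertible by dualizability, the desired identity is therefore equivalent to commutativity of the square
\begin{equation*}
	\begin{tikzcd}
		E\otimes E^{\vee}\ar[r,"comp"]\ar[d,"f\otimes\operatorname{id}_{E^{\vee}}"']& \Sheafhom(E,E)\ar[d,"{\Sheafhom(E,f)}"]\\
		(F\otimes E)\otimes E^{\vee}\ar[r,"comp"]& \Sheafhom(E,F\otimes E),
	\end{tikzcd}
\end{equation*}
that is, naturality of $comp\colon(-)\otimes E^{\vee}\Rightarrow\Sheafhom(E,-)$ evaluated on $f\colon E\to F\otimes E$. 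This commutes because $comp$ in this special form is, by construction, adjoint to the natural transformation $(-)\otimes E^{\vee}\otimes E\xrightarrow{(-)\otimes ev}(-)$, and post-composition with $f$ visibly commutes with $ev$; this is a routine check. Combining the two equalities gives $comp^{-1}\circ\widetilde f = comp^{-1}\circ\Sheafhom(E,f)\circ s = (f\otimes\operatorname{id}_{E^{\vee}})\circ comp^{-1}\circ s = (f\otimes\operatorname{id}_{E^{\vee}})\circ\eta$, completing the proof.

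I expect the only genuine content to be the verification of that naturality square; everything else is bookkeeping with the definitions of Trace~I/II, $\eta$ and $s$. The one point to be careful about is that the three occurrences of "$comp$" — in the definition of $\eta$, inside Trace~II, and in the naturality square — are all instances of the same canonical map in its appropriate special form, so that the cancellations above are legitimate.
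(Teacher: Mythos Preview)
Your proof is correct and essentially identical to the paper's. The paper organizes the same ingredients into a single commutative diagram whose left triangle is $\eta=comp^{-1}\circ s$, whose middle square is the naturality of $comp$ with respect to $f$, whose right triangle is the definition of Trace~II, and whose bottom row composes to $\operatorname{tr}_f$ via $\widetilde f=\Sheafhom(E,f)\circ s$; you have spelled out exactly these steps linearly.
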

\begin{proof}
	The composition of the final two maps is by definition equal to $\operatorname{id}_F\otimes \operatorname{tr}$.
	Now consider the following diagram
	\begin{equation*}
		\begin{tikzcd}[column sep = large]
			&  {E}\otimes {E}^{\vee}\ar[r,"f\otimes \operatorname{id}_{E^{\vee}}"]\ar[d,"comp"]&{F}\otimes {E}\otimes {E}^{\vee} \ar[d,"comp"]\ar[dr,"{\operatorname{id}_F\otimes \operatorname{tr}}"] &\\
			\mathcal{O}\ar[r,"s"]\ar[ur,"\eta"]& \Sheafhom({E},{E})\ar[r,"{\Sheafhom({E}, f)}"] & \Sheafhom({E},{F}\otimes {E})\ar[r,"{\operatorname{tr}}"]&F.
		\end{tikzcd}
	\end{equation*}
	The left and right triangles commute by definition, while the square commutes by naturality of the composition map.
	By naturality of adjunction, the first two arrows in the lower row compose to the map adjoint to $f$. Thus, by definition, the lower row composes to $\operatorname{tr}_f$ and the same must be true for the composition along the top. 
\end{proof}

We will need the following compatibility of traces with tensor products

\begin{lemma}\label{lem:trtensor}
	Let $f:E\to F\otimes E$ be a morphism in $\mathcal{C}$ with $E$ dualizable, and let $V$ be another dualizable object of $\mathcal{C}$. Then 
	\[\operatorname{tr}(f\otimes \operatorname{id}_V) = \operatorname{tr}(f)\circ \operatorname{tr}(\operatorname{id}_V).\]
\end{lemma}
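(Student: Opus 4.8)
The plan is to unwind both sides using the alternative description of the trace from Lemma \ref{lem:traltdef} and then reduce the identity to two standard facts about symmetric monoidal categories: that $(E\otimes V)^\vee \simeq E^\vee\otimes V^\vee$ compatibly with evaluation, and that the diagonal $\eta_{E\otimes V}:\mathcal{O}\to (E\otimes V)\otimes (E\otimes V)^\vee$ factors as $\eta_E\otimes\eta_V$ followed by the shuffle isomorphism. First I would record that $E\otimes V$ is dualizable (the composition map for $E\otimes V$ is the tensor product of the composition maps for $E$ and $V$, conjugated by symmetry isomorphisms, hence an isomorphism), so that all the relevant traces are defined. Then, by Lemma \ref{lem:traltdef}, $\operatorname{tr}(f\otimes\operatorname{id}_V)$ is the composite
\begin{equation*}
\mathcal{O}\xrightarrow{\eta_{E\otimes V}} (E\otimes V)\otimes(E\otimes V)^\vee \xrightarrow{(f\otimes\operatorname{id}_V)\otimes\operatorname{id}} F\otimes(E\otimes V)\otimes(E\otimes V)^\vee\xrightarrow{\operatorname{id}_F\otimes\tau}F\otimes(E\otimes V)^\vee\otimes(E\otimes V)\xrightarrow{\operatorname{id}_F\otimes ev}F,
\end{equation*}
while $\operatorname{tr}(f)\circ\operatorname{tr}(\operatorname{id}_V)$ is the analogous composite for $f$, precomposed with the scalar $\operatorname{tr}(\operatorname{id}_V):\mathcal{O}\to\mathcal{O}$.

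The key step is to substitute $(E\otimes V)^\vee\simeq E^\vee\otimes V^\vee$ and $\eta_{E\otimes V}=(\text{shuffle})\circ(\eta_E\otimes\eta_V)$ into the first display and then reorganize the tensor factors by symmetry isomorphisms so that all the $V$-factors ($V$, $V^\vee$) are grouped together and all the $E$-factors ($E$, $E^\vee$) together. Since $f\otimes\operatorname{id}_V$ touches only the $E$-factors, after this reorganization the composite visibly splits as a tensor product of the $E$-side composite — which is exactly $\operatorname{tr}(f)$ by Lemma \ref{lem:traltdef} — and the $V$-side composite $\mathcal{O}\xrightarrow{\eta_V}V\otimes V^\vee\xrightarrow{\tau}V^\vee\otimes V\xrightarrow{ev}\mathcal{O}$, which is $\operatorname{tr}(\operatorname{id}_V)$ by definition of Trace I. Using that $-\otimes\mathcal{O}\simeq\operatorname{id}$ and that tensoring a morphism with a scalar $\mathcal{O}\to\mathcal{O}$ amounts to composition, the product of these two composites is $\operatorname{tr}(f)\circ\operatorname{tr}(\operatorname{id}_V)$ (equivalently $\operatorname{tr}(\operatorname{id}_V)\cdot\operatorname{tr}(f)$, these agreeing since $\operatorname{End}_{\mathcal{C}}(\mathcal{O})$ is commutative), which gives the claim.

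The main obstacle I expect is bookkeeping: verifying that the reshuffling isomorphisms used to separate the $E$- and $V$-factors are coherent and cancel correctly, i.e. that the comparison $\eta_{E\otimes V}=(\text{shuffle})\circ(\eta_E\otimes\eta_V)$ and the dual factorization $ev_{E\otimes V}=(ev_E\otimes ev_V)\circ(\text{shuffle})$ are the ones forced by naturality of the $\otimes\dashv\Sheafhom$ adjunction and the hexagon axioms. Rather than expand every associator and symmetry, I would phrase the argument diagrammatically: build one large diagram whose outer boundary is the desired equality and whose inner cells commute either by Mac Lane coherence, by naturality of $\tau$, by naturality of the adjunction, or by the compatibility of $ev$ and $\eta$ with the dualizability isomorphisms; then invoke coherence to conclude the outer boundary commutes. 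This is the standard "additivity/multiplicativity of traces" argument (cf.\ the references in \S\ref{subsec:atexseq}), so no genuinely new idea is needed beyond careful diagram-chasing.
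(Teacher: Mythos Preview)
Your argument is correct; it is the standard direct verification of multiplicativity of traces for dualizable objects in a symmetric monoidal category, and the bookkeeping you flag is handled cleanly by Mac Lane coherence exactly as you indicate. The paper, however, does not carry out any of this: its entire proof is a one-line citation to Corollary~5.9 of \cite{PoSh} (Ponto--Shulman). So the difference is not one of mathematical approach but of presentation --- you are essentially reproducing (a special case of) the argument behind the cited result, while the paper outsources it. Your write-up has the advantage of being self-contained and of making clear which structural facts are actually used; the paper's version has the advantage of brevity and of pointing the reader to a reference where the statement is proved in greater generality (for bicategories, with twisting).
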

\begin{proof}
	This follows by applying Corollary 5.9 of \cite{PoSh}.
\end{proof}

\paragraph{Criterion for Additivity of traces.}
We now assume that, say, $\mathcal{C}=D(R)$ is the derived category of a category of sheaves of modules over a ring $R$ in a topos $T$. In particular, it is closed monoidal with a compatible triangulated structure. 
Let ${F}, {E}, {G}$ be dualizable objects of $D(R)$ that are part of an exact triangle
\begin{equation}
	F\to E\to G\to F[1]. \label{eq:extridualizable}
\end{equation} 
Suppose that we have a commutative diagram for some element $L\in D(R)$:
\begin{equation*}
	\begin{tikzcd}
		{F}\ar[r]\ar[d,"g"]& {E} \ar[d,"f"]\ar[r]& {G}\ar[d,"h"] \\
		{L}\otimes {F}\ar[r] &{L}\otimes {E} \ar[r]&L\otimes {G},
	\end{tikzcd}
\end{equation*}
where the lower row is obtained from the upper by tensoring with $L$.  We want a criterion that allows us to conclude that $tr(f)=tr(g)+tr(h)$. We follow the strategy in \cite[\S 8]{May}. 

We will make the following
\begin{assumption}\label{assum:bounded}
	The objects $F,E,G$ are elements of the bounded above derived category $D^-(R)$.
\end{assumption}

\begin{situation}\label{sit:exseqdualized}
	Suppose Assumption \ref{assum:bounded} holds. (This is likely unnecessary if one works with $K$-flat complexes in what follows.)
	Choose a representation of \eqref{eq:extridualizable} by a short exact sequence $0\to{F}\to {E}\to {G}\to 0$ of bounded above complexes of flat $R$-modules (or suppose one is given).
	Further, choose an injective resolution $J$ of $R$, so that the derived dual of a complex is explicitly realized by $(-)^{\vee}:=\Sheafhom_R(-,J)$. Then we have again an exact sequence $0\to G^{\vee}\to {E}^\vee\to {F}^{\vee}\to 0$. 
	We define complexes $W$ and $\overline{W}$ via 
	
	\begin{align*}
		W&:= {E}\otimes {E}^{\vee}/{F}\otimes {G}^{\vee}\\
		\overline{W}&:= {E}^{\vee}\otimes {E}/{G}^\vee \otimes {F}.
	\end{align*}
	The tensor product here is the ordinary tensor product of complexes, which represents the derived tensor product due to the choice of ${F}$ and ${E}$. We have a natural isomorphism $W\to \overline{W}$ coming from the symmetry isomorphism of the tensor product. 
	Moreover, we have natural inclusions of complexes
	\begin{align*}
		{F}^{\phantom{\vee}}\otimes {F}^{\vee}\hookrightarrow &\, W\hookleftarrow \, {G}^{\phantom{\vee}} \otimes {G}^{\vee},
	\end{align*}
	and 
	\begin{align*}
		{F}^{\vee}\otimes {F}^{\phantom{\vee}}\hookrightarrow&\, \overline{W} \hookleftarrow \, {G}^{\vee}\otimes {G}.
	\end{align*}
	The inclusions into $W$ are given by 
	\[{F}\otimes {F}^{\vee}\simeq {F}\otimes {E}^{\vee}/{F}\otimes {G}^{\vee}\subset W\] and \[{G}\otimes {G}^{\vee}\simeq {E}\otimes {G}^{\vee}/{F}\otimes {G}^{\vee}\subset W\]
	repectively, and analogously for the inclusions into $\overline{W}$.
\end{situation}

\begin{lemma}\label{lem:traddsetup}
	Let the notation be as in Situation \ref{sit:exseqdualized}. 
	\begin{enumerate}[label=(\roman*)]
		
		\item 	Let $\overline{\eta}$ be the composition $\mathcal{O}\xrightarrow{\eta} {E}\otimes {E}^{\vee}\to W$ in $D(R)$. Then the following diagram commutes
		\begin{equation*}
			\begin{tikzcd}
				&\mathcal{O}\ar[d,"\overline{\eta}"]\ar[dr,"{(\eta,\eta)}"] \ar[dl,"{\eta}"']& \\
				{E}\otimes {E}^{\vee}\ar[r]&W& {F}\otimes {F}^{\vee}\oplus {G}\otimes {G}^{\vee}\ar[l] .
			\end{tikzcd}
		\end{equation*}
		\item There exists a natural map of complexes $\overline{\operatorname{ev}}:\overline{W}\to J$ whose image in $D(R)$ makes the following diagram commute
		\begin{equation*}
			\begin{tikzcd}
				{E}^{\vee}\otimes {E}\ar[r]\ar[dr,"{\operatorname{ev}}",swap]&\overline{W}\ar[d,"{\overline{\operatorname{ev}}}"] &  ({F}^{\vee}\otimes {F})\oplus(  {G}^{\vee}\otimes {G})\ar[l]\ar[dl,"{\operatorname{ev}\oplus \operatorname{ev}}"]\\
				&\mathcal{O}.&
			\end{tikzcd}
		\end{equation*}
	\end{enumerate}
\end{lemma}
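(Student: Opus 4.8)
The plan is to treat the two parts separately: part (ii) at the level of complexes, since it asserts the existence of an actual chain map $\overline{\operatorname{ev}}$, and part (i) directly in $D(R)$, since $\overline{\eta}$ lives there. In both cases the content is elementary linear algebra with the flag $F\subset E$ and its dual $G^{\vee}\subset E^{\vee}$; the work is in organizing this correctly in the derived/simplicial setting and tracking coherence isomorphisms.

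\emph{Part (ii).} I would realize $\operatorname{ev}_E:E^{\vee}\otimes E\to\mathcal O$ by the honest evaluation pairing $e_E:\Sheafhom_R(E,J)\otimes_R E\to J$, $\phi\otimes x\mapsto\phi(x)$ (with the usual Koszul signs); this represents $\operatorname{ev}_E$ in $D(R)$ because $E$ is $K$-flat and $J$ is a $K$-injective resolution of $R$. The inclusion $G^{\vee}\hookrightarrow E^{\vee}$ of Situation \ref{sit:exseqdualized} is precomposition with $p:E\to G$, so every $\phi\in G^{\vee}$ kills $i(F)$; hence $e_E$ annihilates the subcomplex $G^{\vee}\otimes_R F$ and factors as $E^{\vee}\otimes E\to\overline W\xrightarrow{\overline{\operatorname{ev}}}J$, which already is the left triangle of (ii). For the right triangle I would unwind the two inclusions $F^{\vee}\otimes F\simeq(E^{\vee}\otimes F)/(G^{\vee}\otimes F)\hookrightarrow\overline W$ and $G^{\vee}\otimes G\simeq(G^{\vee}\otimes E)/(G^{\vee}\otimes F)\hookrightarrow\overline W$: a class in $F^{\vee}\otimes F$ maps into $\overline W$ to the class of $\phi\otimes i(f)$ for $\phi\in E^{\vee}$ restricting to the given element, and $\overline{\operatorname{ev}}(\phi\otimes i(f))=\phi(i(f))=(\phi|_F)(f)$, i.e.\ $\operatorname{ev}_F$ on its image; a class in $G^{\vee}\otimes G$ maps to the class of $\psi\otimes e$ with $\psi\in G^{\vee}$, $e\in E$, and $\overline{\operatorname{ev}}(\psi\otimes e)=\psi(e)$, which is well defined since $\psi$ kills $F$ and equals $\operatorname{ev}_G$ on its image. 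That is all of (ii) up to sign bookkeeping.

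\emph{Part (i).} Here the left triangle is the definition of $\overline{\eta}$, so the content is the identity $i_F\circ\eta_F+i_G\circ\eta_G=\overline{\eta}$ in $\Hom_{D(R)}(\mathcal O,W)$. Using dualizability of $E,F,G$, I would rewrite everything through the canonical isomorphisms $A\otimes B^{\vee}\simeq\Sheafhom(B,A)$ ($B$ dualizable): then $\eta_E,\eta_F,\eta_G$ become the diagonals $\mathcal O\to\Sheafhom(E,E)$, etc.; the quotient $E\otimes E^{\vee}\to W$ becomes the canonical map to the cone $W\simeq\coneop\!\bigl(\Sheafhom(G,F)\xrightarrow{\,i\circ(-)\circ p\,}\Sheafhom(E,E)\bigr)$ (the chain-level cokernel defining $W$ agrees with this cone since $i\otimes p^{\vee}$ is an injection of complexes); and $i_F,i_G$ become the maps $\Sheafhom(F,F)\to W\leftarrow\Sheafhom(G,G)$ induced by the obvious maps of pairs. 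Under these identifications $\overline{\eta}$ is the class of $\operatorname{id}_E$ in the cone, and so is $i_F(\operatorname{id}_F)+i_G(\operatorname{id}_G)$: the point is that $\operatorname{id}_E$ decomposes compatibly with the flag because $p\circ i=0$, $\operatorname{id}_E\circ i=i\circ\operatorname{id}_F$ and $p\circ\operatorname{id}_E=\operatorname{id}_G\circ p$, so that $\operatorname{id}_E-i_F(\operatorname{id}_F)-i_G(\operatorname{id}_G)$ lies in the image of $i\circ(-)\circ p$. (Equivalently one may check the identity after replacing $E,F,G$ by a termwise split exact sequence of bounded complexes of finite locally free modules, where $\eta$ is the literal termwise diagonal and this is the obvious split-case computation.)

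\emph{Main obstacle.} The difficulty is bookkeeping rather than conceptual. For (i) one must verify that the chain-level objects $W$, the quotient map, $i_F$ and $i_G$ of Situation \ref{sit:exseqdualized} really correspond in $D(R)$ to the $\Sheafhom$-picture above, and carry the symmetry and duality coherence isomorphisms through this correspondence; this is exactly the kind of diagram chase the formalism of \cite{May} is designed for. For (ii) one must keep the Koszul signs straight and confirm both that $\Sheafhom_R(E,J)\otimes_R E\to J$ genuinely represents the derived evaluation and that the sub/quotient descriptions of $F^{\vee}$ and $G^{\vee}$ inside $E^{\vee}$ are precisely the ones fixed in Situation \ref{sit:exseqdualized}.
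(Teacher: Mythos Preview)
Your treatment of (ii) is essentially the paper's: the evaluation $E^{\vee}\otimes E\to J$ kills $G^{\vee}\otimes F$ and hence factors through $\overline{W}$, and the two triangles are checked by unwinding the inclusions. This part is fine.

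For (i) your route diverges from the paper's and has a gap. The paper does not attempt a direct comparison of $\overline{\eta}$ with $i_F\eta_F+i_G\eta_G$. Instead it introduces the chain-level object
\[
V \;:=\; \bigl(F\otimes E^{\vee}\oplus E\otimes G^{\vee}\bigr)\big/\bigl(F\otimes G^{\vee}\bigr)_{\mathrm{antidiag}},
\]
which sits in an exact sequence $0\to V\xrightarrow{j_1} E\otimes E^{\vee}\to G\otimes F^{\vee}\to 0$ and carries chain maps $j_2:V\to F\otimes F^{\vee}$, $j_3:V\to G\otimes G^{\vee}$ making the relevant triangle over $W$ commute \emph{on the nose}. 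What remains is to produce a single map $\eta_V:\mathcal{O}\to V$ in $D(R)$ with $j_1\eta_V=\eta_E$ and $(j_2,j_3)\eta_V=(\eta_F,\eta_G)$. The paper observes that $V^{\vee}\simeq\overline{W}$ and that dualizing this requirement is exactly the diagram of (ii) for $\overline{\operatorname{ev}}$; so (i) follows from (ii) by duality.

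Your direct argument, by contrast, asserts that ``$\operatorname{id}_E-i_F(\operatorname{id}_F)-i_G(\operatorname{id}_G)$ lies in the image of $i\circ(-)\circ p$''. As written this is not well-posed: $i_F(\operatorname{id}_F)$ and $i_G(\operatorname{id}_G)$ are maps $\mathcal{O}\to W$ in $D(R)$, not elements of $\Sheafhom(E,E)$, so there is nothing to subtract from $\operatorname{id}_E$. To give the sentence meaning you would need lifts of $\operatorname{id}_F\in\Sheafhom(F,F)$ to $\Sheafhom(E,F)$ and of $\operatorname{id}_G\in\Sheafhom(G,G)$ to $\Sheafhom(G,E)$, i.e.\ a splitting of the sequence, which is not available at the chain level in general. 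The relations $p\circ i=0$, $\operatorname{id}_E\circ i=i\circ\operatorname{id}_F$, $p\circ\operatorname{id}_E=\operatorname{id}_G\circ p$ that you cite do not by themselves produce such lifts. Your fallback---replacing by a termwise split sequence of finite locally free modules---is not available for an arbitrary ring $R$ in a topos. The paper's duality trick sidesteps all of this: once (ii) is proved as a statement about honest chain maps, its dual furnishes $\eta_V$, and no splitting is ever invoked.
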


\begin{proof}
	The map in (ii) is the natural quotient map of $ev:E^{\vee}\otimes E\to J$, which vanishes on the subcomplex $G^{\vee}\otimes F$. The commutativity of the right triangle follows by unwinding the definitions. 
	\begin{enumerate}[label=(\roman*)]
		\item  We let $V:=({F}\otimes {E}^{\vee}\oplus {E}\otimes {G}^{\vee})/({F}\otimes {G}^{\vee})$, where the quotient is with respect to the antidiagonal inclusion. Then we have a natural exact sequence of complexes
		\[0\to V \xrightarrow{j_1} {E}\otimes {E}^{\vee}\to {G}\otimes {F}^{\vee}\to 0.\]
		
		Moreover, we have natural maps $j_2:V\to {F}\otimes {F}^{\vee}$ and $j_3:V\to {G}\otimes {G}^{\vee}$ obtained by composing projection onto a factor with a respective quotient map. We claim that the following diagram of complexes commutes
		\begin{equation*}
			\begin{tikzcd}
				& V\ar[dr,"{(j_2,j_3)}"]\ar[dl,"j_1"']&   \\
				{E}\otimes {E}^{\vee}\ar[r] & W& 	({F}\otimes {F}^{\vee})\oplus ({G}\otimes {G}^{\vee})	\ar[l].	
			\end{tikzcd}
		\end{equation*}
		By precomposing with the surjection ${F}\otimes {E}^{\vee}\oplus {E}\otimes {G}^{\vee}\to V$, this reduces to the commutativity of the following two diagrams
		
		\begin{equation*}
			\begin{tikzcd}[column sep=small]
				& {F}\otimes {E}^{\vee}\ar[dr]\ar[dl]& && {E}\otimes {G}^{\vee}\ar[dr]\ar[dl]&   \\
				{E}\otimes {E}^{\vee}\ar[r] & W& 	{F}\otimes {F}^{\vee}\ar[l]&{E}\otimes {E}^{\vee}\ar[r] & W& {G}\otimes {G}^{\vee}	\ar[l]
			\end{tikzcd}
		\end{equation*}
		which one can see directly. Now the problem is reduced to finding a map $\eta_V:\mathcal{O}\to V$ in the derived category such that 
		\begin{equation*}
			\begin{tikzcd}
				&\mathcal{O}\ar[d,"\eta_V"]\ar[dr,"{(\eta,\eta)}"] \ar[dl,"{\eta}"']& \\
				{E}\otimes {E}^{\vee}\ar[r,leftarrow]&V& {F}\otimes {F}^{\vee}\oplus {G}\otimes {G}^{\vee}\ar[l, rightarrow] 
			\end{tikzcd}
		\end{equation*}
		commutes. After dualizing in the derived category, this follows from (ii).
	\end{enumerate}
\end{proof}

Now we have the following result
\begin{proposition}[{\cite[\S 8]{May}}]\label{prop:maytrace}
	Suppose there exists a dotted arrow making its two adjacent squares in the following diagram commute. Then $tr(f)=tr(g)+tr(h)$.
	\begin{equation*}
		\begin{tikzcd}
			&\mathcal{O}\ar[dr,"\eta"]\ar[d,"\overline{\eta}"] \ar[dl,"{(\eta,\eta)}"']& \\
			{E}\otimes {E}^{\vee}\ar[r]\ar[d]&W\ar[d,dashed]& {F}\otimes {F}^{\vee}\oplus {G}\otimes {G}^{\vee}\ar[d]\ar[l] \\
			L\otimes {E}\otimes {E}^{\vee}\ar[d]\ar[r]&L\otimes W \ar[d]&  (L\otimes {F}\otimes {F}^{\vee})\oplus (L\otimes {G}\otimes {G}^{\vee})\ar[d]\ar[l]\\
			L\otimes {E}^{\vee}\otimes {E}\ar[r]\ar[dr,"{\operatorname{ev}}"']&L\otimes \overline{W}\ar[d,"{\overline{\operatorname{ev}}}"] &  (L\otimes {F}^{\vee}\otimes {F})\oplus( L\otimes {G}^{\vee}\otimes {G})\ar[l]\ar[dl,"{\operatorname{ev}\oplus\operatorname{ev}}"]\\
			&L.&
		\end{tikzcd}
	\end{equation*}
\end{proposition}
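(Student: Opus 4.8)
The plan is to run a formal diagram chase in $D(R)$ through the displayed diagram, following \cite[\S 8]{May}: its two outer vertical paths compute $\operatorname{tr}(f)$ and $\operatorname{tr}(g)+\operatorname{tr}(h)$, every interior cell commutes, and hence the two paths agree. I read the unlabelled arrows in the only sensible way: the left column is $f\otimes\operatorname{id}_{E^{\vee}}$, then $\operatorname{id}_L\otimes\tau$, then $\operatorname{id}_L\otimes\operatorname{ev}$; the right column is $(g\otimes\operatorname{id})\oplus(h\otimes\operatorname{id})$, then $(\operatorname{id}\otimes\tau)\oplus(\operatorname{id}\otimes\tau)$, then the map out of the direct sum whose two components are each $\operatorname{id}_L\otimes\operatorname{ev}$; the arrows touching $W$ and $\overline{W}$ are the quotient and inclusion maps of Situation \ref{sit:exseqdualized} (tensored with $L$ where needed); and $\mathcal{O}\to E\otimes E^{\vee}$, $\mathcal{O}\to (F\otimes F^{\vee})\oplus(G\otimes G^{\vee})$ are $\eta$ and the diagonal $(\eta,\eta)$.

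First I would identify the outer edges with the traces. Lemma \ref{lem:traltdef}, applied with the object ``$F$'' there replaced by $L$ and the morphism taken to be $f$, says precisely that the left edge $\mathcal{O}\xrightarrow{\eta} E\otimes E^{\vee}\xrightarrow{f\otimes\operatorname{id}} L\otimes E\otimes E^{\vee}\xrightarrow{\operatorname{id}\otimes\tau} L\otimes E^{\vee}\otimes E\xrightarrow{\operatorname{id}\otimes\operatorname{ev}} L$ equals $\operatorname{tr}(f)$. Applying the same lemma to $g$ and to $h$ identifies the two components of the right edge with $\operatorname{tr}(g)$ and $\operatorname{tr}(h)$; since the top map into the direct sum is the diagonal $(\eta,\eta)$ and the bottom map out of it is the codiagonal, their composite is $\operatorname{tr}(g)+\operatorname{tr}(h)$ --- this is where the ``$+$'' comes from, $L$ not being a coproduct.

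Next I would check that every interior cell commutes in $D(R)$. The top triangle is exactly Lemma \ref{lem:traddsetup}(i). The two squares adjacent to the dotted arrow commute by the hypothesis of the proposition. The two squares in the next row down commute because the isomorphism $W\xrightarrow{\sim}\overline{W}$ of Situation \ref{sit:exseqdualized} is induced by the symmetry $\tau$, hence compatible with the quotient maps $E\otimes E^{\vee}\to W$, $E^{\vee}\otimes E\to\overline{W}$ and with the inclusions of the corner summands $F\otimes F^{\vee}$, $G\otimes G^{\vee}$ (respectively $F^{\vee}\otimes F$, $G^{\vee}\otimes G$); applying $L\otimes(-)$ preserves commutativity. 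Finally the two bottom triangles are obtained from the two triangles of Lemma \ref{lem:traddsetup}(ii) by applying $L\otimes(-)$. Granting all cells commute, the chase is immediate: pasting the left dotted-arrow square, the left square below it, and the bottom-left triangle shows that the left column equals $(\text{central column})\circ(E\otimes E^{\vee}\to W)$, where the central column is $W\xrightarrow{\text{dotted}} L\otimes W\to L\otimes\overline{W}\xrightarrow{\overline{\operatorname{ev}}} L$; precomposing with $\eta$ and using $\overline{\eta}=(E\otimes E^{\vee}\to W)\circ\eta$ from Lemma \ref{lem:traddsetup}(i) yields $\operatorname{tr}(f)=(\text{central column})\circ\overline{\eta}$. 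The mirror-image pasting on the right, using $\overline{\eta}=((F\otimes F^{\vee})\oplus(G\otimes G^{\vee})\to W)\circ(\eta,\eta)$, yields $\operatorname{tr}(g)+\operatorname{tr}(h)=(\text{central column})\circ\overline{\eta}$; comparing the two gives the claim.

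I expect no conceptual difficulty here: all the substance has been isolated into Situation \ref{sit:exseqdualized} and Lemma \ref{lem:traddsetup}, and this step is purely bookkeeping. The main care is to read the unlabelled arrows as exactly the maps forced by Lemma \ref{lem:traltdef}, so that the outer edges are literally the traces, and to verify the ``symmetry'' squares, i.e. that $W\cong\overline{W}$ really is compatible with all the relevant quotient and inclusion maps. It is worth recording, as in \cite[\S 8]{May}, that the reason the statement is phrased with a hypothetical dotted arrow is that $W$ is only a \emph{weak} cokernel of $F\otimes G^{\vee}\to E\otimes E^{\vee}$, so the dotted arrow is not determined by $f$, $g$, $h$ alone; once it exists, additivity of traces follows formally.
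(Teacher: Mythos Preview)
Your proposal is correct and is exactly the paper's argument spelled out in detail: the paper's proof simply asserts that all cells other than the two hypothetical ones commute by Lemma~\ref{lem:traddsetup} and the definitions, and that the left and right outer paths compute $\operatorname{tr}(f)$ and $\operatorname{tr}(g)+\operatorname{tr}(h)$ via Lemma~\ref{lem:traltdef}. Your reading of the top-row labels (with $\eta$ pointing to $E\otimes E^{\vee}$ and $(\eta,\eta)$ to the direct sum, as in Lemma~\ref{lem:traddsetup}(i)) is the intended one.
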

\begin{proof}
	Everything else in the diagram commutes due to Lemma \ref{lem:traddsetup} and the definitions. By Lemma \ref{lem:traltdef}, the composition along the left side equals $\operatorname{tr}(f)$, and the composition along the right side equals $\operatorname{tr}(g)+\operatorname{tr}(h)$. 
\end{proof}

\section{Constructions for topoi}

\subsection{Constrution of the reduced Atiyah class}\label{subsec:redatiy}
We review the construction of the reduced Atiyah class for a map of ringed topoi, following the ideas in \cite{Gill}.
\paragraph{A diagram of exact sequences.}
Let $\mathcal{A}$ be an abelian category and consider the following diagram in $\mathcal{A}$ in which the solid arrows commute.

\begin{equation}\label{diag:cornersquare}
	\begin{tikzcd}
		E \ar[r,"e"']\ar[d,"q"]&E''\ar[d,"q''"]\ar[l,dotted,"s"', bend right] \\
		G\ar[r,"f"'] & G''.
	\end{tikzcd}
\end{equation}
Assume that $s$ is a section of $e$, i.e. $e\circ s =\id_{E''}$.
Then, the composition $q\circ s$ induces a morphism $\delta:\Ker(q'')\to \Ker(f)$.

Now suppose this diagram extends to a commutative diagram in which all solid rows and columns are exact and where the dotted arrows give a spliting of the middle exact sequence
\begin{equation*}
	\begin{tikzcd}
		&0\ar[d]&0\ar[d]&0\ar[d]& \\
		0\ar[r]& F'\ar[r,"a"]\ar[d,"j'"]&F\ar[r,"d"]\ar[d,"j"]& F''\ar[d,"j''"]\ar[r] &0 \\
		0\ar[r]& E'\ar[r,"b"']\ar[d]& E \ar[r,"e"']\ar[d,"q"]\ar[l,dotted,"r"', bend right]&E''\ar[r]\ar[d,"q''"]\ar[l,dotted,"s"', bend right]&0\\
		0\ar[r]& G' \ar[r,"c"']\ar[d]&G\ar[r,"f"']\ar[d] & G''\ar[r]\ar[d]& 0.\\
		&0&0&0&  
	\end{tikzcd}
\end{equation*}
Then we have the morphism $\delta: F''\to G'$ as described above. From the composition $r\circ j: F\to E'$, we get an induced map $\operatorname{Coker}(a)\to \operatorname{Coker}(j')$ and thus another map $F''\to G'$. By a diagram chase one checks this to be equal to $-\delta$.  

Now assume that further $\mathcal{A}$ is the category of modules over a simplicial ring $A$ in a topos. Then, the above morphisms fit into the following diagrams of triangles in $D^{\Delta}(A)$:

\begin{equation}\label{diag:conediags}
	\begin{tikzcd}[column sep = small]
		F'\ar[r]\ar[d,equals]& F'\ar[r]\ar[d]&F''\ar[r]\ar[d,"-\delta"]& \sigma F'\ar[d,equals]& & F'' \ar[r]\ar[d,"\delta"]& E''\ar[r]\ar[d]&G''\ar[r]\ar[d,equals]& \sigma F''\ar[d,"\sigma \delta"] \\
		F'\ar[r]&E'\ar[r]& G'\ar[r]& \sigma F', &&  G'\ar[r]& G\ar[r]& G''\ar[r]&\sigma G'.
	\end{tikzcd}
\end{equation} 

This shows that composing $\delta$ with the connecting map $G'\to \sigma F'$ yields \emph{minus} the connecting map associated to the exact sequence of the $F$'s, while composing $\sigma \delta$ with the connecting map $G''\to \sigma F''$ yields the connecting map associated to the exact sequence of the $G$'s. 
\paragraph{The reduced Atiyah class.}
Now let $f:X\to Y$ be a morphism of ringed topoi. Let $E\in C^{\leq 0}(\mathcal{O}_Y)$ be a complex whose components are Tor-independent to $f$, so that in particular the component-wise pullback $E_X:=f^*E$ equals the derived pullback.  Let 
\begin{equation}\label{eq:redatexseq}
	0\to F\to E_X\to G\to 0
\end{equation} 
be an exact sequence in $C^{\leq 0}(\mathcal{O}_X)$, which we also view as a sequence of simplicial $\mathcal{O}_X$-modules via the Dold--Kan correspondence. Let $R:=P_{f^{-1}\mathcal{O}_Y}(\mathcal{O}_X)$ be the standard simplicial resolution, and let $E_R:=f^{-1}E\otimes_{f^{-1}\mathcal{O}_Y} R$.  Denote by $\cdot\mid_R$ restriction of scalars from $\mathcal{O}_X$ to $R$. Since $R$ is flat, the natural morphism $E_R\to E_X|_R$ is a quasi-isomorphism, and it is termwise surjective, since $R\to \mathcal{O}_X$ is. Let $G_R:=G|_R$ and let $F_R$ be the kernel of the induced map $E_R\to G_R$. We have an induced map of exact sequences of $R$-modules, in which the vertical arrows are quasi-isomorphisms
\begin{equation*}
	\begin{tikzcd}
		0\ar[r]&F_R \ar[r]\ar[d] & E_R\ar[r]\ar[d]& G_R\ar[r]\ar[d]&0\\
		0\ar[r] & F|_R\ar[r]&E_X|_R\ar[r]&G|_R \ar[r]& 0.
	\end{tikzcd}
\end{equation*} 
We now take the exact sequence of principal parts with respect to the upper row. This gives the following commutative diagram of solid arrows with exact rows and columns

\begin{equation}\label{diag:exseqofexseq}
	\begin{tikzcd}
		&0\ar[d]&0\ar[d]&0\ar[d]& \\
		0\ar[r]& \Omega_{R/f^{-1}\mathcal{O}_Y}\otimes_R F_R\ar[r]\ar[d]&P^1_{R/f^{-1}\mathcal{O}_Y}(F_R)\ar[r]\ar[d]& F_R\ar[d]\ar[r] &0 \\
		0\ar[r]& \Omega_{R/f^{-1}\mathcal{O}_Y}\otimes_R E_R\ar[r]\ar[d]& P^1_{R/f^{-1}\mathcal{O}_Y}(E_R) \ar[r]\ar[d]\ar[l,dotted,"r"', bend right]&E_R\ar[r]\ar[d]\ar[l,dotted,"s"', bend right]&0\\
		0\ar[r]& \Omega_{R/f^{-1}\mathcal{O}_Y}\otimes_R G_R \ar[r]\ar[d]&P^1_{R/f^{-1}\mathcal{O}_Y}(G_R)\ar[r]\ar[d] & G_R\ar[r]\ar[d]& 0\\
		&0&0&0&  
	\end{tikzcd}
\end{equation}

Here the arrows denoted by $s$ and $r$ come from a splitting of the middle exact sequence, which is defined  as follows: 
Since $E_R = R\otimes_{f^{-1}\mathcal{O}_Y}f^{-1}E$, we have by definition 
\begin{gather*}
	P^1_{R/f^{-1}\mathcal{O}_Y}(E_R) = (R\otimes_{f^{-1}\mathcal{O}_Y} R/I_{\Delta}^2)\otimes_R E_R \\
	= (R\otimes_{f^{-1}\mathcal{O}_Y} R \otimes_{f^{-1}\mathcal{O}_Y} f^{-1}E)/(I_{\Delta}^{2}\cdot  R\otimes R\otimes f^{-1}E). 
\end{gather*}
Then $s$ is given on local sections by $a\otimes m\mapsto a\otimes 1 \otimes m$, which one checks to be a morphism of left $R$-modules.
We see that the lower right square of \eqref{diag:exseqofexseq} is of the form \eqref{diag:cornersquare}. In particular, we get the induced morphism
\begin{equation}\label{eq:morredat}
	\delta: F_R\to \Omega_{R/f^{-1}\mathcal{O}_Y}\otimes_R G_R.
\end{equation}
By passing to derived categories, and taking extensions of scalars along $R\to \mathcal{O}_X$, this corresponds to a morphism 
\[\overline{\at}_{E,X/Y,G}:F\to L_{X/Y}\otimes G\] 
in $D^{\Delta}(\mathcal{O}_X)$, or equivalently $D^{\leq 0}(\mathcal{O}_X)$, which we call the \emph{reduced Atiyah class} of $E$ over $Y$ with respect to the sequence \eqref{eq:redatexseq} on $X$. We also write $\overline{\at}_E$ if the remaining data is understood.

\begin{remark}
	The morphism $\delta:F_R\to \Omega_{R/f^{-1}\mathcal{O}_Y}\otimes_R G_R$ obtained by using the lower right corner of \eqref{diag:exseqofexseq} is explicitly given as follows: For a local section $f=\sum r_i\otimes f^{-1}e_i$, where $r_i$ are sections of $R$ and $e_i$ are sections of $E$ (over $Y$) we have 
	\[\delta:f\mapsto \sum dr_i\otimes \overline{e_i}.\]
\end{remark}

\begin{remark}\label{rem:redatcomptopos}
	The following triangles commute
	\begin{enumerate}[label = \arabic*)]
		\item 
		\begin{equation*}
			\begin{tikzcd}
				F\ar[r,"-\overline{\at}_E"]\ar[dr,"\at_F"']& L_{X/Y}\otimes G\ar[d] \\
				& L_{X/Y}[1]\otimes F,
			\end{tikzcd}
		\end{equation*}
		
		\item 
		\begin{equation*}
			\begin{tikzcd}
				G\ar[d]\ar[dr,"{\at_G}"]&  \\
				F[1]\ar[r,"{\overline{\at}_E[1]}"']& L_{X/Y}[1]\otimes G.
			\end{tikzcd}
		\end{equation*}
	\end{enumerate}
	In both cases, the vertical morphisms are induced from the connecting map $G\to F[1]$ of the given exact sequence. This follows from the morphisms of triangles \eqref{diag:conediags}.
\end{remark}
\begin{remark}\label{rem:redatgen}
	In general, the reduced Atiyah class depends on slightly more than a map $\varphi:f^*E\to G$ in the derived category $D(\mathcal{O}_X)$, analogously to how cones in the derived category are unique only up to non-unique isomorphism. However, for any given $\varphi$, by a variation of the above using cocones/cones instead of kernels/cokernels, one can define a reduced Atiyah class $F:=\operatorname{cocone}(\varphi)\to L_{X/Y}\otimes G$, which is well-defined \emph{up to an element of} $\Ext^{-1}(F,G)$.  
\end{remark}

\subsection{The reduced Atiyah class via the graded cotangent complex}
Let $f:X\to Y$ be a morphism of ringed topoi and let $E$ be an $\mathcal{O}_Y$-module, Tor-independent to $f$. Let $E_X:=f^*E$ and let 
\[0\to F \to E_X\to G\to 0\] 
be an exact sequence of $\mathcal{O}_X$-modules. 
We will use the $\Z$-graded cotangent complex as defined in \cite[IV 2]{Ill}.
For a $\Z$-graded ring $A=\oplus A_i$, we let $k^i$ denote the functor that sends a graded $A$-module $M=\oplus M_i$ to the $A_0$-module given by its $i$-th graded piece $M_i$. 

Let $X[E_X]$ denote the graded ringed topos whose underlying site is the \'etale site of $X$ and whose sheaf of rings is given by $\mathcal{O}_X\oplus E$, where $E$ is placed in degree $1$, and similarly for $Y[E],X[G]$. Let $q:X[G]\to X$ be the morphism induced by the inclusion $\mathcal{O}_X\to \mathcal{O}_X\oplus G$ and similarly $r:X[E_X]\to X$.

\begin{proposition}\label{prop:redatisoms}
	\begin{enumerate}[label=(\roman*)]
		\item We have a canonical natural isomorphism \[k^1q_*L^{gr}_{X[E_X]/X}\simeq E_X\] of objects of $D(\mathcal{O}_X)$.\label{redatisoms1}
		\item We have a canonical natural isomorphism \[k^1q_*L^{gr}_{X[G]/X[E_X]}\simeq F[1]\] of objects of $D(\mathcal{O}_X)$.\label{redatisoms2}
		\item Up to the isomorphisms in \ref{redatisoms1} and \ref{redatisoms2}, applying $k^1\circ q_*$ to the connecting homomorphism of graded cotangent complexes associated to the composition $X[G]\to X[E_X]\to X[F]$ recovers the inclusion $F\to E$ up to a shift.  
	\end{enumerate}
\end{proposition}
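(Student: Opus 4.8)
The plan is to deduce all three assertions from a single computation — the graded-degree-one part of the graded cotangent complex of $X[N]\to X$ for an $\mathcal{O}_X$-module $N$ — fed into the transitivity triangles of the graded cotangent complex. Throughout I use that $X$, $X[F]$, $X[E_X]$, $X[G]$ share the same underlying étale topos, so that ``$q_*$'' (and ``$r_*$'') is just restriction of scalars to $\mathcal{O}_X$, which is exact, and that extracting the graded piece $k^1$ is exact; hence $k^1\circ q_*$ is an exact functor, carrying distinguished triangles to distinguished triangles. The key input I isolate is a \emph{building block}: for any $\mathcal{O}_X$-module $N$ there is a natural isomorphism $k^1 q_*L^{gr}_{X[N]/X}\simeq N$ in $D(\mathcal{O}_X)$, with $N$ in degree $0$. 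As the graded cotangent complex is local for the étale topology, this reduces to the affine statement $k^1 L^{gr}_{A\oplus N/A}\simeq N$. Here $k^1\Omega^{gr}_{A\oplus N/A}=N$ is immediate, and the vanishing of the higher cotangent homology in graded degree one is seen by computing $L^{gr}_{A\oplus N/A}$ from a polynomial simplicial resolution of $A\oplus N$ over $A$ adapted to the grading: take $P_0=A[x_j]$ with the $x_j$ of degree $1$ indexed by a generating set of $N$, impose the linear syzygies of $N$ (degree $1$) together with the monomials $x_jx_{j'}$ (degree $2$), and at every later stage adjoin homogeneous syzygy variables. The monomials and all they generate live in graded degrees $\ge 2$, whereas the linear relations and their iterated syzygies reproduce a free resolution of $N$; so the graded-degree-one part of $L^{gr}_{A\oplus N/A}$ is that free resolution, i.e.\ $N$ in degree $0$. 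Functoriality in $N$ follows from functoriality of all the constructions involved.

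Part (i) is the building block with $N=E_X$. For (ii), apply $k^1 q_*$ to the transitivity triangle of $X[G]\xrightarrow{a}X[E_X]\to X$,
\[
a^*L^{gr}_{X[E_X]/X}\to L^{gr}_{X[G]/X}\to L^{gr}_{X[G]/X[E_X]}\xrightarrow{+1}.
\]
By the building block $k^1 q_*L^{gr}_{X[G]/X}=G$, and $k^1 q_*a^*L^{gr}_{X[E_X]/X}=E_X$ because $L^{gr}_{X[E_X]/X}$ is concentrated in graded degrees $\ge 1$, whence its degree-one part is left unchanged by the base change along $\mathcal{O}_X\oplus E_X\to\mathcal{O}_X\oplus G$. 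Functoriality of the building block applied to $a$ — whose map on structure sheaves is $\mathrm{id}_{\mathcal{O}_X}\oplus(E_X\twoheadrightarrow G)$ — identifies the first arrow, after $k^1 q_*$, with the surjection $E_X\twoheadrightarrow G$. Hence $k^1 q_*L^{gr}_{X[G]/X[E_X]}\simeq\operatorname{Cone}(E_X\twoheadrightarrow G)\simeq F[1]$, the last step because $0\to F\to E_X\to G\to 0$ identifies the cone of the surjection with $F[1]$; this proves (ii), with the isomorphism realized as the cone appearing in this triangle.

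For (iii) one keeps the same triangle. Its connecting morphism $\partial\colon L^{gr}_{X[G]/X[E_X]}\to a^*L^{gr}_{X[E_X]/X}[1]$ is the connecting homomorphism associated to the composition $X[G]\to X[E_X]\to X$ (the map $X[E_X]\to X$ factoring through $X[F]$). Applying $k^1 q_*$ and the identifications of (i) and (ii), the triangle becomes
\[
E_X\xrightarrow{\ \pi\ }G\to F[1]\xrightarrow{\ \partial\ }E_X[1],
\]
with $\pi$ the surjection. A distinguished triangle whose first arrow is $\pi$ is canonically isomorphic, up to sign, to the rotation of the triangle $F\xrightarrow{\iota}E_X\xrightarrow{\pi}G\to F[1]$ coming from the short exact sequence; matching them shows $\partial=\pm\iota[1]$, i.e.\ the shift of the inclusion $F\hookrightarrow E_X$, which is exactly the assertion of (iii).

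The only substantial step is the building block — the vanishing of the higher cotangent homology of $A\oplus N$ over $A$ in graded degree one, for $N$ arbitrary (in particular not flat). This is where the grading does the work: $L^{gr}_{A\oplus N/A}$ is genuinely complicated in higher graded degrees, but its degree-one truncation collapses to an ordinary free resolution of $N$. The remainder is bookkeeping with transitivity triangles and the exactness of $k^1\circ q_*$, together with keeping the natural identifications tight enough that the \emph{maps} in (iii), and not merely the objects, come out as claimed.
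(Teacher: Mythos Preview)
Your approach is essentially the same as the paper's: cite or sketch Illusie's computation of $k^1L^{gr}_{X[N]/X}\simeq N$ for part (i), then deduce (ii) and (iii) from the transitivity triangle for $X[G]\to X[E_X]\to X$ by applying the exact functor $k^1\circ q_*$. The paper simply cites \cite[IV (2.2.5)]{Ill} for (i) and \cite[\S1.8]{Gill} for the deduction, whereas you supply a direct sketch of the graded resolution argument and spell out why $k^1$ of the derived base change along $a$ is unchanged; both are valid.

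One small point: your parenthetical in (iii), ``the map $X[E_X]\to X$ factoring through $X[F]$,'' does not actually identify the connecting map for $X[G]\to X[E_X]\to X[F]$ with that for $X[G]\to X[E_X]\to X$; those live in different targets. What you have proven, like the paper, is the statement with base $X$ rather than $X[F]$, and indeed only that version is compatible with the isomorphism of (i) (which concerns $L^{gr}_{X[E_X]/X}$, not $L^{gr}_{X[E_X]/X[F]}$). This is almost certainly a typo in the statement; your argument for the intended assertion is fine.
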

\begin{proof}
	Statement \ref{redatisoms1} is \cite[IV (2.2.5)]{Ill}, the other parts can be deduced from that as in \cite[\S1.8]{Gill}:  Consider the distinguished triangle in $D^{gr}(X[G])$
	\[L^{gr}_{X[E_X]/X}\mid_{X[G]}\to L^{gr}_{X[G]/X}\to L^{gr}_{X[E_X]/X[G]}\xrightarrow{+1} L^{gr}_{X[E_X]/X}[1]. \]
	Applying $k^1\circ q_*$ and the isomorphisms of \ref{redatisoms1}, we obtain a distinguished triangle in $D(X)$:
	\[E_X\to G\to k^1\circ q_* L^{gr}_{X[E_X]/X[G]}\to E_X[1].\] 
	It follows that there is a unique isomorphism $k^1 \circ q_* L^{gr}_{X[E_X]/X[G]}\simeq F$ that identifies the connecting map on cohomology sheaves with the inclusion $F\to E$. 
\end{proof}

Consider the transitivity triangle of graded cotangent complexes associated to the morphism of graded ringed topoi $X[G]\to X[E_X]\to Y[E]$, and let 
\[L^{gr}_{X[G]/X[E_X]}\to L^{gr}_{X[E_X]/Y[E]}\mid_{X[G]}[1] \] 
be the connecting map. Applying $k^1\circ q_*$ and the isomorphisms of Proposition \ref{prop:redatisoms}, we obtain a map
\begin{equation}\label{eq:atredsecdef}
	F[1]\to (L_{X/Y}\otimes G)[1].
\end{equation}

\begin{proposition}\label{prop:redatsecdef}
	The map \eqref{eq:atredsecdef} agrees with the shift $\overline{\at}_{E,X/Y,G}[1]$ of the reduced Atiyah class. 	
\end{proposition}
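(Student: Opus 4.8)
The plan is to show that the two constructions of the reduced Atiyah class --- the one via the exact sequence of principal parts of Subsection \ref{subsec:redatiy}, and the one via the graded cotangent complex --- come from the same geometric data, namely the square-zero extension encoded by the module of principal parts. First I would recall that, for a map of ordinary ringed topoi $Z\to W$, the graded cotangent complex $L^{gr}_{Z[M]/W[N]}$ (for modules $M$ on $Z$, $N$ on $W$) is computed, after applying $k^1\circ q_*$, by the same simplicial-resolution recipe that underlies Illusie's Atiyah class: concretely, writing $R:=P_{f^{-1}\mathcal{O}_Y}(\mathcal{O}_X)$ as in the topos construction, the graded algebra $R[E_R]$ is a simplicial resolution of $\mathcal{O}_X[E_X]$ over $f^{-1}\mathcal{O}_Y[E]$, and its module of graded Kähler differentials in weight $1$, restricted along $X[G]\to X[E_X]$ and pushed to degree $1$, recovers $\Omega^1_{R/f^{-1}\mathcal{O}_Y}\otimes_R E_R$ --- this is exactly the identification \cite[IV (2.2.5)]{Ill} used in Proposition \ref{prop:redatisoms}\ref{redatisoms1}, and I would make it functorial in the two-step composition.

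The key step is then to identify the \emph{connecting maps}. On the graded side we have the connecting map of the transitivity triangle for $X[G]\to X[E_X]\to Y[E]$; on the principal-parts side we have the map $\delta$ of \eqref{eq:morredat}, which by construction is the boundary morphism $\operatorname{Ker}(q'')\to\operatorname{Ker}(f)$ extracted from the lower-right corner square \eqref{diag:cornersquare} inside the big diagram \eqref{diag:exseqofexseq}. The plan is to show these agree by writing both out on the level of the resolution $R$: applying $k^1\circ q_*$ to the (weight-graded) transitivity triangle produces, degreewise in the simplicial direction, precisely the exact sequences of principal parts of $F_R, E_R, G_R$ arranged as the columns of \eqref{diag:exseqofexseq}, together with the canonical splitting $s\colon a\otimes m\mapsto a\otimes 1\otimes m$ of the middle one (which is exactly the splitting coming from the $W[E]$-algebra structure used to split $L^{gr}_{X[E_X]/X}$ off $L^{gr}_{X[E_X]/Y[E]}$). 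Once the diagrams are matched, the boundary map of the transitivity triangle is \emph{by definition} the map induced by the composite $q\circ s$ through the snake, which is $\delta$; comparing with Remark \ref{rem:redatcomptopos} and the diagrams \eqref{diag:conediags} pins down the signs and the single shift, giving \eqref{eq:atredsecdef}${}=\overline{\at}_{E,X/Y,G}[1]$ on the nose.

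I expect the main obstacle to be bookkeeping rather than a conceptual difficulty: one has to check that the various canonical isomorphisms --- $k^1 q_* L^{gr}_{X[E_X]/X}\simeq E_X$, $k^1 q_* L^{gr}_{X[G]/X[E_X]}\simeq F[1]$, and the identification of $k^1 q_* L^{gr}_{X[E_X]/Y[E]}$ with $L_{X/Y}\otimes E_X$ --- are all induced by the \emph{same} choice of simplicial resolution $R$ and are compatible with restriction of scalars along $R\to\mathcal{O}_X$ and with the Dold--Kan correspondence, so that no stray sign or shift is introduced when the two triangles are overlaid. A secondary subtlety is that the graded construction is a priori stated only up to the identifications of Proposition \ref{prop:redatisoms}, whereas $\delta$ is an honest map of complexes of $R$-modules; the remedy is to run the whole comparison before passing to the derived category, i.e.\ to exhibit $R[E_R]\to R[G_R]$ and the associated graded differentials explicitly and observe that the snake lemma applied to the resulting $3\times 3$ diagram literally \emph{is} the diagram \eqref{diag:exseqofexseq}, with $\delta$ as its connecting morphism. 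The sign (the ``$-\delta$'' versus ``$\delta$'' in \eqref{diag:conediags}) is then tracked exactly as in the topos construction, so that the statement follows.
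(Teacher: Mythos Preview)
Your plan is sound and, if carried out, would give a correct proof. The strategy of realizing $R[E_R]$ as a free simplicial resolution of $\mathcal{O}_X[E_X]$ over $f^{-1}\mathcal{O}_Y[f^{-1}E]$, computing $k^1q_*$ of the graded transitivity triangle on that resolution, and then recognizing the result as the $3\times 3$ diagram \eqref{diag:exseqofexseq} with its distinguished splitting $s$ is exactly the right picture; the connecting map of the transitivity triangle then becomes the snake map $\delta$, and the sign bookkeeping is as you describe via \eqref{diag:conediags}.

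That said, the paper's own argument is far shorter and takes a different route: it does not redo this computation but simply observes that Gillam's construction in \cite[2.3]{Gill} goes through verbatim in the present (ringed topos) setting to give the same $\delta$, and then invokes \cite[Theorem 2.6]{Gill}, which is precisely the statement that the graded-cotangent-complex definition agrees with the principal-parts definition up to a shift. In other words, the paper outsources the entire comparison to \cite{Gill}, whereas you are proposing to reprove \cite[Theorem 2.6]{Gill} from scratch. Your approach buys self-containedness and makes the sign and shift explicit on the page; the paper's approach buys brevity and avoids repeating an argument already in the literature. If you pursue your plan, the only genuine work beyond what you have written is verifying carefully that $k^1q_*$ applied to the graded $\Omega^1$ of $R[E_R]$ over $f^{-1}\mathcal{O}_Y[f^{-1}E]$ really is $\Omega^1_{R/f^{-1}\mathcal{O}_Y}\otimes_R E_R$ (this is the content of \cite[IV (2.2.5)]{Ill} at the level of the resolution) and that the splitting you name coincides with the one induced by the retraction $Y[E]\to Y$; both are straightforward but should be stated.
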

\begin{proof} 
	The definition in \cite[2.3]{Gill} goes through in our setting and gives the same resulting notion of reduced Atiyah class.
	By Theorem 2.6 there, the map defined in this way agrees with \eqref{eq:atredsecdef} up to a shift. 
\end{proof}

\begin{corollary}\label{cor:redatpullback}
	The reduced Atiyah class is preserved under Tor-independent pullback: Consider a commutative diagram of ringed topoi
	\begin{equation*}
		\begin{tikzcd}
			X'\ar[r,"\alpha"]\ar[d]& X\ar[d] \\
			Y'\ar[r,"\beta"] & Y
		\end{tikzcd}
	\end{equation*}
	such that $\beta$ is Tor-independent to $E$ and such that $\alpha$ is Tor-independent to $E_X$ and $G$. Then the diagram
	\begin{equation*}
		\begin{tikzcd}
			\alpha^*F\ar[r,"\alpha^*\overline{\at}_{E,X/Y,G}"]\ar[dr,"\overline{\at}_{\beta^*E, X'/Y', \alpha^*E}"']&[27 pt] \alpha^*L_{X/Y}\otimes \alpha^*G\ar[d] \\
			& L_{X'/Y'}\otimes \alpha^* G
		\end{tikzcd}
	\end{equation*}
	commutes.	
\end{corollary}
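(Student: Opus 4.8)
The plan is to deduce this from the description of the reduced Atiyah class via the graded cotangent complex (Proposition~\ref{prop:redatsecdef}), for which compatibility with pullback is essentially built into the formalism. Writing $f':X'\to Y'$ for the induced left vertical map, set $E':=\beta^*E$, $E'_{X'}:=\alpha^*E_X\simeq f'^*E'$ (using that $\beta$ is Tor-independent to $E$ and $\alpha$ to $E_X$), and $G':=\alpha^*G$; by the Tor-independence hypotheses the sequence $0\to\alpha^*F\to E'_{X'}\to G'\to 0$ is exact and is the termwise pullback of~\eqref{eq:redatexseq}, so $F':=\alpha^*F$ is its kernel. The square of the Corollary then induces a commutative cube of graded ringed topoi relating the tower
\[ X'[G']\to X'[E'_{X'}]\to Y'[E'] \qquad\text{to}\qquad X[G]\to X[E_X]\to Y[E], \]
compatibly with the augmentations $q':X'[G']\to X'$ and $q:X[G]\to X$.

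First I would assemble three inputs. (1) \emph{Base change for the graded cotangent complex.} For each of the squares $\bigl(X'[E'_{X'}]\to X',\,X[E_X]\to X\bigr)$ and $\bigl(X'[G']\to X'[E'_{X'}],\,X[G]\to X[E_X]\bigr)$ the standard cotangent-complex base-change morphism, applied in the graded setting of \cite[IV~2]{Ill}, is an isomorphism: $L^{gr}_{X'[E'_{X'}]/X'}$ is the derived pullback of $L^{gr}_{X[E_X]/X}$, and likewise $L^{gr}_{X'[G']/X'[E'_{X'}]}$ is the derived pullback of $L^{gr}_{X[G]/X[E_X]}$ --- this is exactly where the Tor-independence of $\alpha$ to $E_X$ and to $G$ enters, forcing the relevant squares to be Tor-independent cartesian. (2) \emph{Naturality of transitivity.} The transitivity triangle of $X[G]\to X[E_X]\to Y[E]$ is functorial, so it pulls back to that of $X'[G']\to X'[E'_{X'}]\to Y'[E']$; in particular the connecting map $L^{gr}_{X[G]/X[E_X]}\to L^{gr}_{X[E_X]/Y[E]}\mid_{X[G]}[1]$ pulls back to the corresponding connecting map over $X'$. (3) \emph{Exactness of $k^1\circ q_*$ and naturality of the identifications.} On graded modules $k^1\circ q_*$ simply extracts the degree-one piece viewed over the base; it is exact and hence commutes with (derived) pullback. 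Moreover the isomorphisms of Proposition~\ref{prop:redatisoms} are natural in the diagram: for~\ref{redatisoms1} by naturality of \cite[IV~(2.2.5)]{Ill}, and for~\ref{redatisoms2} because that isomorphism is \emph{characterized uniquely} by its compatibility with the inclusion $F\to E_X$ on cohomology, a property preserved under $\alpha^*$.

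Combining (1)--(3): applying $k^1\circ q_*$ to the connecting map for the tower over $X$ gives the shift $\overline{\at}_{E,X/Y,G}[1]$ by Proposition~\ref{prop:redatsecdef}; pulling back along $\alpha$ and using the three inputs identifies $\alpha^*\bigl(\overline{\at}_{E,X/Y,G}[1]\bigr)$ with $k^1\circ q'_*$ of the connecting map for the tower over $X'$, which is $\overline{\at}_{\beta^*E,X'/Y',\alpha^*E}[1]$. This is the asserted commutativity after a shift, hence the Corollary. I expect the main obstacle to be the Tor-independence bookkeeping in step~(1): one must verify that ``$\alpha$ Tor-independent to $E_X$ and $G$'' together with ``$\beta$ Tor-independent to $E$'' really do force the two nontrivial faces of the cube to be Tor-independent cartesian, so that the base-change maps are isomorphisms and not merely natural morphisms. (Alternatively one can argue directly from the construction in~\S\ref{subsec:redatiy}: the standard simplicial resolution $R=P_{f^{-1}\mathcal{O}_Y}(\mathcal{O}_X)$ and its module of principal parts are suitably compatible with the pullback --- in the flat case this is Lemma~\ref{lem:flat} --- so the whole diagram~\eqref{diag:exseqofexseq} pulls back to the analogous one for $f'$, and with it the morphism~\eqref{eq:morredat}; this is more computational but avoids the graded formalism.)
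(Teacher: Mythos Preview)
Your proposal is correct and follows essentially the same route as the paper: the paper's proof is a one-liner invoking Proposition~\ref{prop:redatsecdef} together with the functoriality of the transitivity triangle for graded cotangent complexes, and your inputs (1)--(3) are precisely what one needs to unpack that sentence. Your careful identification of the Tor-independence bookkeeping in step~(1) is exactly the content hidden in the word ``directly'' in the paper's proof.
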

\begin{proof}
	This follows directly from Proposition \ref{prop:redatsecdef} and the functoriality of the transitivity triangle for graded cotangent complexes.  
\end{proof}

\begin{lemma}\label{lem:commdiagconn}
	Consider a commutative diagram of ringed topoi 
	\begin{equation*}
		\begin{tikzcd}
			W\ar[r]&X'\ar[r]\ar[d]& X\ar[d] &\\
			&Y'\ar[r] & Y\ar[r]&Z.
		\end{tikzcd}
	\end{equation*}
	We assume further that $X$ and $Y'$ are Tor-independent over $Y$, and that the induced square of rings on $X'$ obtained by pulling back the structure sheaves of $Y,Y'$ and $X$ is cocartesian.
	Then the diagram of (shifted) connecting maps 
	\begin{equation*}
		\begin{tikzcd}
			L_{W/X'}\ar[r]\ar[d]& L_{X'/Y'}\mid_W[1]\simeq L_{X/Y}\mid_W[1]\ar[d] \\
			L_{X'/X}\mid_W[1]\simeq L_{Y'/Y}\mid_W[1]\ar[r] & L_{Y/Z}\mid_W[2]
		\end{tikzcd}
	\end{equation*}
	on $W$ anticommutes. Here $\cdot\mid_W$ denotes pullback to $W$. 
\end{lemma}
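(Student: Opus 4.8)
The plan is to express the four arrows of the square through the connecting maps of the transitivity (Jacobi--Zariski) triangles of the single chain $W\to X'\to Y\to Z$, and then to deduce anticommutativity from the fact that two \emph{consecutive} connecting maps compose to zero. I write $\partial_{A,B,C}\colon L_{A/B}\to L_{B/C}\mid_A[1]$ for the connecting map of the triangle $L_{B/C}\mid_A\to L_{A/C}\to L_{A/B}\xrightarrow{+1}$ of a composable pair $A\to B\to C$, and abbreviate $\delta:=\partial_{W,X',Y}$.

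First I would record the base-change input. Since $X$ and $Y'$ are Tor-independent over $Y$ and the square of structure sheaves on $X'$ is cocartesian, Illusie's base-change theorem for the cotangent complex gives the canonical isomorphisms $c_1\colon L_{X'/Y'}\xrightarrow{\sim}L_{X/Y}\mid_{X'}$ and $c_2\colon L_{X'/X}\xrightarrow{\sim}L_{Y'/Y}\mid_{X'}$ used in the statement. Denoting by $i_1,p_1$ the first two maps of the transitivity triangle of $X'\to X\to Y$ and by $i_2,p_2$ those of $X'\to Y'\to Y$ (so $i_1\colon L_{X/Y}\mid_{X'}\to L_{X'/Y}$, $p_1\colon L_{X'/Y}\to L_{X'/X}$, $i_2\colon L_{Y'/Y}\mid_{X'}\to L_{X'/Y}$, $p_2\colon L_{X'/Y}\to L_{X'/Y'}$), one has $c_1^{-1}=p_2\circ i_1$ and $c_2^{-1}=p_1\circ i_2$. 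Since $p_1\circ i_2=c_2^{-1}$ is an isomorphism, the transitivity triangle of $X'\to X\to Y$ splits, giving an isomorphism $(i_1,i_2)\colon L_{X/Y}\mid_{X'}\oplus L_{Y'/Y}\mid_{X'}\xrightarrow{\sim}L_{X'/Y}$. Setting $\phi:=c_1\circ p_2$ and $\psi:=c_2\circ p_1$, the vanishings $p_2 i_2=0$ and $p_1 i_1=0$ (consecutive maps in a triangle) yield $\phi i_1=\operatorname{id}$, $\psi i_2=\operatorname{id}$, $\phi i_2=0$, $\psi i_1=0$, so $(\phi,\psi)$ is a left inverse of $(i_1,i_2)$; as the latter is an isomorphism, $(\phi,\psi)$ is its inverse, so $i_1\phi+i_2\psi=\operatorname{id}_{L_{X'/Y}}$.

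Then I would rewrite the four arrows. Functoriality of the transitivity triangle in the final object identifies the top arrow $\partial_{W,X',Y'}$ with $p_2[1]\circ\delta$ and the left arrow $\partial_{W,X',X}$ with $p_1[1]\circ\delta$; functoriality in the first object identifies $\partial_{X,Y,Z}\mid_{X'}=\partial_{X',Y,Z}\circ i_1$ and $\partial_{Y',Y,Z}\mid_{X'}=\partial_{X',Y,Z}\circ i_2$. Hence, after inserting the isomorphisms $c_1$ and $c_2$, the top-then-right composite equals $(\partial_{X',Y,Z}\circ i_1\circ\phi\circ\delta)\mid_W[1]$ and the left-then-bottom composite equals $(\partial_{X',Y,Z}\circ i_2\circ\psi\circ\delta)\mid_W[1]$. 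Their sum is therefore $(\partial_{X',Y,Z}\circ(i_1\phi+i_2\psi)\circ\delta)\mid_W[1]=(\partial_{X',Y,Z}\circ\delta)\mid_W[1]$. Finally this vanishes: functoriality in the final object factors $\delta$ through the canonical map $L_{X'/Z}\mid_W\to L_{X'/Y}\mid_W$, and $\partial_{X',Y,Z}$ precomposed with $L_{X'/Z}\to L_{X'/Y}$ is a composite of two consecutive arrows of the transitivity triangle of $X'\to Y\to Z$, hence zero. Thus the two ways around the square sum to zero, i.e.\ the square anticommutes.

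I expect the main obstacle to be the sign bookkeeping: one must verify that the base-change isomorphism is precisely $p_2\circ i_1$ and not its negative, that the comparison morphisms between transitivity triangles invoked above are the standard ones, and that the $\operatorname{Cone}^{\Delta}$, $\sigma$ and $\gamma$ sign conventions of Section~2 are applied uniformly. The robust way to pin this down, in keeping with the methods of the paper, is to pass to the standard simplicial resolutions $P_{\bullet}(\cdot)$ and the exact sequences of principal parts, where $i_1,i_2,p_1,p_2$, the maps $\partial$, and the base-change maps are all explicit chain-level maps, so that each identity above becomes an elementary verification; a single sign slip here would turn anticommutativity into commutativity, so it genuinely must be tracked.
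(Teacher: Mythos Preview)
Your argument is correct and takes a genuinely different route from the paper's. The paper passes to explicit free simplicial resolutions: it replaces the rings by a diagram of simplicial algebras in which the square is cocartesian and all maps are degreewise free, writes out the resulting $3\times 3$ diagram of modules of K\"ahler differentials, and then invokes the classical homological-algebra fact that in such a diagram the two iterated connecting maps differ by a sign. Your approach stays entirely in the derived category: you exploit the splitting $L_{X'/Y}\simeq L_{X/Y}\mid_{X'}\oplus L_{Y'/Y}\mid_{X'}$ forced by the cocartesian square to express both ways around the square through the single composition $\partial_{X',Y,Z}\mid_W[1]\circ\partial_{W,X',Y}$, which vanishes because it factors through two consecutive maps of the transitivity triangle of $X'\to Y\to Z$. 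Your proof is more conceptual and avoids any chain-level computation, but---as you correctly flag---its correctness hinges on the identifications $c_1^{-1}=p_2i_1$ and $c_2^{-1}=p_1i_2$ being on the nose rather than up to sign for whatever convention fixes the base-change isomorphism; the paper's explicit resolution step trades elegance for making that sign transparent.
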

\begin{proof}
	By taking suitable simplicial resolutions, we are reduced to the setting that we have a diagram 
	\begin{equation*}
		\begin{tikzcd}
			&C\ar[r]&C' \ar[r] &D\\
			A\ar[r] &B\ar[r]\ar[u] 	 & B'\ar[u]  &
		\end{tikzcd}
	\end{equation*}
	of simplicial rings in a topos $T$, in which the square is cocartesian, and in which all maps are free in each simplicial degree. 
	
	We then get a diagram of $D$-modules
	\[
	\begin{tikzcd}
		&0\ar[d]&0\ar[d]&0\ar[d]& \\
		0\ar[r]& \Omega_{B/A}\otimes_B D\ar[r]\ar[d]&\Omega_{C/A}\otimes_C D\ar[r]\ar[d]& \substack{\Omega_{C/B}\otimes_C D\\=\Omega_{C'/B'}\otimes_{C'} D}\ar[d]\ar[r] &0 \\
		0\ar[r]& \Omega_{B'/A}\otimes_B' D\ar[r]\ar[d]& \Omega_{D/A} \ar[r]\ar[d]&\Omega_{D/B'}\ar[r]\ar[d]&0\\
		0\ar[r]& \substack{\Omega_{B'/B}\otimes_{B'}D\\=\Omega_{C'/C}\otimes_{C'} D} \ar[r]\ar[d]&\Omega_{D/C}\ar[r]\ar[d] & \Omega_{D/C'}\ar[r]\ar[d]& 0\\
		&0&0&0&  
	\end{tikzcd}
	\]
	
	It is now a basic exercise in homological algebra to show that the following induced diagram of connecting maps in $D^{\Delta}(D)$ anticommutes:
	\begin{equation*}
		\begin{tikzcd}
			\Omega_{D/C'}\ar[r,"\delta"]\ar[d,"\delta"]& \sigma\Omega_{C'/C}\ar[d,"\sigma \delta"] \\
			\sigma\Omega_{C'/B'}\otimes_{C'}D\ar[r,"{\sigma \delta}"] &\sigma^2 \Omega_{B/A}\otimes_B D
		\end{tikzcd}
	\end{equation*}
\end{proof}

\begin{proposition}\label{prop:redatcompbasic}
	Let $E,F,G$ and $f:X\to Y$ be as before, and let $Y\to Z$ be a morphism of ringed topoi. 
	Then the following diagram anti-commutes 
	\begin{equation*}
		\begin{tikzcd}
			F\ar[rr,"{\overline{\at}_{E,X/Y,G}}"]\ar[d]& & L_{X/Y}\otimes G\ar[d] \\
			E\ar[r,"{\at_{E,Y/Z}}"] & L_{Y/Z}[1]\otimes E\ar[r]&L_{Y/Z}[1]\otimes G,
		\end{tikzcd}
	\end{equation*}
	where the right vertical map is obtained from the connecting homomorphism of cotangent complexes by tensoring with $G$. 
\end{proposition}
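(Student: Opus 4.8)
The plan is to reduce the statement to the anticommutativity of a square of connecting maps of \emph{graded} cotangent complexes, and then to invoke the graded analogue of Lemma~\ref{lem:commdiagconn}. I work with the graded square-zero extensions $X[G]\to X[E_X]\to Y[E]$ from the construction preceding Proposition~\ref{prop:redatisoms}, together with the augmentations to $X$ and $Y$; here $X[E_X]\to Y[E]$ is the base change of $f\colon X\to Y$ (Tor-independent, because $E$ is Tor-independent to $f$), $X[G]\to X[E_X]$ is induced by the surjection $E_X\twoheadrightarrow G$, and $Y[E]\to Y$, $X[E_X]\to X$, $X[G]\to X$ are the augmentations. Throughout, ``applying $k^1q_*$'' means applying the exact (hence triangulated) functor $k^1\circ q_*$, where $q$ is the relevant augmentation onto $X$.

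First I would collect three dictionary facts, each obtained by applying $k^1q_*$ to the connecting map of a transitivity triangle of graded cotangent complexes. (a) By Proposition~\ref{prop:redatsecdef}, together with the base-change isomorphism $L^{gr}_{X[E_X]/Y[E]}\simeq L^{gr}_{X/Y}\mid_{X[E_X]}$, the connecting map $L^{gr}_{X[G]/X[E_X]}\to L^{gr}_{X[E_X]/Y[E]}\mid_{X[G]}[1]$ of the triangle for $X[G]\to X[E_X]\to Y[E]$ yields the shift $\overline{\at}_{E,X/Y,G}[1]\colon F[1]\to (L_{X/Y}\otimes G)[1]$. (b) By the argument of Proposition~\ref{prop:redatisoms}(iii), the connecting map of the triangle for $X[G]\to X[E_X]\to X$ yields the shift $F[1]\to E_X[1]$ of the inclusion $F\hookrightarrow E_X$; here one uses that $k^1q_*(L^{gr}_{X[E_X]/X}\mid_{X[G]})\simeq E_X$, which holds because $L^{gr}_{X[E_X]/X}$ is concentrated in graded degrees $\ge 1$, so that its degree-$1$ part is unaffected by the restriction along $X[G]\to X[E_X]$. (c) By the graded description of the ordinary Atiyah class (\cite[IV~2.3]{Ill}), the connecting map of the triangle for $Y[E]\to Y\to Z$ yields the shift $\at_{E,Y/Z}[1]$; and, by functoriality of the transitivity triangle along the base change to $X$ and the restriction to $X[G]$ (as in Corollary~\ref{cor:redatpullback}), the restriction of that connecting map to $X[G]$ yields $f^*\at_{E,Y/Z}$ followed by $f^*L_{Y/Z}[1]\otimes(E_X\twoheadrightarrow G)$.

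Next I would apply the graded analogue of Lemma~\ref{lem:commdiagconn} --- proved by the same reduction to a diagram chase, now carried out over graded simplicial rings (free in each simplicial degree) and graded K\"ahler differentials --- to the commutative diagram of graded ringed topoi
\begin{equation*}
\begin{tikzcd}
X[G]\ar[r]&X[E_X]\ar[r]\ar[d]& Y[E]\ar[d]&\\
&X\ar[r]&Y\ar[r]&Z.
\end{tikzcd}
\end{equation*}
Both hypotheses of the lemma hold because $E$ is Tor-independent to $f$: this gives that $Y[E]$ and $X$ are Tor-independent over $Y$ and that the induced square of graded rings on $X[E_X]$ is cocartesian. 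The lemma then produces an anticommuting square of shifted connecting maps on $X[G]$. Applying $k^1q_*$ to it, and using the dictionary facts (a)--(c) together with the two base-change isomorphisms furnished by the conclusion of the lemma --- the last of which identifies the remaining edge $L_{X/Y}\otimes G\to f^*L_{Y/Z}[1]\otimes G$ with the connecting map $L_{X/Y}\to f^*L_{Y/Z}[1]$ tensored with $G$ --- one obtains an anticommuting square which, after a shift by $[-1]$, is precisely the diagram in the statement.

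The step I expect to be the main obstacle is this last identification: matching each of the four edges of the abstract anticommuting square with the morphism named in the statement requires juggling several base-change and restriction isomorphisms at once, and one must check that exactly one sign survives, so that the final diagram anticommutes rather than commutes. By contrast, setting up the graded version of Lemma~\ref{lem:commdiagconn} is routine (it localizes to a purely homological statement about graded modules over a cocartesian square of graded simplicial rings), and the dictionary facts are either already recorded in the paper or are small variants of the computations of Illusie and Gillam.
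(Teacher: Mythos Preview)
Your proposal is correct and follows essentially the same strategy as the paper: apply Lemma~\ref{lem:commdiagconn} (in its graded form) to the square-zero extensions $X[G]\to X[E_X]$, $Y[E]\to Y$ over $Z$, then apply $k^1\circ q_*$ and invoke Propositions~\ref{prop:redatisoms} and~\ref{prop:redatsecdef} to identify the edges. The only cosmetic difference is that the paper orients the diagram of Lemma~\ref{lem:commdiagconn} with $X[E]\to X$ horizontal and $X[E]\to Y[E]$ vertical (the transpose of your choice), and applies the lemma directly to ringed topoi rather than stating a separate graded analogue; neither affects the argument.
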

\begin{proof}
	We apply Lemma \ref{lem:commdiagconn} to the diagram of ringed topoi
	\begin{equation*}
		\begin{tikzcd}
			X[G]\ar[r]&X[E]\ar[r]\ar[d]& X\ar[d] &\\
			&Y[E]\ar[r] & Y\ar[r]&Z,
		\end{tikzcd}
	\end{equation*}
	which gives the anti-commutative diagram
	\begin{equation*}
		\begin{tikzcd}
			L_{X[G]/X[E]}\ar[r]\ar[d]& L_{X/Y}\mid_{X[G]}[1]\ar[d] \\
			L_{Y[E]/Y}\mid_{X[G]}[1]\ar[r] & L_{Y/Z}\mid_{X[G]}[2].
		\end{tikzcd}
	\end{equation*}
	By applying $k^1\circ q_*$ and using the identifications of Proposition \ref{prop:redatisoms}, the result follows.
\end{proof}
\subsection{The Atiyah class for an exact sequence}\label{subsec:atexseqtopoi}
Let $f:X\to Y$ be a morphism of ringed topoi and let $0\to F\to E\to G\to 0$ be an exact sequence of bounded above complexes of $\mathcal{O}_X$-modules, such that $F,E$ and $G$ are dualizable and such that their duals lie again in $D^-(X)$. After taking appropriate resolutions and up to shifting, we may assume that $E,F$ and $G$ are concentrated in degrees $\leq 0$ and that they have flat components.  
Let $\mathcal{O}_X\to J$ be an injective resolution, and let $F^{\vee}, E^{\vee}$ and $G^{\vee}$ be the complexes obtained by applying $\Sheafhom_{X}(-,J)$ to $F,E$ and $G$ respectively. Let $N$ be large enough so that $E^{\vee},F^{\vee} $ and $G^{\vee}$ have no nonzero cohomology in degrees $\geq N$, and set $\overline{E}:=(\tau^{\leq N} E)[N]$, and similarly for $\overline{F}$, $\overline{G}$. We let $J':=\tau^{\leq N}J[N]$. Then the sequence $0\to \overline{G}\to\overline{E}\to \overline{F}\to 0$ is exact, and we have the natural commutative diagram of complexes  
\begin{equation*}
	\begin{tikzcd}
		\overline{G}\otimes E\ar[r]\ar[d]& \overline{G}\otimes G\ar[d] \\
		\overline{E}\otimes E\ar[r] & \overline{J}.
	\end{tikzcd}
\end{equation*} 
Using the Alexander--Whitney map of the Dold--Kan correspondence, we get such a diagram in the category of simplicial $\mathcal{O}_X$-modules. 

Now let $R=P^1_{f^{-1}\mathcal{O}_Y}(\mathcal{O}_X)$. Then we have the following commutative diagram of $R$-modules in which the rows are exact sequences
\begin{equation*}
	\begin{tikzcd}
		0\ar[r]&\overline{G}\otimes \Omega^1_{R/f^{-1}\mathcal{O}_Y}\otimes F\ar[r]\ar[d,"\beta"]&\overline{G}\otimes P^1_{R/f^{-1}\mathcal{O}_Y}(F)\ar[r]\ar[d]&\overline{G}\otimes F \ar[d]\ar[r]& 0 \\
		0\ar[r]&\overline{E}\otimes \Omega^1_{R/f^{-1}\mathcal{O}_Y}\otimes E\ar[r]\ar[d,"\alpha"]&\overline{E}\otimes P^1_{R/f^{-1}\mathcal{O}_Y}(E)\ar[r]\ar[d]&\overline{E}\otimes E \ar[d]\ar[r]& 0 \\
		0\ar[r]& \Omega^1_{R/f^{-1}\mathcal{O}_Y}\otimes \overline{J}\ar[r]&\alpha_*(\overline{E}\otimes P^1_{R/f^{-1}\mathcal{O}_Y}(E))\ar[r]&\overline{E}\otimes E \ar[r]& 0 
	\end{tikzcd}
\end{equation*}
Here the last row is obtained from the second by pushout along the map $\alpha$, and $\alpha$ is induced from the evaluation map $\overline{E}\otimes E\to J$. Abbreviate $P(E):=P^1_{R/f^{-1}\mathcal{O}_Y}$. Since the composition $\alpha\circ\beta$ is zero, we can quotient out $\overline{G}\otimes \Omega^1_{R/f^{-1}\mathcal{O}_Y}\otimes F$ to obtain a diagram.
\begin{equation*}
	\begin{tikzcd}
		0\ar[r]&0\ar[r]\ar[d]&\overline{G}\otimes F\ar[r]\ar[d]&\overline{G}\otimes F \ar[d,"\iota"]\ar[r]& 0 \\
		0\ar[r]& \Omega^1_{R/f^{-1}\mathcal{O}_Y}\otimes \overline{J}\ar[r]&\alpha_*(\overline{E}\otimes P(E))\ar[r]&\overline{E}\otimes E \ar[r]& 0.
	\end{tikzcd}
\end{equation*} 
Here, the vertical maps are injections of $R$-modules, so we can take the quotient exact sequence
\[0\to \Omega^1_{R/f^{-1}\mathcal{O}_Y}\otimes \overline{J} \to \frac{\alpha_*(\overline{E}\otimes P(E))}{\iota(\overline{G}\otimes F)}\to \frac{\overline{E}\otimes E}{\overline{G}\otimes F} \to 0. \]

The induced connecting map defines (after extending scalars to $\mathcal{O}_X$, applying the Dold--Kan correspondence, and shifting) a morphism in $D^-(\mathcal{O}_X)$
\[\frac{E^{\vee}\otimes E}{G^{\vee}\otimes F}[-1]\to L_{X/Y},\]
which we call the Atiyah class $\at_{\underline{E}}=\at_{\underline{E},X/Y}$ of the exact sequence $0\to F\to E\to G \to 0$.

\begin{remark}
	It is easy to see by standard arguments that the result of the construction is independent of the choice of $J$. The dependence on $N$ is not addressed here. 
\end{remark}

We observe directly from the construction that

\begin{corollary}\label{cor:atexsequsat}
	The morphism $\at_{\underline{E}}$ is compatible with the usual Atiyah class, i.e. the diagram 
	\begin{equation*}
		\begin{tikzcd}
			E^{\vee}\otimes E[-1]\ar[d]\ar[dr,"{\at'_E}"]&  \\
			\frac{E^{\vee}\otimes E}{G^{\vee}\otimes F}[-1]\ar[r,"\at_{\underline{E}}"] & L_{X/Y}. 
		\end{tikzcd}
	\end{equation*}
	commutes.
\end{corollary}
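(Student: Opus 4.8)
The plan is to read off the claim from the construction of $\at_{\underline E}$ in \S\ref{subsec:atexseqtopoi} together with the naturality of connecting homomorphisms. Recall that, for dualizable $E$, the map $\at'_E$ is obtained from Illusie's Atiyah class $\at_E\colon E\to L_{X/Y}[1]\otimes E$ by tensoring with $E^{\vee}$ and composing with the evaluation $E^{\vee}\otimes E\to\mathcal{O}_X$ (after the symmetry isomorphism), and that $\at_E$ itself is by definition the morphism obtained, after extending scalars along $R\to\mathcal{O}_X$ and applying Dold--Kan, from the connecting map of the principal-parts sequence $\underline{P}^1_{R/f^{-1}\mathcal{O}_Y}(E)$. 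So it suffices to show: (a) the connecting map of the \emph{un-quotiented} pushed-out sequence
\[0\to\Omega^1_{R/f^{-1}\mathcal{O}_Y}\otimes\overline J\to\alpha_*(\overline E\otimes P(E))\to\overline E\otimes E\to 0\]
recovers $\at'_E$ after the usual truncation/shift/Dold--Kan bookkeeping; and (b) $\at_{\underline E}\circ(\text{natural quotient map})$ equals that same connecting map.

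For (a): the displayed sequence is exactly $\alpha_*$ applied to $\overline E\otimes\underline{P}^1_{R/f^{-1}\mathcal{O}_Y}(E)$, i.e. the principal-parts sequence of $E$ tensored with the flat complex $\overline E$. Tensoring an exact sequence by a flat module is exact and tensors the connecting map, so the connecting map of $\overline E\otimes\underline{P}^1_{R/f^{-1}\mathcal{O}_Y}(E)$ is $\mathrm{id}_{\overline E}\otimes\partial$, where $\partial$ is the connecting map of $\underline{P}^1_{R/f^{-1}\mathcal{O}_Y}(E)$; and pushing a short exact sequence out along a map $u$ postcomposes its connecting map with $u$. Since $\alpha$ is $\Omega^1_{R/f^{-1}\mathcal{O}_Y}$ tensored with the (truncated, shifted) evaluation $\overline E\otimes E\to\overline J$, the connecting map of the displayed sequence is $\alpha\circ(\mathrm{id}_{\overline E}\otimes\partial)$ up to the evident symmetry/Alexander--Whitney isomorphisms moving the $\Omega^1$-factor past $\overline E$. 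Extending scalars to $\mathcal{O}_X$, applying Dold--Kan, undoing the shift by $N$, and identifying $\overline E,\overline J$ with $E^{\vee},\mathcal{O}_X$ in the relevant range turns this into $\mathrm{ev}\circ(\mathrm{id}_{E^{\vee}}\otimes\at_E)$, which is precisely $\at'_E$.

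For (b): the quotient step in \S\ref{subsec:atexseqtopoi} yields a morphism of short exact sequences from the displayed sequence to $0\to\Omega^1_{R/f^{-1}\mathcal{O}_Y}\otimes\overline J\to\alpha_*(\overline E\otimes P(E))/\iota(\overline G\otimes F)\to(\overline E\otimes E)/(\overline G\otimes F)\to 0$; it is the identity on the left-hand term (the submodule $\overline G\otimes F$ being divided out meets $\Omega^1_{R/f^{-1}\mathcal{O}_Y}\otimes\overline J$ trivially, because $\iota$ is injective) and the natural surjections on the other two terms. Naturality of the connecting homomorphism then gives that the connecting map of the quotient sequence, precomposed with $\overline E\otimes E\twoheadrightarrow(\overline E\otimes E)/(\overline G\otimes F)$, equals the connecting map of the displayed sequence. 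After the bookkeeping operations the former is $\at_{\underline E}$, the surjection becomes the vertical arrow $E^{\vee}\otimes E[-1]\to\frac{E^{\vee}\otimes E}{G^{\vee}\otimes F}[-1]$ of the Corollary, and the latter is $\at'_E$ by (a); this is the asserted commutativity.

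The conceptual content is thus just naturality of connecting maps for the quotient morphism of short exact sequences; the only place that requires care — and the reason this is stated as an immediate observation — is carrying the truncation $\tau^{\le N}$, the shift by $[N]$, the restriction of scalars from $R$ to $\mathcal{O}_X$, and the Dold--Kan correspondence through the argument without picking up a sign, so that the triangle commutes on the nose rather than up to sign.
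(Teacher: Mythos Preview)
Your proof is correct and matches the paper's approach: the paper simply says ``we observe directly from the construction that'' the diagram commutes, and your argument is a faithful unpacking of exactly that observation, namely that passing from the $\alpha_*$-pushed-out sequence to its quotient by $\iota(\overline G\otimes F)$ is a morphism of short exact sequences that is the identity on the left-hand term, so naturality of connecting maps gives the triangle. Your identification of the connecting map of the un-quotiented sequence with $\at'_E$ via pushout-along-$\alpha$ and tensoring-by-$\overline E$ is also precisely the intended reading of the construction.
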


\begin{corollary}\label{cor:atexseqfuncy}
	The map $\at_{\underline{E},X/Y}$ is functorial in $Y$, i.e. given a map $Y\to Y'$, the composition 
	\[\frac{E^{\vee}\otimes E}{G^{\vee}\otimes F}[-1]\xrightarrow{\at_{\underline{E},X/Y'}} L_{X/Y'}\to L_{X/Y}  \] 
	equals $\at_{\underline{E},X/Y}$ (assuming we make the same choices of $N$ in each construction).
\end{corollary}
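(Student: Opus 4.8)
The plan is to observe that the entire construction of $\at_{\underline{E},X/Y}$ in \S\ref{subsec:atexseqtopoi} is natural in the standard simplicial resolution used there, and then to apply this naturality to the morphism of resolutions induced by the composite $X\to Y\to Y'$. Write $A:=f^{-1}\mathcal{O}_Y$, let $A'$ denote the pullback of $\mathcal{O}_{Y'}$ to $X$, so that we have ring maps $A'\to A\to\mathcal{O}_X$ on the \'etale site of $X$, and set $R':=P_{A'}(\mathcal{O}_X)$ and $R:=P_{A}(\mathcal{O}_X)$, the standard simplicial resolutions. Functoriality of $P_{(-)}(-)$ applied to the square with horizontal arrows $A'\to A$ and vertical arrows $\operatorname{id}_{\mathcal{O}_X}$ yields a morphism $R'\to R$ of simplicial $A'$-algebras, compatible with the augmentations to $\mathcal{O}_X$; in particular $R'\to R$ is a quasi-isomorphism. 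The key point — which is precisely what the hypothesis ``same choices of $N$'' guarantees — is that all the remaining data entering the construction (the injective resolution $J$ of $\mathcal{O}_X$, the integer $N$, the truncated duals $\overline{E},\overline{F},\overline{G}$, the complex $\overline{J}$, and the evaluation map $\overline{E}\otimes E\to\overline{J}$ inducing $\alpha$) depends only on $X$ and on the sequence $0\to F\to E\to G\to 0$, not on $Y$ or $Y'$, and is taken identically in the two constructions.

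Next I would build a morphism between the two big commutative diagrams of the construction. By functoriality of the exact sequence of principal parts applied to the square $(A'\to A,\ R'\to R)$, there is a morphism of short exact sequences $\underline{P}^1_{R'/A'}(M)\to\underline{P}^1_{R/A}(M_R)$ for every $R'$-module $M$, whose term on the left is the canonical map $\Omega^1_{R'/A'}\otimes_{R'}M\to\Omega^1_{R/A}\otimes_R M_R$. Taking $M=E,F,G$ (restricted of scalars from $\mathcal{O}_X$) and tensoring on the left by $\overline{E},\overline{F},\overline{G}$ produces a morphism from the $R'$-version of the diagram of \S\ref{subsec:atexseqtopoi} to the restriction along $R'\to R$ of its $R$-version; since $\alpha$ is literally the same evaluation map on $X$, this morphism is compatible with the pushout along $\alpha$ and with the quotient by the image of $\overline{G}\otimes F$. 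As these are colimit constructions, passing to cokernels gives a morphism between the two quotient short exact sequences of \S\ref{subsec:atexseqtopoi} that is the identity on the last term $\overline{E}\otimes E/(\overline{G}\otimes F)$. Hence the induced connecting maps fit into a commutative square in which the vertical arrow on the source side is the identity and the one on the target side is the map induced by $\Omega^1_{R'/A'}\otimes\overline{J}\to\Omega^1_{R/A}\otimes\overline{J}$.

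It then remains to extend scalars along $R'\to R\to\mathcal{O}_X$, apply the Dold--Kan correspondence, and shift. Lemma \ref{lem:dertens} (valid since both $R'\to R$ and $R\to\mathcal{O}_X$ are quasi-isomorphisms) shows that the two routes to $\mathcal{O}_X$-coefficients agree, so the square above descends to a commutative square in $D^-(\mathcal{O}_X)$. Under the standard identifications $\Omega^1_{R'/A'}\otimes_{R'}\mathcal{O}_X\simeq L_{X/Y'}$ and $\Omega^1_{R/A}\otimes_R\mathcal{O}_X\simeq L_{X/Y}$, the map induced by $\Omega^1_{R'/A'}\to\Omega^1_{R/A}$ is the canonical transitivity (functoriality) map $L_{X/Y'}\to L_{X/Y}$; this is classical, see \cite{Ill}. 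Tensoring with $\overline{J}$ and unwinding the identifications of \S\ref{subsec:atexseqtopoi}, the commutative square becomes exactly the equality $\at_{\underline{E},X/Y}=\big(L_{X/Y'}\to L_{X/Y}\big)\circ\at_{\underline{E},X/Y'}$, which is the assertion.

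I expect the main obstacle to be this last step: one must verify that the naive map on K\"ahler differentials induced by $R'\to R$ really computes the canonical transitivity map of cotangent complexes (not merely \emph{some} map between them), and that the intervening squares — the morphism of principal-part sequences and its compatibility with the pushout and the quotient — commute on the nose at the level of complexes before one passes to the derived category. Both facts are standard once the auxiliary data has been fixed as above, so the argument is essentially a bookkeeping exercise around the naturality of Illusie's construction.
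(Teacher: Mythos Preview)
Your proposal is correct and is essentially what the paper has in mind: in the paper this corollary is left without proof, being observed directly from the construction, and your argument spells out precisely this observation---naturality of the principal-parts diagram in the morphism of standard simplicial resolutions $P_{A'}(\mathcal{O}_X)\to P_A(\mathcal{O}_X)$, together with the fact that the auxiliary data $J,N,\overline{E},\overline{F},\overline{G},\overline{J}$ and the map $\alpha$ are independent of $Y$.
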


\begin{lemma}\label{lem:atexseqfuncx}
	The map $\at_{\underline{E},X/Y}$ is functorial for morphisms $a:X'\to X$. More precisely, given such a morphism, the diagram
	\begin{equation*}
		\begin{tikzcd}
			a^*\left(\frac{E^{\vee}\otimes E}{G^{\vee}\otimes F}[-1]\right) \ar[r,"\at_{\underline{E}}"]\ar[d]& a^*L_{X/Y}\ar[d] \\
			\frac{(a^*E)^{\vee}\otimes a^*E}{(a^*G)^{\vee}\otimes a^*F}[-1]\ar[r,"\at_{a^*\underline{E}}"] & L_{X'/Y}
		\end{tikzcd}
	\end{equation*}
	commutes (assuming we make the same choices of $N$ in the construction). Here, we assume that $E,F,G$ are already given by bounded above complexes with flat components. 
\end{lemma}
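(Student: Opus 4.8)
The plan is to pull back the construction of \S\ref{subsec:atexseqtopoi} along $a$ and compare it, term by term, with the same construction performed directly on $X'$. Set $f':=f\circ a\colon X'\to Y$. The morphism $\at_{\underline E}$ is produced by a chain of natural operations: forming the exact sequence of principal parts for the standard simplicial resolution, tensoring with the relevant dual data, a pushout along an evaluation map, passing to a quotient, taking the connecting homomorphism of a short exact sequence, and finally extension of scalars followed by the Dold--Kan correspondence and a shift. Consequently it is enough to exhibit a morphism from the $a$-pullback of the whole diagram used in the construction over $X$ to the analogous diagram over $X'$, compatibly with all of this structure; naturality of connecting homomorphisms then yields the square in the statement, with right-hand vertical map the canonical $a^*L_{X/Y}\to L_{X'/Y}$.

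First I would fix the auxiliary data and the comparison maps relating them. The standard simplicial resolution is functorial, so there is a canonical map of simplicial rings $a^{-1}R\to R'$ on $X'$ over $\mathcal{O}_{X'}$, where $R$ and $R'$ are the standard resolutions for $X/Y$ and $X'/Y$ respectively; this induces, after base change, compatible maps on modules of principal parts $a^*P^1_{R/f^{-1}\mathcal{O}_Y}(-)\to P^1_{R'/(f')^{-1}\mathcal{O}_Y}(a^*(-))$ and on the modules $\Omega^1$, the latter being compatible --- via the functoriality of the cotangent complex --- with the canonical map $a^*L_{X/Y}\to L_{X'/Y}$. Choosing injective resolutions $\mathcal{O}_X\to J$ and $\mathcal{O}_{X'}\to J'$, the natural map $\mathcal{O}_{X'}=a^*\mathcal{O}_X\to a^*J$ lifts, uniquely up to homotopy, to a map of resolutions $a^*J\to J'$; composing with the canonical maps $a^*\Sheafhom_X(-,J)\to\Sheafhom_{X'}(a^*(-),a^*J)$ yields comparison morphisms $a^*E^{\vee}\to(a^*E)^{\vee}$ and likewise for $F,G$, which are quasi-isomorphisms since $E,F,G$ are dualizable and, having flat components, pull back to their derived pullbacks.

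I would then assemble these comparison maps into a morphism of the big commutative diagram of $R$-modules with exact rows that appears in the construction: pulling the $X$-diagram back along $a$, map it to the $R'$-module diagram on $X'$ using the functoriality of $P^1_{-/-}$ in the ring map and in the module argument, the comparison maps above, and the map $a^*J\to J'$ --- the last being exactly what is needed to intertwine the evaluation maps $\overline E\otimes E\to J$, hence the maps $\alpha$ along which the pushout is formed. Since pushouts, and quotients by compatibly embedded submodules, are functorial, the pushout row and then the quotient short exact sequence are carried to the corresponding objects on $X'$; their connecting homomorphisms are therefore intertwined by the comparison maps. Extension of scalars along $R\to\mathcal{O}_X$ (resp.\ $R'\to\mathcal{O}_{X'}$), the Dold--Kan correspondence, and the shift are all natural, which gives the claimed commutativity.

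The main obstacle is the bookkeeping forced by the non-exactness of $a^*$: one cannot pull back the injective resolution $J$, nor the truncated duals $\overline E,\overline F,\overline G$, and recover the $X'$-constructions on the nose, so one must systematically replace $a^*E^{\vee}$, $a^*\overline E$, and so on by the genuine constructions over $X'$ through the comparison quasi-isomorphisms, and then check that these replacements remain compatible with the pushout, with the quotient, and with the identification of the source term $\frac{E^{\vee}\otimes E}{G^{\vee}\otimes F}$. A related point, already partly granted by the hypothesis that the same $N$ is used on both sides, is to verify that $a^*$ applied to the truncation $\tau^{\le N}(-)$ maps correctly to the truncation on $X'$; this again reduces to the flatness of the components and the boundedness of the cohomology of the perfect complexes $E,F,G$.
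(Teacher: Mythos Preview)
Your proposal is correct and follows essentially the same approach as the paper. The paper's proof is in fact much terser than yours: it isolates only the key difficulty---namely that the derived pullback of the truncated dual $\overline{E}$ may be computed either via a flat resolution of $\overline{E}$ or by repeating the truncated-dual construction with $a^*E$, and that the natural comparison map $La^*\overline{E}\to\overline{a^*E}$ is a quasi-isomorphism because $E$ is perfect---and then explicitly leaves the remaining details to the reader. Your outline supplies exactly those details (functoriality of the standard resolution, the comparison of injective resolutions $a^*J\to J'$, compatibility with the pushout along $\alpha$ and the quotient), and your final paragraph identifies the same main obstacle the paper singles out.
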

\begin{proof}
	The main point is that to compute the derived pullback of $\overline{E}$ (and similarly $\overline{G}, \overline{F}$), we may use either a flat resolution of $\overline{E}$-- denote this $La^*\overline{E}$ by abuse of notation, or repeat the construction with $a^*E$ in place of $E$--which we denote $\overline{a^*E}$. The two are related by a natural map $La^*E\to \overline{a^*E}$, which is a quasi-isomorphism, as can be checked locally where it follows from the assumption that $E$ is a perfect complex. The details are left to the reader. 
\end{proof}

Now suppose that we have an exact sequence $0\to F\to E_X\to G\to 0$ as in \eqref{eq:redatexseq}. Then the Atiyah class for the exact sequence is related to the reduced Atiyah class
\begin{proposition}\label{prop:redatexseqat}
	We have a commutative diagram 
	\begin{equation*}
		\begin{tikzcd}
			\frac{E^{\vee}\otimes E}{G^{\vee}\otimes F}[-1]\ar[r,"-"]\ar[dr, "\at_{\underline{E}}"']& G^{\vee}\otimes F\ar[d, "\overline{\at}_E'"] \\
			& L_{X/Y},
		\end{tikzcd}
	\end{equation*}
	where the horizontal map is \emph{minus} the natural connecting homomorphism.
\end{proposition}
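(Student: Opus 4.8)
The plan is to realize both $\at_{\underline E}$ and the composite $\overline{\at}'_E\circ(-\partial)$ as connecting homomorphisms of short exact sequences of (simplicial) $R$-modules, $R=P_{f^{-1}\mathcal{O}_Y}(\mathcal{O}_X)$ being the standard simplicial resolution, and to extract the claimed identity from a single commutative diagram of such sequences, in the spirit of the discussion around \eqref{diag:conediags}. Throughout I work over $R$ and pass to $D^-(\mathcal{O}_X)$ by extension of scalars and Dold--Kan only at the end, so that the map $-\partial$ in the statement is (minus) the connecting homomorphism of the sequence $0\to\overline G\otimes F\to\overline E\otimes E\to\frac{\overline E\otimes E}{\overline G\otimes F}\to 0$ over $R$ (the sequence \eqref{eq:quotienttriang} at the chain level), and $\overline{\at}'_E$ is represented by the chain map $\overline G\otimes F_R\xrightarrow{\overline G\otimes\delta}\overline G\otimes\Omega^1_{R/f^{-1}\mathcal{O}_Y}\otimes G_R\xrightarrow{\operatorname{ev}}\Omega^1_{R/f^{-1}\mathcal{O}_Y}\otimes\overline J$, where $\delta$ is the map of \eqref{eq:morredat} read off from the corner square of \eqref{diag:exseqofexseq}.

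First I would pin down the shape of the answer. Since $E_X=f^*E$ is pulled back from $Y$, its relative Atiyah class $\at_{E_X,X/Y}$ vanishes (pullback property of the Atiyah class). By Corollary \ref{cor:atexsequsat}, $\at_{\underline E}$ therefore annihilates the image of $E^\vee\otimes E[-1]\to\frac{E^\vee\otimes E}{G^\vee\otimes F}[-1]$; rotating \eqref{eq:quotienttriang} identifies the cone of this map with $G^\vee\otimes F$, the transition morphism being precisely $-\partial$, so $\at_{\underline E}$ factors as $\overline\psi\circ(-\partial)$ for some $\overline\psi\colon G^\vee\otimes F\to L_{X/Y}$. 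Such a factorization is unique only up to morphisms of the form $G^\vee\otimes F\to E^\vee\otimes E\to L_{X/Y}$, so it remains to identify $\overline\psi$ with $\overline{\at}'_E$, equivalently to show $(\overline\psi-\overline{\at}'_E)\circ\partial=0$.

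To carry out the identification I would inspect the construction of \S\ref{subsec:atexseqtopoi} directly. There the exact sequence $\mathcal S_1\colon 0\to\Omega^1_{R/f^{-1}\mathcal{O}_Y}\otimes\overline J\to\frac{\alpha_*(\overline E\otimes P(E))}{\iota(\overline G\otimes F)}\to\frac{\overline E\otimes E}{\overline G\otimes F}\to 0$ defining $\at_{\underline E}$ is obtained from the sequence $0\to\Omega^1_{R/f^{-1}\mathcal{O}_Y}\otimes\overline J\to\alpha_*(\overline E\otimes P(E))\to\overline E\otimes E\to 0$ --- whose connecting map is $\at'_{E_X}=0$, so that it splits --- by quotienting the middle and right terms by $\overline G\otimes F$. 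Tracking how $\iota(\overline G\otimes F)$ sits inside $\alpha_*(\overline E\otimes P(E))$ relative to such a splitting, one finds that the obstruction to extending the splitting across the quotient is governed by the discrepancy between the chosen section of $\overline E\otimes P(E)\to\overline E\otimes E$ and the projection $P^1_{R/f^{-1}\mathcal{O}_Y}(F_R)\to F_R$ on the $F$-part, i.e.\ by $\delta$. The upshot is that $\mathcal S_1$ is isomorphic, as an extension of $\frac{\overline E\otimes E}{\overline G\otimes F}$ by $\Omega^1_{R/f^{-1}\mathcal{O}_Y}\otimes\overline J$, to the pushout, along the above chain representative of $-\overline{\at}'_E$, of the sequence $0\to\overline G\otimes F\to\overline E\otimes E\to\frac{\overline E\otimes E}{\overline G\otimes F}\to 0$; reading off connecting maps yields $\at_{\underline E}=\overline{\at}'_E\circ(-\partial)$, which is the assertion. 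The minus sign is the same sign discrepancy --- between the connecting homomorphism of a pushout and that of the principal-parts sequence of $F$ --- recorded by the ``$-\delta$'' in \eqref{diag:conediags}.

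The main obstacle is this last comparison: one must match the two a priori unrelated presentations --- the presentation of $\at_{\underline E}$ via $\overline E$-tensored, pushed-out principal parts of $E$ and the subsequent quotient, versus the presentation of $\overline{\at}_E$ via the corner square of the principal parts of $0\to F_R\to E_R\to G_R\to 0$ --- and carry all the signs and shifts through the Dold--Kan correspondence and through the passage from $\at_E$ to $\at'_E$. Because the factorization obtained in the first step is not unique, this bookkeeping genuinely cannot be replaced by a purely formal argument; conversely, once the identity is established it immediately yields the compatibility of $\at_{\underline E}$ with $\at_F$ by restricting along $F^\vee\otimes F[-1]\hookrightarrow\frac{E^\vee\otimes E}{G^\vee\otimes F}[-1]$ and invoking Remark \ref{rem:redatcomptopos}.
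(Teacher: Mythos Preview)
Your core argument---realize $\mathcal S_1$ as the pushout of the tautological sequence
$0\to\overline G\otimes F_R\to\overline E\otimes E_R\to\frac{\overline E\otimes E_R}{\overline G\otimes F_R}\to 0$
along the chain map induced by $\delta$---is exactly what the paper does, and is correct.  The paper writes this down directly: using the specific left-$R$-linear section $s:E_R\to P^1_{R/f^{-1}\mathcal O_Y}(E_R)$ coming from $E_R=R\otimes_{f^{-1}\mathcal O_Y}f^{-1}E$, the map
\[
\overline E\otimes E_R\xrightarrow{\ \operatorname{id}\otimes s\ }\overline E\otimes P(E_R)\to \frac{\alpha_*(\overline E\otimes P(E_R))}{\iota(\overline G\otimes F_R)}
\]
gives the middle vertical arrow of a map of short exact sequences with the identity on the right, and one then checks the induced left vertical arrow is the chain representative of $\overline{\at}'_E$ by restricting to the subdiagram with $\overline G\otimes E_R$ on top.

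Your preliminary step---arguing abstractly that $\at_{E_X,X/Y}=0$ forces a factorization $\at_{\underline E}=\overline\psi\circ(-\partial)$ for \emph{some} $\overline\psi$---is unnecessary and, as you yourself note, does not determine $\overline\psi$; you end up doing the same explicit computation regardless.  The paper simply omits it.  Also, when you say the $\alpha_*$-sequence ``splits'' because its connecting map vanishes, be careful: what the argument actually uses is not a derived-category splitting but the honest chain-level section $\operatorname{id}\otimes s$, which exists precisely because $E_R$ is base-changed from $Y$; it is this concrete section that makes the identification with $\delta$ possible.
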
 
\begin{proof}
	We may work with the sequence 
	\[0\to F_R\to E_R\to G_R\to 0 \]
	instead of $F,E,G$ as in the construction of the reduced Atiyah class. 
	We need to show that we have a commutative diagram 
	\begin{equation*}
		\begin{tikzcd}
			0\ar[r]& \overline{G}\otimes F_R\ar[r]\ar[d]&\overline{E}\otimes E_R \ar[r]\ar[d]& \frac{\overline{E}\otimes E_R}{\overline{G}\otimes F_R} \ar[r]\ar[d,equals]& 0\\
			0\ar[r] & L_{X/Y}\otimes \overline{J}\ar[r]& \frac{\alpha_*(\overline{E}\otimes P(E_R))}{\iota(\overline{G}\otimes F_R)}\ar[r] & \frac{\overline{E}\otimes E_R}{\overline{G}\otimes F_R} \ar[r]& 0,
		\end{tikzcd}
	\end{equation*}
	where the left vertical arrow is obtained from the composition 
	\[\overline{G}\otimes F_R\to \overline{G}\otimes \Omega^1_{R/f^{-1}\mathcal{O}_Y}\otimes G_R\to \overline{G}\otimes \Omega^1_{R/f^{-1}\mathcal{O}_Y}\otimes G\to \overline{J},\]
	and therefore induces the reduced Atiyah class (up to a shift) when passing to $D(X)$.
	The proposition then clearly follows. 
	To construct the diagram, let $s:E_R\to P(E_R)$ denote the section in the construction of the reduced Atiyah class. Then the middle vertical map is the composition 
	\[\overline{E}\otimes E_R\xrightarrow{s\otimes \operatorname{id}} \overline{E}\otimes P(E)\to \frac{\alpha_*(\overline{E}\otimes P(E_R))}{\iota(\overline{G}\otimes F_R)}.\]
	Since the composition $\overline{E}\otimes E_R\to \overline{E}\otimes P(E_R)\to \overline{E}\otimes E_R$ is the identity, we naturally obtain a commutative diagram of the desired form and it remains only to show that the left vertical map is as desired. But in fact, we have a subdiagram 
	
	\begin{equation*}
		\begin{tikzcd}
			0\ar[r]& \overline{G}\otimes F_R\ar[r]\ar[d]&\overline{G}\otimes E_R \ar[r]\ar[d]& \frac{\overline{G}\otimes E_R}{\overline{G}\otimes F_R} \ar[r]\ar[d,equals]& 0\\
			0\ar[r] & L_{X/Y}\otimes \overline{J}\ar[r]& \frac{\alpha_*(\overline{G}\otimes P(E_R))}{\iota(\overline{G}\otimes F_R)} \ar[r] & \frac{\overline{G}\otimes E_R}{\overline{G}\otimes F_R} \ar[r]& 0,
		\end{tikzcd}
	\end{equation*}
	the lower row of which is identified with the row
	\[0\to L_{X/Y}\otimes \overline{J} \to \alpha_*(\overline{G}\otimes G_R)\to \overline{G}\otimes G_R\to 0.\]
	Under this identification, the middle map factors as 
	\[\overline{G}\otimes E_R\xrightarrow{id_{\overline{G}}\otimes s} \overline{G}\otimes P(G)\to \alpha_*(\overline{G}\otimes P(G)),\] and it follows that the left vertical map indeed factors through the reduced Atiyah class as desired. 
\end{proof}

\section{Definitions}
We construct the Atiyah class for an algebraic stack, and show that it is independent of various choices made in the construction. We then address the case of the reduced Atiyah class and of the Atiyah class of an exact sequence. 
\subsection{Construction of the Atiyah class}\label{sec:constr-at}

\begin{construction}\label{constr:atiyah}
	Let $f:\mathcal{X}\to \mathcal{Y}$ be a morphism of algebraic stacks and let $E\in D^{\leq 0}_{qcoh}(\mathcal{X})$. By truncating, we may assume that $E$ is represented by a complex with nonzero terms only in negative degrees. We let $E_{\eqtopos{W}}$ be the induced $\mathcal{O}_{\eqtopos{W}}$-module, which we also regard as a simplicial module. Choose a diagram as in \eqref{diag:2comm} and consider the setup of Situation \ref{sit:wxtopoi}. We let $R:=P_{h^{-1}\mathcal{O}_{\eqtopos{Y}}}(\eqtopos{W})$ be the free simplicial resolution. This is a simplicial ring on $\eqtopos{W}$ with components $R_X=P_{g^{-1}\mathcal{O}_{Y_{\bullet}}}(\mathcal{O}_{X_{\bullet}})$ and $R_W=P_{h^{-1}\mathcal{O}_{Y_{\bullet}}}(\mathcal{O}_{W_{\bullet}})$. We regard $E_{\eqtopos{W}}$ as an $R$-module via restriction of scalars.
	Consider the exact sequence of principal parts $ \underline{P}_{R/\mathcal{O}_{\eqtopos{Y}}}^1(E_{\eqtopos{W}})$ associated to $E_{\eqtopos{W}}$ and the ring map $\eqtopos{h}^{-1}\mathcal{O}_{\eqtopos{Y}}\to R$.
	\[0\to L_{\eqtopos{W}/\eqtopos{Y}}\otimes_{\mathcal{O}_{\eqtopos{W}}} E_{\eqtopos{W}} \to P_{R/\eqtopos{h}^{-1}\mathcal{O}_{\eqtopos{Y}}}^1(E_{\eqtopos{W}})  \to E_{\eqtopos{W}} \to 0.\]
	It induces a map 
	\begin{equation}
		\delta_{R/\eqtopos{h}^{-1} \mathcal{O}_{\eqtopos{Y}}}(E_{\eqtopos{W}}): E_{\eqtopos{W}} \to \sigma L_{\eqtopos{W}/\eqtopos{Y}}\otimes_{\mathcal{O}_{\eqtopos{W}}} E_{\eqtopos{W}} \label{eq:atfirstmap}
	\end{equation} in $D^{\Delta}(R)$. By Lemma \ref{lem:dertens}, this is just the restriction of a morphism in $D^{\Delta}(\eqtopos{W})$ and thus corresponds to a unique morphism $E_{\eqtopos{W}} \to  L_{\eqtopos{W}/\eqtopos{Y}}\otimes_{\mathcal{O}_{\eqtopos{W}}} E_{\eqtopos{W}}[1]$ in $D(\mathcal{O}_{\eqtopos{W}})$ by the Dold--Kan correspondence.  We have canonical natural isomorphisms $\coneop^{\Delta}(E)\xrightarrow{\sim} E_{W_{\bullet}}[1]$ and $\coneop(L_{\eqtopos{W}/\eqtopos{Y}})\xrightarrow{\sim} \eta_W^*L_{\mathcal{X}/\mathcal{Y}}[1]$ by Lemmas \ref{lem:conepresqcoh} and \ref{lem:coneprescot} respectively. Since $E$ has quasi-coherent cohomology sheaves, the assumptions of Lemma \ref{lem:tensorcone} \ref{lem:tensorcone1} are satisfied, so that we obtain a morphism 
	\begin{equation}\label{eq:atsecmap}
		E_{W_{\bullet}}[1]\to  \eta_W^*L_{\mathcal{X}/\mathcal{Y}}\otimes E_{W_{\bullet}}[2].
	\end{equation}
	Applying $\eta_{W*}$ and the shift $[-1]$ and multiplying by $-1$ (the sign is obtained from commuting two shift functors and is needed for compatibility with the usual Atiyah class), we obtain a morphism 
	\begin{equation}\label{eq:atdef}
		\at_{E,\mathcal{X}/\mathcal{Y}}:=\at_{E,\mathcal{X}/\mathcal{Y},X/Y}: E\to L_{\mathcal{X}/\mathcal{Y}}\otimes E[1].
	\end{equation}
	This is the \emph{Atiyah class of $E$ over $\mathcal{Y}$}. We will also write $\at_E$, if the morphism $f:\mathcal{X}\to \mathcal{Y}$ is understood. In Corollary \ref{cor:atindep}, we show that the Atiyah class is independent of choice of diagram \eqref{diag:2comm}.
\end{construction}

As a consequence of the construction, we have 
\begin{lemma}\label{lem:functoriallem}
	The morphism \eqref{eq:atdef} is functorial in $E\in D^{\leq 0}_{qcoh}(\mathcal{X})$ for a fixed choice of diagram \eqref{diag:2comm}. 
\end{lemma}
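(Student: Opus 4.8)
The plan is to observe that, for the fixed diagram \eqref{diag:2comm}, Construction \ref{constr:atiyah} produces $\at_E$ by applying to $E$ a finite composite of functors and of natural transformations between them, every stage of which is manifestly functorial in $E$. Given a morphism $\varphi\colon F\to E$ in $D^{\leq 0}_{qcoh}(\mathcal{X})$, the asserted commuting square relating $\at_F$, $\at_E$ and $\varphi$ then follows formally by chasing $\varphi$ through the construction.

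First, the passage $E\mapsto E_{\eqtopos{W}}$ is the restriction of an honest functor on categories of (simplicial) modules: it is built from the pullback functors $\eta_X^*$, $\eta_W^*$ together with the comparison maps $s^*,t^*$, followed by restriction of scalars along $R\to\mathcal{O}_{\eqtopos{W}}$. Next, the exact sequence of principal parts $\underline{P}^1_{R/\eqtopos{h}^{-1}\mathcal{O}_{\eqtopos{Y}}}(E_{\eqtopos{W}})$ depends functorially on the $R$-module $E_{\eqtopos{W}}$, since the functors $M\mapsto P^1_{R/\eqtopos{h}^{-1}\mathcal{O}_{\eqtopos{Y}}}(M)$ and $M\mapsto\Omega^1_{R/\eqtopos{h}^{-1}\mathcal{O}_{\eqtopos{Y}}}\otimes_R M$ are additive and the maps in the short exact sequence are natural transformations. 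Consequently the morphism \eqref{eq:atfirstmap}, being the connecting map attached in $D^{\Delta}(R)$ to this functorial family of short exact sequences, is a natural transformation between the functors $E\mapsto E_{\eqtopos{W}}$ and $E\mapsto\sigma L_{\eqtopos{W}/\eqtopos{Y}}\otimes_{\mathcal{O}_{\eqtopos{W}}}E_{\eqtopos{W}}$ on $D^{\Delta}(R)$; one uses here the flatness of $\Omega^1_{R/\eqtopos{h}^{-1}\mathcal{O}_{\eqtopos{Y}}}$ over $R$ to identify $\Omega^1_{R/\eqtopos{h}^{-1}\mathcal{O}_{\eqtopos{Y}}}\otimes_R E_{\eqtopos{W}}$ with (the restriction of scalars of) $L_{\eqtopos{W}/\eqtopos{Y}}\otimes_{\mathcal{O}_{\eqtopos{W}}}E_{\eqtopos{W}}$, functorially in $E$.

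All remaining steps are plainly functorial: the equivalence of Lemma \ref{lem:dertens} along the quasi-isomorphism $R\to\mathcal{O}_{\eqtopos{W}}$ and the Dold--Kan correspondence are equivalences of categories; the comparison isomorphism $\coneop^{\Delta}(E_{\eqtopos{W}})\simeq E_{W_{\bullet}}[1]$ of Lemma \ref{lem:conepresqcoh} and the tensor--cone comparison of Lemma \ref{lem:tensorcone} are natural in $E$, whereas the isomorphism $\coneop(L_{\eqtopos{W}/\eqtopos{Y}})\simeq\eta_W^*L_{\mathcal{X}/\mathcal{Y}}[1]$ of Lemma \ref{lem:coneprescot} does not involve $E$ at all; and $\eta_{W*}$, the shift $[-1]$ and multiplication by $-1$ are functors. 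Composing these natural transformations exhibits $E\mapsto\at_E$ as a natural transformation between the identity functor on $D^{\leq 0}_{qcoh}(\mathcal{X})$ and $E\mapsto L_{\mathcal{X}/\mathcal{Y}}\otimes E[1]$, which is precisely the functoriality claimed. The only ingredient that is more than bookkeeping is the naturality of the connecting map used in the second paragraph, namely that a morphism of short exact sequences induces a commuting square of connecting maps; here it is applied with vertical arrows $\varphi$ and $\id_{\Omega^1}\otimes\varphi$, and it can be read off from the functoriality of the $\operatorname{Cone}^{\Delta}$-construction established earlier in the paper.
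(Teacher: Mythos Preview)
Your proof is correct and follows exactly the approach the paper intends: the paper states this lemma with the preface ``As a consequence of the construction, we have'' and gives no further argument. You have simply spelled out in detail why each step of Construction~\ref{constr:atiyah} is functorial in $E$, which is precisely what underlies the paper's one-line justification.
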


\begin{remark}\label{rem:atreple}
	\begin{enumerate}[label = \roman*)]
		\item	In Construction \ref{constr:atiyah}, let $F$ be an $R$-module together with an isomorphism $F\to E_{\eqtopos{W}}$ in $D^{\Delta}(R)$. Then we may instead work with the exact sequence $\underline{P}_{R/\eqtopos{h}^{-1}\mathcal{O}_{\eqtopos{Y}}}(F)$, and define \eqref{eq:atfirstmap} equivalently as the map obtained as the composition
		\[E_{\eqtopos{W}}\xrightarrow{\sim}F \to \sigma \underline{P}_{R/\mathcal{O}_{\eqtopos{Y}}}^1(F_{\eqtopos{W}}) \xrightarrow{\sim} \sigma L_{\eqtopos{W}/\eqtopos{Y}}\otimes_{\mathcal{O}_{\eqtopos{W}}} E_{\eqtopos{W}}.\]
		\item \label{rem:atreple2} We may also replace $\eqtopos{W}$ and $\eqtopos{Y}$ by their analogues $W_{\wedge}$ and $Y_{\wedge}$, associated to the respective diagrams 
		\[X_{\bullet}\xleftarrow{s_{\bullet}} W_{\bullet} \xrightarrow{t_{\bullet}} X_{\bullet}, \mbox{ and } Y_{\bullet} \xleftarrow{=}Y_{\bullet}\xrightarrow{=}Y_{\bullet}\] 
		throughout Construction \ref{constr:atiyah}.
	\end{enumerate}
\end{remark}

\paragraph{Well-definedness and compatibility with pullback.}
Suppose that we are given a map of diagrams \eqref{diag:2comm}, i.e. that we have a $2$-commuting cube

\begin{equation}\label{eq:functorialitycube}
	\begin{tikzcd}[cramped]
		X' \arrow[rr] \arrow[dr] \arrow[dd] &&
		\mathcal{X}' \arrow[dd] \arrow[dr,"A"] \\
		& X \arrow[rr,crossing over] &&
		\mathcal{X} \arrow[dd] \\
		Y' \arrow[rr] \arrow[dr] && \mathcal{Y}' \arrow[dr] \\
		& Y \arrow[rr] \arrow[uu,<-, crossing over]&& \mathcal{Y},
	\end{tikzcd}
\end{equation}
whose front and back faces are as in $\eqref{diag:2comm}$. (This means in particular, that for any two maps $X'\to \mathcal{Y}$ obtained by traveling along the edges of the cube, the two two-isomorphisms relating them by traversing the faces are identical.) 

\begin{lemma}\label{lem:functoriality}
	Let $E\in D^{\leq 0}_{qcoh}(\mathcal{X})$. Let 
	\[\at_E:=\at_{E,\mathcal{X}/\mathcal{Y},X/Y} \mbox{ and } \at_{LA^*E}:=\at_{LA^*E,\mathcal{X}'/\mathcal{Y}',X'/Y' }.\] 
	Then the diagram 
	\begin{equation*}
		\begin{tikzcd}[column sep= 3 em ]
			LA^*E\ar[r,"{LA^*\at_{E}}"]\ar[d]& LA^*L_{\mathcal{X}/\mathcal{Y}}\otimes LA^*E[1]\ar[d] \\
			LA^*E\ar[r,"{\at_{LA^*E}}"] & L_{\mathcal{X}'/\mathcal{Y}'} \otimes LA^*E[1]
		\end{tikzcd}
	\end{equation*}
	commutes.
\end{lemma}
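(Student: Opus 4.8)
The plan is to check that every step of Construction \ref{constr:atiyah} is natural with respect to the cube \eqref{eq:functorialitycube}, and then to correct for the fact that the naive simplicial pullback of $E_{\eqtopos{W}}$ is not literally the object attached to $LA^{*}E$, but is isomorphic to it only after restriction to the topos $W_{\wedge}$.

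\emph{Set-up.} The cube induces maps of strictly simplicial algebraic spaces $X'_{\bullet}\to X_{\bullet}$ and $Y'_{\bullet}\to Y_{\bullet}$, and --- via the two copies of $X'\to X$, which are compatible over $\mathcal{X}_{Y_{\bullet}}$ --- a map $W'_{\bullet}\to W_{\bullet}$; hence a morphism of parallel-arrow ringed topoi $\eqtopos{A}\colon\eqtopos{W'}\to\eqtopos{W}$ and a morphism $\eqtopos{h}'\colon\eqtopos{W'}\to\eqtopos{Y'}$ fitting into a $2$-commutative square with $\eqtopos{h}$ and the induced map $\eqtopos{Y'}\to\eqtopos{Y}$. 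By the second part of Remark \ref{rem:atreple} I may carry out Construction \ref{constr:atiyah} throughout on $W_{\wedge}$, $Y_{\wedge}$ and their primed analogues, which is what will make Lemma \ref{lem:technicalfunctorialitylemma} applicable.

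\emph{Naturality of the construction.} Let $R,R'$ be the standard simplicial resolutions used on the two sides. Functoriality of the standard resolution gives a natural map $\eqtopos{A}^{-1}R\to R'$ compatible with the augmentations to the structure sheaves and with the maps from $\mathcal{O}_{\eqtopos{Y}}$; functoriality of the module of principal parts then produces, after pulling back along $\eqtopos{A}$ and mapping to $R'$, a morphism of exact sequences of principal parts $\underline{P}^{1}_{R/\eqtopos{h}^{-1}\mathcal{O}_{\eqtopos{Y}}}(E_{\eqtopos{W}})\to\underline{P}^{1}_{R'/(\eqtopos{h}')^{-1}\mathcal{O}_{\eqtopos{Y'}}}(\eqtopos{A}^{*}E_{\eqtopos{W}})$, whose degree-one term realizes the standard pullback map $\eqtopos{A}^{*}L_{\eqtopos{W}/\eqtopos{Y}}\to L_{\eqtopos{W'}/\eqtopos{Y'}}$ of cotangent complexes (both $\Omega^{1}$'s are flat over their resolutions, so compute the derived tensor product). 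By functoriality of the connecting map of an exact sequence this identifies $\eqtopos{A}^{*}$ of the map \eqref{eq:atfirstmap} with the map \eqref{eq:atfirstmap} for $\eqtopos{A}^{*}E_{\eqtopos{W}}$, up to the above comparison of targets. I then propagate this commuting square through the remaining steps: Lemma \ref{lem:dertens} (natural, since derived tensor product and restriction of scalars are), the Dold--Kan correspondence and diagram \eqref{diag:hugecommdiag} of Remark \ref{rem:hugecommdiag}, the isomorphisms $\coneop(E)\simeq E_{W_{\bullet}}[1]$ and $\coneop(L_{\eqtopos{W}/\eqtopos{Y}})\simeq\eta_{W}^{*}L_{\mathcal{X}/\mathcal{Y}}[1]$ of Lemmas \ref{lem:conepresqcoh} and \ref{lem:coneprescot} (the latter stated there to be functorial in \eqref{diag:2comm}), and the tensor--cone comparison of Lemma \ref{lem:tensorcone} --- all natural for $\eqtopos{A}$ --- and finally $\eta_{W*}$ together with the shift and sign. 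The outcome is a commutative square
\begin{equation*}
	\begin{tikzcd}[column sep=huge]
		LA^{*}E\ar[r,"LA^{*}\at_E"]\ar[d,"\sim"]& LA^{*}L_{\mathcal{X}/\mathcal{Y}}\otimes LA^{*}E[1]\ar[d]\\
		\eta_{W'*}(\eqtopos{A}^{*}E_{\eqtopos{W}})_{W'_{\bullet}}\ar[r]& L_{\mathcal{X}'/\mathcal{Y}'}\otimes\eta_{W'*}(\eqtopos{A}^{*}E_{\eqtopos{W}})_{W'_{\bullet}}[1],
	\end{tikzcd}
\end{equation*}
in which the left vertical arrow comes from the identification $\eta_{W'*}\eqtopos{A}^{*}\eta_{W}^{*}\simeq LA^{*}$, and the bottom row is the Atiyah class of the $\mathcal{O}_{\eqtopos{W'}}$-module $\eqtopos{A}^{*}E_{\eqtopos{W}}$ produced by Construction \ref{constr:atiyah} in the variant form of the first part of Remark \ref{rem:atreple}.

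\emph{Identification and conclusion.} The $\mathcal{O}_{\eqtopos{W'}}$-module $\eqtopos{A}^{*}E_{\eqtopos{W}}$ has $X'_{\bullet}$-component with quasi-coherent cohomology and quasi-isomorphism pullback maps, so Lemma \ref{lem:technicalfunctorialitylemma} yields a natural isomorphism in $D(W'_{\wedge})$ between it and the canonical object attached to $\eta_{W'*}(\eqtopos{A}^{*}E_{\eqtopos{W}})_{W'_{\bullet}}\simeq LA^{*}E$, namely $(LA^{*}E)_{\eqtopos{W'}}$. Feeding this isomorphism into the first part of Remark \ref{rem:atreple} --- which shows that the construction depends only on the image of the relevant $R'$-module in $D(W'_{\wedge})$ --- identifies the bottom row of the square above with $\at_{LA^{*}E}$, so that the square becomes the asserted commutative diagram. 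The main obstacle is exactly this last step: one must verify that the isomorphism furnished by Lemma \ref{lem:technicalfunctorialitylemma} is compatible with the structure actually used in Construction \ref{constr:atiyah} --- the exact sequence of principal parts, and the copy of the cotangent complex occurring in the target of the Atiyah class --- rather than being a mere abstract isomorphism of underlying objects; this, together with careful bookkeeping of the sign introduced by commuting the two shift functors, is where the real content of the argument lies.
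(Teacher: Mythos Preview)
Your proposal is correct and follows essentially the same route as the paper: set up the induced map $\eqtopos{W'}\to\eqtopos{W}$, use functoriality of the standard resolution and of principal parts to compare the two connecting maps, propagate through $\coneop$ and the Dold--Kan correspondence, and then invoke Lemma \ref{lem:technicalfunctorialitylemma} together with Remark \ref{rem:atreple} (both parts) to identify the Atiyah class built from $\eqtopos{A}^{*}E_{\eqtopos{W}}$ with $\at_{LA^{*}E}$. Your closing caveat about the compatibility of the isomorphism from Lemma \ref{lem:technicalfunctorialitylemma} with the principal-parts structure is fair, but note that the paper resolves this at exactly the same level of detail you do --- by citing those two results and leaving the routine verification implicit.
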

\begin{proof}
	By using the setup of  Situation \ref{sit:wxtopoi} and Construction \ref{constr:atiyah} for the primed objects, we obtain a commutative cube of simplicial algebraic spaces
	\begin{equation*}
		\begin{tikzcd}[row sep=2.em]
			W'_{\bullet} \arrow[rr,"t_{\bullet}'"] \arrow[dr,swap,"s_{\bullet}'"] \arrow[dd,"a_{\bullet}"'] &&
			X_{\bullet}' \arrow[dd,swap, "b_{\bullet}" near start] \arrow[dr] \\
			& X_{\bullet}' \arrow[rr,crossing over] &&
			Y_{\bullet}' \arrow[dd] \\
			W_{\bullet} \arrow[rr,"t_{\bullet}" near end] \arrow[dr,swap,"s_{\bullet}"] && X_{\bullet} \arrow[dr] \\
			& X_{\bullet} \arrow[rr] \arrow[uu,<-, crossing over, "b_{\bullet}" near end]&& Y_{\bullet}
		\end{tikzcd}
	\end{equation*}
	as well as morphisms of topoi $\eqtopos{a}:\eqtopos{W}'\to \eqtopos{W}$ and $\eqtopos{Y}'\to \eqtopos{Y}$, which fit in a $2$-commutative square
	\begin{equation*}
		\begin{tikzcd}
			\eqtopos{W}'\ar[r]\ar[d]& \eqtopos{Y}'\ar[d] \\
			\eqtopos{W}\ar[r] & \eqtopos{Y}.
		\end{tikzcd}
	\end{equation*} 	
	By construction, the Atiyah class $\at_E$ corresponds via $\eta_W^*$ to a map $\alpha:E_{W_{\bullet}}\to \eta_WL_{\mathcal{X}/\mathcal{Y}}[1]\otimes E_{W_{\bullet}}$, obtained as the shift of the map \eqref{eq:atsecmap}. Since the pullback $LA^*$ can be computed as $\eta_{W'*}a_{\bullet}^*\eta_W^*$, we have a natural commutative diagram 
	\begin{equation*}
		\begin{tikzcd}[column sep = huge]
			\eta_{W'}^*LA^*E\ar[r,"\eta_{W'}^*LA^*\at_E"] \ar[d]&\eta_{W'}^*LA^*(L_{\mathcal{X}/\mathcal{Y}}[1]\otimes E) \ar[d] \\
			La_{\bullet}^*E_W\ar[r, "La_{\bullet}^*\alpha"] & La_{\bullet}^*(\eta_W^*L_{\mathcal{X}/\mathcal{Y}}[1]\otimes E_W).
		\end{tikzcd}
	\end{equation*}
	
	Let $E':=\eta_{W'*}(La_{\bullet}^*E_W)$. We apply Construction \ref{constr:atiyah} to the primed objects, i.e. with respect to the backside of \eqref{eq:functorialitycube} and $E'$. 
	We obtain the ring $R'=P_{\eqtopos{h}'^{-1}\mathcal{O}_{\eqtopos{Y}'}}(\mathcal{O}_{\eqtopos{W}'})$ and the map 
	\[\alpha':E'_{W'_{\bullet}}\to \eta_{W'}^*L_{\mathcal{X}'/\mathcal{Y}'}[1]\otimes E'_{W'_{\bullet}},\]
	as the shift of  the map \eqref{eq:atsecmap}. By functoriality of simplicial resolutions, we have a natural map  $\eqtopos{a}^{-1}R\to R'$ of rings on $\eqtopos{W}'$. 
	Now the statement of the lemma is equivalent to the following 
	\begin{claim}
		The diagram 
		\begin{equation*}
			\begin{tikzcd}
				La_{\bullet}^*E_{W_{\bullet}}\ar[r,"La_{\bullet}^*\alpha"] \ar[d]& La_{\bullet}^*(\eta_{W}^*L_{\mathcal{X}/\mathcal{Y}}[1]\otimes E_{W_{\bullet}}) \ar[d] \\		
				E'_{W'_{\bullet}}\ar[r,"\alpha'"]& \eta_{W'}^*L_{\mathcal{X}'/\mathcal{Y}'}[1]\otimes E'_{W'_{\bullet}} 
			\end{tikzcd}
		\end{equation*}
		in $D(W'_{\bullet})$ commutes, where the vertical maps are induced by the natural isomorphism $La_{\bullet}^*E_{W_{\bullet}}\xrightarrow{\sim} E'_{W'}$ and the pullback map on cotangent complexes.
	\end{claim}
	\begin{proof}
		We may assume that $E$ is represented by a complex of flat modules concentrated in degrees $\leq 0$, so that the same holds for $E_{\eqtopos{W}}$. Then we have $L\eqtopos{a}^*E_{\eqtopos{W}}=\eqtopos{a}^*E_{\eqtopos{W}}$ and its components are given by $b_{\bullet}^*E_X$ and $a_{\bullet}^*E_W$ respectively.

		By functoriality of the principal parts construction, we have a morphism of exact sequences of $\eqtopos{a}^{-1}R$-modules $\eqtopos{a}^{-1}\underline{P}^1_{\mathcal{O}_{\eqtopos{W}}/\eqtopos{h}^{-1}\mathcal{O}_{\eqtopos{Y}}}(E_{\eqtopos{W}})\to \underline{P}^1_{\mathcal{O}_{\eqtopos{W}'}/\eqtopos{h'}^{-1}\mathcal{O}_{\eqtopos{Y'}}}(\eqtopos{a}^*E_{\eqtopos{W}}).$	
		We get the induced diagram of connecting maps in $D^{\Delta}(\eqtopos{a}^{-1}R)$
		\begin{equation*}
			\begin{tikzcd}
				\eqtopos{a}^{-1}\delta_{R/\eqtopos{h}^{-1} \mathcal{O}_{\eqtopos{Y}}}(E_{\eqtopos{W}}):&[-1 cm]\eqtopos{a}^{-1}E_{\eqtopos{W}}\ar[r]\ar[d]&\sigma \eqtopos{a}^{-1}L_{\eqtopos{W}/\eqtopos{Y}} \otimes_{\eqtopos{a}^{-1}\mathcal{O}_{\eqtopos{W}}} \eqtopos{a}^{-1}E_{\eqtopos{W}} \ar[d] \\
				\delta_{R'/\eqtopos{h}'^{-1} \mathcal{O}_{\eqtopos{Y}'}}(\eqtopos{a}^*E_{\eqtopos{W}}):&\eqtopos{a}^*E_{\eqtopos{W}} \ar[r] & \sigma L_{\eqtopos{W}'/\eqtopos{Y'}} \otimes_{\mathcal{O}_{\eqtopos{W}'}} \eqtopos{a}^*E_{\eqtopos{W}}.
			\end{tikzcd}
		\end{equation*}	
		By adjunction, this corresponds to a diagram in $D^{\Delta}(R')$, which in turn corresponds to a diagram in $D(\mathcal{O}_{\eqtopos{W}})$:
		\begin{equation*}
			\begin{tikzcd}
				L\eqtopos{a}^*E_{\eqtopos{W}}\ar[r]\ar[d]& L\eqtopos{a}^*L_{\eqtopos{W}/\eqtopos{Y}}[1]\otimes L\eqtopos{a}^*E_{\eqtopos{W}}\ar[d] \\
				L\eqtopos{a}^*E_{\eqtopos{W}}\ar[r] & L_{\eqtopos{W}'/\eqtopos{Y}'}[1]\otimes L\eqtopos{a}^*E_{\eqtopos{W}}.
			\end{tikzcd}
		\end{equation*}
		After applying $\coneop$ and shifting, the upper line is canonically identified with $La_{\bullet}^*\alpha$ (this uses that we have a natural isomorphism $\coneop\circ L\eqtopos{a}^*\simeq La_{\bullet}^*\circ \coneop$, which one can check for $K$-flat $\mathcal{O}_{\eqtopos{W}}$ modules), while the lower line yields a map 
		\[\alpha'':La_{\bullet}^*E_{W_{\bullet}}\to \eta_{W'}^*L_{\mathcal{X}'/\mathcal{Y}'}[1]\otimes La_{\bullet}^*E_{W_{\bullet}}.\] 
		To finish the proof of the claim, we need to show that under the isomorphism $La^*E_{W}\to E'_{W'}$, the maps $\alpha''$ and $\alpha$ get identified. 
		This follows from Lemma \ref{lem:technicalfunctorialitylemma} and Remark \ref{rem:atreple} \ref{rem:atreple2}.
		
	\end{proof}
\end{proof}
\begin{corollary}\label{cor:atindep}
	The map $\at_{E}:E\to L_{\mathcal{X}/\mathcal{Y}}[1]\otimes E$ obtained from Construction \ref{constr:atiyah} is independent of choice of diagram \eqref{diag:2comm}.
\end{corollary}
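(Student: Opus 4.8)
The plan is to deduce this from the functoriality statement of Lemma \ref{lem:functoriality}, applied with $A=\id_{\mathcal{X}}$ and $B=\id_{\mathcal{Y}}$. The key observation is that in this case the two vertical maps in the commuting square of Lemma \ref{lem:functoriality} are canonically identities: one has $LA^*E\simeq E$, and the canonical comparison map $LA^*L_{\mathcal{X}/\mathcal{Y}}\to L_{\mathcal{X}'/\mathcal{Y}'}=L_{\mathcal{X}/\mathcal{Y}}$ attached to a square both of whose horizontal arrows are equalities is the identity. Consequently, whenever one choice of diagram \eqref{diag:2comm} admits a morphism to another in the sense of the cube \eqref{eq:functorialitycube} with $\mathcal{X}'=\mathcal{X}$, $\mathcal{Y}'=\mathcal{Y}$, $A=\id$, $B=\id$ — that is, whenever one diagram ``refines'' the other — the two Atiyah classes produced by Construction \ref{constr:atiyah} coincide.

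It therefore suffices to show that any two choices $(X_1,Y_1)$ and $(X_2,Y_2)$ of diagram \eqref{diag:2comm} are simultaneously refined by a common third choice $(X_3,Y_3)$. I would build this by iterated $2$-fibre products. Put $Y_3:=Y_1\times_{\mathcal{Y}}Y_2$; since each $Y_j\to\mathcal{Y}$ is smooth and surjective, each projection $Y_3\to Y_i$ is a base change of such a map, hence smooth surjective, and so is the composite $Y_3\to\mathcal{Y}$, so $Y_3$ is an algebraic space with a valid cover of $\mathcal{Y}$. Writing $\mathcal{X}_{Y_3}:=Y_3\times_{\mathcal{Y}}\mathcal{X}$, set $\widetilde{X}_i:=X_i\times_{\mathcal{X}_{Y_i}}\mathcal{X}_{Y_3}$ via the canonical map $\mathcal{X}_{Y_3}\to\mathcal{X}_{Y_i}$; this is smooth surjective over $\mathcal{X}_{Y_3}$ and carries a projection to $X_i$. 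Finally put $X_3:=\widetilde{X}_1\times_{\mathcal{X}_{Y_3}}\widetilde{X}_2$, which is again smooth surjective over $\mathcal{X}_{Y_3}$, so $(X_3,Y_3)$ is a valid diagram \eqref{diag:2comm}. The evident projections $X_3\to X_i$ and $Y_3\to Y_i$, together with $\id_{\mathcal{X}}$ and $\id_{\mathcal{Y}}$, assemble into a cube of the form \eqref{eq:functorialitycube}. Applying the reduction of the previous paragraph twice then yields $\at_{E,\mathcal{X}/\mathcal{Y},X_3/Y_3}=\at_{E,\mathcal{X}/\mathcal{Y},X_1/Y_1}$ and $\at_{E,\mathcal{X}/\mathcal{Y},X_3/Y_3}=\at_{E,\mathcal{X}/\mathcal{Y},X_2/Y_2}$, whence the two Atiyah classes agree, which is the assertion.

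The one point that requires care — and the main technical obstacle — is verifying that the cube constructed above genuinely satisfies the hypotheses of Lemma \ref{lem:functoriality}, namely that all of its faces $2$-commute and that the coherence condition holds (the two $2$-isomorphisms relating any two composites $X_3\to\mathcal{Y}$ obtained by traversing different faces of the cube agree). All of the $2$-isomorphisms in sight are the canonical ones furnished by the universal properties of the $2$-fibre products used in the construction, and the required coherence is a formal consequence of the coherence of these canonical isomorphisms; I would make this precise by fixing, once and for all, the $2$-isomorphism witnessing each fibre product and then checking that the pasted $2$-isomorphisms along the faces of the cube agree, a routine if somewhat tedious diagram chase. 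Once this verification is in place, the corollary is immediate, and no further bookkeeping of the identifications $\eta_W^*L_{\mathcal{X}/\mathcal{Y}}\simeq L_{\mathcal{X}/\mathcal{Y},X/Y}$ and $\coneop(E_{\eqtopos{W}})\simeq E_{W_\bullet}[1]$ is needed beyond what already enters the proof of Lemma \ref{lem:functoriality}.
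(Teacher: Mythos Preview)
Your proposal is correct and follows essentially the same approach as the paper. The paper phrases the argument slightly more tersely---``by replacing $X_2\to Y_2$ with the fiber product $X_1\times_{\mathcal{X}}X_2\to Y_1\times_{\mathcal{Y}}Y_2$, we may without loss of generality assume\ldots''---but this WLOG step is precisely your refinement argument applied once, and your more elaborate $X_3$ in fact simplifies to $X_1\times_{\mathcal{X}}X_2$ once one unwinds the fibre products (since $\mathcal{X}_{Y_3}\simeq\mathcal{X}_{Y_1}\times_{\mathcal{X}}\mathcal{X}_{Y_2}$).
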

\begin{proof}
	Suppose we are given two choices 
	
	\begin{minipage}{.4\textwidth}
		\begin{equation*}
			\begin{tikzcd}
				X_1\ar[r]\ar[d]& \mathcal{X}\ar[d] \\
				Y_1\ar[r] & \mathcal{Y}
			\end{tikzcd}
		\end{equation*}
	\end{minipage}

	\begin{minipage}{.4\textwidth}
		\begin{equation*}
			\begin{tikzcd}
				X_2\ar[r]\ar[d]& \mathcal{X}\ar[d] \\
				Y_2\ar[r] & \mathcal{Y}.
			\end{tikzcd}
		\end{equation*}
	\end{minipage}
	
	We need to show that $\at_{E,\mathcal{X}/\mathcal{Y},X_1/Y_1}$ and $\at_{E,\mathcal{X}/{\mathcal{Y}},X_2/Y_2}$ agree. By replacing $X_2\to Y_2$ with the fiber product $X_{1}\times_{\mathcal{X}}X_2\to Y_1\times_{\mathcal{Y}}Y_2$, we may without loss of generality assume that we have a $2$-commuting diagram 
	\begin{equation*}
		\begin{tikzcd}[row sep = small]
			&X_2\ar[dl]\ar[dd]\ar[dr] & \\
			X_1 \ar[rr, crossing over] \ar[dd]&& \mathcal{X}\ar[dd]\\
			&Y_2\ar[dl]\ar[dr]& \\
			Y_1\ar[rr] && \mathcal{Y}. 
		\end{tikzcd}
	\end{equation*}
	By adding in the identity morphisms on $\mathcal{X}$ and $\mathcal{Y}$, this gives a $2$-commuting cube \eqref{eq:functorialitycube}, and we obtain the desired equality from Lemma \ref{lem:functoriality}.
\end{proof}
\begin{corollary}
	The Atiyah class is compatible with pullback. More precisely, given a $2$-commutative square 
	\begin{equation*}
		\begin{tikzcd}
			\mathcal{X}'\ar[r,"A"]\ar[d]& \mathcal{X}\ar[d] \\
			\mathcal{Y}'\ar[r] & \mathcal{Y},
		\end{tikzcd}
	\end{equation*}
	the natural diagram 
	\begin{equation*}
		\begin{tikzcd}[column sep= .6 in ]
			LA^*E\ar[r,"LA^*\at_E"]\ar[d]& LA^*L_{\mathcal{X}/\mathcal{Y}}\otimes LA^*E[1]\ar[d] \\
			LA^*E\ar[r,"\at_{LA^*E}"] & L_{\mathcal{X}'/\mathcal{Y}'} \otimes LA^*E[1]
		\end{tikzcd}
	\end{equation*}
	commutes.
\end{corollary}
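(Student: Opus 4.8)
The plan is to reduce the statement to Lemma \ref{lem:functoriality} combined with the independence result of Corollary \ref{cor:atindep}. We may assume $E\in D^{\leq 0}_{qcoh}(\mathcal{X})$, the general bounded-above case following by shifting.

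First I would produce a $2$-commuting cube of the shape \eqref{eq:functorialitycube} refining the given square. Choose a smooth surjection $Y\to\mathcal{Y}$ from an algebraic space, and then a smooth surjection $X\to\mathcal{X}_Y:=Y\times_{\mathcal{Y}}\mathcal{X}$ from an algebraic space; this is the front face of \eqref{diag:2comm} for $f$. Next choose a smooth surjection $Y'\to\mathcal{Y}'\times_{\mathcal{Y}}Y$ from an algebraic space, so that in particular $Y'\to\mathcal{Y}'$ is smooth surjective and we obtain a map $Y'\to Y$ over $\mathcal{Y}$. Since the original square $2$-commutes there is an induced morphism $\mathcal{X}'_{Y'}:=Y'\times_{\mathcal{Y}'}\mathcal{X}'\to\mathcal{X}_Y$, and because $X\to\mathcal{X}_Y$ is representable by algebraic spaces, so is $\mathcal{X}'_{Y'}\times_{\mathcal{X}_Y}X\to\mathcal{X}'_{Y'}$; pick a smooth surjection $X'\to\mathcal{X}'_{Y'}\times_{\mathcal{X}_Y}X$ from an algebraic space. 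Then $X'\to\mathcal{X}'_{Y'}$ is smooth surjective, being the composite of $X'\to\mathcal{X}'_{Y'}\times_{\mathcal{X}_Y}X$ with the base change of $X\to\mathcal{X}_Y$; hence $X'\to\mathcal{X}'$ is smooth surjective, and projection gives a map $X'\to X$, so that the back face of \eqref{diag:2comm} for the map $\mathcal{X}'\to\mathcal{Y}'$ and the whole cube \eqref{eq:functorialitycube} $2$-commute.

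Then I would apply Lemma \ref{lem:functoriality} to this cube, which yields commutativity of the square in the statement with $\at_E$ and $\at_{LA^*E}$ replaced by $\at_{E,\mathcal{X}/\mathcal{Y},X/Y}$ and $\at_{LA^*E,\mathcal{X}'/\mathcal{Y}',X'/Y'}$ respectively. Finally, by Corollary \ref{cor:atindep} these agree with $\at_E$ and $\at_{LA^*E}$ (which, by definition, may be computed with respect to some, equivalently any, choice of diagram \eqref{diag:2comm}), giving the asserted commutative diagram.

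The only mildly delicate point is the coherence of $2$-isomorphisms when assembling the cube: as required in the hypothesis of Lemma \ref{lem:functoriality}, one must check that for any two composite morphisms $X'\to\mathcal{Y}$ obtained along edges of \eqref{eq:functorialitycube}, the two $2$-morphisms relating them agree. This is where I would be most careful, but it is formal — all faces arise either from the given $2$-commuting square or from the canonical $2$-isomorphisms attached to the fiber products used, and these are compatible by the relevant universal properties. Everything else is a direct invocation of the already-established Lemma \ref{lem:functoriality} and Corollary \ref{cor:atindep}.
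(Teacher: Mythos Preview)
Your proposal is correct and follows essentially the same approach as the paper: complete the given $2$-commutative square to a cube of the form \eqref{eq:functorialitycube} and invoke Lemma \ref{lem:functoriality}. Your explicit construction of the cube and the appeal to Corollary \ref{cor:atindep} make precise what the paper leaves implicit in its one-line proof.
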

\begin{proof}
	This follows immediately from Lemma \ref{lem:functoriality}, after completing the square to a cube as in \eqref{eq:functorialitycube}.
\end{proof}

\paragraph{Compatibility with Illusie's definition.}
\begin{proposition}\label{prop:atcomp}
	If $\mathcal{X}$ and $\mathcal{Y}$ are in fact algebraic spaces, then the morphism $\at_E$ agrees with Illusie's Atiyah class for $E$ with respect to the map of ringed \'etale topoi $\mathcal{X}_{\operatorname{et}}\to \mathcal{Y}_{\operatorname{et}}$.
\end{proposition}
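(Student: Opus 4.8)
The plan is to specialize Construction~\ref{constr:atiyah} to the tautological choice of diagram \eqref{diag:2comm}, which is legitimate by Corollary~\ref{cor:atindep}, and then observe that for this choice the construction degenerates verbatim into the recollection of Illusie's construction given in the paragraph ``Illusie's Atiyah class''. Concretely, I would take $X=\mathcal{X}$, $Y=\mathcal{Y}$ with both vertical maps the identity. First I would record the resulting simplifications: the strictly simplicial algebraic spaces $X_{\bullet}$, $Y_{\bullet}$ and $W_{\bullet}=X\times_{\mathcal{X}_Y}X$ are then the constant simplicial spaces on $\mathcal{X}$, $\mathcal{Y}$ and $\mathcal{X}$ respectively, with $s_{\bullet}=t_{\bullet}=\operatorname{id}$; moreover $\mathcal{X}_{Y_n}=\mathcal{Y}\times_{\mathcal{Y}}\mathcal{X}=\mathcal{X}$, so that $\Omega_{X_n/\mathcal{X}_{Y_n}}=0$ and hence, by Remark~\ref{rem:cotcomp}, $L_{\mathcal{X}/\mathcal{Y},X/Y}$ is simply the constant simplicial object on $L_{\mathcal{X}_{\operatorname{et}}/\mathcal{Y}_{\operatorname{et}}}$, using the standard fact that Olsson's cotangent complex restricts to the classical one on algebraic spaces. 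Finally, under the diagonal assignment $M\mapsto(M,M,\operatorname{id},\operatorname{id})$ of $\mathcal{O}_{\mathcal{X}_{\operatorname{et}}}$-modules into $\mathcal{O}_{\eqtopos{W}}$-modules, the ringed topos $(\eqtopos{W},\mathcal{O}_{\eqtopos{W}})$ is the image of $(\mathcal{X}_{\operatorname{et}},\mathcal{O}_{\mathcal{X}})$, the topos $\eqtopos{Y}$ is the constant object on $\mathcal{Y}_{\operatorname{et}}$, the free simplicial resolution $R=P_{\eqtopos{h}^{-1}\mathcal{O}_{\eqtopos{Y}}}(\mathcal{O}_{\eqtopos{W}})$ is the diagonal object on $P:=P_{f^{-1}\mathcal{O}_{\mathcal{Y}}}(\mathcal{O}_{\mathcal{X}})$, and $E_{\eqtopos{W}}$ is the diagonal object on $E$.

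Granting these identifications, the next step is bookkeeping: the exact sequence of principal parts $\underline{P}^1_{R/\eqtopos{h}^{-1}\mathcal{O}_{\eqtopos{Y}}}(E_{\eqtopos{W}})$ is, in each of its two components, the diagonal object on Illusie's sequence $\underline{P}^1_{P/f^{-1}\mathcal{O}_{\mathcal{Y}}}(E_P)$, so the connecting morphism \eqref{eq:atfirstmap}, its image in $D(\mathcal{O}_{\eqtopos{W}})$, and the associated map $E_{\eqtopos{W}}\to L_{\eqtopos{W}/\eqtopos{Y}}\otimes E_{\eqtopos{W}}[1]$ obtained by Dold--Kan are all the diagonal objects on the corresponding data in Illusie's construction, terminating with his map $\at^{\mathrm{Ill}}_E\colon E\to L_{\mathcal{X}_{\operatorname{et}}/\mathcal{Y}_{\operatorname{et}}}\otimes E[1]$. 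It then remains to trace this through the rest of Construction~\ref{constr:atiyah}. Here I would use that $\coneop$ of a diagonal object $(M,M,\operatorname{id},\operatorname{id})$ is $\operatorname{Cone}(M\oplus M\xrightarrow{-\operatorname{id}\oplus\operatorname{id}}M)$, that this is canonically $M[1]$ via its antidiagonal copy of $M$, and that the natural quasi-isomorphism $\gamma$ of Lemma~\ref{lem:coneforqiso} (hence the identifications of Lemmas~\ref{lem:conepresqcoh} and \ref{lem:coneprescot}) restricts to the identity on that copy; likewise that in this constant situation $\eta_{W*}$ of a diagonal (Cartesian) object is, after the equivalence $D_{qcoh}(W_{\bullet})\simeq D_{qcoh}(\mathcal{X})$, just the passage back from $\mathcal{O}_{\mathcal{X}_{\operatorname{et}}}$-modules to quasi-coherent $\mathcal{O}_{\mathcal{X}}$-modules. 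Thus, up to a sign, applying $\coneop$, the identification of Lemma~\ref{lem:tensorcone}\ref{lem:tensorcone1} for the diagonal $E_{\eqtopos{W}}$, and $\eta_{W*}$ carries the diagonal object on $\at^{\mathrm{Ill}}_E$ to $\at^{\mathrm{Ill}}_E[1]$.

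The main obstacle is therefore the sign-and-shift bookkeeping in this last step: I must verify that the shift $[-1]$ together with the explicit factor $-1$ inserted in Construction~\ref{constr:atiyah} precisely cancel the sign produced when one commutes the shift functors $[1]$ and $[-1]$ past each other, equivalently the sign in the symmetry isomorphism $\Z[1]\otimes\Z[-1]\simeq\Z[-1]\otimes\Z[1]$; and that the isomorphism of Lemma~\ref{lem:tensorcone}\ref{lem:tensorcone1} for the diagonal object $E_{\eqtopos{W}}$ is compatible with the identification $\coneop(M,M,\operatorname{id},\operatorname{id})\simeq M[1]$ used above. Both are direct but slightly delicate diagram chases; once they are in place, following the diagonal object on $\at^{\mathrm{Ill}}_E$ through all of the identifications in Construction~\ref{constr:atiyah} returns exactly $\at^{\mathrm{Ill}}_E$, which is the assertion.
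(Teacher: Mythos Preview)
Your proposal is correct and follows essentially the same route as the paper's own proof: specialize to the tautological cover $X=\mathcal{X}$, $Y=\mathcal{Y}$, identify $\eqtopos{W}$ with the image of $\mathcal{X}_{\operatorname{et}}$ under the diagonal $M\mapsto(M,M,\operatorname{id},\operatorname{id})$, observe that $\coneop$ applied to diagonal objects is a shift via the $\gamma$ of Lemma~\ref{lem:coneforqiso}, and then undo the shift and sign. The paper is slightly more explicit about one point you gloss over, namely the passage between the lisse-\'etale and \'etale sites in the final step (it writes down the commutative square of topoi $X_{\bullet,\operatorname{lis-et}}\to X_{\operatorname{lis-et}}$, $X_{\bullet,\operatorname{et}}\to X_{\operatorname{et}}$), whereas you are a bit more careful about the sign bookkeeping, which the paper leaves implicit.
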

\begin{proof}
	In this case, we may choose $X=\mathcal{X}$ and $Y=\mathcal{Y}$ in diagram \eqref{diag:2comm}, so that we get $W=X$. Then $X_{\bullet}$ and $W_{\bullet}$ are constant strictly simplicial algebraic spaces with value $X$, and $Y_{\bullet}$ is the constant strictly simplicial algebraic space with value $Y$. 
	In this case, we have the morphism of topoi $\Delta:\eqtopos{W}\to X_{\bullet}$, where $\Delta^*M_X = (M_X,M_X,\operatorname{id},\operatorname{id})$. The morphism $\gamma$ of Lemma \ref{lem:coneforqiso} gives a natural transformation of functors $\coneop_{\mathcal{O}_W}\circ \Delta^*\Rightarrow [-1]$, which induces to a natural isomorphism of functors from $D(\mathcal{O}_{X_{\bullet}})$ to itself. 
	It follows from Construction \ref{constr:atiyah} that the morphism $E_{\eqtopos{W}} \to  L_{\eqtopos{W}/\eqtopos{Y}}[1]\otimes_{\mathcal{O}_{\eqtopos{W}}} E_{\eqtopos{W}}$ obtained from \eqref{eq:atfirstmap} by applying the functors $D^{\Delta}(R)\to D^{\Delta}(\mathcal{O}_{\eqtopos{W}})\to D(\eqtopos{W})$ is naturally identified with the pullback $\Delta^*\at_{E_{X_{\bullet}}, X_{\bullet}/Y_{\bullet}}$ of the Atiyah class for the map of topoi $X_{\bullet}\to Y_{\bullet}$. Thus taking the cone, we see that \eqref{eq:atsecmap} is identified with $\at_{E_{X_{\bullet}}, X_{\bullet}/Y_{\bullet}}[1]$. From the commutative diagram of topoi 
	
	\begin{equation*}
		\begin{tikzcd}
			X_{\bullet, \operatorname{lis-et}}\ar[r,"\pi_X"]\ar[d,"\epsilon"]& X_{\operatorname{lis-et}}\ar[d,"\epsilon"] \\
			X_{\bullet,\operatorname{et}}\ar[r] & X_{\operatorname{et}}
		\end{tikzcd}
	\end{equation*}
	we see that $\eta_{X*}\at_{E_{X_{\bullet}}, X_{\bullet}/Y_{\bullet}}$ is naturally identified with the pullback to the lisse-\'etale site of the usual Atiyah class for the map $X_{\operatorname{et}}\to Y_{\operatorname{et}}$, from which the result follows. 
\end{proof}

As a first application of our definition, we compute the Atiyah class of the universal vector bundle on $BGL_n$ over a chosen base field $k$.
\begin{example}
	Let $\mathcal{V}$ denote the universal rank $n$ locally free sheaf on $BGL_n$, which is the sheaf of sections of the vector bundle associated to the universal principal $GL_n$-bundle. 
	We have the $2$-cartesian diagram, where the map $x$ from $\Spec k$ is the one corresponding to the trivial $GL_n$-torsor 
	\begin{equation*}
		\begin{tikzcd}
			GL_n\ar[r,"t"]\ar[d,"s"]& \Spec k\ar[d,"x"] \\
			\Spec k\ar[r,"x"]\ar[ur, Rightarrow,"\rho"] & BGL_n.
		\end{tikzcd}
	\end{equation*}

	There is a natural trivialization of the pullback $\mathcal{V}_{\Spec k}$. Then for every $T$-valued point $g=(g_{i,j})$ on $GL_n$, the $2$-morphism $\rho$ indicated in the diagram induces a natural pullback map $\rho^*: t^*\mathcal{V}_{\Spec k}|_T\xrightarrow{\sim}s^*\mathcal{V}_{\Spec k}|_T$. With respect to the given trivialization of $\mathcal{V}$ over $\Spec k$, this morphism is just given by $g^{-1}:\mathcal{O}_T^{\oplus n} \to \mathcal{O}_T^{\oplus n}$.
	
	We now compute the Atiyah class of $\mathcal{V}$ using Construction \ref{constr:atiyah}, where we chose $X=Y=\Spec k$, so that we get $W=GL_n$. More precisely, we will calculate the restriction of $\at_{\mathcal{V}}$ to the etale site of $W$, so that we don't have to consider the associated strictly simplicial algebraic spaces. Thus, here we let $\eqtopos{W}$ and $\eqtopos{Y}$ be the topos associated to the diagrams 
	\[W\underset{t}{\overset{s}{\rightrightarrows}}X \mbox{ and } {Y}\rightrightarrows Y\]
	respectively and $\eqtopos{h}:\eqtopos{W}\to \eqtopos{Y}$ the natural map. and let $\mathcal{V}_{\eqtopos{W}}$ denote the locally free sheaf on $\eqtopos{W}$ obtained by pullback. First, to calculate $\at_{\mathcal{V}_{\eqtopos{W}},\eqtopos{W}/\eqtopos{Y}}$, we do not need to take a simplicial resolution of $\mathcal{O}_{\eqtopos{W}}$ over $\eqtopos{h}^{-1}\mathcal{O}_{\eqtopos{Y}}$ since the map $\eqtopos{W}\to \eqtopos{Y}$ is composed of smooth maps of algebraic spaces. Moreover, since $\coneop$ preserves mapping cones of complexes, we may calculate $\coneop(\at_{\mathcal{V}_{\eqtopos{W}},\eqtopos{W}/\eqtopos{Y}})$ by first applying $\coneop$ to the sequence of principal parts  $P_{\eqtopos{W}/\eqtopos{Y}}(\mathcal{V}_{\eqtopos{W}})$ and only then taking connecting homomorphisms.
	Writing out the pullback maps in the sequence of principal parts $P_{\eqtopos{W}/\eqtopos{Y}}(\mathcal{V}_{\eqtopos{W}})$ gives the following diagram on $W$, where we use that $\Omega_{X/Y}=0$:
	\begin{equation*}
		\begin{tikzcd}
			0\ar[r]& 0 \ar[r]\ar[d]& s^*\mathcal{V}_{X}\ar[r]\ar[d]& s^*\mathcal{V}_X\ar[r]\ar[d]& 0 \\
			0\ar[r]& \Omega_{W/Y}\otimes \mathcal{V}_W \ar[r]&P_{W/Y}(\mathcal{V}_{W}) \ar[r]& \mathcal{V}_W\ar[r]& 0 \\
			0\ar[r]& 0 \ar[r]\ar[u]& t^*\mathcal{V}_{X}\ar[r]\ar[u]& t^*\mathcal{V}_X\ar[r]\ar[u]& 0
		\end{tikzcd}
	\end{equation*}
	Since $\mathcal{V}_W$ is pulled back from $X=Y=\Spec k$ via $s$, we have a natural splitting $P_{W/Y}(\mathcal{V}_W)=\Omega_{W/Y}\otimes s^*\mathcal{V}_{X}\oplus s^*\mathcal{V}_X$. With respect to this splitting, one calculates that the induced pullback map along $t$ is given by:
	\[t^*\mathcal{V}_{X} \xrightarrow{(d\rho, \rho)} \Omega_{W/Y}\otimes s^*\mathcal{V}_{X}\oplus s^*\mathcal{V}_X. \]
	It follows, that after applying $\coneop$, the resulting exact sequence has the following exact subsequence, and the termwise inclusions give quasi-isomorphisms:
	\[0\to \Omega_{W/Y}\otimes s^*\mathcal{V}_X\to \operatorname{Cone}(t^*\mathcal{V}_X\xrightarrow{d\rho}\Omega_{W/Y}\otimes s^*\mathcal{V}_X) \to t^*\mathcal{V}_X[1]\to 0. \]
	It is a general fact, that for an exact sequence of this form, the connecting map is canonically quasi-isomorphic to minus the shift of the map the cone is taken over for the middle term, i.e. in our case, this gives 
	\[-d\rho[1]: t^*\mathcal{V}_X[1]\to \Omega_{W/Y}\otimes s^*\mathcal{V}_X[1].\] 
	In particular, we find that there is a natural isomorphism $s^*x^*L_{BGL_n}[1]\simeq \Omega_{W/Y}$, and that using this identification and the trivialization of $\mathcal{V}_X$, we have that $s^*x^*\at_{\mathcal{V}}$ is given by
	\begin{align}\label{eq:atgln}
		\mathcal{O}_{GL_n}\xrightarrow{ d(T^{-1})\circ T} \Omega_{GL_n}\otimes \mathcal{O}_{GL_n}^{\oplus n},
	\end{align}
	where $T=(T_{ij})$ is the universal matrix on $GL_n$. 
	In particular, for $n=1$, we have 
\end{example}

\begin{example}\label{ex:trdetgln}
	By taking the trace in \eqref{eq:atgln}, we obtain the section 
	\[\mathcal{O}_{GL_n}\to \Omega_{GL_n}\]
	given by $\operatorname{tr}(d(T^{-1})T)$. This is equal to 
	\[d(\det T^{-1}) \det T.\]
	In particular, for $n=1$, we have $T=(t)$ as a $1\times 1$ matrix, and the pullback of $\at_{\mathcal{L}}\otimes \mathcal{L}^{-1}$ -- with $\mathcal{L}$ the universal rank $1$ sheaf -- to $\mathbb{G}_m=GL_1$ is given by
	\[\mathcal{O}_{\mathbb{G}_m}\to \Omega_{\mathbb{G}_m},\]
	sending $1$ to $d(1/t) t=-dt/t$. 
	We have the natural map $DET:BGL_n\to B\mathbb{G}_m$ given by associating to a locally free sheaf its determinant line bundle. By taking fiber products with a point, this induces a map $GL_n\to \mathbb{G}_m$, (which is again just the determinant) whose pullback map on differentials is given by $dt\mapsto d(\det T)$.
	In particular, we have $td(1/t)\mapsto \det T d(\det T^{-1})$. 
	This shows that we have a natural identification $\operatorname{tr}(s^*x^*\at_{\mathcal{V}})= s^*x^*(\at_{\det \mathcal{V}}\otimes \det \mathcal{V}^{-1})$. One can show that this isomorphism is part of a descent datum, and thus we have 
	\[\operatorname{tr}(\at_{\mathcal{V}})= \at_{\det \mathcal{V}}\otimes \det(\mathcal{V})^{-1}.\]
\end{example}

\subsection{Construction of the reduced Atiyah class}
\begin{construction}\label{constr:redat}
	
	Let $f:\mathcal{X}\to \mathcal{Y}$ be a morphism of algebraic stacks. Let $E\in D^-_{qcoh}(\mathcal{Y})$ and let $E_{\mathcal{X}}:= f^*E$. Let $F \to E_{\mathcal{X}}\to G \xrightarrow{+1}$ be a distinguished triangle in $D^-_{qcoh}(\mathcal{X})$ such that $R\Hom^{-1}(F, G) = 0$. 
	We let $X_{\bullet}, Y_{\bullet}$ and $W_{\bullet}$ be as in Situation \ref{sit:wxtopoi} and let $R=P_{h^{-1}\mathcal{O}_{\eqtopos{Y}}}(\mathcal{O}_{\eqtopos{W}})$. Let $F_{\eqtopos{W}}, E_{\eqtopos{W}}$ and $G_{\eqtopos{W}}$ denote the $\mathcal{O}_{\eqtopos{W}}$-modules induced by $E_{\mathcal{X}}$ and $G$ respectively, and let $E_{\eqtopos{Y}}$ denote the $\mathcal{O}_{\eqtopos{Y}}$-module induced by $E$. 
	Then, by the construction in \S \ref{subsec:redatiy}, and using Remark \ref{rem:redatgen}, we obtain a morphism 
	\[\overline{\at}_{E_{\eqtopos{Y}} }:F_{\eqtopos{W}}\to L_{\eqtopos{W}/\eqtopos{Y}}\otimes G_{\eqtopos{W}}.\]  
	Applying the sequence of maps of \eqref{diag:hugecommdiag}, we get 
	\[\overline{\at}_{E,\mathcal{X}/\mathcal{Y},G}:F\to L_{\mathcal{X}/\mathcal{Y}}\otimes G.\]  
	We also write $\overline{\at}_E$ if the rest of the data is clear.	
\end{construction}

\begin{proposition}
	The reduced Atiyah class is independent of the choice of cover $X/Y$. 
\end{proposition}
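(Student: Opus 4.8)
The plan is to mirror the proof of Corollary~\ref{cor:atindep}, replacing the functoriality input Lemma~\ref{lem:functoriality} by the topos-level compatibility of the reduced Atiyah class with Tor-independent pullback, Corollary~\ref{cor:redatpullback}. First I would reduce to comparing two covers related by a map. Given two diagrams \eqref{diag:2comm} computing the reduced Atiyah class, say $X_1/Y_1$ and $X_2/Y_2$, replace $X_2\to Y_2$ by $X_1\times_{\mathcal X}X_2\to Y_1\times_{\mathcal Y}Y_2$; this refinement admits a map to each of the two, so it suffices to treat a single map of diagrams \eqref{diag:2comm}, i.e.\ a $2$-commuting cube of the form \eqref{eq:functorialitycube} with $\mathcal X'=\mathcal X$, $\mathcal Y'=\mathcal Y$ and both vertical faces instances of \eqref{diag:2comm}. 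As in Situation~\ref{sit:wxtopoi} and the proof of Lemma~\ref{lem:functoriality}, such a cube induces flat morphisms of ringed topoi $\eqtopos{a}:\eqtopos{W}'\to\eqtopos{W}$ and $\eqtopos{Y}'\to\eqtopos{Y}$ fitting into a $2$-commuting square over the maps $\eqtopos{h}$, $\eqtopos{h}'$; flatness holds because every map of strictly simplicial algebraic spaces in the induced cube is smooth, being obtained from smooth covers and base change.

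Next I would run Construction~\ref{constr:redat} for both $\eqtopos{W}$ and $\eqtopos{W}'$. By construction the reduced Atiyah class $\overline{\at}_{E,\mathcal X/\mathcal Y,G,X_i/Y_i}$ is obtained from the topos-level reduced Atiyah class $\overline{\at}_{E_{\eqtopos Y}}:F_{\eqtopos W}\to L_{\eqtopos{W}/\eqtopos{Y}}\otimes G_{\eqtopos W}$ of \S\ref{subsec:redatiy} by applying the chain of functors in \eqref{diag:hugecommdiag}. Now I invoke Corollary~\ref{cor:redatpullback} for the square $\eqtopos{W}'\to\eqtopos{W}$, $\eqtopos{Y}'\to\eqtopos{Y}$: since $\eqtopos{Y}'\to\eqtopos{Y}$ is flat, hence Tor-independent to $E_{\eqtopos Y}$, and $\eqtopos{a}$ is flat, hence Tor-independent to $E_{\eqtopos W}$ and $G_{\eqtopos W}$, the pullback $\eqtopos{a}^*$ carries $\overline{\at}_{E_{\eqtopos Y}}$ to $\overline{\at}_{\eqtopos{a}^*E_{\eqtopos Y}}$ up to the canonical identifications of cotangent complexes. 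It then remains to push this equality down through \eqref{diag:hugecommdiag}: one identifies $\eqtopos{a}^*E_{\eqtopos W}$, $\eqtopos{a}^*F_{\eqtopos W}$, $\eqtopos{a}^*G_{\eqtopos W}$ with the objects $E'_{\eqtopos{W}'}$, $F'_{\eqtopos{W}'}$, $G'_{\eqtopos{W}'}$ built directly from the cover $X_2/Y_2$ — this is exactly the content of Lemma~\ref{lem:technicalfunctorialitylemma} together with Remark~\ref{rem:atreple}\ref{rem:atreple2} — and then uses that $\coneop$, $N$, $\otimes$ and $\eta_{W*}$ commute with flat pullback of topoi in the manner recorded in the proof of Lemma~\ref{lem:functoriality}, combined with Lemma~\ref{lem:coneprescot} on the cotangent-complex side. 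Tracing the two composite sides of \eqref{diag:hugecommdiag} yields $\overline{\at}_{E,\mathcal X/\mathcal Y,G,X_1/Y_1}=\overline{\at}_{E,\mathcal X/\mathcal Y,G,X_2/Y_2}$, as desired.

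I expect the main obstacle to be this last bookkeeping step. The reduced construction uses the kernel $F_{\eqtopos W}$ of $E_{\eqtopos W}\to G_{\eqtopos W}$ at the level of chain complexes of modules over the \emph{non-flat} simplicial ring $R$, and it proceeds through the diagram of exact sequences \eqref{diag:exseqofexseq} of principal parts rather than through a single sequence; one must therefore check that taking this kernel, forming \eqref{diag:exseqofexseq}, and passing to the derived category all commute with $\eqtopos{a}^*$ in a way compatible with the hypothesis $R\Hom^{-1}(F,G)=0$ that pins down the reduced Atiyah class. As in the proof of Lemma~\ref{lem:functoriality}, this is handled by passing to the $W_{\wedge}$-picture via Remark~\ref{rem:atreple}\ref{rem:atreple2} and applying Lemma~\ref{lem:technicalfunctorialitylemma}; apart from replacing the exact sequence of principal parts of $E$ by the diagram \eqref{diag:exseqofexseq}, no genuinely new idea is needed, and the well-definedness hypothesis $R\Hom^{-1}(F,G)=0$ is stable under the flat pullbacks in play, so no ambiguity is introduced.
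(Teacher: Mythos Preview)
Your proposal is correct and follows essentially the same route as the paper: the paper's proof is simply ``This is similar to Lemma~\ref{lem:functoriality}, and left to the reader,'' and your sketch spells out exactly that strategy---reduce to a map of covers as in Corollary~\ref{cor:atindep}, then use pullback compatibility of the topos-level reduced Atiyah class. Your choice to invoke Corollary~\ref{cor:redatpullback} (proved via the graded cotangent complex) rather than redoing the principal-parts functoriality by hand is a legitimate shortcut and is in the spirit of what the paper intends.
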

\begin{proof}
	This is similar to Lemma \ref{lem:functoriality},  and left to the reader (and slightly simpler since $E$ and $G$ are assumed to be quasicoherent sheaves, rather than more general complexes). 
\end{proof}
\begin{proposition}
	If $\mathcal{X}$ and $\mathcal{Y}$ are algebraic spaces, the reduced Atiyah class of Construction \ref{constr:redat} agrees with the one for ringed topoi defined in \S \ref{subsec:redatiy}. 
\end{proposition}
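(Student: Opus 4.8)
The plan is to mimic the proof of Proposition~\ref{prop:atcomp}, replacing the ordinary Atiyah class by its reduced counterpart at every step. Since the reduced Atiyah class of Construction~\ref{constr:redat} is independent of the chosen cover, I would take $X=\mathcal{X}$ and $Y=\mathcal{Y}$ in~\eqref{diag:2comm}, so that $W=X$ and $X_{\bullet}$, $W_{\bullet}$, $Y_{\bullet}$ are the constant strictly simplicial algebraic spaces with values $X$, $X$, $Y$. As in Proposition~\ref{prop:atcomp} one then has the diagonal morphism of topoi $\Delta\colon\eqtopos{W}\to X_{\bullet}$, $\Delta^{*}M_{X}=(M_{X},M_{X},\id,\id)$, and the morphism $\gamma$ of Lemma~\ref{lem:coneforqiso} furnishes a natural transformation $\coneop_{\mathcal{O}_{\eqtopos{W}}}\circ\Delta^{*}\Rightarrow[-1]$ that is a natural isomorphism, compatibly with Dold--Kan. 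I would also fix a representation of the triangle $F\to E_{\mathcal{X}}\to G\xrightarrow{+1}$ by an honest short exact sequence $0\to F\to E_{X}\to G\to 0$ of bounded above complexes with flat components; as $R\Hom^{-1}(F,G)=0$, the cocone/cone variant of Remark~\ref{rem:redatgen} used in Construction~\ref{constr:redat} then coincides with the short-exact-sequence construction of \S\ref{subsec:redatiy}.

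The next step is to observe that every piece of data entering Construction~\ref{constr:redat} over $\eqtopos{W}/\eqtopos{Y}$ is $\Delta^{*}$ of the corresponding datum over $X_{\bullet}/Y_{\bullet}$: one has $\mathcal{O}_{\eqtopos{W}}=\Delta^{*}\mathcal{O}_{X_{\bullet}}$, and since the standard simplicial resolution commutes with pullback of topoi, $R=P_{\eqtopos{h}^{-1}\mathcal{O}_{\eqtopos{Y}}}(\mathcal{O}_{\eqtopos{W}})=\Delta^{*}\bigl(P_{g^{-1}\mathcal{O}_{Y_{\bullet}}}(\mathcal{O}_{X_{\bullet}})\bigr)$; moreover, by functoriality of the module of principal parts and exactness of $\Delta^{*}$, the whole diagram~\eqref{diag:exseqofexseq} over $\eqtopos{W}/\eqtopos{Y}$ is $\Delta^{*}$ of the corresponding diagram over $X_{\bullet}/Y_{\bullet}$. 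Hence the connecting map~\eqref{eq:morredat}, and therefore $\overline{\at}_{E_{\eqtopos{Y}}}$, is identified with $\Delta^{*}$ of the reduced Atiyah class $\overline{\at}_{E_{X_{\bullet}},\,X_{\bullet}/Y_{\bullet},\,G_{X_{\bullet}}}$ for the morphism of ringed topoi $X_{\bullet}\to Y_{\bullet}$ (to which \S\ref{subsec:redatiy} directly applies). Running the chain of functors of~\eqref{diag:hugecommdiag} --- extension of scalars $\mathcal{O}_{\eqtopos{W}}\otimes^{\ell}_{R}-$, Dold--Kan $N$, $\coneop$, and $\eta_{W*}=\eta_{X*}$ --- and using compatibility of $\Delta^{*}$ with extension of scalars and Dold--Kan, together with $\coneop\circ\Delta^{*}\simeq[-1]$ (compatibly with $\coneop(E_{\eqtopos{W}})\simeq E_{W_{\bullet}}[1]$ and $\coneop(L_{\eqtopos{W}/\eqtopos{Y}})\simeq\eta_{W}^{*}L_{\mathcal{X}/\mathcal{Y}}[1]$ from Lemmas~\ref{lem:conepresqcoh} and~\ref{lem:coneprescot}, which in the constant case are evident), I expect to conclude that the output of Construction~\ref{constr:redat} is $\eta_{X*}\overline{\at}_{E_{X_{\bullet}},\,X_{\bullet}/Y_{\bullet},\,G_{X_{\bullet}}}$, the built-in shifts cancelling exactly as in Proposition~\ref{prop:atcomp}.

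Finally I would identify $\eta_{X*}\overline{\at}_{E_{X_{\bullet}},\,X_{\bullet}/Y_{\bullet},\,G_{X_{\bullet}}}$ with the reduced Atiyah class $\overline{\at}_{E,\,X_{\operatorname{et}}/Y_{\operatorname{et}},\,G}$ of \S\ref{subsec:redatiy}. Since $X_{\bullet}$ and $Y_{\bullet}$ are constant, the \S\ref{subsec:redatiy} construction for $X_{\bullet,\operatorname{et}}\to Y_{\bullet,\operatorname{et}}$ is in each simplicial degree the one for $X_{\operatorname{et}}\to Y_{\operatorname{et}}$; using the commutative square of topoi
\begin{equation*}
\begin{tikzcd}
X_{\bullet,\operatorname{lis-et}}\ar[r,"\pi_X"]\ar[d,"\epsilon"]& X_{\operatorname{lis-et}}\ar[d,"\epsilon"] \\
X_{\bullet,\operatorname{et}}\ar[r]& X_{\operatorname{et}},
\end{tikzcd}
\end{equation*}
the flatness of its four maps, and the compatibility of the reduced Atiyah class with such (Tor-independent) pullbacks from Corollary~\ref{cor:redatpullback}, one identifies $\eta_{X*}$ of this construction, under the lisse-\'etale comparison, with the pullback of $\overline{\at}_{E,\,X_{\operatorname{et}}/Y_{\operatorname{et}},\,G}$, which is what Construction~\ref{constr:redat} yields. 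The main obstacle, such as it is, is purely bookkeeping: checking that the string of natural isomorphisms ($\Delta^{*}$ versus principal parts, standard resolutions, extension of scalars and Dold--Kan; $\coneop\circ\Delta^{*}\simeq[-1]$; Lemma~\ref{lem:coneprescot}) is coherent and that the shifts and signs of Construction~\ref{constr:redat} match those of \S\ref{subsec:redatiy}. Since each of these compatibilities is already established above and the argument parallels Proposition~\ref{prop:atcomp}, no genuinely new difficulty should arise.
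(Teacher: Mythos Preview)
Your proposal is correct and follows exactly the approach the paper intends: the paper's own proof simply states that it is analogous to Proposition~\ref{prop:atcomp} and leaves the details to the reader, and you have written out precisely those details. The only minor remark is that you should double-check the sign/shift bookkeeping you flag at the end, since Construction~\ref{constr:redat} does not involve the extra $[-1]$ shift and sign that appear in Construction~\ref{constr:atiyah}; but this is routine and does not affect the correctness of your strategy.
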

\begin{proof}
	The proof is analogous to the proof of Proposition \ref{prop:atcomp} and left to the reader.
\end{proof}

\begin{corollary}
	The following triangles commute
	\begin{enumerate}[label = \arabic*)]
		\item 
		\begin{equation*}
			\begin{tikzcd}
				F\ar[r,"-\overline{\at}_E"]\ar[dr,"\at_F"']& L_{\mathcal{X}/\mathcal{Y}}\otimes G\ar[d] \\
				& L_{\mathcal{X}/\mathcal{Y}}[1]\otimes F,
			\end{tikzcd}
		\end{equation*}
		
		\item 
		\begin{equation*}
			\begin{tikzcd}
				G\ar[d]\ar[dr,"{\at_G}"]&  \\
				F[1]\ar[r,"{\overline{\at}_E[1]}"']& L_{\mathcal{X}/\mathcal{Y}}[1]\otimes G
			\end{tikzcd}
		\end{equation*}
	\end{enumerate} 
	In both cases, the vertical morphisms are induced from the connecting map $G\to F[1]$ of the given exact sequence.	
\end{corollary}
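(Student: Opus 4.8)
The plan is to obtain both triangles by transporting the topos-level statement of Remark~\ref{rem:redatcomptopos} through the cone-and-pushforward procedure that defines the Atiyah class and the reduced Atiyah class on a stack. First I would fix a diagram \eqref{diag:2comm} and pass to the associated data $X_\bullet, Y_\bullet, W_\bullet$, $\eqtopos{W}$, $\eqtopos{Y}$ and $R = P_{h^{-1}\mathcal{O}_{\eqtopos{Y}}}(\mathcal{O}_{\eqtopos{W}})$ of Situation~\ref{sit:wxtopoi}. Pulling the distinguished triangle $F\to E_{\mathcal X}\to G\xrightarrow{+1}$ back along $\eta_W^*$ gives a distinguished triangle $F_{\eqtopos{W}}\to E_{\eqtopos{W}}\to G_{\eqtopos{W}}\xrightarrow{+1}$ of $\mathcal{O}_{\eqtopos{W}}$-modules, all with quasi-coherent cohomology, so in particular all of their pullback maps $s^*,t^*$ are quasi-isomorphisms and $R\Hom^{-1}(F_{\eqtopos{W}},G_{\eqtopos{W}})=0$ still holds. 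Applying Remark~\ref{rem:redatcomptopos} to $\eqtopos{h}\colon\eqtopos{W}\to\eqtopos{Y}$ and this triangle --- in the cocone variant of Remark~\ref{rem:redatgen}, the vanishing of $R\Hom^{-1}$ removing the indeterminacy --- yields the two commuting triangles in $D(\eqtopos{W})$ built from $\overline{\at}_{E_{\eqtopos{Y}}}$, $\at_{F_{\eqtopos{W}},\eqtopos{W}/\eqtopos{Y}}$, $\at_{G_{\eqtopos{W}},\eqtopos{W}/\eqtopos{Y}}$ and the connecting map $G_{\eqtopos{W}}\to F_{\eqtopos{W}}[1]$.

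Next I would apply the functor $\Psi:=\eta_{W*}\circ[-1]\circ\coneop$ appearing in both constructions (defined, on objects whose pullback maps are quasi-isomorphisms, into $D_{qcoh}(\mathcal X)$), together with the natural identifications $\coneop(F_{\eqtopos{W}})\simeq F_{W_\bullet}[1]$ (Lemma~\ref{lem:conepresqcoh}), $\coneop(L_{\eqtopos{W}/\eqtopos{Y}})\simeq\eta_W^*L_{\mathcal X/\mathcal Y}[1]$ (Lemma~\ref{lem:coneprescot}), and the tensor compatibility of Lemma~\ref{lem:tensorcone} and Remark~\ref{rem:hugecommdiag}. Since $\coneop$ is triangulated and $\eta_{W*}$ is an equivalence onto $D_{qcoh}(\mathcal X)$, $\Psi$ sends a commuting triangle of the relevant shape to a commuting triangle. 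By Construction~\ref{constr:redat}, $\Psi$ carries $\overline{\at}_{E_{\eqtopos{Y}}}$ to $\overline{\at}_{E,\mathcal X/\mathcal Y,G}$; by Construction~\ref{constr:atiyah}, it carries $\at_{F_{\eqtopos{W}},\eqtopos{W}/\eqtopos{Y}}$ and $\at_{G_{\eqtopos{W}},\eqtopos{W}/\eqtopos{Y}}$ to $\at_F$ and $\at_G$, up to the normalizing sign recorded there; and because $\eta_W^*$ is triangulated and $\coneop$ is compatible with tensoring against $L_{\eqtopos{W}/\eqtopos{Y}}$, $\Psi$ carries the connecting map $G_{\eqtopos{W}}\to F_{\eqtopos{W}}[1]$ (tensored with $L_{\eqtopos{W}/\eqtopos{Y}}$) to the vertical maps in the statement. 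Reading off the images of the two topos-level triangles then produces exactly the two diagrams of the Corollary; independence of the chosen diagram \eqref{diag:2comm} is already known from Corollary~\ref{cor:atindep} and the corresponding statement for the reduced class proved above.

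The one point that requires real care --- and the step I expect to be the main obstacle --- is the sign bookkeeping. One must check that the $-1$ inserted in Construction~\ref{constr:atiyah}, the sign twist built into the triangulated structure of $\coneop$ (the ``minus on the components coming from $A_{-1}$'' appearing in the parallel-arrow-category lemma), and the minus signs already present in Remark~\ref{rem:redatcomptopos} all conspire to leave precisely the signs $-\overline{\at}_E$ on the horizontal maps and $+\at_F$, $+\at_G$ on the diagonals, as asserted. This is a direct but somewhat tedious computation with the explicit identifications of Lemmas~\ref{lem:conepresqcoh}, \ref{lem:coneprescot} and \ref{lem:tensorcone}; it introduces no new ideas beyond the sign calibrations already performed in Construction~\ref{constr:atiyah} and Remark~\ref{rem:redatcomptopos}, and once it is carried out the Corollary follows. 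As with the two preceding propositions in this subsection, one could alternatively simply record that this is the image of Remark~\ref{rem:redatcomptopos} under the topos-to-stacks dictionary and leave the verification to the reader.
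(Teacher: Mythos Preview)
Your proposal is correct and follows essentially the same approach as the paper: apply Remark~\ref{rem:redatcomptopos} to $\eqtopos{W}/\eqtopos{Y}$ and then pass to $D(\mathcal{X})$ via the $\coneop$-and-$\eta_{W*}$ machinery. The paper's own proof is the two-sentence version you describe in your final paragraph, so your write-up is simply a more explicit unpacking of that passage (including the sign bookkeeping, which the paper leaves implicit).
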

\begin{proof}
	From Remark \ref{rem:redatcomptopos}, we get commutative triangles for the classical Atiyah classes $\at_{E_{\eqtopos{W}},\eqtopos{W}/\eqtopos{Y},G_{\eqtopos{W}} }$ and $\overline{\at}_{E_{\eqtopos{Y}}, \eqtopos{W}/\eqtopos{Y},G_{\eqtopos{W}}}$. The result follows by passing to $D(\mathcal{X})$.
\end{proof}
\subsection{Construction of the Atiyah class for an exact sequence}
Let $f:\mathcal{X}\to \mathcal{Y}$ be a morphism of algebraic stacks and let $0\to F\to E\to G\to 0$ be an exact sequence of quasicoherent sheaves on $\mathcal{X}$, which we denote by $\underline{E}$. Assume that $F,E$ and $G$ are dualizable as objects of the derived category of $\mathcal{X}$. We let $\eqtopos{X}$, $\eqtopos{Y}$ and $\eqtopos{W}$ be as in Situation \ref{sit:wxtopoi}. We also let $R=P_{h^{-1}\mathcal{O}_{\eqtopos{Y}}}\mathcal{O}_{\eqtopos{W}}$, and we let $F_{\eqtopos{W}}, E_{\eqtopos{W}}$ and $G_{\eqtopos{W}}$ denote the sheaves on $\eqtopos{W}$ induced by $F,E,G$ respectively. Let 
\[\alpha:\frac{E_{\eqtopos{W}}\otimes E_{\eqtopos{W}}^{\vee}}{F_{\eqtopos{W}}\otimes G^{\vee}_{\eqtopos{W}}}[-1]\to L_{\eqtopos{W}/\eqtopos{Y  }} \] be the Atiyah class of the exact sequence $\underline{E}_{\eqtopos{W}}$ with respect to the morphism of topoi $\eqtopos{W}\to\eqtopos{Y}$. This is a morphism in $D(\eqtopos{W})$. By taking cones, shifting, and applying $\eta_{W*}$, we obtain a morphism 
\[\at_{\underline{E},\mathcal{X}/\mathcal{Y}}:\frac{E\otimes E^{\vee}}{F\otimes G^{\vee}}[-1]\to L_{\mathcal{X}/\mathcal{Y}},\]
which we call the Atiyah class of the exact sequence $\underline{E}$. We also write $\at_{\underline{E}}$ if the morphism $f:\mathcal{X}\to \mathcal{Y}$ is understood. 

From the construction and Corollary \ref{cor:atexsequsat}, we conclude
\begin{corollary}\label{cor:atexseqatcomp}
	The morphism $\at_{\underline{E}}$ is compatible with the usual Atiyah class, i.e. the diagram 
	\begin{equation*}
		\begin{tikzcd}
			E^{\vee}\otimes E[-1]\ar[d]\ar[dr,"{\at'_E}"]&  \\
			\frac{E^{\vee}\otimes E}{G^{\vee}\otimes F}[-1]\ar[r,"\at_{\underline{E}}"] & L_{\mathcal{X}/\mathcal{Y}}. 
		\end{tikzcd}
	\end{equation*}
	commutes.
\end{corollary}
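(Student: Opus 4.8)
The plan is to deduce this from its topos-theoretic counterpart, Corollary \ref{cor:atexsequsat}, applied to the morphism of ringed topoi $\eqtopos{W}\to\eqtopos{Y}$ associated to a fixed choice of diagram \eqref{diag:2comm}, and then to transport the resulting commutative triangle back to $\mathcal{X}\to\mathcal{Y}$ along the very functor through which all the objects and arrows in question were defined. So the first thing I would do is isolate that functor.

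By Construction \ref{constr:atiyah} and the paragraph preceding the statement, the passage from $\eqtopos{W}\to\eqtopos{Y}$ to $\mathcal{X}\to\mathcal{Y}$ is effected by the composite $\Phi := \eta_{W*}\circ\coneop$, followed by a shift by $[-1]$, together with the canonical identifications $\coneop(L_{\eqtopos{W}/\eqtopos{Y}})[-1]\simeq\eta_W^*L_{\mathcal{X}/\mathcal{Y}}$ of Lemma \ref{lem:coneprescot} and $\coneop(M_{\eqtopos{W}})[-1]\simeq M_{W_\bullet}$ of Lemma \ref{lem:conepresqcoh}, valid for any $M$ with quasi-coherent cohomology. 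By construction $\Phi$ sends Illusie's Atiyah class $\at_{E_{\eqtopos{W}},\eqtopos{W}/\eqtopos{Y}}$ to $\at_{E,\mathcal{X}/\mathcal{Y}}$ (up to the sign introduced in Construction \ref{constr:atiyah}), and hence, since $\coneop$ is compatible with tensor products (Lemma \ref{lem:tensorcone}) and since $\eta_W^*$, $\eta_{W*}$ preserve duals and the symmetry isomorphism of perfect complexes, it also sends the dual form $\at'_{E_{\eqtopos{W}}}$ to $\at'_{E,\mathcal{X}/\mathcal{Y}}$. By the very definition of $\at_{\underline{E},\mathcal{X}/\mathcal{Y}}$, $\Phi$ sends $\alpha = \at_{\underline{E}_{\eqtopos{W}},\eqtopos{W}/\eqtopos{Y}}$ to $\at_{\underline{E},\mathcal{X}/\mathcal{Y}}$; and it sends the quotient map $E_{\eqtopos{W}}^\vee\otimes E_{\eqtopos{W}}[-1]\to \frac{E_{\eqtopos{W}}^\vee\otimes E_{\eqtopos{W}}}{G_{\eqtopos{W}}^\vee\otimes F_{\eqtopos{W}}}[-1]$ to the natural map appearing in the statement, because the triangle \eqref{eq:quotienttriang} on $\mathcal{X}$ is, by construction, obtained from the corresponding triangle of $\mathcal{O}_{\eqtopos{W}}$-modules. (Note that $E^\vee\otimes E$, $G^\vee\otimes F$ and their quotient all have quasi-coherent cohomology, being perfect, so Lemma \ref{lem:conepresqcoh} applies to each term, and the symmetry isomorphism $E\otimes E^\vee\simeq E^\vee\otimes E$ used to reconcile the notation is the one of Situation \ref{sit:exseqdualized}.)

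Since $\coneop$ is triangulated, $\Phi$ is in particular a functor, hence preserves composition. I would then simply apply $\Phi$ to the commutative triangle furnished by Corollary \ref{cor:atexsequsat} for the morphism $\eqtopos{W}\to\eqtopos{Y}$ — which asserts that $\at_{\underline{E}_{\eqtopos{W}}}$ precomposed with the quotient map equals $\at'_{E_{\eqtopos{W}}}$ — and read off the identity $\at_{\underline{E},\mathcal{X}/\mathcal{Y}}\circ(\text{natural map}) = \at'_{E,\mathcal{X}/\mathcal{Y}}$, which is the assertion. The argument works for any fixed choice of \eqref{diag:2comm} used in both constructions, so no separate independence check is needed here.

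The step I expect to be the main obstacle is making precise that all of the canonical identifications — those of Lemmas \ref{lem:conepresqcoh}, \ref{lem:coneprescot} and \ref{lem:tensorcone}, together with the compatibility of $\eta_W^*$ with duality and with the symmetry isomorphism $E\otimes E^\vee\simeq E^\vee\otimes E$ — genuinely assemble into a single coherent functor $\Phi$ that simultaneously carries $\at'_{E_{\eqtopos{W}}}$ to $\at'_{E,\mathcal{X}/\mathcal{Y}}$, $\alpha$ to $\at_{\underline{E},\mathcal{X}/\mathcal{Y}}$, and the quotient map to the natural map of the statement. This is pure bookkeeping, but it is where the real content lies; once it is recorded, the conclusion is formal.
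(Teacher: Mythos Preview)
Your proposal is correct and takes essentially the same approach as the paper: the paper's proof consists solely of the sentence ``From the construction and Corollary \ref{cor:atexsequsat}, we conclude'', and what you have written is an explicit unpacking of exactly that step. The bookkeeping you flag as the main obstacle is real but routine, and the paper simply leaves it implicit in the word ``construction''.
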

Similarly, we conclude immediately from Proposition \ref{prop:redatexseqat}: 
\begin{corollary}\label{cor:redatexseqatstack}
	We have a commutative diagram 
	\begin{equation*}
		\begin{tikzcd}
			\frac{E^{\vee}\otimes E}{G^{\vee}\otimes F}[-1]\ar[r,"-"]\ar[dr, "\at_{\underline{E}}"']& G^{\vee}\otimes F\ar[d, "\overline{\at}_E'"] \\
			& L_{\mathcal{X}/\mathcal{Y}},
		\end{tikzcd}
	\end{equation*}
	where the horizontal map is the natural connecting homomorphism.
\end{corollary}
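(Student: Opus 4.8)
The plan is to deduce the claim on $\mathcal X$ from its counterpart on the parallel-arrow topos $\eqtopos W$ — Proposition~\ref{prop:redatexseqat} — by transporting that identity along $\eta_{W*}\circ\coneop$ (together with the relevant shift), in exactly the way Corollary~\ref{cor:atexseqatcomp} was obtained from Corollary~\ref{cor:atexsequsat}. I work in Situation~\ref{sit:wxtopoi} for a fixed diagram~\eqref{diag:2comm}, in the setting of \eqref{eq:redatexseq}: the exact sequence underlying $\underline E$ is $0\to F\to f^*E\to G\to 0$ for some $E\in D^-_{qcoh}(\mathcal Y)$ (so the ``$E$'' in the statement stands for $f^*E$), which is what makes both $\at_{\underline E,\mathcal X/\mathcal Y}$ and $\overline{\at}'_{E,\mathcal X/\mathcal Y,G}$ available. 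Write $\underline E_{\eqtopos W}$ for the induced exact sequence of $\mathcal O_{\eqtopos W}$-modules, and let $\alpha$ and $\overline\alpha$ denote the Atiyah class $\at_{\underline E_{\eqtopos W},\eqtopos W/\eqtopos Y}$ of that sequence and the reduced Atiyah class over $\eqtopos W$, respectively — these are exactly the maps related by Proposition~\ref{prop:redatexseqat}.

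First I would record that each of the three morphisms in the claimed triangle over $\mathcal X$ is, by the definitions in \S\ref{subsec:atexseqtopoi}, Construction~\ref{constr:redat}, and the construction of $\at_{\underline E,\mathcal X/\mathcal Y}$, the image under $\eta_{W*}$ — preceded by $\coneop$ and a shift — of the corresponding morphism over $\eqtopos W$: namely $\at_{\underline E,\mathcal X/\mathcal Y}$ comes from $\alpha$, $\overline{\at}'_{E,\mathcal X/\mathcal Y,G}$ comes from $\overline\alpha$, and the connecting map $\tfrac{E^\vee\otimes E}{F\otimes G^\vee}[-1]\to G^\vee\otimes F$ on $\mathcal X$ comes from the connecting map of the corresponding triangle over $\eqtopos W$. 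For the connecting map this uses that $\coneop$ is a triangulated functor of derived categories (so it carries the defining triangle of the quotient to a triangle), with Lemma~\ref{lem:tensorcone} for the tensor factors and Lemmas~\ref{lem:conepresqcoh}--\ref{lem:coneprescot} to identify the relevant cones with $E_{W_\bullet}$ and $\eta_W^*L_{\mathcal X/\mathcal Y}$, together with the fact that $\eta_{W*}\colon D_{qcoh}(W_\bullet)\xrightarrow{\sim}D_{qcoh}(\mathcal X)$ is a monoidal equivalence, so it carries the dual and quotient complexes over $\eqtopos W$ to those entering the stack-level construction; the coherence of all this is encoded in the diagram~\eqref{diag:hugecommdiag}. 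Then I would apply $\eta_{W*}\circ\coneop$ and the shift to the commuting triangle of Proposition~\ref{prop:redatexseqat}; functoriality of these operations yields a commuting triangle over $\mathcal X$ whose three morphisms are the ones above.

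The one genuinely non-formal point — the hard part — is the sign. Proposition~\ref{prop:redatexseqat} gives commutativity with \emph{minus} the connecting homomorphism, whereas the corollary is stated with the connecting homomorphism itself; I expect this to be reconciled by the sign picked up when commuting shift functors in passing from $\eqtopos W$ down to $\mathcal X$, precisely the phenomenon behind the ``$\times(-1)$'' normalization in Construction~\ref{constr:atiyah}. So the real work is to pin down the shift-commutation signs across $\coneop$, $[-1]$, and $\eta_{W*}$, and to confirm that with the normalizations actually used in defining $\at_{\underline E,\mathcal X/\mathcal Y}$ and $\overline{\at}'_{E,\mathcal X/\mathcal Y,G}$ the two signs are consistent; everything else is routine transport through the equivalence $\eta_{W*}$.
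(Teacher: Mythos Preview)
Your approach is correct and is exactly what the paper does: the corollary is stated as following ``immediately from Proposition~\ref{prop:redatexseqat}'', i.e.\ by transporting the topos-level identity along $\eta_{W*}\circ\coneop$ and a shift, precisely as you outline.

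Your one worry, however, is misplaced. You treat the apparent sign discrepancy between Proposition~\ref{prop:redatexseqat} (``\emph{minus} the natural connecting homomorphism'') and the corollary (``the natural connecting homomorphism'') as the genuine content to be resolved. But note that in both displayed diagrams the horizontal arrow carries the label ``$-$''; the difference is only in the accompanying prose, which in the corollary simply omits the word ``minus'' (almost certainly an oversight). More to the point, the stack-level constructions of $\at_{\underline{E},\mathcal X/\mathcal Y}$ and of $\overline{\at}'_{E,\mathcal X/\mathcal Y,G}$ apply the \emph{same} sequence of functors (cone, shift, $\eta_{W*}$) to their topos-level counterparts, with no extra sign inserted in either case; hence whatever shift-commutation signs arise cancel when comparing the two legs of the triangle, and no relative sign is introduced. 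The explicit ``$\times(-1)$'' you cite from Construction~\ref{constr:atiyah} is specific to the ordinary Atiyah class and plays no role here. So there is nothing to track: the commuting triangle on $\eqtopos W$ goes directly to the commuting triangle on $\mathcal X$, with the same sign on the connecting map.
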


\section{Properties}\label{sec:properties}
\subsection{Tensor compatibility of the Atiyah class.}

Let $A$ be a ring, and $B$ an $A$-algebra. For a $B$-module $M$ let $\underline{P}^1_{B/A}(M)$ denote the exact sequence 
\[0\to \Omega_{B/A}\otimes_B M\to P_{B/A}^1(M)\to M\to 0.\]
We will also write $\underline{P}^1_{B/A}(M)$ to denote the corresponding element of $\Ext^1_B(M,\Omega_{B/A}\otimes_B M)$.

\begin{lemma}\label{lem:tensormods}
	Let $M$ and $N$ be $B$-modules and suppose that $M$ is flat. Then in $\Ext^1_B(M\otimes_B N, \Omega_{B/A}\otimes_B M\otimes_B N)$ we have the following equality:
	\[\underline{P}_{B/A}(M)\otimes_B N+M\otimes_B \underline{P}_{B/A}(N)=\underline{P}_{B/A}(M\otimes N).\]
	Here we regard $M\otimes_B \underline{P}_{B/A}(N)$ as an extension of $M\otimes N$ by $ \Omega_{B/A}\otimes_B M\otimes_B N$ via the symmetry isomorphism of the tensor product. 
	
\end{lemma}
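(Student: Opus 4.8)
The plan is to reduce the statement to an explicit description of the extension class $\underline{P}^1_{B/A}(M)$ in terms of the module of principal parts, and then verify the identity by a direct computation with the defining exact sequences. Recall that $\underline{P}^1_{B/A}(M) = \underline{P}^1_{B/A}\otimes_B M$, where $\underline{P}^1_{B/A}$ is the exact sequence of principal parts $0\to \Omega^1_{B/A}\to P^1_{B/A}\to B\to 0$ of $(B,B)$-bimodules. The key point is that the $\Ext^1$-class of $\underline{P}^1_{B/A}(M)$ is obtained from $\underline{P}^1_{B/A}$ by applying the functor $-\otimes_B M$ (tensoring on the right), which makes sense as a functor on extensions because $M$ is flat (so the sequence stays exact after tensoring). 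Thus all three terms in the claimed identity are pulled back, in an appropriate sense, from the single universal extension $\underline{P}^1_{B/A}\in \Ext^1_B(B,\Omega^1_{B/A})$.

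First I would set up the comparison maps. For $M, N$ $B$-modules with $M$ flat, there is a natural map of exact sequences
\begin{equation*}
	\begin{tikzcd}
		0\ar[r]&\Omega_{B/A}\otimes_B M\otimes_B N\ar[r]\ar[d,equals]& P^1_{B/A}(M)\otimes_B N\ar[r]\ar[d]& M\otimes_B N\ar[r]\ar[d,equals]&0\\
		0\ar[r]&\Omega_{B/A}\otimes_B M\otimes_B N\ar[r]& P^1_{B/A}(M\otimes_B N)\ar[r]& M\otimes_B N\ar[r]&0,
	\end{tikzcd}
\end{equation*}
where the middle vertical map is induced by the universal derivation, using the identification $P^1_{B/A}(M)\otimes_B N = (P^1_{B/A}\otimes_B M)\otimes_B N$ and the quotient map $P^1_{B/A}\otimes_A N \twoheadrightarrow P^1_{B/A}\otimes_B N$ composed appropriately — concretely, on local sections this is the map sending $(b\otimes m)\otimes n$ to the class of $b\otimes (m\otimes n)$, which is well-defined precisely because the relations defining $P^1_{B/A}(M\otimes N)$ (killing $I_\Delta^2$ acting on the left factor) are already imposed in the source. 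This exhibits $\underline{P}^1_{B/A}(M)\otimes_B N$ as a pullback-type contribution to $\underline{P}^1_{B/A}(M\otimes N)$. Similarly, tensoring $\underline{P}^1_{B/A}(N)$ on the left by $M$ (and applying the symmetry isomorphism of $\Omega_{B/A}\otimes_B M\otimes_B N$ as in the statement) gives the other contribution.

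The heart of the argument is then the Leibniz rule. The module of principal parts $P^1_{B/A}(M\otimes N)$ can be described via $B\otimes_A(M\otimes_B N)$ modulo $I_\Delta^2$ acting through the first factor; an element $m\otimes n$ with $m\in M$, $n\in N$ lifts to $1\otimes(m\otimes n)$, and the difference of the two splittings (left versus right $B$-module structure, after tensoring suitably) measures the extension class. The Leibniz identity $d_{M\otimes N}(m\otimes n) = d_M(m)\otimes n + m\otimes d_N(n)$ for the canonical "derivations" into the respective principal parts modules is what yields the additive decomposition of the extension class. The cleanest way to organize this is via the Baer sum: I would exhibit an explicit map of short exact sequences from $\underline{P}^1_{B/A}(M\otimes N)$ to the Baer sum of $\underline{P}^1_{B/A}(M)\otimes_B N$ and $M\otimes_B\underline{P}^1_{B/A}(N)$, inducing the identity on sub and quotient, hence an isomorphism of extensions. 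The main obstacle is purely bookkeeping: keeping track of the left versus right $B$-module structures on $P^1_{B/A}$, the various symmetry isomorphisms (in particular the one flagged in the statement for the $M\otimes_B\underline{P}^1_{B/A}(N)$ term), and checking that the map built from $1\otimes(m\otimes n)\mapsto$ (image in the Baer sum) is well-defined modulo the square-of-the-diagonal-ideal relations on both sides simultaneously. Once the explicit map is written down, well-definedness and the fact that it induces identities on the outer terms are routine diagram chases, which I would leave to the reader or dispatch briefly.
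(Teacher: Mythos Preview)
Your overall strategy---build an explicit isomorphism of extensions between $\underline{P}^1_{B/A}(M\otimes_B N)$ and the Baer sum of the two summands, using the universal property of principal parts and the Leibniz rule---is exactly what the paper does. The paper writes out the Baer sum $R/Q$ explicitly, defines a first-order $A$-linear differential operator $D:M\otimes_B N\to R/Q$ (hence a $B$-linear map out of $P^1_{B/A}(M\otimes_B N)$), and checks that the resulting map of short exact sequences has identities on the outer terms.

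However, your preliminary ``comparison map'' diagram is wrong and should be dropped. You claim a map of exact sequences
\[
\underline{P}^1_{B/A}(M)\otimes_B N \longrightarrow \underline{P}^1_{B/A}(M\otimes_B N)
\]
inducing the identity on both sub and quotient, with middle map $(b\otimes m)\otimes n\mapsto b\otimes(m\otimes n)$. Such a map cannot exist: if it did, the two extensions would be \emph{equal} in $\Ext^1$, forcing $M\otimes_B\underline{P}^1_{B/A}(N)=0$, which is false in general. Concretely, the formula is not well-defined: in $P^1_{B/A}(M)\otimes_B N$ (tensor over the left $B$-structure) one has $(b'b\otimes m)\otimes n=(b\otimes m)\otimes b'n$, and your formula would send these to $b'b\otimes(m\otimes n)$ and $b\otimes b'(m\otimes n)$ respectively; their difference lies in $I_\Delta\cdot(B\otimes_A(M\otimes_B N))$ but not in $I_\Delta^2$. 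So there is no shortcut here: go straight to the Baer sum, as in the second half of your sketch and as the paper does.
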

\begin{proof}
	Recall that $P^1_{B/A}(M) = B\otimes_A M/(I_{\Delta}^{2}B\otimes_A M)$ , where $I_{\Delta}$ is the kernel of the map of $A$-algebras $B\otimes_A B\to B$; the $B$-module structure on $P^1_{B/A}(M)$ is given by action on the left side of the tensor product. Therefore we have natural isomorphisms $P_{B/A}(M)\otimes_B N \simeq N\otimes_B P_{B/A}(M)\simeq N\otimes_A M/I_{N,M}$, where $I_{N,M}:=I_{\Delta}^2 (N\otimes_A M)$, and similarly with $N$ and $M$ reversed. 
	
	We then have that the left hand side of the equality in the lemma is represented by the Baer sum of the following two exact sequences:
	\begin{equation}\label{eq:tensorseq1}
		0\to \Omega_{B/A}\otimes_B M\otimes_B  N\xrightarrow{j_1} N\otimes_A M/ I_{N,M}^2 \xrightarrow{p_1} M\otimes_B N \to 0, 
	\end{equation}
	\begin{equation}\label{eq:tensorseq2}
		0\to \Omega_{B/A}\otimes_B M\otimes_B  N\xrightarrow{j_2} M\otimes_A N/ I_{M,N}^2 \xrightarrow{p_2} M\otimes_B N \to 0, 
	\end{equation}
	
	where 
	\begin{align*}
		j_1(db\otimes m\otimes n) &= [bn\otimes m - n\otimes bm],\\ 
		p_1([n\otimes m])&=m\otimes n, \mbox{ and }\\ 
		j_2(db\otimes m\otimes n) &= [bm\otimes n- m\otimes bn],\\ 
		p_2([m\otimes n])&= m\otimes n.
	\end{align*}
	
	We make the definition of the Baer sum explicit: We have submodules 
	\[Q\subset R \subset N\otimes_A M/I_{N,M}^2\oplus M\otimes N/I_{M,N}^2,\]
	given by 
	\[R=\{(a, b) \,\mid\,  p_1(a)=p_2(b) \};\]
	\[Q=\{(j_1(x),-j_2(x))\, \mid \, x\in \Omega_{B/A}\otimes_B M\otimes_B N\}.\]

	Then the Baer sum of \eqref{eq:tensorseq1} and \eqref{eq:tensorseq2} is given by 
	\[0\to \Omega_{B/A} \otimes_B M\otimes_B N \xrightarrow{j_3} R/Q\xrightarrow{p_3} M\otimes_B N\to 0, \]
	where the maps are defined by $p_3((a,b)+ Q)=p_1(a)$, and $j_3(x)= (j_1(x),0)+Q$.  
	
	Note that we also have (by exactness of the sequences \eqref{eq:tensorseq1} and \eqref{eq:tensorseq2}) 
	\begin{equation}\label{eq:Qdefalt}
		Q = \{(\sum [n_i\otimes m_i], \sum [m_i\otimes n_i])\,\mid\, \sum m_i\otimes n_i =0 \mbox{ in } M\otimes_B N\}.
	\end{equation}
	
	We claim that this extension is isomorphic to $\underline{P}^1_{B/A}(M\otimes_B N)$. We first construct a map of $B$-modules $P_{B/A}^1(M\otimes_B N)\to R/Q$. By the universal property of the module of principal parts, this is equivalent to giving an $A$-linear degree one differential operator $M\otimes_B N\to R/Q$. We define
	\begin{align*}
		D: M\otimes_B N &\to R/Q,\\
		\sum m_i\otimes n_i &\mapsto ([\sum n_i\otimes m_i], [\sum m_i\otimes n_i])+Q
	\end{align*}
	This is well-defined, since if $\sum m_i\otimes n_i=\sum m_j'\otimes n_j'$ in $M\otimes_B N$, then the difference 
	\[([\sum n_i\otimes m_i -\sum n_j'\otimes m_j'], [\sum m_i\otimes n_i-\sum m_j'\otimes n_j'])\]
	lies in $Q$ due to \eqref{eq:Qdefalt}. 
	The map $D$ is clearly $A$-linear, and a first order differential operator, since for any $b\in B$, we have 
	\[D(b (m\otimes n))-bD(m\otimes n) = ([n\otimes bm - bn\otimes m],0)+Q,\]
	and thus the map $x\mapsto D(bx)-bD(x)$ is $B$-linear. Thus, we have the corresponding map of $B$-modules
	\begin{align*}
		P^1_{B/A}(N\otimes_B M)&\to R/Q\\
		[b\otimes x]&\mapsto b D(x). 
	\end{align*} 
	A straightforward explicit computation shows that the diagram
	\begin{equation*}
		\begin{tikzcd}[cramped]\label{eq:exseqdiag}
			0\ar[r]&\Omega_{B/A}\otimes_B N\otimes_B M\ar[r]\ar[d,equal]&P_{B/A}^1(N\otimes_B M) \ar[r]\ar[d]& N\otimes_B M\ar[r]\ar[d,equals]&0 \\
			0\ar[r] & \Omega_{B/A}\otimes_B N\otimes_B M\ar[r] & R/Q \ar[r]& N\otimes_B M\ar[r] &0
		\end{tikzcd}
	\end{equation*}
	commutes, which finishes the proof.  
\end{proof}
Since taking tensor products, K\"ahler differentials, modules of principal parts and Baer sums commute with sheafification and are suitably functorial, we have as a consequence
\begin{corollary}\label{cor:tensormods}
	Lemma \ref{lem:tensormods} holds when $A$ and $B$ are rings in a topos.
\end{corollary}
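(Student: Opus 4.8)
The plan is to observe that the entire proof of Lemma \ref{lem:tensormods} is carried out through natural operations — the module of principal parts together with its universal property, K\"ahler differentials, tensor products over $B$, and the Baer sum of extensions — each of which is defined by formulas on local sections and commutes with sheafification. Consequently one repeats the argument essentially verbatim, with ``element'' replaced by ``local section over an object $U$ of the site'', and every claimed identity or well-definedness assertion checked on such sections.

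Concretely, I would proceed as follows. First, recall that for sheaves of rings $A\to B$ in the topos and a sheaf of $B$-modules $M$, the sheaf of principal parts $P^1_{B/A}(M) = B\otimes_A M/(I_\Delta^2 B\otimes_A M)$ (with $I_\Delta = \ker(B\otimes_A B\to B)$) still satisfies the universal property: morphisms of $B$-modules $P^1_{B/A}(M)\to \mathcal{N}$ correspond to $A$-linear first-order differential operators $M\to \mathcal{N}$, since this may be tested on local sections. Likewise, the Baer sum of two extensions of $B$-modules is formed objectwise and sheafified, and $\Ext^1_B$ in the abelian category of $B$-modules in the topos classifies extensions in the usual way.

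Next, form the exact sequences \eqref{eq:tensorseq1} and \eqref{eq:tensorseq2} of sheaves of $B$-modules — using that $M$ flat over $B$ keeps them exact — and the submodules $Q\subset R$ and the quotient $R/Q$ exactly as in the proof of Lemma \ref{lem:tensormods}, noting that a local section of $M\otimes_B N$ is, after passing to a cover, a finite sum of pure tensors, so that the description \eqref{eq:Qdefalt} of $Q$ remains available locally. Then define $D:M\otimes_B N\to R/Q$ by the same formula on local sections; its well-definedness, $A$-linearity, and the property of being a first-order differential operator are local statements, hence follow from the verifications in Lemma \ref{lem:tensormods} applied over each $U$. This yields the induced morphism $P^1_{B/A}(M\otimes_B N)\to R/Q$ and the commutative diagram of short exact sequences, which exhibits the asserted equality of classes in $\Ext^1_B(M\otimes_B N,\Omega_{B/A}\otimes_B M\otimes_B N)$.

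The only point requiring care is the bookkeeping about local sections: any formula written in terms of pure tensors $\sum m_i\otimes n_i$ must be read as an identity valid after restriction to a suitable cover, since the sheaf tensor product is a sheafification of the presheaf one. Once this is kept in mind there is no genuine obstacle; equivalently, if the topos has enough points (as is the case for the lisse-\'etale topos), one may simply verify the identity of Ext classes on stalks, where it reduces directly to Lemma \ref{lem:tensormods}.
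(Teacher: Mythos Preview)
Your proposal is correct and follows exactly the same approach as the paper, which simply observes in one sentence that taking tensor products, K\"ahler differentials, modules of principal parts and Baer sums commute with sheafification and are suitably functorial. You have merely spelled out in detail what the paper leaves as a remark.
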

Using Lemma \ref{lem:tensormods}, we can prove the tensor compatibility of the Atiyah class.
Regard $E\otimes {\at}_F$ as a map $E\otimes F\to L_{\mathcal{X}/\mathcal{Y}}\otimes E\otimes F [1]$ via the composition 
\[E\otimes F\xrightarrow{E\otimes \at_F} E\otimes (L_{\mathcal{X}/\mathcal{Y}}\otimes F [1])\simeq E\otimes L_{\mathcal{X}/\mathcal{Y}} \otimes F [1]\simeq L_{\mathcal{X}/\mathcal{Y}}\otimes E\otimes F[1].\]
Here, the second map is the map defining the triangulated structure on $E\otimes -$, and the third map is the symmetry isomorphism exchanging $E$ and $L_{\mathcal{X}/\mathcal{Y}}$.
\begin{proposition}\label{prop:tensorcomp}
	Let $\mathcal{X}\to \mathcal{Y}$ be a morphism of algebraic stacks, and $E,F\in D^{\leq 0}_{qcoh}(\mathcal{O}_{\mathcal{X}})$. Then we have the equality
	\[\at_{E\otimes F}= \at_{E}\otimes  F+E\otimes \at_{F}.\] 
\end{proposition}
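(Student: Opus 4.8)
The plan is to deduce the identity from its module-theoretic shadow, Corollary~\ref{cor:tensormods} (the module of principal parts turns a tensor product into a Baer sum), by transporting that identity through the chain of functors used in Construction~\ref{constr:atiyah}. Recall that in that construction the Atiyah class of any $E\in D^{\leq 0}_{qcoh}(\mathcal{X})$ is obtained by applying to the connecting map~\eqref{eq:atfirstmap} of the exact sequence of principal parts $\underline{P}^1_{R/\eqtopos{h}^{-1}\mathcal{O}_{\eqtopos{Y}}}(E_{\eqtopos{W}})$ the chain of functors of Remark~\ref{rem:hugecommdiag}, followed by the shift $[-1]$ and multiplication by $-1$. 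Each of these functors ($\mathcal{O}_{\eqtopos{W}}\otimes^\ell_R-$, the Dold--Kan functor $N$, $\coneop$, and $\eta_{W*}$ on objects with quasi-coherent cohomology) is triangulated and, by Lemma~\ref{lem:tensorcone}, the Dold--Kan compatibilities recalled in the simplicial preliminaries, and Lemmas~\ref{lem:conepresqcoh} and~\ref{lem:coneprescot}, is compatible with tensoring by a fixed object with quasi-coherent cohomology and with the relevant symmetry isomorphisms. Since moreover $\eta_{W*}$ restricts to an equivalence $D_{qcoh}(W_\bullet)\xrightarrow{\sim}D_{qcoh}(\mathcal{X})$, it suffices to prove the analogous identity, suitably interpreted, for the connecting maps~\eqref{eq:atfirstmap} in $D^\Delta(R)$.

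To do this I would first choose degreewise flat simplicial $R$-module resolutions $\widetilde{E}\to E_{\eqtopos{W}}$ and $\widetilde{F}\to F_{\eqtopos{W}}$, which is permissible by Remark~\ref{rem:atreple}(i). Then $\widetilde{E}\otimes_R\widetilde{F}$ is again degreewise $R$-flat, and by Lemma~\ref{lem:dertens} it represents $(E\otimes F)_{\eqtopos{W}}$ in $D^\Delta(R)$; hence, again by Remark~\ref{rem:atreple}(i), all three of $\at_E$, $\at_F$ and $\at_{E\otimes F}$ may be computed from the connecting maps $\delta(\widetilde{E})$, $\delta(\widetilde{F})$ and $\delta(\widetilde{E}\otimes_R\widetilde{F})$ of the principal-parts sequences of $\widetilde{E}$, $\widetilde{F}$ and $\widetilde{E}\otimes_R\widetilde{F}$. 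Now I would apply Corollary~\ref{cor:tensormods} with the ambient topos taken to be the topos of simplicial sheaves on $\eqtopos{W}$ (so that $R$ becomes an ordinary ring there, flat over the constant simplicial ring $\eqtopos{h}^{-1}\mathcal{O}_{\eqtopos{Y}}$), with $M=\widetilde{E}$ and $N=\widetilde{F}$. This yields, in the appropriate $\Ext^1_R$-group,
\[\underline{P}^1(\widetilde{E}\otimes_R\widetilde{F})=\underline{P}^1(\widetilde{E})\otimes_R\widetilde{F}+\widetilde{E}\otimes_R\underline{P}^1(\widetilde{F}),\]
the middle term on the right being read off through the symmetry isomorphism of $\otimes$ as in Lemma~\ref{lem:tensormods}. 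Because the connecting map of a short exact sequence of simplicial $R$-modules is precisely the morphism of $D^\Delta(R)$ represented by its class in $\Ext^1_R$, and Baer sum corresponds to addition of such classes, this translates into $\delta(\widetilde{E}\otimes_R\widetilde{F})=\delta(\widetilde{E})\otimes\widetilde{F}+\widetilde{E}\otimes\delta(\widetilde{F})$ in $D^\Delta(R)$, where the two summands are, respectively, $\delta(\widetilde E)$ tensored by $\widetilde F$ and reinterpreted via $\sigma(\Omega\otimes\widetilde{E})\otimes\widetilde{F}\cong\sigma(\Omega\otimes\widetilde{E}\otimes\widetilde{F})$, and $\delta(\widetilde F)$ tensored by $\widetilde E$ and reinterpreted via $\widetilde{E}\otimes\sigma(\Omega\otimes\widetilde{F})\cong\sigma(\widetilde{E}\otimes\Omega\otimes\widetilde{F})\xrightarrow{\sim}\sigma(\Omega\otimes\widetilde{E}\otimes\widetilde{F})$.

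Applying the functor chain above then produces an identity of three maps $E\otimes F\to L_{\mathcal{X}/\mathcal{Y}}\otimes E\otimes F[1]$; the common factor $-1$ coming from~\eqref{eq:atdef} is harmless for an additive relation. The step I expect to be the main obstacle is the identification of these three maps with $\at_{E\otimes F}$, $\at_E\otimes F$ and $E\otimes\at_F$ as the latter two are defined just before the statement: one must verify that $\coneop\circ N$ carries the simplicial-level isomorphisms $\sigma(\Omega\otimes\widetilde{E})\otimes\widetilde{F}\cong\sigma(\Omega\otimes\widetilde{E}\otimes\widetilde{F})$ and $\widetilde{E}\otimes\sigma(\Omega\otimes\widetilde{F})\cong\sigma(\widetilde{E}\otimes\Omega\otimes\widetilde{F})$, together with the symmetry isomorphism interchanging $\Omega_{R/\eqtopos{h}^{-1}\mathcal{O}_{\eqtopos{Y}}}$ (hence $L_{\mathcal{X}/\mathcal{Y}}[1]$) with $E$, exactly to the map defining the triangulated structure on $E\otimes-$ and to the symmetry isomorphism exchanging $E$ and $L_{\mathcal{X}/\mathcal{Y}}$ used in the formulation, keeping track of any Koszul sign picked up when Dold--Kan turns $\sigma$ into the shift $[1]$. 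This is governed entirely by the compatibility of the Dold--Kan correspondence with symmetry isomorphisms recalled above, so it is bookkeeping rather than a genuine difficulty, but it is where care is needed.
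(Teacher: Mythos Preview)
Your proposal is correct and follows essentially the same approach as the paper: reduce to Corollary~\ref{cor:tensormods} applied to the principal parts sequences over $R$, then transport the resulting Baer-sum identity through the chain of functors in Remark~\ref{rem:hugecommdiag} using Lemma~\ref{lem:tensorcone} and the Dold--Kan compatibilities. The only inessential difference is that the paper resolves just $E_{\eqtopos{W}}$ flatly over $R$ (since Lemma~\ref{lem:tensormods} requires only one factor to be flat) rather than both, and it starts by choosing $E$ to be a complex of flat $\mathcal{O}_{\mathcal{X}}$-modules so that the ordinary tensor product of $E$ and $F$ already represents $E\otimes^L F$; your choice to resolve both works equally well.
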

\begin{proof}
	We use the setup of Construction \ref{constr:atiyah}.
	Without loss of generality, we may assume that $E$ is represented by a complex of flat $\mathcal{O}_{\mathcal{X}}$-modules, so in particular we may choose $E\otimes F\in D^{\leq 0}_{qcoh}(\mathcal{O}_{\mathcal{X}})$ to be represented by the usual tensor product of the complexes $E$ and $F$. 
	Let $E_R\to E_{\eqtopos{W}}$ be a flat resolution of the $R$-module $E_{\eqtopos{W}}$.  By the construction of the Atiyah class in \ref{sec:constr-at}, we may use the exact sequences 
	\[\underline{P}^1_{R/\eqtopos{h}^{-1}\mathcal{O}_{\eqtopos{Y}}}(E_R) \mbox{ and } \underline{P}^1_{R/\eqtopos{h}^{-1}\mathcal{O}_{\eqtopos{Y}}}(E_R\otimes_R F_{\eqtopos{W}})\] to compute $\at_{E}$ and $\at_{E\otimes F}$ respectively. By Corollary \ref{cor:tensormods}, we have an equality of extensions of $R$-modules
	\[\underline{P}^1_{R/\eqtopos{h}^{-1}\mathcal{O}_{\eqtopos{Y}}}(E_R)\otimes_R F_{\eqtopos{W}}+ E_R\otimes_R\underline{P}^1_{R/\eqtopos{h}^{-1}\mathcal{O}_{\eqtopos{Y}}}(F_{\eqtopos{W}}) = \underline{P}^1_{R/\eqtopos{h}^{-1}\mathcal{O}_{\eqtopos{Y}}}(E_R\otimes_R F_{\eqtopos{W}}).\]
	After taking connecting maps in $D^{\Delta}(R)$, and using that extensions of scalars and the Dold-Kan correspondence are compatible with the symmetry isomorphisms of the derived tensor product, we get an equality of maps
	\[\at_{E_{\eqtopos{W}}\otimes F_{\eqtopos{W}}}=\at_{E_{\eqtopos{W}}}\otimes F_{\eqtopos{W}} + E_{\eqtopos{W}}\otimes\at_{F_{\eqtopos{W}}}.\]
	
	By Lemma \ref{lem:tensorcone}, this implies the result after passing to $D^{\leq 0}(\mathcal{O}_{W_{\bullet}})$ and then to $D^{\leq 0}(\mathcal{X})$.
\end{proof}
If we assume that one of the Atiyah classes of $E$ or $F$ vanish, this simplifies to
\begin{corollary}
	Suppose that $\at_F=0$ (respectively that $\at_E=0$), then we have $\at_{E\otimes F}=\at_E\otimes F$ (respectively $\at_{E\otimes F}=E\otimes{\at_F}$) after identifying the targets using symmetry of the tensor product. 
\end{corollary}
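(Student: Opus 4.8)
The plan is to deduce the corollary immediately from Proposition \ref{prop:tensorcomp}, whose conclusion is the identity $\at_{E\otimes F}=\at_E\otimes F+E\otimes\at_F$ of morphisms $E\otimes F\to L_{\mathcal{X}/\mathcal{Y}}\otimes E\otimes F[1]$ in $D^{\leq 0}_{qcoh}(\mathcal{O}_{\mathcal{X}})$. It therefore suffices to observe that each of the two summands is a composite of canonical maps one of whose factors is $\at_F$ (respectively $\at_E$), so that vanishing of that factor forces the whole summand to vanish. This is cleaner than re-running the construction of \S\ref{sec:constr-at} directly.

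Concretely, recall from the paragraph preceding Proposition \ref{prop:tensorcomp} that $E\otimes\at_F$ is, by definition, the composition
\[E\otimes F\xrightarrow{\ \mathrm{id}_E\otimes\at_F\ }E\otimes(L_{\mathcal{X}/\mathcal{Y}}\otimes F[1])\xrightarrow{\ \sim\ }E\otimes L_{\mathcal{X}/\mathcal{Y}}\otimes F[1]\xrightarrow{\ \tau\ }L_{\mathcal{X}/\mathcal{Y}}\otimes E\otimes F[1],\]
where the middle arrow is the one defining the triangulated structure on $E\otimes(-)$ and $\tau$ is the symmetry isomorphism exchanging $E$ and $L_{\mathcal{X}/\mathcal{Y}}$. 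The first arrow is the image of $\at_F$ under the triangulated (hence additive) functor $E\otimes(-)$; if $\at_F=0$ in $D^{\leq 0}_{qcoh}(\mathcal{O}_{\mathcal{X}})$, this image is the zero map, so the whole composite $E\otimes\at_F$ vanishes. Substituting into the identity of Proposition \ref{prop:tensorcomp} gives $\at_{E\otimes F}=\at_E\otimes F$, which is the first assertion. The parenthetical statement follows by the identical argument applied to the summand $\at_E\otimes F$, which is the image of $\at_E$ under $(-)\otimes F$ postcomposed with a symmetry isomorphism, and hence vanishes when $\at_E=0$.

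There is essentially no obstacle here: the only point requiring (routine) care is keeping track of which symmetry isomorphisms are used to identify the common target $L_{\mathcal{X}/\mathcal{Y}}\otimes E\otimes F[1]$ of the three morphisms $\at_{E\otimes F}$, $\at_E\otimes F$ and $E\otimes\at_F$. But this identification is exactly the one already fixed in the statement of Proposition \ref{prop:tensorcomp} and in the paragraph preceding it, so no new choices enter, and the corollary is a one-line consequence once that identity is in hand.
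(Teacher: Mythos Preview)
Your proposal is correct and matches the paper's approach: the paper gives no separate proof for this corollary, treating it as an immediate simplification of Proposition \ref{prop:tensorcomp} (it is introduced with the phrase ``If we assume that one of the Atiyah classes of $E$ or $F$ vanish, this simplifies to''). Your argument just spells out this one-line deduction.
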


\begin{corollary}[Shift invariance]\label{cor:shiftinv}
	For $E\in D^{\leq 0}_{qcoh}(\mathcal{X})$, the following diagram commutes
	\begin{equation*}
		\begin{tikzcd}
			E[1]\ar[r,"{\at_{E[1]}}"]\ar[dr,swap,"{\at_E[1]}"]&L_{\mathcal{X}/\mathcal{Y}}[1]\otimes (E[1]) \ar[d,"{\tau[1]}"] \\
			\,& L_{\mathcal{X}/\mathcal{Y}}[1]\otimes E [1],
		\end{tikzcd}
	\end{equation*}
	where $\tau $ is the map defining the triangulated structure on the tensor product functor $L_{\mathcal{X}/\mathcal{Y}}[1]\otimes -$ (see \cite[\href{https://stacks.math.columbia.edu/tag/0G6A}{Tag 0G6A} and \href{https://stacks.math.columbia.edu/tag/0G6E}{Tag 0G6E}]{St20}). 
\end{corollary}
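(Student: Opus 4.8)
The plan is to deduce the corollary from the tensor compatibility of the Atiyah class (Proposition~\ref{prop:tensorcomp}), by writing the shift as a tensor product with the shifted structure sheaf. Observe that $E[1]\simeq E\otimes_{\mathcal{O}_{\mathcal{X}}}\mathcal{O}_{\mathcal{X}}[1]$, where $\mathcal{O}_{\mathcal{X}}[1]\in D^{\leq 0}_{qcoh}(\mathcal{X})$ denotes $\mathcal{O}_{\mathcal{X}}$ placed in cohomological degree $-1$; with the shift conventions fixed in the introduction this identification is the symmetry isomorphism $E\otimes_{\Z}\Z[1]\xrightarrow{\sim}\Z[1]\otimes_{\Z}E$, and likewise $L_{\mathcal{X}/\mathcal{Y}}[1]\otimes E\otimes\mathcal{O}_{\mathcal{X}}[1]\simeq L_{\mathcal{X}/\mathcal{Y}}[1]\otimes E[1]$.

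First I would record that $\at_{\mathcal{O}_{\mathcal{X}}[1],\mathcal{X}/\mathcal{Y}}=0$. Indeed $\mathcal{O}_{\mathcal{X}}[1]=Lf^{*}(\mathcal{O}_{\mathcal{Y}}[1])$ is pulled back from $\mathcal{Y}$, so applying the pullback-compatibility corollary following Corollary~\ref{cor:atindep} to the $2$-commutative cube with back face $\operatorname{id}_{\mathcal{Y}}$, front face $f$, and horizontal maps $f$, the class $\at_{\mathcal{O}_{\mathcal{X}}[1],\mathcal{X}/\mathcal{Y}}$ is identified with $Lf^{*}\at_{\mathcal{O}_{\mathcal{Y}}[1],\mathcal{Y}/\mathcal{Y}}$ followed by the map induced by $Lf^{*}L_{\mathcal{Y}/\mathcal{Y}}\to L_{\mathcal{X}/\mathcal{Y}}$, which vanishes since $L_{\mathcal{Y}/\mathcal{Y}}=0$. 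This is the instance of ``$\at_F$ is trivial when $F$ is pulled back from $\mathcal{Y}$'' noted in \S\ref{subsec:theatiyahclass}.

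Next I apply Proposition~\ref{prop:tensorcomp} to the pair $(E,\mathcal{O}_{\mathcal{X}}[1])$ of objects of $D^{\leq 0}_{qcoh}(\mathcal{X})$; since $\at_{\mathcal{O}_{\mathcal{X}}[1]}=0$, its corollary for a vanishing factor gives $\at_{E\otimes\mathcal{O}_{\mathcal{X}}[1]}=\at_{E}\otimes\mathcal{O}_{\mathcal{X}}[1]$ after the symmetry identification of targets. Transporting this equality along the canonical isomorphisms of the first paragraph, the left-hand side becomes $\at_{E[1]}$ followed by an identification of its target with $L_{\mathcal{X}/\mathcal{Y}}[1]\otimes E[1]$, while the right-hand side becomes $\at_{E}[1]$. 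It then remains to check that the identification so produced is exactly $\tau[1]$: the triangulated-structure map entering the definition of ``$\at_E\otimes\mathcal{O}_{\mathcal{X}}[1]$'' in Proposition~\ref{prop:tensorcomp} is, for the functor $-\otimes_{\mathcal{O}_{\mathcal{X}}}\mathcal{O}_{\mathcal{X}}[1]=(-)[1]$, the structure isomorphism of the shift functor referred to in the statement, and one checks it is compatible with the symmetry isomorphisms used on source and target. (Alternatively one can argue directly from Construction~\ref{constr:atiyah}: the sequence of principal parts of $E_{\eqtopos{W}}[1]$ is the $\sigma$-twist of that of $E_{\eqtopos{W}}$, and one compares connecting maps via the natural isomorphism $\operatorname{Cone}^{\Delta}(\sigma\alpha)\simeq\sigma\operatorname{Cone}^{\Delta}(\alpha)$ induced by the symmetry $\gamma\sigma\simeq\sigma\gamma$, reaching the same last step.)

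The main obstacle is exactly this last step: the sign and structure-map bookkeeping needed to see that the composite of structural isomorphisms produced by Proposition~\ref{prop:tensorcomp} coincides with the single map $\tau[1]$ appearing in the statement. No new idea is required beyond the conventions of the introduction and the compatibility of the Dold--Kan correspondence with symmetry isomorphisms used throughout \S2--\S4; indeed the sign $-1$ inserted in Construction~\ref{constr:atiyah} (arising from commuting two shift functors) is precisely what makes this come out right.
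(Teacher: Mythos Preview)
Your proof is correct and follows essentially the same route as the paper: apply Proposition~\ref{prop:tensorcomp} with one factor equal to $\mathcal{O}_{\mathcal{X}}[1]$ and use that its Atiyah class vanishes because it is pulled back from $\mathcal{Y}$. The only cosmetic difference is that the paper tensors on the left, taking $F\otimes E$ with $F=\mathcal{O}_{\mathcal{X}}[1]$; since the paper's convention is $E[1]=\Z[1]\otimes_{\Z}E$, this identifies $F\otimes E$ with $E[1]$ directly and spares one symmetry isomorphism in the bookkeeping you flag at the end.
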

\begin{proof}
	This follows from Proposition \ref{prop:tensorcomp} applied to $F\otimes E$ with $F=\mathcal{O}_{\mathcal{X}}[1]$ and noting that $\at_{F}=0$, since $F$ is pulled back from $\mathcal{Y}$.
\end{proof}

\begin{remark}
	Using Corollary \ref{cor:shiftinv}, it is straightforward to extend the definition of $\at_E$ to any $E\in D^-_{qcoh}(\mathcal{X})$.
\end{remark}
Another consequence is the following:

\begin{corollary}\label{cor:atmapoftri}
	The following diagram is a morphism of exact triangles
	\begin{equation*}
		\begin{tikzcd}
			F\ar[r]\ar[d,"{\at_F}"]& E\ar[d,"{\at_E}"]\ar[r]&G\ar[r]\ar[d,"{\at_G}"] &F[1]\ar[d,"{\at_F[1]}"] \\
			L_{\mathcal{X}/\mathcal{Y}}[1]\otimes F\ar[r] &L_{\mathcal{X}/\mathcal{Y}}[1]\otimes E \ar[r]&L_{\mathcal{X}/\mathcal{Y}}[1]\otimes G\ar[r]&L_{\mathcal{X}/\mathcal{Y}}[1]\otimes F[1].
		\end{tikzcd}
	\end{equation*}
	Here the lower row is obtained by applying the triangulated functor $L_{\mathcal{X}/\mathcal{Y}}[1]\otimes -$ to the upper row.
\end{corollary}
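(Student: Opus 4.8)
The plan is to obtain this as a formal consequence of two properties already established: the functoriality of the Atiyah class in its argument (Lemma~\ref{lem:functoriallem}, valid for the intrinsic $\at$ by Corollary~\ref{cor:atindep}), and its compatibility with shifts (Corollary~\ref{cor:shiftinv}). The underlying principle is that $\at_{-}$ is a natural transformation from the identity functor to the triangulated functor $L_{\mathcal{X}/\mathcal{Y}}[1]\otimes-$, and any natural transformation between triangulated functors which is compatible with the suspension structure isomorphisms automatically carries a distinguished triangle to a morphism of distinguished triangles. I would simply spell this out in the case at hand.

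Concretely, write the given distinguished triangle as $F\xrightarrow{u}E\xrightarrow{v}G\xrightarrow{w}F[1]$. Applying the functoriality of $\at$ to the maps $u$ and $v$ gives commutativity of the first two squares on the nose, with the bottom row obtained by applying $L_{\mathcal{X}/\mathcal{Y}}[1]\otimes-$ (note the bottom row is distinguished precisely because this functor is triangulated). For the third square, I would apply functoriality of $\at$ to the map $w\colon G\to F[1]$: this yields a commutative square whose right vertical arrow is $\at_{F[1]}$ and whose bottom horizontal arrow is $L_{\mathcal{X}/\mathcal{Y}}[1]\otimes w$, with target $L_{\mathcal{X}/\mathcal{Y}}[1]\otimes F[1]=(L_{\mathcal{X}/\mathcal{Y}}[1]\otimes F)[1]$. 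It then remains to identify $\at_{F[1]}$ with $\at_F[1]$, which is exactly Corollary~\ref{cor:shiftinv}, and to observe that $L_{\mathcal{X}/\mathcal{Y}}[1]\otimes w$ is the connecting map of the image triangle by definition of the triangulated structure on $L_{\mathcal{X}/\mathcal{Y}}[1]\otimes-$. Splicing the three commuting squares together gives the claimed morphism of exact triangles.

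The only delicate point, and hence the main thing to be careful about, is the bookkeeping of the structure isomorphism $\tau$ of the functor $L_{\mathcal{X}/\mathcal{Y}}[1]\otimes-$ in the third square: this $\tau$ appears both in the connecting map of the image triangle and in the statement of Corollary~\ref{cor:shiftinv}, and one must check that these two occurrences match so that the square commutes exactly, not merely up to sign. This is routine once Corollary~\ref{cor:shiftinv} is available, and it carries no new geometric content beyond the functoriality and shift-invariance already proven; all the genuine work has been done in the preceding results.
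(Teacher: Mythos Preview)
Your proposal is correct and matches the paper's own argument essentially verbatim: the first two squares commute by functoriality of the Atiyah class, and the third by functoriality applied to $w\colon G\to F[1]$ together with Corollary~\ref{cor:shiftinv}. Your extra care about the structure isomorphism $\tau$ is well placed but not something the paper dwells on.
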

\begin{proof}
	The commutativity of the first two squares follows from the functoriality of the Atiyah class (Lemma \ref{lem:functoriallem}), while the commutativity of the last square follows from functoriality combined with Corollary \ref{cor:shiftinv}.
\end{proof}

\subsection{Compatibility with traces}
We let $f:\mathcal{X}\to \mathcal{Y}$ be a morphism of algebraic stacks, and consider Atiyah classes with respect to this morphism. In Proposition \ref{prop:attrace} and Corollary \ref{cor:atdettr}, we make global boundedness assumptions, but the results likely hold for arbitrary perfect complexes, (cf. \cite{LiOl}).
\begin{proposition}\label{prop:attrace}
	Let $F\to E\to G\to F[1]$ be a distinguished triangle of perfect complexes in $D^b_{qcoh}(\mathcal{X})$  and assume each of $E,F,G$ has finite Tor-amplitude. Then 
	\[\operatorname{tr}(\at_E)= \operatorname{tr}(\at_F)+\operatorname{tr}(\at_G).\]
\end{proposition}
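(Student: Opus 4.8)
The plan is to deduce this from May's additivity criterion, Proposition~\ref{prop:maytrace}, applied with $L := L_{\mathcal{X}/\mathcal{Y}}[1]$ and with $f=\at_E$, $g=\at_F$, $h=\at_G$ (the relevant maps $E\to L\otimes E$, $F\to L\otimes F$, $G\to L\otimes G$). The commutative ladder relating the triangle $F\to E\to G$ and its $L$-twist that is required as input for that criterion is exactly Corollary~\ref{cor:atmapoftri}. Since the additivity machinery of that subsection is phrased in the derived category of modules over a ring in a topos, I would first transport everything along $\eta_X^*$ into $D_{qcoh}(\mathcal{O}_{X_{\bullet}})$ (for a choice of cover as in Situation~\ref{sit:wxtopoi}); this is a monoidal equivalence, so it preserves $\operatorname{tr}(\at_\bullet)$, and the finite Tor-amplitude hypothesis lets me represent the triangle by an honest short exact sequence $0\to F\to E\to G\to 0$ of bounded-above complexes of flat modules, putting us in Situation~\ref{sit:exseqdualized} (Assumption~\ref{assum:bounded} holds as $E,F,G\in D^b$). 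In the big diagram of Proposition~\ref{prop:maytrace} everything except the dashed arrow $W\to L\otimes W$ and the two squares adjacent to it commutes by Lemma~\ref{lem:traddsetup} and the definitions, and by Lemma~\ref{lem:traltdef} the left edge composes to $\operatorname{tr}(\at_E)$ and the right edge to $\operatorname{tr}(\at_F)+\operatorname{tr}(\at_G)$. So the whole proof reduces to producing this dashed arrow together with the commutativity of its two neighbouring squares.

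For that I would use the same principal-parts machinery that underlies the Atiyah class of an exact sequence in \S\ref{subsec:atexseqtopoi}: over the simplicial resolution $R = P_{\eqtopos{h}^{-1}\mathcal{O}_{\eqtopos{Y}}}(\mathcal{O}_{\eqtopos{W}})$ on $\eqtopos{W}$, form the exact sequence of principal parts of $E$, tensor it on the right by $E^\vee$ to obtain the exact sequence $0\to L_{\eqtopos{W}/\eqtopos{Y}}\otimes E\otimes E^\vee\to P^1(E)\otimes E^\vee\to E\otimes E^\vee\to 0$, and observe that by functoriality of the principal-parts construction the subobject built from $\underline{P}^1(F)\otimes G^\vee$ produces, upon passing to the quotient by $F\otimes G^\vee$ and then applying $\coneop$, $\eta_{W*}$ and the Dold--Kan correspondence, a morphism $W\to L\otimes W$. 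The two squares then assert precisely that this arrow, restricted along $E\otimes E^\vee\to W$, is $\at_E\otimes\operatorname{id}_{E^\vee}$, and restricted along $F\otimes F^\vee\oplus G\otimes G^\vee\to W$ is $(\at_F\otimes\operatorname{id})\oplus(\at_G\otimes\operatorname{id})$; both follow from the functoriality of the Atiyah-class construction (Lemma~\ref{lem:functoriallem}, Corollary~\ref{cor:atmapoftri}) applied to $F\to E$ and $E\to G$, combined with the tensor compatibility of Proposition~\ref{prop:tensorcomp} to absorb the $\otimes E^\vee$ (resp.\ $\otimes F^\vee$, $\otimes G^\vee$) factors.

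The main obstacle is exactly this construction of the dashed arrow: descending $\at_E\otimes\operatorname{id}_{E^\vee}$ to the quotient $W$ as a \emph{canonical} morphism that is simultaneously compatible with $\at_F$ and $\at_G$ cannot be done formally in the derived category — a priori such a descent exists only up to an element of a negative Ext group, cf.\ Remark~\ref{rem:redatgen} — so one must carry the whole argument out at the level of the explicit diagram of exact sequences of $R$-modules, as in \S\ref{subsec:atexseqtopoi}, and only afterwards pass to derived categories; the bookkeeping of the cone, shift, and $\eta_{W*}$ steps (and of the $W$ versus $\overline{W}$ identification used in Lemma~\ref{lem:traddsetup}) is where the care is needed. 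An alternative that sidesteps the May diagram is to note that $\operatorname{tr}(\at_E)=\at'_E\circ\eta$ and that $\at'_E$ factors through $\at_{\underline{E}}\colon \frac{E\otimes E^\vee}{F\otimes G^\vee}[-1]\to L_{\mathcal{X}/\mathcal{Y}}$ by Corollary~\ref{cor:atexseqatcomp}; then Lemma~\ref{lem:traddsetup}(i), which rewrites $\eta$ pushed into $W$ as the sum of the diagonals of $F\otimes F^\vee$ and $G\otimes G^\vee$, together with Proposition~\ref{prop:atexseqcommdiag} and its $G$-analogue, gives $\operatorname{tr}(\at_E)=\operatorname{tr}(\at_F)+\operatorname{tr}(\at_G)$ at once — the residual work there being the $G$-version of Proposition~\ref{prop:atexseqcommdiag} with the correct sign, which requires essentially the same computation.
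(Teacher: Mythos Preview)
Your proposal is correct and is essentially the paper's own argument: apply May's criterion (Proposition~\ref{prop:maytrace}) with $L=L_{\mathcal{X}/\mathcal{Y}}[1]$, and build the dashed arrow $W\to L\otimes W$ by forming over $R$ on $\eqtopos{W}$ the termwise quotient of the inclusion $\underline{P}^1(F_R)\otimes\overline{G}\hookrightarrow\underline{P}^1(E_R)\otimes\overline{E}$ (with $\overline{E},\overline{F},\overline{G}$ the shifted truncated duals, which is where finite Tor-amplitude is used), then take the connecting map and apply $\coneop$, $\eta_{W*}$, and a shift. The only cosmetic difference is that the two adjacent squares are verified not via Proposition~\ref{prop:tensorcomp} --- the vertical maps there are $\at_F\otimes\id_{F^\vee}$ and $\at_G\otimes\id_{G^\vee}$, not $\at_{F\otimes F^\vee}$ --- but by the direct observation that $\underline{P}^1(F_R)\otimes\overline{F}$ and $\underline{P}^1(G_R)\otimes\overline{G}$ embed as subsequences of the quotient sequence $\underline{S}$, which is exactly the explicit $R$-module-level check you correctly flagged as the heart of the matter.
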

\begin{proof}
	By the shift invariance of the Atiyah class and Lemma \ref{lem:trtensor}, we may assume that the given distinguished triangle is represented by a short exact sequence of complexes
	\[0\to F\to E\to G\to 0,\]
	where $F,E,G$ have flat components and lie in $C^{\leq 0}(\mathcal{X})$.
	
	We then use the setup of Situation \ref{sit:exseqdualized} with $L=L_{\mathcal{X}/\mathcal{Y}}[1]$ and $R=\mathcal{O}_{\mathcal{X}}$. (this is justified by Corollary \ref{cor:atmapoftri}). Then by Proposition \ref{prop:maytrace}, we need to find a morphism 
	\[\frac{E\otimes E^{\vee}}{F\otimes G^{\vee}}\to L_{\mathcal{X}/\mathcal{Y}}\otimes \frac{E\otimes E^{\vee}}{F\otimes G^{\vee}}\]
	making the diagram there commute. By the assumption on Tor-dimension, each term of the sequence 
	\[0\to G^{\vee}\to E^{\vee}\to F^{\vee}\to 0\]
	lies in $D^-_{qcoh}(\mathcal{X})$. We can therefore find an integer $N$, such that the natural maps from the good truncation $\tau_{\leq N}$ are quasi-isomorphisms for each term.  Set $\overline{E}:=(\tau_{\leq N}E)[N]$, and define $\overline{F},\overline{G}$ in the same way, so that we obtain an exact sequence in $C^{\leq 0}(\mathcal{O}_{\mathcal{X}})$. 
	We also let $\overline{E}_{\eqtopos{W}}, \overline{F}_{\eqtopos{W}}$ and $\overline{{G}}_{\eqtopos{W}}$ denote the induced complexes of $\mathcal{O}_{\eqtopos{W}}$-modules, which we regard as simplicial modules via the Dold--Kan correspondence and as $R$-modules by restriction of scalars. 
	Choose resolutions of $F_{\eqtopos{W}},E_{\eqtopos{W}}$ and $G_{\eqtopos{W}}$ by termwise flat $R$-modules, so that we have a morphism of exact sequences
	\begin{equation*}
		\begin{tikzcd}
			0\ar[r]& F_R\ar[r]\ar[d]& E_R\ar[r]\ar[d]& G_R\ar[r]\ar[d]&0  \\
			0\ar[r] & F_{\eqtopos{W}}\ar[r]& E_{\eqtopos{W}}\ar[r]& G_{\eqtopos{W}}\ar[r]& 0.
		\end{tikzcd}
	\end{equation*}
	We then obtain a natural morphism of exact sequences of $R$-modules
	\[\underline{P}^1_{R/h^{-1}\mathcal{O}_{\eqtopos{Y}}}(F_{R})\otimes_R \overline{G}_{\eqtopos{W}}\to \underline{P}^1_{R/h^{-1}\mathcal{O}_{\eqtopos{Y}}}(E_{R})\otimes_R \overline{E}.\]
	This is termwise an injection of complexes, so we get a quotient exact sequence $\underline{S}$ of the form 
	\[0\to \Omega^1_{R/h^{-1}\mathcal{O}_{\eqtopos{Y}}}\otimes_R \frac{E_{R}\otimes_R \overline{E}_{\eqtopos{W}}}{F_R\otimes_R\overline{G}_{\eqtopos{W}}}\to ***\to \frac{E_R\otimes \overline{E}_{\eqtopos{W}}}{F_R\otimes \overline{G}_{\eqtopos{W}}}\to 0.\] 
	Taking the connecting map of this sequence and passing back to $D(\eqtopos{W})$, we obtain a morphism $\alpha$ making the following square commute
	\begin{equation*}
		\begin{tikzcd}
			E_{\eqtopos{W}}\otimes \overline{E}_{\eqtopos{W}}\ar[r]\ar[d,"\at_{E_{\eqtopos{W}}}\otimes \overline{E}_{\eqtopos{W}}"]&\frac{	E_{\eqtopos{W}}\otimes \overline{E}_{\eqtopos{W}}}{	F_{\eqtopos{W}}\otimes \overline{G}_{\eqtopos{W}}} \ar[d,"\alpha"] \\
			L_{\eqtopos{W}/\eqtopos{Y}}[1]\otimes E_{\eqtopos{W}}\otimes \overline{E}_{\eqtopos{W}}\ar[r] & L_{\eqtopos{W}/\eqtopos{Y}}[1]\otimes \frac{	E_{\eqtopos{W}}\otimes \overline{E}_{\eqtopos{W}}}{	F_{\eqtopos{W}}\otimes \overline{G}_{\eqtopos{W}}}.
		\end{tikzcd}
	\end{equation*}
	We claim that the diagram
	\begin{equation}\label{diag:Fcommutes}
		\begin{tikzcd}
			F_{\eqtopos{W}}\otimes \overline{F}_{\eqtopos{W}}\ar[r]\ar[d,"\at_{F_{\eqtopos{W}}}\otimes \overline{F}_{\eqtopos{W}}"]&\frac{	E_{\eqtopos{W}}\otimes \overline{E}_{\eqtopos{W}}}{	F_{\eqtopos{W}}\otimes \overline{G}_{\eqtopos{W}}} \ar[d,"\alpha"] \\
			L_{\eqtopos{W}/\eqtopos{Y}}[1]\otimes F_{\eqtopos{W}}\otimes \overline{F}_{\eqtopos{W}}\ar[r] & L_{\eqtopos{W}/\eqtopos{Y}}[1]\otimes \frac{	E_{\eqtopos{W}}\otimes \overline{E}_{\eqtopos{W}}}{	F_{\eqtopos{W}}\otimes \overline{G}_{\eqtopos{W}}}.
		\end{tikzcd}
	\end{equation}
	also commutes (and similarly with $G$ in place of $F$ in the left column). This follows from the observation that the exact sequence $\underline{P}^1_{R/\eqtopos{h}^{-1}\mathcal{O}(\eqtopos{Y})}(F_R)\otimes \overline{F}_{\eqtopos{W}}$ is isomorphic to the termwise cokernel of the map of exact sequences 
	\[\underline{P}^1_{R/h^{-1}\mathcal{O}_{\eqtopos{Y}}}(F_{R})\otimes_R \overline{G}_{\eqtopos{W}}\to \underline{P}^1_{R/h^{-1}\mathcal{O}_{\eqtopos{Y}}}(F_{R})\otimes_R \overline{E}_{\eqtopos{W}},\]
	and thus includes into $\underline{S}$. Then we get the commutativity \eqref{diag:Fcommutes} by taking connecting morphisms and passing to $D(\eqtopos{W})$. The argument for $G$ in place of $F$ goes similarly. 
	Applying the functor $\eta_{W*}\circ\coneop(\alpha) \circ [-1]$ then gives a morphism
	\[\frac{E\otimes E^{\vee}}{F\otimes G^{\vee}}[N]\to L_{\mathcal{X}/\mathcal{Y}}\otimes \frac{E\otimes E^{\vee}}{F\otimes G^{\vee}}[N].\]
	Shifting by $-N$ gives the desired map in Proposition \ref{prop:maytrace}. 
\end{proof}

\begin{corollary}\label{cor:atdettr}
	Let $E\in D^-_{qcoh}(\mathcal{O}_{\mathcal{X}})$ be an object that can be represented by a finite length complex of locally free sheaves. Then we have an equality of maps $\mathcal{O}_{\mathcal{X}}\to L_{\mathcal{X}/\mathcal{Y}}[1]$:
	\[\operatorname{tr}(\at_{E})=\frac{\at_{\det E}}{\det E}.\]
\end{corollary}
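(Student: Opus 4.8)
The plan is to reduce the statement to the case of a single locally free sheaf by exploiting additivity of both sides, and then to deduce that case from the computation of the Atiyah class of the universal bundle on $BGL_n$ in Example \ref{ex:trdetgln}.

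First I would establish additivity of the right-hand side. For line bundles $L,M$ the identity $\at_{L\otimes M}\otimes(L\otimes M)^{-1}=\at_L\otimes L^{-1}+\at_M\otimes M^{-1}$ follows from the tensor compatibility of Proposition \ref{prop:tensorcomp} after tensoring with $L^{\vee}\otimes M^{\vee}$ and applying the evaluation maps $L\otimes L^{\vee}\to\mathcal{O}_{\mathcal{X}}$ and $M\otimes M^{\vee}\to\mathcal{O}_{\mathcal{X}}$; in particular $\at_{L^{\vee}}\otimes L=-\at_L\otimes L^{-1}$. Since $\det$ is multiplicative over distinguished triangles and $\det(E[1])=\det(E)^{\vee}$, this together with the shift invariance of Corollary \ref{cor:shiftinv} --- whose sign matches the Euler-characteristic sign of $\operatorname{tr}$ under a shift --- makes $E\mapsto\at_{\det E}\otimes\det(E)^{-1}$ additive over distinguished triangles. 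The left-hand side is additive by Proposition \ref{prop:attrace}, whose finite-Tor-amplitude hypothesis is satisfied by every term of a finite complex of locally free sheaves. Writing $E$ as an iterated extension of shifts of its locally free terms thus reduces the corollary to the case $E=V$, a locally free sheaf of some rank $n$ placed in degree $0$ (after passing to the open-closed substacks where the rank is constant).

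For this case I would use that $V$ corresponds to a morphism $A\colon\mathcal{X}\to BGL_n$ with $V\simeq A^*\mathcal{V}$. Applying the pullback compatibility of the Atiyah class to the square with top arrow $A$ and with $\Spec k$ in both bottom corners identifies $A^*\at_{\mathcal{V},BGL_n/k}$ with $\at_{V,\mathcal{X}/k}$, compatibly with the map $A^*L_{BGL_n/k}\to L_{\mathcal{X}/k}$; composing with $L_{\mathcal{X}/k}\to L_{\mathcal{X}/\mathcal{Y}}$ and using the functoriality of the Atiyah class in the base (immediate from Construction \ref{constr:atiyah}; cf.\ Corollary \ref{cor:atexseqfuncy}) produces $\at_{V,\mathcal{X}/\mathcal{Y}}$. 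The trace commutes with the symmetric monoidal functor $LA^*$ and with post-composition along the fixed map $L_{\mathcal{X}/k}[1]\to L_{\mathcal{X}/\mathcal{Y}}[1]$, so $\operatorname{tr}(\at_{V,\mathcal{X}/\mathcal{Y}})$ is the image of $\operatorname{tr}(\at_{\mathcal{V},BGL_n/k})$ under $A^*L_{BGL_n/k}[1]\to L_{\mathcal{X}/\mathcal{Y}}[1]$. By Example \ref{ex:trdetgln} the latter equals $\at_{\det\mathcal{V}}\otimes\det(\mathcal{V})^{-1}$, and since $\det V\simeq A^*\det\mathcal{V}$ and $\at_{\det\mathcal{V}}\otimes\det(\mathcal{V})^{-1}$ transports to $\at_{\det V}\otimes\det(V)^{-1}$ under the same pullback and base change (by pullback compatibility and base functoriality applied to the line bundle $\det\mathcal{V}$), this is exactly the claimed equality.

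I expect the main obstacle to be bookkeeping rather than conceptual: verifying that the several sign conventions (shift invariance of $\at$ in Corollary \ref{cor:shiftinv}, the sign of $\operatorname{tr}$ under shifts, and $\at_{L^{\vee}}\otimes L=-\at_L\otimes L^{-1}$) are mutually consistent so that the reduction to a sheaf in degree $0$ is clean, and checking that the trace genuinely commutes with $LA^*$ and with the cotangent-complex functoriality maps used above. Both follow from the general properties of traces in closed symmetric monoidal categories recalled earlier and from Lemma \ref{lem:trtensor}, but they are the points where care is needed.
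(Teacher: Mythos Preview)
Your proposal is correct and follows the same route as the paper: both reduce to a single locally free sheaf via additivity of each side (Proposition \ref{prop:attrace} for the trace, and Proposition \ref{prop:tensorcomp} together with multiplicativity of $\det$ for the right-hand side) and then invoke Example \ref{ex:trdetgln} for the base case. You are more explicit about that base case, spelling out the pullback along the classifying map $\mathcal{X}\to BGL_n$ and the subsequent change of base to $\mathcal{Y}$, which the paper leaves implicit; one small slip is that base functoriality of the ordinary Atiyah class is the pullback property of \S\ref{subsec:theatiyahclass} applied with $A=\id_{\mathcal{X}}$, not Corollary \ref{cor:atexseqfuncy}, which concerns the Atiyah class of an exact sequence.
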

\begin{proof}
	We argue by induction on the number $k$ of nonzero components in a resolution by locally free sheaves. Without loss of generality, we may assume that $E$ is given by such a resolution. If $k=1$, the result follows from Example \ref{ex:trdetgln}. If $k>1$, we may write $k=k_1+k_2$ for positive integers $k_1,k_2$ and can take bad truncations of $E$ to get an exact sequence of complexes
	\[0\to F\to E\to G\to 0,\]
	where $F$ has $k_1$ nonzero components and $G$ has $k_2$, and they are all locally free. Then by Proposition \ref{prop:attrace} and the induction hypothesis, on one hand, we have
	\[\operatorname{tr}(\at_E) = \operatorname{tr}(\at_F)+\operatorname{tr}(\at_G)=\frac{\at_{\det F}}{\det F}+\frac{\at_{\det G}}{\det G}.\]
	On the other hand, the determinant is multiplicative in exact sequences of perfect complexes, which gives $\det E=\det F\otimes \det G$. Using the tensor compatibility Proposition \ref{prop:tensorcomp}, we find 
	\[\frac{\at_{\det E}}{\det E} = \frac{\at_{\det F}\otimes \det G + \det F\otimes \at_{\det G}}{\det F\otimes \det G}= \frac{\at_{\det F}}{\det F}+\frac{\at_{\det G}}{\det G}.\] 
\end{proof}		
\subsection{Compatibility of Atiyah class and reduced Atiyah class}\label{subsec:compatredat}
We prove Proposition \ref{prop:redatcomphard}.
Consider the setup of Construction \ref{constr:redat}, and let $\mathcal{Y}\to \mathcal{Z}$ be a further morphism of algebraic stacks. Choose a diagram 
\begin{equation*}
	\begin{tikzcd}
		X\ar[r]&\mathcal{X}_Y\ar[d]\ar[r]&\mathcal{X}_Z\ar[r]\ar[d]&\mathcal{X}\ar[d]\\
		&Y				   \ar[r]&\mathcal{Y}_Z\ar[r]\ar[d]& \mathcal{Y}\ar[d] \\
		& 						 &Z\ar[r] 				   & \mathcal{Z}
	\end{tikzcd}
\end{equation*}
in which all squares are cartesian, the horizontal morphisms are smooth and surjective, and in which $X,Y$ and $Z$ are algebraic spaces. We let $X_{\bullet}$, $Y_{\bullet}$ and $Z_{\bullet}$ be the strictly simplicial algebraic spaces associated to compositions along the horizontal rows respectively. We also let $V:=Y\times_{\mathcal{Y}_Z}Y$ with strictly simplicial algebraic space $V_{\bullet}$ associated to the morphism $V\to \mathcal{Y}$, and further $W:=X\times_{\mathcal{X}_Y} X$ and $\widetilde{W}:=X\times_{\mathcal{X}_Z} X$ with strictly simplicial algebraic spaces $W_{\bullet}$ and $\widetilde{W}_{\bullet}$ associated to the morphisms to $\mathcal{X}$. We have the following natural commutative diagram
\begin{equation*}
	\begin{tikzcd}			
		X_{\bullet}\ar[d]&\ar[l,"s_{\bullet}"']W_{\bullet}\ar[r,"t_{\bullet}"]\ar[d]& X_{\bullet}\ar[d] \\
		X_{\bullet}\ar[d] &\widetilde{W}_{\bullet} \ar[d]\ar[r,"{t_{\bullet}}"]\ar[l,"{s_{\bullet}}"'] & X_{\bullet}\ar[d]\\
		Y_{\bullet} & V_{\bullet}\ar[l,"{s_{\bullet}}"']\ar[r,"{t_{\bullet}}"]& Y_{\bullet}	
	\end{tikzcd}
\end{equation*}
Here, by abuse of notation, we use $s_{\bullet}$ and $t_{\bullet}$ to denote the morphisms given by (degreewise) projection to the first and second factor respectively.
We define diagrammatic topoi $\eqtopos{W}$, $\eqtopos{\widetilde{W}}$ and $\eqtopos{V}$  by the rows of this diagram and $\eqtopos{Y}$ and $\eqtopos{Z}$ associated to the constant diagram with values $Y_{\bullet}$ and $Z_{\bullet}$ respectively.  Then we have morphisms 
\[\eqtopos{W}\xrightarrow{\eqtopos{j}} \eqtopos{\widetilde{W}}\to \eqtopos{V}\to \eqtopos{Y}\to \eqtopos{Z}.\]
Denote the sheaves on either of these obtained by pulling back $E, F$ or $G$ respectively by a corresponding subscript. In particular, we have $E_{\eqtopos{Y}}$ on $Y$, which is Tor-independent with the morphism $\eqtopos{W}\to \eqtopos{Y}$, and we have an exact sequence 
\[0\to F_{\eqtopos{W}}\to E_{\eqtopos{W}}\to G_{\eqtopos{W}}\to 0.\]
By construction, the reduced Atiyah class $\overline{\at}_{E,\mathcal{X}/\mathcal{Y},G}$ is obtained from $\overline{\at}_{E_{\eqtopos{Y}},\eqtopos{W}/\eqtopos{Y},\eqtopos{G}}$ by applying $\coneop$ and descent to $D(\mathcal{X})$. Similarly, the Atiyah class $\at_{E,\mathcal{Y}/\mathcal{Z}}$ is obtained from $\at_{E_{\eqtopos{V}},\eqtopos{V}/\eqtopos{Z }}$ by applying $\coneop$ and passing to $D(\mathcal{Y})$.

Let $\eqtopos{\widetilde{f}}$ denote the morphism $\eqtopos{\widetilde{W}}\to \eqtopos{V}$. By Proposition \ref{prop:redatcompbasic}, we have the anti-commutative diagram
\begin{equation*}
	\begin{tikzcd}[column sep = large]
		F_{\eqtopos{\widetilde{W}}}\ar[rr,"{\overline{\at}_{E_{\eqtopos{V}},\eqtopos{\widetilde{W}}/\eqtopos{V},G_{\eqtopos{W}}}}"]\ar[d]&[5pt] & L_{\eqtopos{\widetilde{W}}/\eqtopos{V}}\otimes G_{\eqtopos{\widetilde{W}}}\ar[d] \\	 		
		E_{\eqtopos{\widetilde{W}}}\ar[r,"{\eqtopos{\widetilde{f}}^*\at_{E_{\eqtopos{V}},\eqtopos{V}/\eqtopos{Z}}}"] & \eqtopos{\widetilde{f}}^*L_{\eqtopos{V}/\eqtopos{Z}}[1]\otimes E_{\eqtopos{\widetilde{W}}}\ar[r]&\eqtopos{\widetilde{f}}^*L_{\eqtopos{V}/\eqtopos{Z}}[1]\otimes G_{\eqtopos{\widetilde{W}}}.
	\end{tikzcd}
\end{equation*}
We also have a morphism $\eqtopos{Y}\to \eqtopos{V}$ induced by the morphism $Y_{\bullet}\to V_{\bullet}$, given by the diagonal of $Y_n\times_{\mathcal{Y}_{Z_n}}Y_n$ in degree $n$. This fits into the commutative diagram
\begin{equation*}
	\begin{tikzcd}
		\eqtopos{W}\ar[r,"\eqtopos{j}"]\ar[d]& \eqtopos{\widetilde{W}}\ar[d] \\
		\eqtopos{Y}\ar[r] & \eqtopos{V}.
	\end{tikzcd}
\end{equation*}
Here, the horizontal maps are Tor-independent to $E_{\eqtopos{V}}$ and to $E_{\eqtopos{\widetilde{W}}}$ and $G_{\eqtopos{\widetilde{W}}}$ respectively, thus we can apply Corollary \ref{cor:redatpullback}. Moreover, the pullback $\eqtopos{j}^*L_{\eqtopos{\widetilde{W}}/\eqtopos{V}}\to L_{\eqtopos{W}/\eqtopos{Y}}$ is a quasi-isomorphism (this follows, since the morphisms $Y_{\bullet}\to V_{\bullet}\leftarrow \widetilde{W}_{\bullet}$ are Tor-independent, which can be checked degreewise).
In conclusion, by pulling back along $\eqtopos{j}$, we obtain an anti-commutative diagram in $D(\eqtopos{W})$:
\begin{equation*}
	\begin{tikzcd}[column sep = large]
		F_{\eqtopos{W}}\ar[rr,"{\overline{\at}_{E_{\eqtopos{Y}},\eqtopos{W}/\eqtopos{Y},G_{\eqtopos{W}}}}"]\ar[dd]&[5pt] & L_{\eqtopos{W}/\eqtopos{Y}}\otimes G_{\eqtopos{W}}\\
		& & \eqtopos{j}^*L_{\eqtopos{\widetilde{W}}/\eqtopos{V}}\otimes G_{\eqtopos{W}}\ar[u,"\sim"']\ar[d] \\	 		
		E_{\eqtopos{W}}\ar[r,"{\eqtopos{f}^*\at_{E_{\eqtopos{V}},\eqtopos{V}/\eqtopos{Z}}}"] & \eqtopos{f}^*L_{\eqtopos{V}/\eqtopos{Z}}[1]\otimes E_{\eqtopos{W}}\ar[r]&\eqtopos{f}^*L_{\eqtopos{V}/\eqtopos{Z}}[1]\otimes G_{\eqtopos{W}},
	\end{tikzcd}
\end{equation*}
where $\eqtopos{f}$ is the morphism $\eqtopos{W}\to \eqtopos{V}$.
By applying $\coneop$ and passing to $D(\mathcal{X})$, we conclude 
\begin{proposition}\label{prop:atclassredat}
	The diagram 
	\begin{equation*}
		\begin{tikzcd}
			F\ar[rr,"{\overline{\at}_{E,\mathcal{X}/ \mathcal{Y} ,G}}"]\ar[d]&[20 pt] & L_{\mathcal{X}/\mathcal{Y}}\otimes G\ar[d] \\
			E_{\mathcal{X}}\ar[r,"{Lf^*\at_{E, \mathcal{Y}  /\mathcal{Z}}}"] & Lf^*L_{\mathcal{Y}/\mathcal{Z}}[1]\otimes E\ar[r]&Lf^*L_{\mathcal{Y}/\mathcal{Z}}[1]\otimes G
		\end{tikzcd}
	\end{equation*}
	anti-commutes. 
\end{proposition}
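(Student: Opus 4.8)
The plan is to reduce the statement, via a suitable choice of smooth covers and the associated diagrammatic topoi, to the already-established anti-commutativity for a map of ringed topoi --- Proposition~\ref{prop:redatcompbasic} --- and then to descend the resulting diagram to $D(\mathcal{X})$.

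Concretely, I would first choose a tower of smooth surjective covers by algebraic spaces compatible with $\mathcal{X}\to\mathcal{Y}\to\mathcal{Z}$: pick $Z\to\mathcal{Z}$, then $Y\to\mathcal{Y}_Z:=Z\times_{\mathcal{Z}}\mathcal{Y}$, then $X\to\mathcal{X}_Y:=Y\times_{\mathcal{Y}}\mathcal{X}$, so that all the evident squares are cartesian. Forming $W:=X\times_{\mathcal{X}_Y}X$, $\widetilde{W}:=X\times_{\mathcal{X}_Z}X$, $V:=Y\times_{\mathcal{Y}_Z}Y$, the associated strictly simplicial algebraic spaces $W_\bullet,\widetilde{W}_\bullet,V_\bullet$, and the diagrammatic topoi $\eqtopos{W},\eqtopos{\widetilde{W}},\eqtopos{V},\eqtopos{Y},\eqtopos{Z}$ as in Situation~\ref{sit:wxtopoi}, one obtains a chain $\eqtopos{W}\xrightarrow{\eqtopos{j}}\eqtopos{\widetilde{W}}\to\eqtopos{V}\to\eqtopos{Y}\to\eqtopos{Z}$, with $E,F,G$ pulling back to each of these topoi and an exact sequence $0\to F_{\eqtopos{\widetilde{W}}}\to E_{\eqtopos{\widetilde{W}}}\to G_{\eqtopos{\widetilde{W}}}\to 0$, where $E_{\eqtopos{\widetilde{W}}}$ is the pullback of the sheaf $E_{\eqtopos{V}}$ that is Tor-independent to $\eqtopos{\widetilde{W}}\to\eqtopos{V}$. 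Applying Proposition~\ref{prop:redatcompbasic} to $\eqtopos{\widetilde{W}}\to\eqtopos{V}\to\eqtopos{Z}$ gives an anti-commutative square over $\eqtopos{\widetilde{W}}$; I would then pull it back along $\eqtopos{j}$, using the Tor-independent-pullback compatibility of the reduced Atiyah class (Corollary~\ref{cor:redatpullback}) together with the quasi-isomorphism $\eqtopos{j}^*L_{\eqtopos{\widetilde{W}}/\eqtopos{V}}\simeq L_{\eqtopos{W}/\eqtopos{Y}}$, checked degreewise from Tor-independence of $Y_\bullet\to V_\bullet\leftarrow\widetilde{W}_\bullet$. This yields an anti-commutative diagram in $D(\eqtopos{W})$ of the desired shape. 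Finally, applying $\coneop$ and $\eta_{W*}$ and passing to $D(\mathcal{X})$ converts it into the diagram of the proposition: here one uses Lemma~\ref{lem:coneprescot} to identify $\coneop(L_{\eqtopos{W}/\eqtopos{Y}})[-1]$ (and likewise the pulled-back $L_{\eqtopos{V}/\eqtopos{Z}}$) with the stacky cotangent complexes, Lemmas~\ref{lem:conepresqcoh} and~\ref{lem:tensorcone} for compatibility of $\coneop$ with the tensor factors (in particular with the structure map $-\otimes G$), and diagram~\eqref{diag:hugecommdiag} of Remark~\ref{rem:hugecommdiag} for compatibility with the passage from simplicial modules over the resolution $R$ down to $D(\mathcal{X})$, which is exactly how $\overline{\at}_E$ and $\at_E$ on the stack are defined in Constructions~\ref{constr:redat} and~\ref{constr:atiyah}.

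The step I expect to be most delicate is the descent bookkeeping in this last move: one must carry along all the identifications --- cones of transitivity triangles of cotangent complexes, the two connecting maps isolated in Lemma~\ref{lem:commdiagconn}, and the symmetry isomorphisms hidden in $-\otimes G$ --- and check that the anti-commutativity is preserved with the exact sign, i.e. that applying $\coneop$, the Dold--Kan correspondence and $\eta_{W*}$ introduces no spurious sign; one must also confirm the Tor-independence hypotheses needed to invoke Proposition~\ref{prop:redatcompbasic} and Corollary~\ref{cor:redatpullback} over the diagrammatic topoi, which reduces to degreewise statements about the simplicial algebraic spaces. Once Proposition~\ref{prop:atclassredat} is in hand, Proposition~\ref{prop:redatcomphard} follows by taking $\mathcal{Z}$ to be the base (so $L_{\mathcal{Y}/\mathcal{Z}}=L_{\mathcal{Y}}$) and rewriting both composites under the adjunction $\Hom(F,L_{\mathcal{X}/\mathcal{Y}}\otimes G)\cong\Hom(F\otimes G^{\vee},L_{\mathcal{X}/\mathcal{Y}})$ afforded by dualizability of $G$, which turns $\overline{\at}_E$ into $\overline{\at}'_E$ and the lower-row composite into $F\otimes G^{\vee}\to E_{\mathcal{X}}\otimes E_{\mathcal{X}}^{\vee}\xrightarrow{f^*\at'_E}f^*L_{\mathcal{Y}}[1]$.
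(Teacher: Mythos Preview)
Your proposal is correct and follows essentially the same route as the paper: choose compatible smooth covers to obtain the chain $\eqtopos{W}\to\eqtopos{\widetilde{W}}\to\eqtopos{V}\to\eqtopos{Y}\to\eqtopos{Z}$, apply Proposition~\ref{prop:redatcompbasic} to $\eqtopos{\widetilde{W}}\to\eqtopos{V}\to\eqtopos{Z}$, pull back along $\eqtopos{j}$ using Corollary~\ref{cor:redatpullback} and the quasi-isomorphism $\eqtopos{j}^*L_{\eqtopos{\widetilde{W}}/\eqtopos{V}}\simeq L_{\eqtopos{W}/\eqtopos{Y}}$ (checked via degreewise Tor-independence of $Y_\bullet\to V_\bullet\leftarrow\widetilde{W}_\bullet$), and then apply $\coneop$ and descend to $D(\mathcal{X})$. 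The paper makes explicit the auxiliary commutative square involving the diagonal $\eqtopos{Y}\to\eqtopos{V}$ that justifies invoking Corollary~\ref{cor:redatpullback}, but otherwise your outline matches the argument step for step, including the final remark that Proposition~\ref{prop:redatcomphard} follows by dualizing.
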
 
If $E,F,G$ are perfect complexes, this directly implies Proposition \ref{prop:redatcomphard}.   

\subsection{Compatibilities of the Atiyah class for an exact sequence}

\begin{proposition}\label{prop:atexseqfuncystack}
	The map $\at_{\underline{E},\mathcal{X}/\mathcal{Y}}$ is functorial in $\mathcal{Y}$, i.e. given a map $\mathcal{Y}\to \mathcal{Z}$, the composition 
	\[\frac{E^{\vee}\otimes E}{G^{\vee}\otimes F}[-1]\xrightarrow{\at_{\underline{E},\mathcal{X}/\mathcal{Z}}} L_{\mathcal{X}/\mathcal{Z}}\to L_{\mathcal{X}/\mathcal{Y}}  \] 
	equals $\at_{\underline{E},\mathcal{X}/\mathcal{Y}}$ (at least assuming we make the same choices of $N$ in the construction).
\end{proposition}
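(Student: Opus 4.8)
The plan is to bootstrap from the corresponding statement for morphisms of ringed topoi, Corollary~\ref{cor:atexseqfuncy}, transporting it to the stacky setting along the construction of $\at_{\underline{E},\mathcal{X}/\mathcal{Y}}$, in the same spirit as the proof of Proposition~\ref{prop:atclassredat}. First I would fix a $2$-commutative tower of covers, exactly as in that proof: smooth surjective maps from algebraic spaces $X\to\mathcal{X}$, $Y\to\mathcal{Y}$, $Z\to\mathcal{Z}$ fitting into a diagram of cartesian squares
\[
\begin{tikzcd}
X\ar[r]&\mathcal{X}_Y\ar[d]\ar[r]&\mathcal{X}_Z\ar[r]\ar[d]&\mathcal{X}\ar[d]\\
&Y\ar[r]&\mathcal{Y}_Z\ar[r]\ar[d]&\mathcal{Y}\ar[d]\\
&&Z\ar[r]&\mathcal{Z},
\end{tikzcd}
\]
from which both $\at_{\underline{E},\mathcal{X}/\mathcal{Y}}$ and $\at_{\underline{E},\mathcal{X}/\mathcal{Z}}$ can be computed. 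Setting $W:=X\times_{\mathcal{X}_Y}X$ and $\widetilde{W}:=X\times_{\mathcal{X}_Z}X$, with strictly simplicial algebraic spaces $W_{\bullet},\widetilde{W}_{\bullet}$ and diagrammatic topoi $\eqtopos{W},\eqtopos{\widetilde{W}},\eqtopos{Y},\eqtopos{Z}$ as in Situation~\ref{sit:wxtopoi}, the map $\mathcal{X}_Y\to\mathcal{X}_Z$ induces a morphism $\eqtopos{j}\colon\eqtopos{W}\to\eqtopos{\widetilde{W}}$ which is the identity on the $X_{\bullet}$-component, together with $2$-coherent morphisms $\eqtopos{W}\to\eqtopos{Y}\to\eqtopos{Z}$ and $\eqtopos{\widetilde{W}}\to\eqtopos{Z}$ commuting with $\eqtopos{j}$ (all of these exist by the same reasoning as in the proof of Proposition~\ref{prop:atclassredat}). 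From its construction, $\at_{\underline{E},\mathcal{X}/\mathcal{Y}}$ is obtained from the topos-level class $\at_{\underline{E}_{\eqtopos{W}},\eqtopos{W}/\eqtopos{Y}}$ by applying $\coneop$, a shift, $\eta_{W*}$ and descent to $D(\mathcal{X})$, and likewise $\at_{\underline{E},\mathcal{X}/\mathcal{Z}}$ from $\at_{\underline{E}_{\eqtopos{\widetilde{W}}},\eqtopos{\widetilde{W}}/\eqtopos{Z}}$ via $\eta_{\widetilde{W}*}$.

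Next I would chain together two topos-level functorialities. Applying Lemma~\ref{lem:atexseqfuncx} to the morphism $\eqtopos{j}$ over the base $\eqtopos{Z}$, and using that $\eqtopos{j}^{*}\underline{E}_{\eqtopos{\widetilde{W}}}=\underline{E}_{\eqtopos{W}}$ (the components agree on the nose, and flatness is preserved by $\eta^{*}$, so the pullback is the derived one), identifies $\eqtopos{j}^{*}\at_{\underline{E}_{\eqtopos{\widetilde{W}}},\eqtopos{\widetilde{W}}/\eqtopos{Z}}$, along the pullback map $\eqtopos{j}^{*}L_{\eqtopos{\widetilde{W}}/\eqtopos{Z}}\to L_{\eqtopos{W}/\eqtopos{Z}}$, with $\at_{\underline{E}_{\eqtopos{W}},\eqtopos{W}/\eqtopos{Z}}$. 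Then Corollary~\ref{cor:atexseqfuncy} applied to $\eqtopos{W}\to\eqtopos{Y}\to\eqtopos{Z}$ says that $\at_{\underline{E}_{\eqtopos{W}},\eqtopos{W}/\eqtopos{Z}}$ post-composed with the transitivity map $L_{\eqtopos{W}/\eqtopos{Z}}\to L_{\eqtopos{W}/\eqtopos{Y}}$ equals $\at_{\underline{E}_{\eqtopos{W}},\eqtopos{W}/\eqtopos{Y}}$. Combining the two, the composite of $\eqtopos{j}^{*}\at_{\underline{E}_{\eqtopos{\widetilde{W}}},\eqtopos{\widetilde{W}}/\eqtopos{Z}}$ with $\eqtopos{j}^{*}L_{\eqtopos{\widetilde{W}}/\eqtopos{Z}}\to L_{\eqtopos{W}/\eqtopos{Z}}\to L_{\eqtopos{W}/\eqtopos{Y}}$ is precisely $\at_{\underline{E}_{\eqtopos{W}},\eqtopos{W}/\eqtopos{Y}}$.

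Finally I would transport this identity along $\coneop$, the shift, $\eta_{W*}$ and descent. The point is that $\coneop\circ\eqtopos{j}^{*}$ agrees with $\coneop$ on $\eqtopos{\widetilde{W}}$ followed by pullback along $W_{\bullet}\to\widetilde{W}_{\bullet}$ (since $\coneop$ only involves the $W$-component and $\eqtopos{j}$ is the identity on the $X$-component), that $\eta_{W*}\circ(W_{\bullet}\to\widetilde{W}_{\bullet})^{*}\simeq\eta_{\widetilde{W}*}$ by the cover-refinement compatibility of $\eta^{*}$ recalled in the discussion of Olsson's construction, and that the map $\eqtopos{j}^{*}L_{\eqtopos{\widetilde{W}}/\eqtopos{Z}}\to L_{\eqtopos{W}/\eqtopos{Y}}$ descends, via Lemma~\ref{lem:coneprescot} and functoriality of the cotangent complex, to the transitivity map $L_{\mathcal{X}/\mathcal{Z}}\to L_{\mathcal{X}/\mathcal{Y}}$. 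Applying $\eta_{W*}\circ\coneop\circ[-1]$ and descending to $D(\mathcal{X})$ then converts the displayed topos-level equality into the asserted equality of maps $\frac{E^{\vee}\otimes E}{G^{\vee}\otimes F}[-1]\to L_{\mathcal{X}/\mathcal{Y}}$. I expect the only real obstacle to be organizational: keeping track of the two distinct covers $W$ and $\widetilde{W}$, and checking that the various maps of diagrammatic topoi fit together $2$-coherently (as with the cube~\eqref{eq:functorialitycube}) so that $\coneop$, $\eta_{*}$ and the descent to $D(\mathcal{X})$ intertwine correctly with the two topos-level functorialities; each individual compatibility is routine, but the combined diagram chase is long.
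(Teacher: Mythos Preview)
Your proposal is correct and follows essentially the same approach as the paper: set up the tower of covers from \S\ref{subsec:compatredat}, combine the two topos-level functorialities Corollary~\ref{cor:atexseqfuncy} and Lemma~\ref{lem:atexseqfuncx}, and then transport via $\coneop$, shift, and $\eta_{*}$. The only cosmetic difference is the choice of intermediate: the paper introduces the further topos $\eqtopos{V}$ (built from $V=Y\times_{\mathcal{Y}_Z}Y$) and routes through the chain $\eqtopos{W}/\eqtopos{Y}\to\eqtopos{\widetilde{W}}/\eqtopos{V}\to\eqtopos{\widetilde{W}}/\eqtopos{Z}$, whereas you apply the two lemmas in the opposite order and pass through $\eqtopos{W}/\eqtopos{Z}$ instead, which lets you avoid ever introducing $\eqtopos{V}$.
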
 
\begin{proof}
	We use the setup of \S \ref{subsec:compatredat}. There we had the topoi 
	\[\eqtopos{W}\to \widetilde{\eqtopos{W}}\to \eqtopos{V} \to \eqtopos{Y}\to \eqtopos{Z}\]
	and the following morphisms of maps of topoi
	\[\eqtopos{W}/\eqtopos{Y}\to \widetilde{\eqtopos{W}}/\eqtopos{V}\to \widetilde{\eqtopos{W}}/\eqtopos{Z}\]
	whose cotangent complexes represent (after taking cones and shifting) $L_{\mathcal{X}/\mathcal{Y}}$, $L_{\mathcal{X}/\mathcal{Y}}$ and $L_{\mathcal{X}/\mathcal{Z}}$ respectively. The result now follows from Corollary \ref{cor:atexseqfuncy} and Lemma \ref{lem:atexseqfuncx}.
\end{proof}
\begin{proof}[Proof of Proposition \ref{prop:atexseqcomp}]
	The commutativity of the first square follows from the functoriality of the usual Atiyah class and Corollary \ref{cor:atexseqatcomp}. Commutativity of the second square follows from Proposition \ref{prop:atexseqfuncystack}, and Corollary \ref{cor:redatexseqatstack}. Finally, the commutativity of the square involving connecting morphisms is Proposition \ref{prop:atclassredat}. 
\end{proof}
\section{Deformation theoretic properties}
In this section, we prove Proposition \ref{prop:atisob}. The proof of Proposition \ref{prop:redatisob} is similar, but not addressed here. For details see \cite[\S 4]{Gill}.  In \S \S \ref{subsec:obstrcomp} - \ref{subsec:autcomp}, we use the following notation: $X$ is a smooth projective variety over a base field $k$, and $\mathcal{M}$ is an open substack of the moduli stack of coherent sheaves on $X$. 
\subsection{Deformations of morphisms to algebraic stacks}
Let $\mathcal{Y}$ be an algebraic stack over a base scheme $S$ and let $T$ be a scheme over $S$. Here we consider the problem of deforming maps from the scheme $T$ to $\mathcal{Y}$. As a special case of \cite[Theorem 1.5]{Olso3}, we have
\begin{theorem}\label{th:thdefalgstackmap}
	Let $g:T\to \mathcal{Y}$ be a morphism and let $j:T\hookrightarrow \overline{T}$ be a square zero extension of $T$ by a quasicoherent sheaf $I$. Then 
	\begin{enumerate}[label=\arabic*)]
		\item There is a natural obstruction class $\omega(g,\overline{T})\in \Ext^1(g^*L_{\mathcal{Y}},I)$ which vanishes if and only if there is an extension of $g$ to a morphism $\overline{g}:\overline{T}\to \mathcal{Y}$. 
		\item If an extension of $g$ to $\overline{T}$ exists, then the set of isomorphism classes of extensions naturally forms a torsor under $\Ext^0(g^*L_{\mathcal{Y}},I)$.
		\item For a fixed extension $\overline{g}$ of $g$, the set of automorphisms of $\overline{g}$ as an extension of $g$ is canonically isomorphic to $\Ext^{-1}(g^*L_{\mathcal{Y}}, I)$.
	\end{enumerate}
\end{theorem}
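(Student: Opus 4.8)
The plan is to obtain the three statements by specializing Olsson's general deformation theory of morphisms to algebraic stacks \cite[Theorem 1.5]{Olso3} to the case in which the source is the scheme $T$, so that the proof amounts to identifying the relevant Ext-groups. First I would recall Olsson's framework: the problem of lifting a morphism $g:T\to\mathcal{Y}$ along a square-zero thickening $j:T\hookrightarrow\overline{T}$ with ideal $I$ is governed by the cotangent complex of the target, pulled back to the source. Through the standard analysis of the mapping groupoid $\operatorname{Map}(\overline{T},\mathcal{Y})$ lying over $\operatorname{Map}(T,\mathcal{Y})$, the fiber over $g$ is controlled by the groups $\Ext^i_{\mathcal{O}_T}(g^*L_{\mathcal{Y}},I)$ for $i=-1,0,1$, where $g^*$ denotes derived pullback in accordance with the conventions of this paper.

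Matching the three parts degree by degree is then routine. The obstruction to the fiber being nonempty is a class $\omega(g,\overline{T})$ in $\Ext^1$, which gives part 1); when the fiber is nonempty, its set of isomorphism classes $\pi_0$ is naturally a torsor under $\Ext^0$, giving part 2); and, since maps to a stack form a groupoid rather than a mere set, each lift $\overline{g}$ carries a group of automorphisms as a deformation of $g$, which the theory identifies with $\pi_1=\Ext^{-1}$, giving part 3). I would also observe that the naturality assertions (of $\omega$, of the torsor structure, and of the automorphism identification) are inherited directly from the corresponding naturality in Olsson's formulation.

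Since the genuinely hard deformation-theoretic input is black-boxed in \cite{Olso3}, the only substantive step is to confirm that the present statement really is the cited special case: namely that Olsson's hypotheses hold for arbitrary quasi-coherent ideal $I$ and arbitrary algebraic-stack target $\mathcal{Y}$, and that his indexing conventions for the cotangent complex and the $\Ext$-groups coincide with ours. The conceptually important feature to emphasize — and the one genuinely new relative to a scheme target — is the $\Ext^{-1}$ term appearing in part 3): it vanishes when $\mathcal{Y}$ is a scheme, where $L_{\mathcal{Y}}$ is concentrated in non-positive degrees, and its possible nonvanishing here reflects precisely the infinitesimal automorphisms coming from the stacky structure of $\mathcal{Y}$, encoded in the positive-degree cohomology of $L_{\mathcal{Y}}$. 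I therefore expect the main obstacle to be bookkeeping and convention-matching rather than any new geometric content.
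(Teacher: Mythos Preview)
Your proposal is correct and matches the paper's approach exactly: the paper does not give a proof but simply cites the theorem as a special case of \cite[Theorem 1.5]{Olso3}. Your additional remarks on convention-matching and the role of $\Ext^{-1}$ are accurate commentary, but the paper itself offers no further argument.
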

One can describe the characterizations in Theorem \ref{th:thdefalgstackmap} explicitly. 

\begin{remark}[Obstructions]\label{rem:obstrsmor}
	The morphism $g$ induces a natural map $g^*L_{\mathcal{Y}}\to L_T$. Similarly, $j$ induces a natural map $L_T\to L_{T/\overline{T}}$, and $L_{T/\overline{T}}$ is concentrated in degrees $\leq -1$ with $h^{-1}(L_{T/\overline{T}})$ naturally isomorphic to $I$. The obstruction class $\omega(g,\overline{T})$ is then given by the composition 
	\[g^*L_{\mathcal{Y}}\to L_{T/\overline{T}}\to I[1].\]
	This follows from the construction in \cite[4.8]{Olso3} and the construction of the obstruction class for topoi in \cite[III 2.2]{Ill}.	
\end{remark}

\begin{remark}[Deformations]\label{rem:corrdefs}
	For a given $g:T\to \mathcal{Y}$, let $\overline{T}$ be the trivial extension of $T$ by $I$, given by taking the structure sheaf $\mathcal{O}_T\oplus I$ on $T$. Then there is a natural morphism $\overline{T}\to T$ corresponding to the inclusion $\mathcal{O}_T\oplus \{0\}\subset \mathcal{O}_T\oplus I$, giving rise to a canonical extension of $g$ as the composition $\overline{T}\to T\to \mathcal{Y}$.  Taking this as a base point, the torsor structure in Theorem \ref{th:thdefalgstackmap} (2) induces a bijection between the set of isomorphism classes of extensions of $g$ and the group $\Ext^0(g^*L_{\mathcal{Y}},I)$. To describe this bijection explicitly, note that we have a natural isomorphism $h^0(i^*L_{\overline{T}/T})\simeq I$. Now, for a given extension $\overline{g}:\overline{T}\to \mathcal{Y}$ consider the composition $\overline{g}^*L_{\mathcal{Y}}\to L_{\overline{T}}\to L_{\overline{T}/T}$. Up to given isomorphisms, this restricts to a map $\alpha_{\overline{g}}:g^*L_{\mathcal{Y}}\to I$ on $T$. The association $\overline{g}\mapsto \alpha_{\overline{g}}$ is the bijection in question. This follows from  \cite[4.8]{Olso3} and the construction in \cite[III 2.2]{Ill}.
\end{remark}

\begin{remark}[Automorphisms]\label{rem:corrauts}
	Consider a fixed square zero extension $j:T\hookrightarrow \overline{T}$ with sheaf of ideals $I$, and an extension $\overline{g}:\overline{T}\to \mathcal{Y}$ of $g$. Let $\operatorname{Aut}(\overline{g})$ denote the automorphism group of $\overline{g}$ as an extension of $g$, i.e. the group of $2$-isomorphisms of $\overline{g}:\overline{T}\to \mathcal{Y}$ that restrict to the identity $2$-isomorphism when restricted to $T$. Let $\operatorname{Aut}_{\mathcal{Y}}(\overline{T})$ denote the group of automorphisms of $\overline{T}$ as an extension of $T$ over $\mathcal{Y}$ whose elements are pairs $(a,\phi)$, where $a:\overline{T}\to \overline{T}$ is an automorphism satisfying $a\circ j = j$, and where $\phi$ is a $2$-isomorphism $\phi:\overline{g}\circ a \Rightarrow \overline{g}$. Similarly, we let  $\operatorname{Aut}_{S}(\overline{T})$ denote the group of automorphisms of $\overline{T}$ as an extension of $T$ over $S$. 
	Then we have a natural forgetful map $\operatorname{Aut}_{\mathcal{Y}}(\overline{T})\to \operatorname{Aut}_S(\overline{T})$ whose Kernel is $\operatorname{Aut}(\overline{g})$. We have identifications $\Ext^0(L_{X/\mathcal{Y}},I)\simeq \operatorname{Aut}_{\mathcal{Y}}(\overline{T})$, which are natural in $\mathcal{Y}$ (and in particular hold for $S$ in place of $\mathcal{Y}$), and via the exact triangle $g^*L_{\mathcal{Y}/S}\to L_{T/S}\to L_{T/\mathcal{Y}}\xrightarrow{+1}$, we obtain the exact sequence 
	\[0\to \Ext^{-1}(g^*L_{\mathcal{Y}/S},I)\to \Ext^0(L_{T/\mathcal{Y}}, I)\to \Ext^0(L_{T/S},I)\]
	By what is said above, this gives an identification 
	\[\Ext^{-1}(g^*L_{\mathcal{Y}/S},I) \simeq \operatorname{Aut}(\overline{g}).\]
\end{remark}
For our purposes a different characterization of the bijection $\Ext^{-1}(g^*L_{\mathcal{Y}/S},I)\simeq \operatorname{Aut}(\overline{g})$ than the one given in Remark \ref{rem:corrauts} will be needed. For the rest of this subsection, we consider the case where $\overline{T}$ is the trivial square zero extension of $T$ by $I$ and where $\overline{g}$ is the trivial extension of $g$. 
Let $y:Y\to\mathcal{Y}$ be a smooth cover by an algebraic space and assume that $g:T\to \mathcal{Y}$ factors through $Y$ (this can always be arranged by passing to an \'etale cover of $T$, which is enough for our later application). We fix such a factorization $g_Y:T\to Y$ (with an implicit choice of $2$-isomorphism $y\circ g_Y\Rightarrow g)$. Form the Cartesian diagram 
\begin{equation*}
	\begin{tikzcd}
		Z\ar[r,"t"]\ar[d,"s"]& Y\ar[d] \\
		Y\ar[r] & \mathcal{Y}.
	\end{tikzcd}
\end{equation*}
We observe that $Y$ naturally has the structure of a groupoid algebraic space with the space of morphisms given by $Z$, and that we have a natural equivalence $[Y/Z]\xrightarrow{\sim}\mathcal{Y}$. Moreover, by definition of the $2$-cartesian product, the set of automorphisms of the morphism $g:T\to \mathcal{Y}$ is in natural bijection to the set of maps $f:T\to Z$ satisfying $s\circ f=t\circ f = g_Y$. In particular, there is a morphism $e:T\to Z$ corresponding to the identity automorphism of $g_Y$. 

Now let $\overline{g}_Y$ denote the composition $\overline{T}\to T\xrightarrow{g_Y} Y$, which is a lift of $\overline{g}$. Then we observe
\begin{lemma}\label{lem:autcorrspecial}
	We have a natural bijection betwen $\operatorname{Aut}(\overline{g})$ and the set of morphisms $\overline{f}:\overline{T}\to Z$ satisfying $s\circ \overline{f}= t\circ \overline{f} =\overline{g}_Y$ and $\overline{f}\circ j = e$.  In other words, the group of infinitesimal automorphisms of $\overline{g}$ is in bijection with the group of deformations of $e$ to $\overline{T}$ that induce the trivial deformation of $g_Y$ upon composition with either $s$ or $t$.
\end{lemma}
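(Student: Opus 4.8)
The plan is to deduce the statement from the groupoid presentation $[Y/Z]\xrightarrow{\sim}\mathcal{Y}$ and the universal property of the $2$-fibre product $Z=Y\times_{\mathcal{Y}}Y$, essentially by running the argument recorded just before the lemma (which identifies $\operatorname{Aut}(g)$ with $\{f:T\to Z\mid s\circ f=t\circ f=g_Y\}$) with $\overline{g}$ in place of $g$, and then tracking how the constraint ``restricts to the identity on $T$'' translates. The underlying general principle is: for any algebraic space $U$ and morphism $h:U\to\mathcal{Y}$ equipped with a factorization $h_Y:U\to Y$ through $y$ (i.e.\ a chosen $2$-isomorphism $\theta_h:y\circ h_Y\Rightarrow h$), conjugation by $\theta_h$ gives a bijection between the group of $2$-automorphisms of $h$ and the set $Z(U,h_Y):=\{f:U\to Z\mid s\circ f=t\circ f=h_Y\}$, because a $2$-automorphism of $y\circ h_Y$ is exactly the data needed to lift $(h_Y,h_Y):U\to Y\times Y$ to $Z$. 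Moreover this bijection is functorial: for $a:U'\to U$ with $h_Y\circ a=h'_Y$ (and $\theta_{h'}$ obtained by whiskering $\theta_h$ with $a$), restriction of $2$-automorphisms along $a$ corresponds to precomposition $(-)\circ a$ on the $Z$-side.

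First I would apply this principle with $U=\overline{T}$, $h=\overline{g}=g\circ p$ (where $p:\overline{T}\to T$ is the projection), and $h_Y=\overline{g}_Y=g_Y\circ p$, with $\theta_{\overline{g}}$ the whiskering of the chosen $\theta_g:y\circ g_Y\Rightarrow g$ by $p$. This identifies the group of \emph{all} $2$-automorphisms of $\overline{g}:\overline{T}\to\mathcal{Y}$ with $Z(\overline{T},\overline{g}_Y)=\{\overline{f}:\overline{T}\to Z\mid s\circ\overline{f}=t\circ\overline{f}=\overline{g}_Y\}$.

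Next I would cut this down to $\operatorname{Aut}(\overline{g})$, the subgroup of those $2$-automorphisms that restrict to $\id_g$ on $T$. Since $p\circ j=\id_T$ we have $\overline{g}_Y\circ j=g_Y$, so functoriality of the bijection applied to $j:T\hookrightarrow\overline{T}$ shows that the restriction map $\phi\mapsto\phi|_T$ corresponds to $(-)\circ j:Z(\overline{T},\overline{g}_Y)\to Z(T,g_Y)$. By construction the element of $Z(T,g_Y)$ matching $\id_g$ under the bijection for $g$ is precisely $e$ — this is exactly what the definition of $e$ records. Hence $\operatorname{Aut}(\overline{g})$, being the preimage of $\id_g$ under $\phi\mapsto\phi|_T$, is carried bijectively onto the preimage of $e$ under $(-)\circ j$, that is, onto $\{\overline{f}:\overline{T}\to Z\mid s\circ\overline{f}=t\circ\overline{f}=\overline{g}_Y,\ \overline{f}\circ j=e\}$, which is the claimed set. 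Naturality of the resulting bijection is immediate from the construction.

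The part requiring care — and the only real obstacle — is the $2$-categorical bookkeeping: fixing the chosen $2$-isomorphism $\theta_g$, whiskering it correctly to $\overline{T}$ and to $T$, keeping track of the universal $2$-isomorphism built into $Z=Y\times_{\mathcal{Y}}Y$, and checking that conjugation by $\theta$ intertwines ``restrict $2$-automorphisms along $j$'' with ``precompose maps to $Z$ with $j$'', together with the normalization $\id_g\leftrightarrow e$. None of this is deep, but it is precisely what makes the two fibre descriptions line up, so it should be spelled out.
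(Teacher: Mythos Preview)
Your proposal is correct and follows precisely the approach the paper sets up: the paper states this lemma as an observation (``Then we observe'') without giving an explicit proof, but the text immediately preceding it already records the bijection between $2$-automorphisms of $g$ and maps $T\to Z$ with $s\circ f=t\circ f=g_Y$ via the universal property of the $2$-cartesian product $Z=Y\times_{\mathcal{Y}}Y$, and your argument simply runs this with $\overline{T}$ in place of $T$ and then intersects with the fibre over $e$ under $(-)\circ j$. The $2$-categorical bookkeeping you flag is exactly what is implicit in the paper's ``by definition of the $2$-cartesian product'', so you have correctly unpacked the intended argument.
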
   
As a consequence of this Lemma, we have a canonical isomorphism
\[\operatorname{Aut}(g)\simeq \Ker\left(\Hom(e^*L_Z,I)\xrightarrow{(-s^*,t^*)} \Hom(e^*s^*L_Y,I)\oplus \Hom(e^*t^*L_Y,I)\right).\] 
Let $z:=y\circ s:Z\to \mathcal{Y}$. By Lemma \ref{lem:coneprescot}, we have the natural isomorphism \[z^*L_{\mathcal{Y}}\simeq \coneop(s^*L_Y\oplus t^*L_Y\oplus L_Z)[-1].\]
Using this, we get an identification
\begin{equation}\label{eq:autcorruse}
\begin{aligned}
	\operatorname{Aut}(\overline{g})&\simeq \Ext^{-1}(e^*z^*L_{\mathcal{Y}},I)=\Ext^{-1}(g^*L_{\mathcal{Y}},I)\\
	\varphi&\mapsto \tau_{\varphi}.
\end{aligned}
\end{equation}
\subsection{Deformations of sheaves}
Let $X, T$ be schemes over a common base field $k$. Let $T\hookrightarrow \overline{T}$ be a square zero extension defined by an ideal sheaf $I$. Let also $E$ be a $T$-flat quasicoherent sheaf on $X\times T$. We consider the problem of extending $E$ to a $\overline{T}$-flat coherent sheaf on $X\times \overline{T}$. Let $\pi:X\times T\to T$ denote the projection.  By \cite[IV Proposition 3.1.8]{Ill}, we have 
\begin{theorem}
	\begin{enumerate}[label=\arabic*)]
		\item There is a natural obstruction class $\omega^{sh}(\linebreak[1]E, \overline{T})\in\linebreak[1] \Ext^2_{X\times T}(\linebreak[1]E,\pi^*I\otimes E )$ which vanishes if and only if there is an extension of $E$ to a $\overline{T}$-flat sheaf on $X\times \overline{T}$.
		\item If a $\overline{T}$-flat extension of $E$ to $X\times \overline{T}$ exists, then the set of isomorphism classes of such extensions naturally forms a torsor under $\Ext^1_{X\times T}(E,\pi^*I\otimes E)$.
		\item For a fixed $\overline{T}$-flat extension $\overline{E}$, the set of automorphisms of $\overline{E}$ which restrict to the identity on $E$ is canonically isomorphic to $\Hom_{X\times T}(E,\pi^*I\otimes E)$. 
	\end{enumerate}
\end{theorem}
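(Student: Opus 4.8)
This statement is precisely Illusie's theorem on deformations of modules along a square-zero thickening, \cite[IV Proposition 3.1.8]{Ill}, applied to the projection $\pi\colon X\times T\to T$, so the shortest route is to invoke that result directly. If one wants a self-contained argument, the plan is as follows. First, reduce to a statement about $\mathcal{O}$-modules on the ringed topos $X\times T$. The closed immersion $i\colon X\times T\hookrightarrow X\times\overline{T}$ is a square-zero extension with ideal sheaf $\mathcal{J}=\ker(\mathcal{O}_{X\times\overline{T}}\to\mathcal{O}_{X\times T})$, and since the projection $X\times\overline{T}\to\overline{T}$ is flat (everything being over the field $k$) one has a canonical isomorphism $\mathcal{J}\simeq\pi^{*}I$ with $\mathcal{J}^{2}=0$; moreover, because $E$ is $T$-flat, the underived tensor product $\pi^{*}I\otimes_{\mathcal{O}_{X\times T}}E$ already computes $\pi^{*}I\otimes^{L}E$. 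A $\overline{T}$-flat deformation $\overline{E}$ of $E$ then sits in a short exact sequence of $\mathcal{O}_{X\times\overline{T}}$-modules
\[
0\to\pi^{*}I\otimes_{\mathcal{O}_{X\times T}}E\to\overline{E}\to E\to 0,
\]
the left-hand term being forced by $\mathcal{O}_{\overline{T}}$-flatness (vanishing of $\mathrm{Tor}_{1}^{\mathcal{O}_{\overline{T}}}(E,\mathcal{O}_{T})$), and conversely every $\mathcal{O}_{X\times\overline{T}}$-module so presented is $\overline{T}$-flat.

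The core of the argument is then to show that the set of such $\overline{E}$ is a pseudo-torsor under $\Ext^{1}_{X\times T}(E,\pi^{*}I\otimes E)$ with obstruction class in $\Ext^{2}$. Locally on $X\times T$ one picks a free resolution $P_{\bullet}\to E$, lifts each $P_{i}$ to a free $\mathcal{O}_{X\times\overline{T}}$-module $\overline{P}_{i}$ and lifts the differentials; the failure of $\overline{d}^{\,2}=0$ takes values in $\mathcal{J}\overline{P}_{\bullet-2}=\pi^{*}I\otimes P_{\bullet-2}$, so it represents (after tensoring the resolution with $\pi^{*}I$, which is again a resolution by $T$-flatness) a local class in $\Ext^{2}_{X\times T}(E,\pi^{*}I\otimes E)$, well defined up to coboundary. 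A local-to-global / \v{C}ech argument — equivalently, Illusie's cocycle construction for the module of deformations, using that the cotangent complex of $i$ is $\pi^{*}I[1]$ — glues these into a single class $\omega^{sh}(E,\overline{T})$ that vanishes exactly when a global flat lift exists, giving (1). When it vanishes, two choices of lift of the differentials differ, up to homotopy, by a $1$-cocycle, and the induced action of $\Ext^{1}$ on the set of deformations is simply transitive, giving (2). For (3), an automorphism of $\overline{E}$ restricting to the identity on $E$ has the form $\mathrm{id}+\varphi$ with $\varphi$ landing in $\mathcal{J}\overline{E}=\pi^{*}I\otimes E$ and killing $\mathcal{J}\overline{E}$, hence factoring through $E$; since $\mathcal{J}^{2}=0$ the map $\mathrm{id}+\varphi$ is automatically $\mathcal{O}_{X\times\overline{T}}$-linear, and $\varphi\mapsto\varphi$ is the desired isomorphism onto $\Hom_{X\times T}(E,\pi^{*}I\otimes E)$ — a direct check.

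The step I expect to be the main obstacle is exactly the passage from the easy local picture (local lifts always exist, $\Ext^{1}$ acts locally) to a canonical \emph{global} obstruction class and a \emph{simply transitive} global $\Ext^{1}$-action: a priori the set of flat deformations carries no linear structure, and exhibiting the canonical torsor structure and the well-definedness of $\omega^{sh}$ is precisely what Illusie's deformation-theoretic formalism — via the truncated cotangent complex of the thickening, together with its associated obstruction theory, of the kind recalled in the excerpt for morphisms to algebraic stacks — is designed to supply. For this reason the cleanest execution is to cite \cite[IV 3.1.8]{Ill} directly rather than reproduce the cocycle bookkeeping; the reduction carried out in the first paragraph is all that is genuinely needed in this paper, since it places the hypotheses of loc.\ cit.\ exactly in the form used in the sequel.
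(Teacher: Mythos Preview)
Your proposal is correct and matches the paper's approach exactly: the paper simply cites \cite[IV Proposition 3.1.8]{Ill} for this theorem without giving any further argument. Your additional self-contained sketch is a bonus the paper does not provide.
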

We make some of the natural maps implied in this theorem explicit.
\begin{remark}[Obstructions]\label{rem:obstrshvs}
	For a given $\overline{T}$, the obstruction class $\omega^{sh}(E,\overline{T})$ is given by the composition 
	\[E\xrightarrow{\at_{E, X\times T/S}} L_{X\times T}[1]\otimes E\to L_{X\times T/X\times \overline{T}}[1]\otimes E\to \pi^*I[2]\otimes E.\]
	Here, the first map is the Atiyah class, the second map is induced from the naturality of cotangent complexes, and the last map is induced from the natural identification $\tau_{\geq 1}L_{T/\overline{T}}\simeq I[1]$.
	This is proven in \cite[IV Proposition 3.1.8]{Ill}. 
\end{remark}
\begin{remark}[Deformations]\label{rem:corrdefshvs}
	Let $\overline{T}$ be the trivial extension of $T$ by $I$. Then there is a canonical flat extension of $E$ to $X\times \overline{T}$ given by $E\oplus \pi^*I\otimes E$, with multiplication by $I$ given by $j(e,0)=(0,je)$ for local sections. The torsor structure on the space of extensions therefore gives rise to a bijection between $\Ext^1_{X\times T}(E,I\otimes E)$ and the set of extensions of $E$ to $\overline{T}$. To describe this bijection, let $\nu\in \Ext^1_{X\times T}(E,\pi^*I\otimes E)$, corresponding to an extension
	\[0\to E\otimes I \xrightarrow{\mu} \overline{E}\xrightarrow{\rho} E\to 0.\]
	We make this into an $\mathcal{O}_{X\times T}\oplus\pi^*I$-module, by defining the action of $I$ on $\overline{E}$ on local sections as $jx:=\mu(j\otimes \rho(x))$. One checks that this defines a $\overline{T}$-flat coherent sheaf on $X\times\overline{T}$ extending $E$. It is straightforward to see that this construction is invertible.  
\end{remark}

\begin{remark}[Automorphisms]\label{rem:autshvs}
	Let $\overline{T}$ be the trivial extension of $T$ by $I$, and let $\overline{E}=E\oplus \pi^*I\otimes E$ be the canonical flat extension of $E$ to $\overline{T}$. For an element $a\in \Hom_{X\times T}(E,\pi^*I\otimes E)$, the map $\psi_a:\overline{E}\to \overline{E}$ given locally by $(x_1,j\otimes x_2)\mapsto (x_1,j\otimes x_2 +\varphi(x_1))$ is an automorphism of $\overline{E}$ which restricts to the identity on $E$. This gives the claimed bijection of automorphism groups for this choice of $\overline{E}$. 
\end{remark}

\begin{remark}
	Suppose that $X$ is a smooth projective variety, and that $T$ is of finite type over $k$. Then in particular, $E$ has a finite length resolution by locally free sheaves. Then we have natural isomorphisms
	\begin{align*}
	\Ext^i_{X\times T}(E,\pi^*I\otimes E)&\simeq \Ext^i_{X\times T}(\mathcal{O}_{X\times T}, \pi^*I \otimes E\otimes E^{\vee})\\
	&\simeq \Ext^i_T(\mathcal{O}_T, I\otimes R\pi_*(E\otimes E^{\vee}))\\
	&\simeq \Ext^i_T(R\pi_*(E\otimes E^{\vee}), I)
	\end{align*}
	Here, all tensor products and duals are taken in the derived sense. The first isomorphism is due to the fact that $E$ is dualizable, the second one is push-pull adjunction and the projection formula, and the third one uses that $R\pi_*(E\otimes E^{\vee})$ is dualizable. 
\end{remark}

\subsection{Comparison of obstruction classes}\label{subsec:obstrcomp}
Now let $X,\mathcal{M}$ be as specified in the beginning of the section with universal sheaf $\mathcal{E}$ on $\mathcal{M}\times X$. Our goal is to show that the map $\At_{\mathcal{E}}:R\pi_*(\mathcal{E}\otimes \mathcal{E}^{\vee})^{\vee}[-1]\to L_{\mathcal{M}}$ is surjective on $h^{-1}$. By the arguments of \cite[\S 4]{BeFa}, it is enough to show that for every map from an affine scheme $g:T\to \mathcal{M}$ and any quasicoherent sheaf $I$ on $T$ the map 
\[(g^*\At_{\mathcal{E}})^*:\Ext^1_T(g^*L_\mathcal{M},I)\to \Ext^2_T(g^*R\pi_*(\mathcal{E}\otimes \mathcal{E}^{\vee})^{\vee}, I)\]
given by composition with the Atiyah class is injective. In fact, it is enough to show that for any such $g$ and $I$ and any square zero extension $T\hookrightarrow \overline{T}$ there exists an extension of $g$ to $\overline{T}$ if and only if the image of the obstruction class $\omega(g,\overline{T})$ under this map vanishes. By Remark \ref{rem:obstrsmor}, the obstruction class $\omega(g,\overline{T})$ is obtained as the composition of the natural maps 
\[g^*L_{\mathcal{M}}\to L_T\to L_{T/\overline{T}}\to I[1].\]
On the other hand, let $E=g^*\mathcal{E}$, so that we have a commutative diagram 
\begin{equation*}
	\begin{tikzcd}
		g^*R\pi_*(\mathcal{E}\otimes \mathcal{E}^{\vee})^{\vee}[-1]\ar[r,"g^*\At_{\mathcal{E}}"]\ar[d,"\sim"]& g^*L_{\mathcal{M}}\ar[d] \\
		R\pi_*(E\otimes E^{\vee})^{\vee}[-1] \ar[r,"\At_E"] & L_T,
	\end{tikzcd}
\end{equation*}
where the left vertical map is the canonical base change isomorphism. 
We find that the image of $\omega(g,\overline{T})$ under $(g^*\At_{\mathcal{E}})^*$ is up to the given isomorphism equal to the composition
\[R\pi_*(E\otimes E^{\vee})^{\vee}[-1]\xrightarrow{\At_E}L_T\to L_{T/\overline{T}}\to I[1].\]
By the definition of $\At_E$, this corresponds to the morphism
\[E \xrightarrow{\at_E} L_{T\times X}[1]\otimes E\to L_{T\times X/\overline{T}\times X}[1]\otimes E\to \pi^*I[2]\otimes E.\]
By Remark \ref{rem:obstrshvs}, this is exactly the obstruction to the existence of a $\overline{T}$-flat extension of $E$ to $\overline{T}\times X$, and therefore an obstruction to the existence of an extension of $\overline{g}$ by the universal property of $\mathcal{M}$. This shows what we needed. 
\subsection{Comparison of deformation spaces} \label{subsec:defcomp}
We show that the map $\At_{\mathcal{E}}:R\pi_*(\mathcal{E}\otimes \mathcal{E}^{\vee})^{\vee}[-1]\to L_{\mathcal{M}}$ is an isomorphism on $h^0$. 
By the arguments of \cite[\S 4]{BeFa}, it is enough to show that for every map from an affine scheme $g:T\to \mathcal{M}$ and any quasicoherent sheaf $I$ on $T$ the map 
\[(g^*\At_{\mathcal{E}})^*:\Ext^0_T(g^*L_\mathcal{M},I)\to \Ext^1_T(g^*R\pi_*(\mathcal{E}\otimes \mathcal{E}^{\vee})^{\vee}, I)\]
given by composition with the Atiyah class is an isomorphism. This follows from the following stronger statement:
\begin{lemma}
	Let $\overline{T}$ be the trivial extension of $T$ by $I$ and let $\overline{g}:\overline{T}\to \mathcal{M}$ be any extension of $g$. Let $\overline{E}$ be the corresponding extension of $E$. Then the class $\alpha_{\overline{g}}$ of Remark \ref{rem:corrdefs} is mapped by $(g^*\At_{\mathcal{E}})^*$ to the class of 
	\[\Ext^1_T(g^*R\pi_*(\mathcal{E}\otimes \mathcal{E}^{\vee})^{\vee}, I)\simeq \Ext^1_T(E,I\otimes E)\]
	corresponding to the extension $\overline{E}$ via Remark \ref{rem:corrdefshvs}. 
\end{lemma}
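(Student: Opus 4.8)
\emph{The plan.} The strategy is to unwind both sides of the asserted equality into morphisms in $D(X\times T)$ and then match them using the pullback-compatibility of the Atiyah class established above, together with Illusie's description of deformations of a module. Write $g_X:=g\times\id_X$ and $\overline{g}_X:=\overline{g}\times\id_X$, let $i_X\colon X\times T\hookrightarrow X\times\overline{T}$ be the square-zero immersion and $p_X\colon X\times\overline{T}\to X\times T$ the retraction, and let $\pi,\pi_{\overline{T}}$ denote the projections to $T$, resp. $\overline{T}$; note $\overline{E}=\overline{g}_X^{*}\mathcal{E}$, $E=g_X^{*}\mathcal{E}=Li_X^{*}\overline{E}$, and $g_X=\overline{g}_X\circ i_X$. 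First I would unwind the left side: using the definition of $\At_{\mathcal{E}}$ from the relative Atiyah class $\at_{\mathcal{E},X\times\mathcal{M}/X}$ (via dualizability, adjunction and the projection formula), base change along $g$ (valid since $\pi$ is proper and flat and $\mathcal{E}$ is perfect), the commutative square of \S\ref{subsec:obstrcomp}, and the pullback-compatibility of $\at$, one sees that under the identification $\Ext^{1}_{X\times T}(E,\pi^{*}I\otimes E)\simeq\Ext^{1}_{T}(g^{*}R\pi_{*}(\mathcal{E}\otimes\mathcal{E}^{\vee})^{\vee},I)$ the class $(g^{*}\At_{\mathcal{E}})^{*}(\alpha_{\overline{g}})$ corresponds to the composite $E\xrightarrow{g_X^{*}\at_{\mathcal{E}}}\pi^{*}g^{*}L_{\mathcal{M}}\otimes E[1]\xrightarrow{\pi^{*}\alpha_{\overline{g}}\otimes\id}\pi^{*}I\otimes E[1]$.

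Next I would rewrite this composite in terms of $\overline{E}$. By Remark \ref{rem:corrdefs}, $\alpha_{\overline{g}}$ is the restriction along $i$ of the natural morphism $\overline{g}^{*}L_{\mathcal{M}}\to L_{\overline{T}}\to L_{\overline{T}/T}$, followed by the natural map onto $h^{0}(i^{*}L_{\overline{T}/T})\simeq I$. Pulling back the pullback-compatibility diagram for $\at$ first along $\overline{g}_X$ and then along $i_X$, and combining with functoriality of the Atiyah class in the base (the special case $A=\id$ of Lemma \ref{lem:functoriality}, applied to $X\times\overline{T}\to X\times T\to X$), one identifies $g_X^{*}\at_{\mathcal{E}}$ post-composed with $\pi^{*}(\overline{g}^{*}L_{\mathcal{M}}\to L_{\overline{T}}\to L_{\overline{T}/T})\otimes\id$ and restricted along $i_X$ with $Li_X^{*}\at_{\overline{E},\,X\times\overline{T}/X\times T}$, the pullback along $i_X$ of the Atiyah class of $\overline{E}$ relative to the square-zero thickening $X\times T\hookrightarrow X\times\overline{T}$. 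Hence the left side of the Lemma equals $Li_X^{*}\at_{\overline{E},\,X\times\overline{T}/X\times T}$ with coefficients pushed to the quotient $h^{0}(i^{*}L_{\overline{T}/T})\simeq I$.

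It remains to identify this with the class $\nu$ of the extension $0\to\pi^{*}I\otimes E\to\overline{E}\to E\to 0$, and here I would invoke Illusie's computation of the deformation class of a module (\cite[IV \S3.1]{Ill}), which applies to our $\at$ by Proposition \ref{prop:atcomp}: the Atiyah class $\at_{\overline{E},X\times\overline{T}/X\times T}$ is represented by the exact sequence of principal parts of $\overline{E}$ for the square-zero ring map $\mathcal{O}_{X\times T}\to\mathcal{O}_{X\times T}\oplus\pi^{*}I$, and restricting this sequence along $i_X$ and passing to the $h^{0}(i^{*}L_{\overline{T}/T})\simeq I$-quotient of its coefficients recovers precisely the extension $0\to\pi^{*}I\otimes E\to\overline{E}\to E\to 0$ of Remark \ref{rem:corrdefshvs}. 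As a consistency check the trivial extension corresponds to $\overline{g}=g\circ p$ (composition with the retraction $p\colon\overline{T}\to T$), for which $\alpha_{\overline{g}}=0$ since it factors through two consecutive maps of a transitivity triangle, while $\overline{E}=p_X^{*}E$ has vanishing Atiyah class relative to $X\times T$ by pullback-compatibility, so both sides vanish. The main obstacle is this last identification together with the careful bookkeeping of the natural maps among the various pulled-back cotangent complexes; in particular one must keep in mind that $L_{\overline{T}/T}$ for the trivial square-zero extension is not concentrated in degree $0$ with value $I$ --- it carries further, characteristic-dependent, cohomology besides $h^{0}\simeq I$ --- so the identifications should be carried out with the full cotangent complexes and only projected onto the $h^{0}\simeq I$-quotient at the very end, where the comparison with Illusie's principal-parts sequence takes place.
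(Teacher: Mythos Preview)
Your proposal is correct and follows essentially the same route as the paper: you unwind $(g^*\At_{\mathcal{E}})^*\alpha_{\overline{g}}$ via adjunction and pullback-compatibility to the restriction along $i_X$ of $\at_{\overline{E},\,X\times\overline{T}/X\times T}$ composed with the projection $L_{\overline{T}/T}\to I$, and then identify this with the extension class of $\overline{E}$ via the principal-parts description. The paper does exactly this, isolating the last step as a separate lemma (Lemma~\ref{lem:explicitdefs}) which it proves by an explicit computation with $P^1_{\overline{T}/T}(\overline{E})=(\mathcal{O}_T\oplus I)\otimes_{\mathcal{O}_T}\overline{E}$ rather than citing \cite[IV \S3.1]{Ill} directly; your remark that one must carry the full cotangent complex and only pass to $h^0\simeq I$ at the end is handled in the paper by working with $\Omega_{\overline{T}/T}$ in that lemma.
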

\begin{proof}
	By  Remark \ref{rem:corrdefs}, we have that $(g^*\At_{\mathcal{E}})^*\alpha_{\overline{g}}$ is equal to the composition
	\[g^*R\pi_*(\mathcal{E}\otimes \mathcal{E}^{\vee})^{\vee}[-1]\xrightarrow{g^*\At_{\mathcal{E}}}g^*L_\mathcal{M}\xrightarrow{\overline{g}^*\mid_T} j^*L_{\overline{T}}\to j^*L_{\overline{T}/T}\to I.\]
	The composition of the first two maps is just $j^*\At_{\overline{E}}$. Therefore, by definition of $\At$, this corresponds under adjunction to the morphism 
	\[E\xrightarrow{j^*\at_{\overline{E}}} L_{\overline{T}\times X/T\times X}[1]\otimes E\to \pi^*I[1]\otimes E.\]
	Thus, we are reduced to showing that this morphism agrees with the class $\beta_{\overline{E}}$ in $\Ext^1(E,\pi^*I\otimes E)$. This is shown in Lemma \ref{lem:explicitdefs} below. 
\end{proof}
\begin{lemma}\label{lem:explicitdefs}
	Let $T$ be an affine scheme and $I$ a quasicoherent sheaf on $T$ and let $j:T\to \overline{T}$ be the trivial square zero thickening of $T$ with ideal sheaf $I$. Let $E$ be a coherent sheaf on $T\times X$ and $\overline{E}$ an extension of $E$ to $\overline{T}$ such that the induced map $I\otimes_{T} E\to I\overline{E}\subset \overline{E}$ is an isomorphism (see \cite[IV 3.1]{Ill}). Let $\beta\in \Ext^1_{T\times X}(E,\pi^*I\otimes E)$ be the corresponding extension class.  Then the composition 
	\[E\xrightarrow{j^*\at_{\overline{E}}} j^*L_{\overline{T}\times X/T\times X}[1]\otimes E\to \pi^*I[1]\otimes E\]
	equals $\beta$. 
\end{lemma}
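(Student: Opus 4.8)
The plan is to reduce to Illusie's explicit construction of the Atiyah class via modules of principal parts, and then to match the resulting connecting homomorphism, after restriction along $j$, with the Yoneda class $\beta$.

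Since $T\times X$ and $\overline T\times X$ are schemes, $\at_{\overline E}:=\at_{\overline E,\overline T\times X/T\times X}$ agrees with Illusie's Atiyah class by Proposition \ref{prop:atcomp}, hence is the connecting map of the exact sequence of principal parts attached to the standard simplicial resolution $R=P_{f^{-1}\mathcal O_{T\times X}}(\mathcal O_{\overline T\times X})$, where $f\colon\overline T\times X\to T\times X$ is the base change of the retraction $\overline T\to T$ of the trivial extension. I would first record that the map $j^*L_{\overline T\times X/T\times X}[1]\to\pi^*I[1]$ in the statement is, up to the shift, the canonical truncation $j^*L_{\overline T\times X/T\times X}\to h^0=\Omega^1_{\overline T\times X/T\times X}\otimes_{\mathcal O_{\overline T\times X}}\mathcal O_{T\times X}$ followed by the map $db\mapsto b$ to $\pi^*I$; the only nonformal point is that this last map is an \emph{isomorphism}, which is a short computation with $\mathcal O_{\overline T}=\mathcal O_T\oplus I$ and uses exactly that $\overline T$ is the trivial square-zero extension (it is the version for $\overline T\times X/T\times X$ of the isomorphism $h^0(i^*L_{\overline T/T})\simeq I$ invoked in Remark \ref{rem:corrdefs}).

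Granting this, post-composing the connecting map of the simplicial principal-parts sequence with $L_{\overline T\times X/T\times X}\to h^0$ yields the connecting map of the ordinary exact sequence of principal parts
\[0\to\Omega^1_{\overline T\times X/T\times X}\otimes\overline E\to P^1_{\overline T\times X/T\times X}(\overline E)\to\overline E\to 0,\]
so the map in the statement is computed by applying $Lj^*$ to this sequence and composing with the truncation to $h^0\cong\pi^*I\otimes E$ (here one uses $Lj^*\overline E\simeq E$, by $\overline T$-flatness of $\overline E$). It then remains to identify the outcome with $\beta$. For this I would construct a morphism of short exact sequences of $\mathcal O_{\overline T\times X}$-modules
\begin{equation*}
	\begin{tikzcd}[column sep=small]
		0\ar[r]&\Omega^1_{\overline T\times X/T\times X}\otimes\overline E\ar[r]\ar[d,"q"]& P^1_{\overline T\times X/T\times X}(\overline E)\ar[r]\ar[d,"\phi"]&\overline E\ar[r]\ar[d,"\rho"]&0\\
		0\ar[r]&\pi^*I\otimes E\ar[r]&\overline E\ar[r]&E\ar[r]&0,
	\end{tikzcd}
\end{equation*}
where the bottom row is the extension defining $\beta$ (with $\overline E$ in the middle regarded as an $\mathcal O_{\overline T\times X}$-module via the retraction, and $\pi^*I\otimes E\simeq\pi^*I\,\overline E$), $q$ is the truncation map above, $\rho$ the quotient map, and $\phi$ a suitable lift. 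Such a morphism of extensions identifies the two connecting maps in $D(\overline T\times X)$ up to $q$ and $\rho$; applying $Lj^*$ and the truncation to $h^0$, and using that $Lj^*\rho$ and $Lj^*q$ induce on $h^0$ the identity of $E$ and the isomorphism $\Omega^1_{\overline T\times X/T\times X}\otimes\mathcal O_{T\times X}\cong\pi^*I$ respectively, forces $j^*\at_{\overline E}$ (composed with the projection) to equal $\beta$.

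The construction of $\phi$ and the verification that both squares commute is the heart of the matter: it is a direct computation, carried out using the explicit formula $j\cdot x=\mu(j\otimes\rho(x))$ for the $\pi^*I$-action on $\overline E$ from Remark \ref{rem:corrdefshvs} together with the presentation of the module of principal parts over the trivial extension. This step — and keeping straight the shifts, truncations, derived versus underived tensor products, and the restriction-of-scalars bookkeeping around $j$ — is where I expect essentially all of the difficulty to be; everything else is formal. (One could equally well phrase the comparison inside Illusie's framework and deduce it from the analysis of deformations of modules in \cite[IV 3.1]{Ill}.)
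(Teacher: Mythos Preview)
Your proposal is correct and follows essentially the same route as the paper: reduce from the simplicial principal-parts sequence to the ordinary one via truncation to $h^0$, then identify the restriction along $j$ of the ordinary principal-parts sequence $\underline{P}^1_{\overline T\times X/T\times X}(\overline E)$ with the extension defining $\beta$ by an explicit calculation. The paper organizes the final step slightly differently (it restricts along $j$ first and then compares on $T\times X$, using the concrete presentation $P^1_{\overline T/T}(\overline E)=(\mathcal O_T\oplus I)\otimes_{\mathcal O_T}\overline E$, rather than building your map $\phi$ of extensions on $\overline T\times X$), but the content is the same.
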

\begin{proof}
	Let $r:\overline{T}\to T$ be the projection coming from the inclusion $\mathcal{O}_T=\mathcal{O}_T\oplus \{0\}\subset\mathcal{O}_T\oplus I$. Let $R=P_{r^{-1}\mathcal{O}_T}(\mathcal{O}_{\overline{T}})$ be the standard simplicial resolution. Then we have the following commutative diagram of $R$-modules with exact rows 
	\begin{equation*}
		\begin{tikzcd}
			0\ar[r]&\Omega^1_{R/r^{-1}\mathcal{O}_T}\otimes_R \overline{E}\ar[r]\ar[d]& P^1_{R/r^{-1}\mathcal{O}_T}(\overline{E})\ar[r]\ar[d]&E \ar[r]\ar[d]&0 \\
			0\ar[r] &\Omega_{\overline{T}/T}\otimes_{\overline{T}} \overline{E} \ar[r]&P^1_{\overline{T}/T}(\overline{E})\ar[r]&E\ar[r]& 0
		\end{tikzcd}
	\end{equation*}
	The upper row is used to define the Atiyah class, so by taking connecting map in $D^{\Delta}(R)$ and passing to $D(\overline{T})$, we get the commutative diagram 
	\begin{equation*}
		\begin{tikzcd}
			\overline{E}\ar[r,"\at_{\overline{E},\overline{T}/T}"]\ar[dr]& L_{\overline{T}/T}\otimes_{\overline{T}}\overline{E}\ar[d] \\
			& \Omega_{\overline{T}/T}\otimes_{\overline{T}}\overline{E},
		\end{tikzcd}
	\end{equation*}
	where the diagonal map is the connecting map associated to the sequence of principal parts $\underline{P}_{\overline{T}/T}(\overline{E})$. This reduces us to showing that the restriction of $\underline{P}_{\overline{T}/T}(\overline{E})$ along $j$ is equal to the extension $\beta$ corresponding to $E$ via the identification $j^*\Omega^1_{\overline{T}/T}=I$.
	This follows from a straightforward calculation using that $P^1_{\overline{T}/T}(E)=(\mathcal{O}_T\oplus I)\otimes_{\mathcal{O}_T}\overline{E}$. 
\end{proof}

\subsection{Comparison of automorphism groups}\label{subsec:autcomp}
We show that the map $\At_{\mathcal{E}}:R\pi_*(\mathcal{E}\otimes \mathcal{E}^{\vee})^{\vee}[-1]\to L_{\mathcal{M}}$ induces an isomorpism on $h^{-1}$. It is enough to show that for every map from an affine scheme $g:T\to \mathcal{M}$ and any quasicoherent sheaf $I$ on $T$ the map 
\[(g^*\At_{\mathcal{E}})^*:\Ext^{-1}_T(g^*L_{\mathcal{M}},I)\to \Ext^0(g^*R\pi_*(\mathcal{E}\otimes \mathcal{E}^\vee)^{\vee},I)\]
given by composition with the Atiyah class is an isomorphism. This follows from the following more precise statement 
\begin{lemma}
	Let $\overline{T}$ be the trivial extension of $T$ by $I$ and let $\overline{g}:\overline{T}\to \mathcal{M}$ be the trivial extension of $g$ to $\overline{T}$. Let $\overline{E}$ be the corresponding extension of $E$. Let $\varphi$ be an automorphism of $\overline{g}$ extending $\id_g$, which we view as an automorphism $\gamma_{\varphi}$ of $\overline{E}$ via the universal property of $\mathcal{M}$. Then the element $\tau_{\varphi}$ corresponding to $\varphi$  via \eqref{eq:autcorruse} is mapped to the element of $\Hom_{X\times T}(E,\pi^*I\otimes E)$ corresponding to $\varphi$ via Remark \ref{rem:autshvs}. 
\end{lemma}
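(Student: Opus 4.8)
The plan is to imitate the strategy of Lemma \ref{lem:explicitdefs}: reduce both sides to an explicit computation on a smooth groupoid presentation of $\mathcal{M}$, and identify each of them with the first–order derivative of the descent datum of the universal sheaf $\mathcal{E}$. First I would unwind the left–hand side: by the construction of $\At_{\mathcal{E}}$ out of $\at_{\mathcal{E}}$ (dualizability of $\mathcal{E}$ and of $R\pi_*(\mathcal{E}\otimes\mathcal{E}^{\vee})$ together with the projection formula), the class $(g^*\At_{\mathcal{E}})^*(\tau_{\varphi})$ is the composition
\[ E\xrightarrow{g^*\at_{\mathcal{E}}}\pi^*g^*L_{\mathcal{M}}\otimes E[1]\xrightarrow{\pi^*(\tau_{\varphi}[1])\otimes\id}\pi^*I\otimes E, \]
where $\at_{\mathcal{E}}$ is the Atiyah class relative to $X$ and $\tau_{\varphi}$ is regarded as a morphism $g^*L_{\mathcal{M}}[1]\to I$. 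After an \'etale base change on the affine scheme $T$ (harmless by the reduction of \cite[\S 4]{BeFa}) I would fix a smooth cover $y:Y\to\mathcal{M}$ through which $g$ factors, $g=y\circ g_Y$, and form the groupoid $Z:=Y\times_{\mathcal{M}}Y$ with projections $s,t:Z\to Y$, identity section $\Delta:Y\to Z$, and $e:=\Delta\circ g_Y:T\to Z$. The universal sheaf carries a descent isomorphism $\theta:s^*\mathcal{E}_Y\xrightarrow{\sim}t^*\mathcal{E}_Y$ on $X\times Z$ with $\Delta^*\theta=\id$, so $e^*\theta=\id_E$; by Lemma \ref{lem:autcorrspecial}, $\varphi$ corresponds to a morphism $\overline{f}_{\varphi}:\overline{T}\to Z$ deforming $e$, with $s\overline{f}_{\varphi}=t\overline{f}_{\varphi}=\overline{g}_Y$ and $\overline{f}_{\varphi}\circ j=e$, and the induced automorphism of $\overline{E}$ is $\gamma_{\varphi}=\overline{f}_{\varphi}^*\theta$.

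Next I would run Construction \ref{constr:atiyah} for the morphism $X\times\mathcal{M}\to X$ using the cover $X\times Y\to X\times\mathcal{M}$; this produces $W=X\times Z$, and the resulting computation is formally the same as the one carried out in the $BGL_n$ example, with the universal matrix $T$ replaced by the descent isomorphism $\theta$. Under the identification of Lemma \ref{lem:coneprescot} for $z:=y\circ s:Z\to\mathcal{M}$, namely $z^*L_{\mathcal{M}}\simeq\coneop(s^*L_Y\oplus t^*L_Y\to L_Z)[-1]$, the morphism $z^*\at_{\mathcal{E}}$ factors through the natural map $\pi^*L_Z\otimes\mathcal{E}_Z\to\pi^*z^*L_{\mathcal{M}}[1]\otimes\mathcal{E}_Z$ (the $s^*L_Y\oplus t^*L_Y$–summands being inert because $\mathcal{E}$ is pulled back), and its $L_Z$–component is, up to sign, $d\theta\circ\theta^{-1}$. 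Restricting along $e$ and using $e^*\theta=\id_E$ gives that $g^*\at_{\mathcal{E}}$ factors through $\pi^*e^*L_Z\otimes E$ via $e^*(d\theta)$. On the other hand, $\tau_{\varphi}:g^*L_{\mathcal{M}}[1]\to I$ is, by its construction in \eqref{eq:autcorruse}, the extension along $e^*L_Z\to g^*L_{\mathcal{M}}[1]$ of the classifying derivation $\alpha:e^*L_Z\to I$ of the deformation $\overline{f}_{\varphi}$ of $e$; this extension exists precisely because $s\overline{f}_{\varphi}$ and $t\overline{f}_{\varphi}$ are the trivial deformations of $g_Y$, i.e.\ $\alpha$ kills both copies of $g_Y^*L_Y$. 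Composing, the left–hand side becomes $\alpha\circ e^*(d\theta):E\to\pi^*I\otimes E$.

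Finally I would match this with the right–hand side. The difference $\gamma_{\varphi}-\id_{\overline{E}}=\overline{f}_{\varphi}^*\theta-e^*\theta$ is, by a first–order Taylor expansion of $\theta$ along the deformation $\overline{f}_{\varphi}$ of $e$ whose classifying derivation is $\alpha$, exactly the contraction $\alpha\circ e^*(d\theta)$; in the canonical decomposition $\overline{E}=E\oplus\pi^*I\otimes E$ this is the endomorphism $(x_1,j\otimes x_2)\mapsto(x_1,j\otimes x_2+(\alpha\circ e^*d\theta)(x_1))$, which by Remark \ref{rem:autshvs} is precisely the element of $\Hom_{X\times T}(E,\pi^*I\otimes E)$ attached to $\varphi$. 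Comparing the two computations proves the lemma.

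The step I expect to be the main obstacle is the middle one: carefully reconciling the cone presentation of $L_{\mathcal{M}}$ used to define $\tau_{\varphi}$ in \eqref{eq:autcorruse} with the one produced by Construction \ref{constr:atiyah} and Lemma \ref{lem:coneprescot}, and tracking the shifts and signs so that the $L_Z$–component of $z^*\at_{\mathcal{E}}$ is identified with $d\theta\circ\theta^{-1}$ on the nose. This is essentially the $BGL_n$ computation re–run with the universal sheaf and its descent datum in place of the universal matrix, together with the (purely formal) compatibility of the adjunctions defining $\At_{\mathcal{E}}$ with the identification \eqref{eq:autcorruse}; the Taylor–expansion and Remark \ref{rem:autshvs} steps are then routine.
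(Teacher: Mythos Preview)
Your approach is essentially the same as the paper's: both reduce \'etale-locally to a groupoid presentation $Z\rightrightarrows Y$ of $\mathcal{M}$, unwind $(g^*\At_{\mathcal{E}})^*(\tau_\varphi)$ via adjunction as $E\xrightarrow{g^*\at_{\mathcal{E}}}\pi^*g^*L_{\mathcal{M}}[1]\otimes E\xrightarrow{\pi^*\tau_\varphi}\pi^*I\otimes E$, identify $\tau_\varphi$ with the classifying derivation of the deformation $\overline{f}_\varphi:\overline{T}\to Z$ of $e$, compute $\gamma_\varphi=\overline{f}_\varphi^*\theta$ by first-order expansion, and compare. The only substantive difference is how the ``main obstacle'' you flag is handled: where you invoke the $BGL_n$ example as a template for identifying the $L_Z$-component of $z^*\at_{\mathcal{E}}$ with $d\theta\circ\theta^{-1}$, the paper instead proves a separate lemma (Lemma~\ref{lem:atclsnake}) showing that the truncation $(\tau_{\geq 0}\otimes\id_E)\circ z^*\at_{\mathcal{E}}$ agrees with the snake-lemma connecting map for the diagram of principal-parts sequences on $Y$ and $Z$; this is the general-coherent-sheaf analogue of the $BGL_n$ calculation and supplies exactly the explicit formula $s^*n\mapsto -\sum dx_j'\otimes t^*n_j'$ (for $s^*n=\sum x_j' t^*n_j'$) that you need. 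Your Taylor-expansion step for $\gamma_\varphi-\id$ is then carried out in the paper by the same direct computation you outline.
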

\begin{proof}
	We will first make two reductions:
	First, the statement can be checked \'etale locally on $T$, therefore we can choose a smooth cover $Y$ and assume that $g$ factors through $Y$, so that we are in the situation of Lemma \ref{lem:autcorrspecial}.
	Then in particular, $\varphi$ corresponds to a morphism $f:\overline{T}\to Z$ which is the image of $\tau_{\varphi}$ in $\Hom(e^*L_{Z},I) $ 
	Second, the image of $\tau_{\varphi}$ is via adjunction identified with the composition
	\[E\xrightarrow{g^*\at_{\mathcal{E}}}(g\times \id_X)^* L_{\mathcal{M}\times X/X}[1]\otimes E\xrightarrow{\pi^*\tau_\varphi\otimes E} \pi^*I\otimes E,\]
	where the first morphism is just $\At_E$, and the second morphism is the one induced by the infinitesimal automorphism $\varphi\times \id$ of the extension $\overline{g}\times \id_X$ of $g\times \id_X$. Since $\tau_{\varphi}$ factors through a map $\tau_{\varphi}':h^1(e^*L_{\mathcal{M}})\to I$, we can rewrite this composition as
	\[E\xrightarrow{(\tau_{\geq 0}\otimes \id_E)\circ g^*\at_{\mathcal{E}}} h^0((g\times \id_X)^*L_{\mathcal{M}\times X/X}[1])\otimes E\xrightarrow{g^*\tau_{\varphi}'\otimes \id_E} \pi^*I\otimes E.\]
	This reduces the problem to understanding the maps $(\tau_{\geq 0}\otimes id_E)\circ g^*\at_{\mathcal{E}}$ and $g^*\tau_{\varphi}'\otimes \id_E$. 
	By Lemma \ref{lem:atclsnake} below, the former is given as follows: For a section $m$ of $E_Z$, write $m=\sum x_i s^*n_i=\sum x_j' t^*n_j'$ using the isomorphisms $s^*E_Y\simeq E_Z\simeq t^*E_Y$. Then $e\mapsto \sum dx_i\otimes s^*e_i-\sum dx_j'\otimes \varphi(t^*e_j')$ in $\Omega_Z\otimes E_Z/(s^*(\Omega_Y\otimes_Y E_Y)+t^*(\Omega_Y\otimes_Y E_Y))$. In particular, if $m=s^*n = \sum x_j't^*n_j'$, then  $s^*y^*h^0(\at_{\mathcal{E}})(m) = -\sum dx_j'\otimes \varphi(t^*n_j')$. Pulling back along $g$ gives 
	\[ g^*(\tau_{\geq 0}\otimes \id_E \circ \at_{\mathcal{E}})(g^*m) = -\sum dx_j\otimes \varphi(g^*m).\] 

	The morphism $\tau'_{\varphi}:\operatorname{Coker}(s^*\Omega_Y\oplus t^*\Omega_Y \to \Omega_Z)\mid_T\to \Omega_{\overline{T}/T}\mid_T=I$ sends $dx$ to $df(x)$, where $df:\Omega_Z\to I$ is the derivation describing $f$ as a deformation of $e$ (s.t. $f=e+I\otimes e + df:e^{-1}\mathcal{O}_Z\to \mathcal{O}_T\oplus I$). In conclusion, we get that the composition is given by the map $g^*n\mapsto \sum -df(x_i)\otimes f^*\varphi(n_i)$ if $s^*n=\sum x_i t^*n_i$.
	On the other hand, we have the automorphism $\gamma_{\varphi}$ of $\overline{E}$ given by the composition $\overline{E}\simeq f^*t^*E_Y\xrightarrow{f^*\varphi}f^*s^*E_Y\simeq \overline{E}$. We compute 
	\[\gamma_{\varphi}(\overline{g}^*n) =f^*\varphi(\sum x_i t^*n_i)=\sum f^*x_i f^*\varphi(t^*n_i) \]
	whenever $s^*n=\sum x_i t^*n_i$. 
	Since $\varphi$ restricts to the identity automorphism on $T$, we know that $\gamma_{\varphi}$ is of the form $\id_{\overline{E}}+\rho$, where $\rho:E\to I\otimes E$ is a morphism. We claim that $\rho(g^*n) = dx_i\otimes f^*\varphi(t^*n_i)$ whenever $s^*n=\sum x_i t^*n_i$. But we have: 
	\[\rho(g^*n) = (f-e)^*\varphi(\sum x_i\otimes t^*n_i) =(0, \sum dfx_i \otimes \varphi(t^*n_i)).\]
\end{proof}
\begin{lemma}\label{lem:atclsnake}
	In the situation of Lemma \ref{lem:autcorrspecial}, suppose that $E\in D^{\leq 0}_{qcoh}(\mathcal{Y})$. Then the composition $s^*y^*E\xrightarrow{s^*y^*\at_E} s^*y^*L_{\mathcal{Y}}[1]\otimes s^*y^*E\to s^*y^*h^{1}(L_{\mathcal{Y}})\otimes s^*y^*E$ agrees up to natural isomorphisms with the connecting homomorphism $\delta $, obtained by applying the Snake Lemma to the following diagram
	\begin{equation*}
		\begin{tikzcd}[column sep = small]
			0\ar[r]& \substack{s^*(\Omega^1_Y\otimes E_Y)\oplus\\(t^*\Omega^1_Y\otimes E_Y)} \ar[r]\ar[d]& s^*P_{Y}^1(E_Y)\oplus t^*P_Y^1(E_Y)\ar[r]\ar[d] & s^*E_Y\oplus t^*E_Y \ar[d] \ar[r] &0\\
			0\ar[r]& \Omega_Z\otimes E_Z\ar[r]& P_Z^1(E_Z)\ar[r]& E_Z \ar[r] & 0
		\end{tikzcd}
	\end{equation*}
\end{lemma}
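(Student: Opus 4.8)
Here is a plan for proving Lemma~\ref{lem:atclsnake}.

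\medskip

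The plan is to unwind Construction~\ref{constr:atiyah} in the present situation and observe that the $3\times 3$ diagram in the statement is literally $\coneop$ of the exact sequence of principal parts used to define the Atiyah class. First I would apply Construction~\ref{constr:atiyah} to the morphism $\mathcal{Y}\to S$ (with $S$ the base field) using the diagram \eqref{diag:2comm} with $X=Y$ the given cover and $Y' = S$, so that $\mathcal{X}_{Y'}=\mathcal{Y}$ and $W=Y\times_{\mathcal{Y}}Y = Z$, with $s,t\colon Z\to Y$ the two projections. Then $s^{*}y^{*}\at_{E}$ is the pullback of $\at_{E,\mathcal{Y}/S}$ along $z=y\circ s\colon Z\to\mathcal{Y}$, and, exactly as in the proof of Proposition~\ref{prop:atcomp} and the $BGL_n$ example, it is computed as the degree-$0$ restriction on $W_0=Z$ of the map obtained by applying $\coneop$ to the principal-parts connecting map on $\eqtopos{W}$. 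Since $\coneop$ is a triangulated functor, this equals the connecting map of the short exact sequence of complexes obtained by applying $\coneop$ \emph{first} to
\[0\to \Omega^{1}_{\eqtopos{W}/\eqtopos{Y}}\otimes E_{\eqtopos{W}}\to P^{1}_{\eqtopos{W}/\eqtopos{Y}}(E_{\eqtopos{W}})\to E_{\eqtopos{W}}\to 0,\]
and only then taking the connecting homomorphism. Writing out what an $\mathcal{O}_{\eqtopos{W}}$-module structure is and what $\coneop$ does on $W_0=Z$, this short exact sequence of complexes is exactly the $3\times 3$ diagram in the statement: its rows are the three short exact sequences of principal parts on $Z$ and on $Y$ pulled back along $s$ and $t$, and the vertical maps are the comparison maps $-s^{*}\oplus t^{*}$. (One may work with $R=\mathcal{O}_{\eqtopos{W}}$ here rather than with a simplicial resolution: the only difference between the resulting ``naive'' Atiyah class and the full one is the natural map $L_{\eqtopos{W}/\eqtopos{Y}}\to\Omega^{1}_{\eqtopos{W}/\eqtopos{Y}}$, whose cone lies in degrees $\le -1$, hence is annihilated after $\coneop$ by the projection $z^{*}L_{\mathcal{Y}}[1]\otimes E\to z^{*}h^{1}(L_{\mathcal{Y}})\otimes E$, using that $h^{0}$ of a $\coneop$ depends only on the $h^{0}$ of its constituents.)

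\medskip

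Next I would identify the source and target of this connecting map via the isomorphisms already established. The third column $[\,s^{*}E_{Y}\oplus t^{*}E_{Y}\to E_{Z}\,]$ is $\coneop(E_{\eqtopos{W}})$ in degree $0$; since $E$ has quasi-coherent, hence Cartesian, cohomology, Lemma~\ref{lem:conepresqcoh} (via the map $\gamma$ of Lemma~\ref{lem:coneforqiso}) gives $\coneop(E_{\eqtopos{W}})\simeq E_{W_\bullet}[1]$, so its $h^{-1}$---the kernel of the right vertical map---is canonically $E_{Z}=s^{*}y^{*}E$. The first column $[\,s^{*}(\Omega^{1}_{Y/S}\otimes E_{Y})\oplus t^{*}(\Omega^{1}_{Y/S}\otimes E_{Y})\to \Omega^{1}_{Z/S}\otimes E_{Z}\,]$ is $\coneop(\Omega^{1}_{\eqtopos{W}/\eqtopos{Y}}\otimes E_{\eqtopos{W}})$ in degree $0$; by Lemma~\ref{lem:tensorcone} (again using that $E$ is Cartesian) this is $\coneop(\Omega^{1}_{\eqtopos{W}/\eqtopos{Y}})\otimes E_{Z}$, and by Lemma~\ref{lem:coneprescot} one has $h^{0}(\coneop(\Omega^{1}_{\eqtopos{W}/\eqtopos{Y}}))\simeq h^{0}(\coneop(L_{\eqtopos{W}/\eqtopos{Y}}))=h^{1}(z^{*}L_{\mathcal{Y}})$, so the cokernel of the left vertical map is $h^{1}(z^{*}L_{\mathcal{Y}})\otimes E_{Z}=s^{*}y^{*}h^{1}(L_{\mathcal{Y}})\otimes s^{*}y^{*}E$. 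Under these two identifications the Snake connecting homomorphism $\delta\colon\ker(\text{right})\to\operatorname{coker}(\text{left})$ becomes precisely the composition $s^{*}y^{*}E\xrightarrow{s^{*}y^{*}\at_{E}}s^{*}y^{*}L_{\mathcal{Y}}[1]\otimes s^{*}y^{*}E\to s^{*}y^{*}h^{1}(L_{\mathcal{Y}})\otimes s^{*}y^{*}E$, which is the assertion.

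\medskip

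The only real work is bookkeeping: one must check that the natural isomorphisms furnished by Lemmas~\ref{lem:conepresqcoh}, \ref{lem:coneprescot} and \ref{lem:tensorcone} are the same ones implicit in the projection to the top cohomology sheaf $z^{*}L_{\mathcal{Y}}[1]\otimes E\to z^{*}h^{1}(L_{\mathcal{Y}})\otimes E$, and that the various signs (the $-1$ in $\coneop$, the $-1$ in \eqref{eq:atdef}, and the sign of the Snake connecting map) are mutually compatible; one also has to know that ``pullback to $Z$'' is the degree-$0$ restriction of $\eta_{W}^{*}$, which is dealt with exactly as in the proof of Proposition~\ref{prop:atcomp} via the lisse-\'etale versus \'etale comparison. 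Once this is arranged, the statement follows formally from the triangulated-functoriality of $\coneop$ together with the cited lemmas. I expect the sign-and-shift reconciliation to be the main nuisance; the structural content is straightforward.
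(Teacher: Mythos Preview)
Your plan is correct and follows essentially the same route as the paper: form the $3\times 3$ diagram of principal-parts sequences, take cones of the vertical maps, and identify the resulting connecting map with (the pullback of) the Atiyah class on one side and with the Snake Lemma map $\delta$ on the other. The paper likewise passes from the simplicial-resolution version to the $\Omega^{1}$-version, exactly as you propose, and the only difference is organizational: where you bundle the ``sign-and-shift reconciliation'' into a final bookkeeping step, the paper isolates it as a separate general statement (Lemma~\ref{lem:snakelemmacomp}), namely that for any map of short exact sequences with vertical arrows $\alpha,\beta,\gamma$, the connecting map of the pushout/pullback $j_{*}i^{*}$ of the cone sequence along $j\colon C(\alpha)\to\operatorname{Coker}(\alpha)$ and $i\colon\Ker(\gamma)[1]\to C(\gamma)$ is exactly $-\delta[1]$. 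Extracting this as a standalone lemma is worthwhile, since it cleanly absorbs both the $[-1]$ shift and the global sign $-1$ appearing in the definition \eqref{eq:atdef} of $\at_{E}$, and makes the compatibility check you anticipate essentially automatic.
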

\begin{proof}
	Let $R:=P_{k}(\mathcal{O}_Z)$ and $Q=P_{k}(\mathcal{O}_Y)$ be the standard simplicial resolutions and $s_R^*$ and $t_R^*$ the pullback functors from $Q$-modules to $R$-modules induced by $s$ and $t$ respectively. Then, we have a commutative diagram of $R$-modules
	\begin{equation*}
		\begin{tikzcd}[column sep = small]
			0\ar[r]& \substack{s_R^*(L_Y\otimes E_Y)\oplus\\t_R^*(L_Y\otimes E_Y) }\ar[r]\ar[d,"\alpha"]& s_R^*P_{Q}^1(E_Y)\oplus t_R^*P_Q^1(E)\ar[r]\ar[d,"\beta"] & s^*E_Y\oplus t^*E_Y \ar[d,"\gamma"] \ar[r] &0\\
			0\ar[r]& L_Z\otimes E_Z\ar[r]\ar[d]& P_R^1(E_Z)\ar[r]\ar[d]& E_Z \ar[r]\ar[d] & 0 \\
			0\ar[r]& C(\alpha)\ar[r] & C(\beta)\ar[r] &C(\gamma) \ar[r]& 0.
		\end{tikzcd}
	\end{equation*}
	 By the construction of the Atiyah class, we have natural isomorphisms $C(\alpha)\simeq s^*y^*L_{\mathcal{Y}}[1]\otimes E$ and $C(\gamma)\simeq s^*y^*E[1]$ with respect to which the connecting map associated to the lower row is naturally identified with $s^*y^*\at_E[1]$ in $D^{\Delta}(R)$. The diagram maps to the similar diagram of $\mathcal{O}_Z$-modules
	\begin{equation*}
		\begin{tikzcd}[column sep = small]
			0\ar[r]& \substack{s^*(\Omega_Y\otimes E_Y)\oplus \\ t^*(\Omega_Y\otimes E_Y)} \ar[r]\ar[d,"\alpha'"]& s^*P_{Y}^1(E_Y)\oplus t^*P_Y^1(E_Y)\ar[r]\ar[d,"\beta'"] & s^*E_Y\oplus t^*E_Y \ar[d,"\gamma'"] \ar[r] &0\\
			0\ar[r]& \Omega_Z\otimes E_Z\ar[r]\ar[d]& P_Z^1(E_Z)\ar[r]\ar[d]& E_Z \ar[r]\ar[d] & 0 \\
			0\ar[r]& \dfrac{\Omega_Z}{s^*\Omega_Y\oplus t^*\Omega_Y}\otimes E_Z\ar[r] & *\ar[r] &C(\gamma') \ar[r]& 0,
		\end{tikzcd}
	\end{equation*}
	where the last row is obtained by taking cones of the vertical morphisms and then pushing out along $C(\alpha')\to \Omega_Z/(s^*\Omega_Y\oplus t^*\Omega_Y) \otimes E_Z$. The morphism $C(\alpha)\to \Omega_Z/(s^*\Omega_Y\oplus t^*\Omega_Y)\otimes E_Z$ here is identified with the truncation morphism $L_{\mathcal{Y}}[1]\to h^1(L_{\mathcal{Y}})$ tensored with $E_Z$.
	By pulling back the lower row along the natural map $s^*y^*E[1]\to C(\gamma')$ obtained from the diagonal map $s^*y^*E\to s^*E_Y\oplus t^*E_Y$, we get an extension
	\[0\to \frac{\Omega_Z}{s^*\Omega_Y+ t^*\Omega_Y }\otimes E_Z \to * \to s^*y^*E[1]\to 0.\]
	That the induced connecting homomorphism agrees with the one coming from the Snake Lemma follows from Lemma \ref{lem:snakelemmacomp}.
\end{proof}
\begin{lemma}\label{lem:snakelemmacomp}
	Suppose that we have a commutative diagram in an abelian category with exact rows 
	\begin{equation*}
		\begin{tikzcd}
			0\ar[r]&A\ar[r]\ar[d,"\alpha"]& B\ar[r]\ar[d,"\beta"]&C\ar[r]\ar[d,"\gamma"]& 0 \\
			0 \ar[r] & D\ar[r]& E\ar[r]&F\ar[r]& 0. 
		\end{tikzcd}
	\end{equation*}
	Form the exact sequence 
	\[\underline{C}: 0\to C(\alpha)\to C(\beta)\to C(\gamma)\to 0\]
	and let $j:C(\alpha)\to \operatorname{Coker}(\alpha)$ and $i:\Ker(\gamma)[1]\to C(\gamma)$ denote the natural maps. Then the connecting homomorphism associated to the sequence of complexes of $A$-modules
	\[j_*i^*\underline{C}=i^*j_*\underline{C}:0\to \operatorname{Coker}(\alpha) \to M \to \Ker(\gamma)[1]\to 0\]
	is exactly \emph{minus} the shift by $1$ of the connecting morphism $\delta:\Ker(\gamma)\to \operatorname{Coker}(\alpha)$ in the Snake Lemma.  
\end{lemma}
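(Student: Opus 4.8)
The plan is to reduce the statement to the computation of a single connecting morphism — that of the short exact sequence of mapping cones $\underline{C}\colon 0\to C(\alpha)\to C(\beta)\to C(\gamma)\to 0$ — and then to identify this with the snake map by a cocycle chase. First I would record the standard fact that the connecting morphism of a short exact sequence of complexes transforms by precomposition under pullback and by postcomposition under pushout: if $0\to X\to Y\to Z\to 0$ has connecting morphism $\partial\colon Z\to X[1]$ in the derived category, and $g\colon Z'\to Z$, $f\colon X\to X'$ are chain maps, then the sequence $f_*g^*(0\to X\to Y\to Z\to 0)$ has connecting morphism $f[1]\circ\partial\circ g$. (This is just functoriality of the distinguished triangle attached to a short exact sequence of complexes, applied to the evident morphisms of sequences.) Before applying this one checks that $j_*i^*\underline{C}$ is genuinely a short exact sequence of complexes: pullback along $i$ is exact because $C(\beta)\to C(\gamma)$ is termwise surjective, pushout along $j$ is exact because $C(\alpha)\to C(\beta)$ is termwise injective, and the two operations commute, so $j_*i^*\underline{C}=i^*j_*\underline{C}$ as claimed. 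Its connecting morphism is then $j[1]\circ\partial_{\underline{C}}\circ i\colon \Ker(\gamma)[1]\to\operatorname{Coker}(\alpha)[1]$, where $\partial_{\underline{C}}\colon C(\gamma)\to C(\alpha)[1]$ is the connecting morphism of $\underline{C}$.

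Next I would observe that $C(\alpha)$ is the complex $[A\xrightarrow{\alpha}D]$ in degrees $-1,0$, so $h^{-1}C(\alpha)=\Ker(\alpha)$, $h^{0}C(\alpha)=\operatorname{Coker}(\alpha)$, and likewise for $\beta,\gamma$; the map $i$ induces an isomorphism on $h^{-1}$ and $j$ an isomorphism on $h^{0}$, hence $j[1]$ an isomorphism on $h^{-1}$. Both source and target of $j[1]\circ\partial_{\underline{C}}\circ i$ therefore have cohomology concentrated in degree $-1$, equal to $\Ker(\gamma)$ and $\operatorname{Coker}(\alpha)$, so this morphism in the derived category is determined by the map it induces on $h^{-1}$; and that induced map is exactly the boundary map $h^{-1}C(\gamma)\to h^{0}C(\alpha)$ in the long exact cohomology sequence of $\underline{C}$. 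A direct cocycle chase — lift $c\in\Ker(\gamma)\subseteq C(\gamma)^{-1}=C$ to $b\in C(\beta)^{-1}=B$, apply the differential of $C(\beta)$ to obtain $\beta(b)\in D=C(\alpha)^{0}$ — identifies this boundary map with the snake recipe $c\mapsto[\beta(b)]$, up to the sign dictated by the cone differential. Matching this against the effect of the shift functor on morphisms then yields the asserted identification with $-\delta[1]$.

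The main obstacle is bookkeeping the signs: the mapping-cone differential, the boundary map of a short exact sequence of complexes, the shift functor's action on morphisms, and the snake recipe each carry a sign convention, and the statement pins down a specific combination (precisely $-\delta$ after the shift by $1$). I would fix all of these to be those already in force for cones in this paper (in particular the cone differential with the leading minus sign, cf. Remark~\ref{rem:cotcomp}), after which the sign in the displayed formula is forced by the computation above. Alternatively one can bypass the push/pull formula and run the chase directly through the explicit description of the middle term of $j_*i^*\underline{C}$ as a subquotient of $C(\beta)$, which makes the sign manifest at the cost of a lengthier but entirely routine verification; the details are left as a standard exercise in homological algebra.
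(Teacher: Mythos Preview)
Your argument is correct, but it takes a more circuitous route than the paper. The paper's proof is a one-liner: it computes the middle term $M$ of $j_*i^*\underline{C}$ directly and finds that $M$ is literally the two-term complex $[\Ker(\gamma)\xrightarrow{\delta}\operatorname{Coker}(\alpha)]$, i.e.\ the mapping cone of $\delta$, with the two maps in the short exact sequence being the canonical inclusion of $\operatorname{Coker}(\alpha)$ and projection to $\Ker(\gamma)[1]$. The connecting morphism of the standard cone sequence $0\to Y\to C(f)\to X[1]\to 0$ is $-f[1]$, which gives the result immediately and pins down the sign without any further bookkeeping.

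Your approach instead factors the connecting morphism of $j_*i^*\underline{C}$ as $j[1]\circ\partial_{\underline{C}}\circ i$ via the push/pull functoriality of boundary maps, then reduces to the long exact cohomology sequence of $\underline{C}$ and recovers the snake map by a cocycle chase. This is valid and has the virtue of being more conceptual---it explains \emph{why} the snake map appears rather than verifying it by inspection---but it leaves the sign to be extracted from the interaction of several conventions, which you acknowledge but do not actually carry out. The paper's direct identification of $M$ as $\operatorname{Cone}(\delta)$ bypasses this entirely: once you know $M$ is the cone, the sign is forced by a single standard identity rather than a chain of them.
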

\begin{proof}
	One checks by a direct computation that $M$ is just the cone over $\delta$ with the two maps being the canonical inclusion and projection. It follows by the usual considerations of mapping cones that the connecting morphism of the sequence $j_*i^*\underline{C}$ is $-\delta$. 
\end{proof}

\begin{remark}\label{rem:complexes}
	To conclude, we point out that the conclusions of \S \ref{subsec:obstrcomp} -- \S \ref{subsec:autcomp} still hold when $\mathcal{M}$ is open in a moduli stack of perfect complexes, and $\mathcal{E}$ is the universal complex, using the results of \cite{HuTho}. 
	We assume here that the complexes being parametrized have vanishing negative $\Ext$-groups universally on $\mathcal{M}$.
	
	The arguments of \ref{subsec:obstrcomp} and \ref{subsec:defcomp} go through with Theorem 3.3 and Corollary 3.4 of \cite{HuTho} in place of Remark \ref{rem:obstrshvs} and Remark \ref{rem:corrdefshvs} respectively. The arguments of \ref{subsec:autcomp} go through unchanged for complexes.
\end{remark}
	
	\bibliography{references}
	\bibliographystyle{amsalpha}
\end{document}